    \let\Cref\crtCref
    \let\cref\crtcref
\theoremstyle{plain}
\newtheorem{theorem}{Theorem}[section]
\newtheorem{maintheorem}{Theorem} 
\crefname{maintheorem}{Theorem}{Theorems} 
\newtheorem*{theorem*}{Theorem}
\newtheorem{lemma}[theorem]{Lemma}
\newtheorem{corollary}[theorem]{Corollary}
\newtheorem{proposition}[theorem]{Proposition}
\theoremstyle{definition}
\newtheorem{definition}[theorem]{Definition}
\theoremstyle{remark}
\newtheorem{remark}[theorem]{Remark}
\newtheorem{remark*}[theorem]{Remark\textup{*}}
\numberwithin{equation}{section}
\DeclareMathAlphabet{\mathpzc}{OT1}{pzc}{m}{it}
\def\C {\mathbb{C}}
\def\N {\mathbb{N}}
\def\R {\mathbb{R}}
\def\Z {\mathbb{Z}}
\newcommand\1{{\ensuremath {\mathds 1} }}
\newcommand{\keyword}[1]{\textbf{#1}}
\newcommand{\bA}{\mathbf{A}}
\newcommand{\bF}{\mathbf{F}}
\newcommand{\bJ}{\mathbf{J}}
\newcommand{\be}{\mathbf{e}}
\newcommand{\bn}{\mathbf{n}}
\newcommand{\bx}{\mathbf{x}}
\newcommand{\by}{\mathbf{y}}
\newcommand{\bz}{\mathbf{z}}
\newcommand{\cE}{\mathcal{E}}
\newcommand{\cL}{\mathcal{L}}
\newcommand{\cP}{\mathcal{P}}
\newcommand{\cR}{\mathcal{R}}
\newcommand{\sSL}{\textup{SL}}
\newcommand{\sSU}{\textup{SU}}
\newcommand{\sU}{\textup{U}}
\newcommand{\CLGN}{C_{\rm LGN}}
\newcommand{\CH}{C_{\rm H}}
\newcommand{\dd}{{\rm d}} 
\newcommand{\ii}{{\rm i}} 
\newcommand{\NLL}{\mathrm{NLL}}
\DeclareMathOperator{\curl}{\mathrm{curl}}
\DeclareMathOperator{\sign}{\mathrm{sign}}
\DeclareMathOperator{\supp}{\mathrm{supp}}
\newcommand{\norm}[1]{\left\|#1\right\|}
\newcommand{\slot}{\cdot}
\newcommand{\dist}{\mathrm{dist}}
\newcommand{\diam}{\mathrm{diam}}
\renewcommand{\div}{\operatorname{div}}
\newcommand{\Lip}{\mathrm{Lip}}
\newcommand{\loc}{\mathrm{loc}}
\patchcmd{\@setaddresses}{\indent}{\noindent}{}{}
\patchcmd{\@setaddresses}{\indent}{\noindent}{}{}
\patchcmd{\@setaddresses}{\indent}{\noindent}{}{}
\patchcmd{\@setaddresses}{\indent}{\noindent}{}{}
\title[A generalized Liouville equation and magnetic stability]{A generalized Liouville equation \\ and magnetic stability}
\author[A. Ataei]{Alireza ATAEI}
\address{(A. Ataei) Department of Mathematics, Uppsala University, Box 480, SE-751 06, Uppsala, Sweden}
\email{\url{alireza.ataei@math.uu.se}}
\author[D. Lundholm]{Douglas LUNDHOLM}
\address{\noindent (D. Lundholm) Department of Mathematics, Uppsala University, Box 480, SE-751 06, Uppsala, Sweden}
\email{\url{douglas.lundholm@math.uu.se}}
\author[D.-T. Nguyen]{Dinh-Thi NGUYEN}
\address{(D.-T. Nguyen) Department of Mathematics, Uppsala University, Box 480, SE-751 06, Uppsala, Sweden;
University of Science, Ho Chi Minh City, Vietnam;
Vietnam National University, Ho Chi Minh City, Vietnam}
\email{\url{ndthi@hcmus.edu.vn}}
\subjclass[2020]{35Q55, 47J10, 81V27, 81V70}
\keywords{Liouville equation, Wronskian geometry, magnetic Ladyzhenskaya--Gagliardo--Nirenberg inequality, Chern--Simons theory, anyon gas, stability of matter}
\begin{document}

\begin{abstract}
This work considers two related families of nonlinear and nonlocal problems in the plane $\R^2$. 
The first main result derives the general integrable solution to a generalized Liouville equation using the Wronskian of two coprime complex polynomials. 
The second main result concerns an application to a generalized Ladyzhenskaya--Gagliardo--Nirenberg interpolation inequality, with a single real parameter $\beta$ interpreted as the strength of a magnetic self-interaction. The optimal constant of the inequality and the corresponding minimizers of the quotient are studied and it is proved that for $\beta \ge 2$, for which the constant equals $2\pi\beta$, such minimizers only exist at quantized $\beta \in 2\N$ corresponding to nonlinear generalizations of Landau levels with densities solving the generalized Liouville equation. This latter problem originates from the study of self-dual vortex solitons in the abelian Chern--Simons--Higgs theory and from the average-field-Pauli effective theory of anyons, i.e.\ quantum particles with statistics intermediate to bosons and fermions.
An immediate application is given to Keller--Lieb--Thirring stability bounds for a gas of such anyons which self-interact magnetically (vector nonlocal repulsion) as well as electrostatically (scalar local/point attraction), thus generalizing the stability theory of the 2D cubic nonlinear Schr\"odinger equation.
\end{abstract}

\maketitle
\setcounter{tocdepth}{2}
\tableofcontents

\section{Introduction and main results}\label{sec:intro}

This work concerns two nonlinear and nonlocal problems in the plane $\R^2$, and our main contributions will be summarized in \Cref{thm:liouville,thm:magneticstability} below.
The first one concerns the Liouville problem in the domain of complex geometry and nonlinear PDEs, 
while the second problem concerns functional analysis of interpolation inequalities and the calculus of variations, also framed in a geometric setting, involving a supersymmetric index theorem for the curvature of a vector bundle and the topology of its vortex solitons. 
What unifies these two problems and brings about their mutual solution is an application in mathematical physics, and more precisely a stability problem in quantum mechanics, involving intermediate quantum statistics (``anyons'') and Chern--Simons self-generated magnetic fields.
Thus, our discussion concerning the latter problem is also highly relevant for the proofs of our mathematical main results.

We proceed to give the necessary definitions and state our main results, their background motivation, key ideas in our proofs, and their most important implications.

\subsection{The Liouville problem}\label{sec:intro-liouville}

Liouville's equation has played a key role in understanding both geometry and physics. The initial motivation was to study isothermal coordinates on two-dimensional manifolds (see \cite{Liouville-53} and, e.g., \cite{Berger-71,ChaYan-87,KazWar-74a,KazWar-74b}), and, later on, the problem was generalized to study higher-dimensional manifolds whose metric is conformally equivalent to a constant scalar curvature metric, which is known as the Yamabe problem; see  \cite{BreMar-11,LeePar-87,Yamabe-60}. This problem has also been considered for metrics which may vanish at finite points, known as conic singularities, and the equation then changes by adding Dirac masses at the singular points; see \cite{E-21,P-898,P-905,T-91}. The equation has emerged from physics in several instances, 
and the most relevant for us are reviewed in
\cite{Dunne-95,Khare-05,RebSol-84,Tarantello-08}
and concern
Chern--Simons--Higgs theory \cite{Hagen-91,HorYer-98,JacPi-90a,JacPi-90b}, which we return to below.
We mention also some developments on the analysis side of this equation concerning regularity, 
classification of solutions, and blow-up of solutions; 
see \cite{BriLei-03,BriHouLei-05,BreMer-91,BT-02,CheLi-91,CheLi-93,ChoWan-94,ChoWan-95,LiSha-94,PraTar-01,Wu-24}. 

We consider the following generalized version of Liouville's equation.
Let $\varrho \in L^1(\R^2;\R_+)$ be a non-negative density function that satisfies 
\begin{align}\label{eq:Liouville-rho}
	-\Delta \log(\varrho) =2 K \varrho - \sum_{j=1}^n 4\pi  n_j \delta_{z_j},
\end{align}
weakly in $\R^2,$ where $K>0, z_j \in \C, n_j \in \N$, $n \in \N \cup \{0\}$, are constants, 
and $\delta_{z_j}$ is the unit Dirac mass at $z_j$. 
Note that we used the natural identification $\R^2 \cong \C$ by $z = z(x,y) := x+{\rm i}y$, where the complex plane has the additional multiplicative structure and choice of real axis. 
Then, by setting $f(z) := \prod_{j=1}^{n_j} (z-z_j)^{n_j}$ for every $z \in \C$, $\psi :=\log\left( \frac{2K \varrho}{|f|^2}\right)$, 
and using that $\Delta\log|z| = 2\pi\delta_0(z)$,
we obtain the equivalent equation
\begin{equation}\label{eq:Liouville-psi}
	-\Delta \psi = |f|^2 e^{\psi},
\end{equation}
weakly in $\R^2$,
where $|f|^2 e^{\psi} \in L^1(\R^2)$. 
As our first result, we prove the explicit form of all the solutions to this equation, the quantization of the values of $\int_{\R^2} |f|^2 e^{\psi}$, and the $\sU(2)$-symmetry of the solutions, where $\sU(2)$ denotes the unitary group of degree $2$.

\begin{maintheorem}[Generalized Liouville equation]
\label{thm:liouville}
Let $f\colon \C \to \C$ be a nonzero polynomial.
All the weak solutions $\psi \in L^1_\loc(\R^2;\R)$ 
of the equation \eqref{eq:Liouville-psi}, 
such that $\int_{\R^2} |f|^2 e^{\psi} < \infty$, are of the form
\begin{equation}\label{eq:Liouville-solution}
	\psi = \psi_{P,Q} := \log(8) - 2\log(|P|^2 + |Q|^2),
\end{equation}
where $P,Q$ are two coprime complex polynomials which satisfy $f= P' Q -P Q'$. 
Moreover, 
\begin{equation}\label{eq:Liouville-degree}
	\max(\deg(P),\deg(Q)) = \frac{\int_{\R^2}|f|^2 e^{\psi} }{8\pi},
\end{equation}
and if $(P,Q)$ and $(\tilde{P},\tilde{Q})$
are pairs of polynomials (not necessarily coprime) then
$\psi_{P,Q} = \psi_{\tilde{P},\tilde{Q}}$ if and only if
$$(\tilde{P},\tilde{Q}) = \Lambda(P,Q),$$ for some constant $\Lambda \in \sU(2)$.
\end{maintheorem}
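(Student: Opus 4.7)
The plan is to verify four claims in sequence: (a) each $\psi_{P,Q}$ of the form \eqref{eq:Liouville-solution} solves \eqref{eq:Liouville-psi}; (b) the integral identity \eqref{eq:Liouville-degree} holds; (c) $\psi_{P,Q}=\psi_{\tilde P,\tilde Q}$ is equivalent to $(\tilde P,\tilde Q)=\Lambda(P,Q)$ for a unique $\Lambda\in\sU(2)$; and (d) every weak solution of \eqref{eq:Liouville-psi} has this form. For (a), I would compute $\Delta\log(|P|^2+|Q|^2)$ via $\Delta=4\partial_z\partial_{\bar z}$ and apply Lagrange's identity to the pairs $(P,Q),(P',Q')$,
\[
(|P|^2+|Q|^2)(|P'|^2+|Q'|^2)-|P\bar{P'}+Q\bar{Q'}|^2=|P'Q-PQ'|^2=|f|^2,
\]
obtaining $\Delta\log(|P|^2+|Q|^2)=4|f|^2/(|P|^2+|Q|^2)^2$. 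Coprimality ensures that $|P|^2+|Q|^2$ is strictly positive on $\C$, so this is a classical pointwise identity, and substitution into \eqref{eq:Liouville-solution} confirms $-\Delta\psi_{P,Q}=|f|^2 e^{\psi_{P,Q}}$.

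For (b), the asymptotic $\psi_{P,Q}(z)=-4n\log|z|+O(1)$ with $n=\max(\deg P,\deg Q)$ gives $\partial_r\psi_{P,Q}(z)=-4n/|z|+o(|z|^{-1})$ at infinity, and applying the divergence theorem on $B_R$ followed by $R\to\infty$ yields $\int_{\R^2}|f|^2 e^{\psi_{P,Q}}=\lim_R\int_{\partial B_R}(-\partial_r\psi_{P,Q})\,\mathrm{d}\sigma=8\pi n$. For (c), let $A,\tilde A\in\C^{2\times(n+1)}$ be the coefficient matrices of $(P,Q)$ and $(\tilde P,\tilde Q)$ after padding to common degree $\le n$. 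Linear independence of the monomials $\{z^j\bar z^k\}_{j,k\le n}$ converts the functional identity $|P|^2+|Q|^2=|\tilde P|^2+|\tilde Q|^2$ into the matrix identity $A^*A=\tilde A^*\tilde A$. Coprimality forces $A$ to have rank two, and the standard polar-decomposition argument then produces a unique $\Lambda\in\sU(2)$ with $\tilde A=\Lambda A$, equivalent to $(\tilde P,\tilde Q)=\Lambda(P,Q)$. The converse direction is immediate: any $\Lambda\in\sU(2)$ preserves the Hermitian norm on $\C^2$, and hence $|P|^2+|Q|^2$.

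The real work is in (d). Factoring $f=c\prod_j(z-z_j)^{m_j}$ with $m_j\in\N$, and setting $\phi:=\psi+2\log|f|$ away from $\{z_j\}$, the distributional identity $\Delta\log|f|^2=4\pi\sum_j m_j\delta_{z_j}$ transforms \eqref{eq:Liouville-psi} into the singular Liouville equation
\[
-\Delta\phi=e^\phi-4\pi\sum_j m_j\delta_{z_j}\ \text{ in }\R^2,\qquad\int_{\R^2}e^\phi<\infty.
\]
Away from $\{z_j\}$, Liouville's classical representation produces a locally meromorphic developing map $G$ with $\phi=\log\frac{8|G'|^2}{(1+|G|^2)^2}$, determined on each simply connected open set up to post-composition by an element of $\mathrm{PSU}(2)$. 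The crucial observation is that the integer quantization $m_j\in\N$ forces the local behavior $G(z)\sim(z-z_j)^{\pm(m_j+1)}$ near $z_j$, corresponding to trivial $\mathrm{PSU}(2)$-monodromy, so that $G$ extends to a single-valued meromorphic function on all of $\C$. The finite-mass bound $\int e^\phi<\infty$ then controls the growth of $G$ near infinity and forces $G$ to be of finite order, hence rational: $G=Q/P$ in lowest terms. Comparing $e^\phi=8|G'|^2/(1+|G|^2)^2=8|P'Q-PQ'|^2/(|P|^2+|Q|^2)^2$ with $e^\phi=|f|^2 e^\psi$ yields $|P'Q-PQ'|=|f|$; as both sides are moduli of holomorphic functions of $z$, this gives $P'Q-PQ'=\omega f$ for some unimodular $\omega$, which is absorbed by a common unit rescaling of $(P,Q)$ (a diagonal element of $\sU(2)$).

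The crux is the global extension step in (d), which rests on the integer quantization of the source strengths (built into the polynomial form of $f$) and on the finite-mass classification of singular Liouville solutions in the spirit of Chen--Li and Prajapat--Tarantello. The remaining pieces are essentially elementary: (a) is Lagrange's identity, (b) is the divergence theorem on an expanding ball, and (c) is polar decomposition applied to a rank-two Gram identity.
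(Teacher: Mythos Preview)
Parts (a) and (b) match the paper's arguments (its Lemma~2.3 for the Laplacian identity via $\partial_z\partial_{\bar z}$, and Gauss--Green on expanding balls for the integral). For (c) you take a genuinely different and more elementary route than the paper: expanding $|P|^2+|Q|^2=\sum_{j,k}(a_j\bar a_k+b_j\bar b_k)z^j\bar z^k$ and using linear independence of $\{z^j\bar z^k\}$ converts the pointwise identity into $A^*A=\tilde A^*\tilde A$ for the $2\times(n+1)$ coefficient matrices, whence SVD produces $\tilde A=\Lambda A$ with $\Lambda\in\sU(2)$. The paper instead shows (its Proposition~2.7 and Lemmas~2.8--2.9) that both pairs solve the same linear ODE $fy''-f'y'+Ry=0$ with $R=W(P',Q')$, and extracts $\Lambda$ from the two-dimensional solution space, with further work to verify unitarity. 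Your Gram-matrix argument is shorter and self-contained; the paper's ODE viewpoint, on the other hand, is what also underlies its inverse-Wronskian algorithm. One small correction: rank two of $A$ follows from linear \emph{independence} of $P,Q$ (equivalently $W(P,Q)=f\neq 0$), not from coprimality per se.

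Your outline of (d) has the right architecture --- pass to $\phi$, build a local developing map, globalize via trivial $\mathrm{PSU}(2)$-monodromy at the integer-weight conical points, then rule out an essential singularity at $\infty$ --- but two steps are genuine gaps. First, Liouville's developing-map representation requires $\phi\in C^2$ away from $\{z_j\}$, while the hypothesis is only $\psi\in L^1_{\loc}$ with $|f|^2e^\psi\in L^1$; the paper's Lemma~2.2 supplies this bootstrap via a Trudinger--Moser argument, and ordinary elliptic regularity does not apply directly since the right-hand side $e^\psi$ is a priori only in $L^1_{\loc}$. Second, ``finite order, hence rational'' is false for meromorphic functions on $\C$ (e.g.\ $\tan z$ has order $1$); what is actually needed, and what the paper proves, is that $G$ has no essential singularity at $\infty$, obtained by combining the Chen--Li asymptotic $\psi(\bx)\sim-\beta\log|\bx|$ with $\beta>2\deg f+2$ and an explicit spherical-derivative estimate $|G'|/(1+|G|^2)\lesssim|\bx|^{-1-\alpha}$. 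Finally, a minor slip: comparing your two expressions for $e^\phi$ yields $|W(P,Q)|/|f|=(|P|^2+|Q|^2)e^{\psi/2}/\sqrt 8$, not $|W|=|f|$ directly; one then argues that $W/f$ is a rational function with smooth nonvanishing modulus, hence a nonzero constant, and rescales $(P,Q)$ to absorb it.
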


Our proof of this theorem will be given in \Cref{sec:liouville}.
Further, as an immediate application and preparation for the next part, we describe the explicit general form of solutions for polynomials $f$ with a single root and polynomials $f$ of degree at most two. 
This is summarized in \Cref{cor:radiallysymmetric,cor:degreetwopoly}, respectively.

We now mention the previous results for this equation. The case that $f$ is a non-zero constant was first studied in \cite{Liouville-53}. The authors in \cite{CheLi-91} also proved that $\int_{\R^2} e^{\psi} = \frac{8 \pi}{|f|^2}$. The case of polynomials $f$ with a single root was studied in \cite{ChaKie-94,PraTar-01}. In the case of multiple roots, the blow-up problem has been studied to find the behavior of the solutions as they converge weakly; see \cite{BreMer-91,LiSha-94,Wu-24}. However, to the best of our knowledge, the general form of solutions for either \eqref{eq:Liouville-rho} or \eqref{eq:Liouville-psi} is missing in the mathematical literature. On the other hand, working in the context of our specific application below concerning self-dual Chern--Simons solitons, physicists have already predicted the general form of \eqref{eq:Liouville-solution} but without a rigorous proof; see \cite{JacPi-90a,JacPi-90b,Hagen-91,Jackiw-91,HorYer-98}, and the reviews \cite[p.~33]{Dunne-95}, \cite[Ch.~7.4]{Khare-05}, \cite{HorZha-09}.
In particular, they claimed that independent of $n_j$ being integer or not there always exists a global meromorphic function $g: \C \to \C$ such that 
\begin{align}\label{eq:Liouvillerepresentation}
	\varrho = \frac{4}{K} \frac{|g'|^2}{(1+ |g|^2)^2}.
\end{align}
This is known as Liouville's representation, indeed valid as a solution to the regular Liouville equation on any simply connected domain, and is unique up to M\"obius transformations; see, e.g., \cite[Remark~3]{BriLei-03}.
However, if $n_j$ is not an integer, then there exists no representation of such form; see \cite{BriHouLei-05,PraTar-01}. In fact, in that case there exists a possibility of having a multi-valued representation, such as $g(z) = z^{\alpha}$, where $\alpha \notin \Z$. Indeed, with this choice of $g$ in \eqref{eq:Liouvillerepresentation}, we derive
\begin{align*}
	-\Delta \log(\varrho) = 2 K \varrho - 4 \pi (\alpha -1) \delta_{0} 
\end{align*}
in $\R^2,$ where $\delta_0$ is the unit Dirac mass centered at $0.$ Hence, $\varrho$ satisfies Liouville's equation outside the origin but does not have Liouville's representation for a meromorphic function. 

In our proof of \Cref{thm:liouville} we fix this issue by the following steps. First, we derive the regularity $\psi \in C^{\infty}(\R^2)$ in \eqref{eq:Liouville-psi}, which was, to our knowledge, only previously shown with the extra assumption of $\psi \in W^{1,2}_{\loc}(\R^2)$ (see \cite{KazWar-74a}), or $e^{\psi} \in L^1_{\loc}(\R^2)$ (see \cite{BreMer-91,CheLi-93}). Second, we use a local (and possibly multi-valued) representation from \cite{ChoWan-94} which extends \eqref{eq:Liouvillerepresentation} to punctured disks, together with integrability $|f|^2 e^{\psi} \in L^1(\R^2)$, to show that there exists a local Liouville's representation \eqref{eq:Liouvillerepresentation} in every disk $D$ (possibly including a single root of $f$), for a meromorphic function $g$ which depends on $D$. 
All such local representations may be patched together
by using the M\"obius symmetry of different Liouville representations, to obtain a global Liouville representation for \eqref{eq:Liouvillerepresentation}. 
We then use integrability again to conclude that the function $g$ in \eqref{eq:Liouvillerepresentation} cannot have an essential singularity at infinity. 
This immediately implies that $g$ is a rational polynomial, and by comparing with \eqref{eq:Liouville-psi}, we deduce the general form \eqref{eq:Liouville-solution}. Once it is obtained, we may straightforwardly derive the quantization of $\int_{\R^2} |f|^2 e^{\psi}$ in \eqref{eq:Liouville-degree} by the Gauss--Green theorem. Finally, the $\sU(2)$ symmetry of solutions is derived by studying the ODE for polynomials $y,R$
\begin{align*}
	f y'' - f' y' + R y =0
\end{align*}
that gives an equivalent form of the Wronskian $P' Q - P Q' = f$. This method changes a nonlinear problem of amplitude $|P|^2 +|Q|^2$ into a linear problem of an ODE, which derives the $\sU(2)$ symmetry. Up to the knowledge of the authors, this symmetry and its connection to solutions of Liouville's equation has not been studied previously in the literature. 

We remark that to find all the solutions to \eqref{eq:Liouville-psi} with given $f$, by \Cref{thm:liouville}, it is enough to find all the pairs of coprime complex polynomials $P,Q$ such that $P' Q - P Q' = f$, which is known as \keyword{the inverse Wronskian problem} (see, e.g., \cite{E-21,Purbhoo-23,Scherbak-02,Sottile-10}). We determine an algorithm which solves the inverse Wronskian problem in finite steps; see \Cref{rem:algorithmWronsk}.

\subsection{The magnetic interpolation problem}\label{sec:intro-magnetic}

We now move on to a geometric setting in which the generalized Liouville problem \eqref{eq:Liouville-rho}-\eqref{eq:Liouville-psi} emerges naturally, involving solitonic self-generated magnetic fields.
In order to facilitate our presentation,
we formulate our results from a purely mathematical perspective concerning functional inequalities, and defer our more technical discussion (including an outline of our proofs)
to the context of our specific application in mathematical physics, in \Cref{sec:intro-stability}.

Our starting reference point for the magnetic problem is a well-known functional inequality, namely the following Ladyzhenskaya--Gagliardo--Nirenberg (LGN) interpolation inequality on $\R^2$ 
\cite{Ladyzhenskaya-58,Gagliardo-59,Nirenberg-59}
for the embedding $H^1 \hookrightarrow L^4$:
\begin{equation}\label{eq:LGN}
    \int_{\R^2}|\nabla u|^2 \int_{\R^2}|u|^2 
	\ge \CLGN \int_{\R^2}|u|^4,
    \quad \forall u \in H^1 = H^1(\R^2;\C).
\end{equation}
Equality is realized with the optimal constant 
$\CLGN:= \|\tau\|_{L^2}^2/2 \approx 0.931 \times 2\pi$ \cite{Weinstein-83} and the optimizer $\tau \in H^1 \cap C^\infty(\R^2;\R^+)$ is known as ``Townes soliton'' 
\cite{CiaGarTow-64,CiaGarTow-65,Fibich-15},
to which we shall return below.

The following geometric generalization of \eqref{eq:LGN} has been considered by Dolbeault et al.\ in \cite{DolEstLapLos-18}, motivated by Keller--Lieb--Thirring inequalities on manifolds.
One may promote the dense domain $C_c^\infty \subseteq H^1$ of smooth and compactly supported functions $u\colon \R^2 \to \C$ defined on
the trivial flat bundle $\R^2 \times \C$ to smooth and compactly supported global sections of a curved complex line bundle on $\R^2$.
Thus, the gradient $\nabla = (\partial_1,\partial_2)$ is replaced by a covariant derivative
$\nabla_{\bA} = \nabla + {\rm i}\bA$, where the vector potential
$\bA=(A_1,A_2) \colon \R^2 \to \R^2$ defines a smooth local
$\sU(1)$ connection 
and an associated curvature (pseudo)scalar field
$B \colon \R^2 \to \R$,
\begin{equation}\label{eq:mag-field}
    B = \curl \bA := \partial_1 A_2 - \partial_2 A_1.
\end{equation}
The curvature field $B$ is known in physics applications as an external \keyword{magnetic field}, 
and the corresponding magnetic interpolation inequality
\begin{equation}\label{eq:LGN-magnetic}
    \int_{\R^2}|\nabla_{\bA} u|^2 \int_{\R^2}|u|^2 
	\ge C\int_{\R^2}|u|^4,
    \quad \forall u \in C^\infty_c(\R^2;\C),
\end{equation}
was studied in \cite{DolEstLapLos-18}, with some weaker regularity assumptions on $\bA$ and $u$, and for fixed $B = \curl \bA$.
For our purposes, the most important observations concerning this bound are the following:

\begin{itemize}
\item
	Due to covariance w.r.t.\ \keyword{gauge transformations}, i.e.
	$$
	    u \mapsto \tilde{u} = e^{-{\rm i}\chi}u, \quad
	    \bA \mapsto \tilde{\bA} = \bA + \nabla\chi, \quad
	    B \mapsto \tilde{B} = B,
	$$
	for $\chi \in C_c^\infty(\R^2;\R)$,
	the constant in \eqref{eq:LGN-magnetic} depends on the geometry, i.e. $C=C(B)$, 
	and not on a particular equivalent gauge choice $\bA$.
\item
	Due to the \keyword{diamagnetic inequality} (see, e.g., \cite[Theorem~7.21]{LieLos-01})
	\begin{equation}\label{eq:diamag}
	    \int_{\R^2} |\nabla_{\bA}u|^2 \ge \int_{\R^2} \left|\nabla|u|\right|^2,
	\end{equation}
	the corresponding optimal constant can only increase:
	$C(B) \ge C(0) = \CLGN$.
\item
	Due to a supersymmetric identity (elaborated in \Cref{rmk:Pauli}), 
	the magnetic gradient integral also satisfies a \keyword{Bogomolnyi bound} (cf. \cite{Bogomolny-76,HloSpe-93}):
	\begin{equation}\label{eq:bogobound}
	    \int_{\R^2} |\nabla_{\bA}u|^2 \ge \pm \int_{\R^2} B|u|^2.
	\end{equation}
\end{itemize}

We now substitute the external magnetic field \eqref{eq:mag-field} for a ``self-generated'' field, in the following precise sense (cf.\ \cite{LunRou-15}):
\begin{definition}[Self-generated magnetic field and energy functional]\label{def:self-field}
Let $\varrho \in L^1(\R^2;\R_+)$ be a given density function, $\beta \in \R$, and
$\bA[\varrho] \colon \R^2 \to \R^2$ a magnetic vector potential which generates a magnetic field proportional to $\varrho$:
\begin{equation} \label{eq:mag-pot}
	\bA[\varrho](\bx) := (\nabla^\perp \log |\slot|) * \varrho \,(\bx)
    = \int_{\R^2} \frac{(\bx-\by)^\perp}{|\bx-\by|^2} \varrho(\by) \,\dd \by
\end{equation}
(defined in the principal value sense,
and we use the notation $\bx^\perp = (x_1,x_2)^\perp = (-x_2,x_1) \in \R^2$), 
such that the curvature $B[\varrho]$ associated to $\beta\bA[\varrho]$ is
\begin{equation}\label{eq:self-field}
	B[\varrho](\bx) :=
	\curl \beta\bA[\varrho](\bx) 
	= \beta \left(\Delta \log |\slot|\right) * \varrho (\bx) 
	= 2\pi\beta\varrho(\bx).
\end{equation}
Then, substituting $\varrho = |u|^2$ in \eqref{eq:mag-pot}-\eqref{eq:self-field}, we define the \keyword{magnetic self-energy}
\begin{equation}\label{eq:AF-func}
	\cE_{\beta}[u] 
	:= \int_{\R^2} \left|\left(\nabla + {\rm i}\beta\bA\left[|u|^2\right]\right) u\right|^2,
	\quad \beta \in \R, \ u \in H^1.
\end{equation}
\end{definition}

Thus, $\beta \in \R$ is the overall strength of the magnetic field,
with its sign defining the orientation of the field in the plane.
Note that rescaling 
$|u|^2$ by a positive number is equivalent to rescaling $\beta$ by the same number, and
henceforth in the sequel we will normalize the mass of the (probability) density $\varrho = |u|^2$ to unity
and interpret $\beta$ as the (possibly fractional) number of magnetic flux units\footnote{A magnetic flux unit is $2\pi$, an elementary phase circulation.} per unit mass.
Further, the gauge in \eqref{eq:mag-pot} is chosen such that $\div\bA[\varrho]=0$ (in accordance with convenience, conventions and applications; known as the Coulomb gauge), and the domain $H^1 = H^1(\R^2;\C)$ makes the above functionals well defined (see \cite[Appendix]{LunRou-15}; we comment further on this choice below).

Analogously to \eqref{eq:LGN-magnetic}, 
we study the optimal constant $C = C(\beta)$ 
in the LGN-type magnetic interpolation inequality
\begin{align}
\label{eq:LGN-selfmagnetic}
\int_{\R^2} \left|\left(\nabla + {\rm i}\beta\left(\int_{\R^2} |u|^2\right)^{-1}\bA\left[|u|^2\right]\right) u\right|^2 \int_{\R^2} |u|^2 \geq C\int_{\R^2} |u|^4, \quad \forall u \in H^1,
\end{align}
or equivalently,
after imposing the normalization $\int_{\R^2} |u|^2 = 1$,
\begin{equation}\label{eq:LGN-selfmagnetic-norm}
	\cE_{\beta}[u] \geq C \int_{\R^2}|u|^4, \quad u \in H^1.
\end{equation}

\begin{definition}[Self-magnetic minimization problem]\label{def:gamma}
For any $\beta \in \R$, we define
\begin{align}\label{eq:defgamma}
	\gamma_*(\beta) := \inf \left\{ \frac{ \cE_{\beta}[u]}{\int_{\R^2} |u|^4}
	: u \in H^1, \int_{\R^2} |u|^2 = 1 \right\}.
\end{align}
\end{definition}
Obviously, $\gamma_*(0) = \CLGN$, and further 
$\gamma_*(-\beta)=\gamma_*(\beta)$,
by complex conjugation symmetry $u \mapsto \bar{u}$ (orientation flip). Therefore, we may restrict to $\beta \ge 0$, without loss of generality.

\medskip

Our second main result concerns the behavior of the optimal embedding constant \eqref{eq:defgamma} and the existence, uniqueness, and explicit form of the corresponding minimizers. Moreover, it determines the symmetry group of their representation, which is $\R^+ \times \sSU(2)$, i.e., positive multiples of $2\times 2$ special unitary matrices. 

\begin{maintheorem}[Magnetic stability] \label{thm:magneticstability}
The following holds:
\begin{enumerate}[label=\text{(\roman*)}]
\item\label{itm:mstab-gamma}
	We have that $\beta \mapsto \gamma_*(\beta)$ is a Lipschitz function and satisfies
	\begin{equation}\label{eq:mag-bermuda}
        \gamma_*(\beta) > \max\{\CLGN, 2\pi\beta\}
        \quad \text{for every $0<\beta <2$,}
	\end{equation}
    and 
	\begin{equation}\label{eq:mag-susy}
        \gamma_*(\beta) = 2\pi\beta 
        \quad \text{for every $\beta \geq 2$.}
	\end{equation}
\item\label{itm:mstab-mini}
	Any minimizer of \eqref{eq:defgamma}, if it exists, is smooth. For small enough $0 < \beta < 2$, there exists a minimizer. For $\beta \geq 2$,
    minimizers exist if and only if $\beta \in 2 \N$, 
	and are of the form 
	\begin{equation}\label{eq:mag-solution}
		u = u_{P,Q} := \sqrt{\frac{2 }{\pi \beta}} \, \frac{\overline{P' Q - P Q'}}{|P|^2 + |Q|^2} ,
	\end{equation}
	where $P,Q$ are two coprime and linearly independent complex polynomials satisfying 
	$$
		\max (\deg(P),\deg(Q)) = \frac{\beta}{2}.
	$$
\item\label{itm:mstab-symm}
	Finally, $u_{P,Q} = u_{\tilde{P},\tilde{Q}}$ for two such pairs of polynomials $(P,Q),(\tilde{P},\tilde{Q})$ if and only if
	$(P,Q)=\Lambda(\tilde{P},\tilde{Q})$ for some constant $\Lambda \in \R^+ \times \sSU(2)$.
\end{enumerate}
\end{maintheorem}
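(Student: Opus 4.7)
The proof rests on three ingredients: (a) the Bogomolnyi--Pauli identity, which applied to the self-generated field (cf.~\eqref{eq:bogobound} with $B=B[|u|^2]=2\pi\beta|u|^2$) gives $\cE_\beta[u]\ge \int_{\R^2} B[|u|^2]\,|u|^2 = 2\pi\beta\int_{\R^2}|u|^4$ for $\beta\ge 0$, with equality iff $u$ satisfies a first-order twistor (Cauchy--Riemann type) equation; (b) the diamagnetic inequality \eqref{eq:diamag} combined with the scalar LGN \eqref{eq:LGN}, which gives the alternative bound $\cE_\beta[u]\ge \CLGN\int|u|^4$; and (c) the classification of Theorem~\ref{thm:liouville}. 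Together, (a) and (b) yield the lower bound $\gamma_*(\beta)\ge \max\{\CLGN,\,2\pi\beta\}$. For the rigidity in (a), a saturating $u$ takes the form $u=e^{-\phi}\bar h$ with $h$ entire and $\Delta\phi=2\pi\beta|u|^2$, so $\varrho=|u|^2$ solves \eqref{eq:Liouville-rho} with $K=2\pi\beta$ and vortex sources at the zeros of $h$; Theorem~\ref{thm:liouville} then pins $u$ down to the form \eqref{eq:mag-solution}, and the degree identity \eqref{eq:Liouville-degree} forces $\beta/2=\max(\deg P,\deg Q)\in\N$, in particular $\beta\in 2\N$.

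To realize $\gamma_*(\beta)=2\pi\beta$ on $[2,\infty)$, I would distinguish two cases. For $\beta=2k\in 2\N$, the Bogomolnyi-saturating trial $u_{P,Q}$ with $\max(\deg P,\deg Q)=k$ works: \eqref{eq:Liouville-degree} gives $\int|u_{P,Q}|^2=1$, saturation gives $\cE_{2k}[u_{P,Q}]=4\pi k\int|u_{P,Q}|^4$, hence $\gamma_*(2k)\le 4\pi k$, so equality holds. For $\beta>2$ not an even integer, the upper bound $\gamma_*(\beta)\le 2\pi\beta$ comes from a bubbling construction: write $u=u_1+u_2$ where $u_1=\sqrt{2/\beta}\,u_{P,Q}$ is a rescaled $k=1$ Bogomolnyi piece of mass $2/\beta<1$ and ratio exactly $2\pi\beta$ (via the mass--$\beta$ scaling identity $\cE_\beta[\sqrt N v]/\int|\sqrt N v|^4=\cE_{\beta N}[v]/(N\int|v|^4)$), while $u_2$ carries the residual mass $1-2/\beta$ placed far from $u_1$ and spread on a scale $L\to\infty$; in this limit, $u_2$'s $L^4$ mass and energy vanish, together with cross terms from the nonlocal $\bA$, so the combined ratio tends to $2\pi\beta$.

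The remaining items follow. For $\beta\ge 2$ with $\beta\notin 2\N$, any minimizer would realize Bogomolnyi equality and force $\beta\in 2\N$ via the rigidity step, so none exists. For small $\beta>0$, the strict gap $\gamma_*(\beta)\ge \CLGN>2\pi\beta$ allows a concentration--compactness argument (centered on the Townes soliton $\tau$, the minimizer of $\gamma_*(0)$) to produce a minimizer: dichotomy is excluded by strict binding, vanishing by a uniform lower bound on $\int|u_n|^4$ along minimizing sequences. Smoothness of any minimizer follows from elliptic regularity applied to the nonlinear Euler--Lagrange system, while Lipschitz continuity of $\gamma_*$ follows from the quadratic-in-$\beta$ expansion of $\cE_\beta[u]$ together with a priori bounds on the magnetic current and the anyonic moment $\int|\bA[|u|^2]|^2|u|^2$. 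Finally, the $\R^+\times\sSU(2)$ symmetry of \eqref{eq:mag-solution} comes from the fact that $(P,Q)\mapsto\lambda(P,Q)$ with $\lambda>0$ leaves $u_{P,Q}$ unchanged (numerator and denominator both scale by $|\lambda|^2$), while under $\Lambda\in\sU(2)$ one has $P'Q-PQ'\mapsto\det\Lambda\cdot(P'Q-PQ')$ and $|P|^2+|Q|^2$ is preserved, so $u_{P,Q}\mapsto\overline{\det\Lambda}\,u_{P,Q}$, with equality iff $\det\Lambda=1$, i.e.~$\Lambda\in\sSU(2)$.

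The main obstacle is the strict inequality $\gamma_*(\beta)>2\pi\beta$ on the range $\CLGN/(2\pi)<\beta<2$, where only the Bogomolnyi bound is informative and the bubbling construction above breaks down (the ``mass defect'' $1-2/\beta$ is negative). Here one must rule out minimizing sequences asymptotically saturating Bogomolnyi: any such sequence would, by a concentration--compactness analysis adapted to the nonlocal self-generated $\bA[|u|^2]$, produce a nonzero Bogomolnyi-saturating limit, which by Theorem~\ref{thm:liouville} forces $\beta\in 2\N$, a contradiction.
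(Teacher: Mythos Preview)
Your overall architecture matches the paper's: Bogomolnyi plus diamagnetic for the lower bounds, the factorization $u=e^{-\beta\Phi}\bar f$ with $f$ entire for the rigidity, Theorem~\ref{thm:liouville} for the classification and quantization, and your bubbling construction is exactly the paper's monotonicity lemma (Lemma~\ref{lem:keylemma}) specialized to $\beta'=2$. Three genuine gaps remain.

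First, you never obtain the \emph{strict} inequality $\gamma_*(\beta)>\CLGN$ for $\beta>0$; diamagnetic plus LGN only gives $\ge$. The paper gets strictness from an additional lower bound (Proposition~\ref{cor:basicbounds}): writing $|\nabla_{\beta\bA}u|^2=|\nabla|u||^2+|u|^2\bigl|\bJ[u]/|u|^2+\beta\bA\bigr|^2$ and bounding the second term below via $\curl(\bJ/|u|^2+\beta\bA)=2\pi\beta|u|^2$ yields $\gamma_*(\beta)\ge \CLGN+\pi^2\beta^2/\gamma_*(\beta)$.

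Second, and most seriously, your resolution of the ``main obstacle'' does not close. You assert that a sequence asymptotically saturating Bogomolnyi at $\beta\in(0,2)$ would ``produce a nonzero Bogomolnyi-saturating limit'', but in the dichotomy alternative of concentration--compactness no such limit need exist: the extracted piece of mass $\theta\in(0,1)$ is only another minimizing sequence for $\gamma_*(\theta\beta)$, which may itself split. The paper's mechanism is different and more indirect: first prove that $\beta\mapsto\gamma_*(\beta)/\beta$ is monotone decreasing (your bubbling, but stated for all $\beta'<\beta$); then show (Lemma~\ref{lem:minimizertrick}) that absence of a minimizer forces the \emph{exact} equality $\gamma_*(\beta)/\beta=\gamma_*(\beta')/\beta'$ for some $\beta'<\beta$; finally, if $\gamma_*(\beta)=2\pi\beta$ held at some $\beta<2$, Lipschitz continuity lets one take the \emph{smallest} such $\beta_*$, at which no minimizer exists (since $\beta_*\notin 2\N$), and the dichotomy step then pushes equality strictly below $\beta_*$, a contradiction. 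The monotonicity lemma, not a direct limit extraction, is the missing key.

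Third, for part~\ref{itm:mstab-symm} you only verify that $\R^+\times\sSU(2)$ fixes $u_{P,Q}$; the converse (if $u_{P,Q}=u_{\tilde P,\tilde Q}$ then the pairs are related by such a $\Lambda$) requires the $\sU(2)$-uniqueness statement in Theorem~\ref{thm:liouville}, which you do not invoke.
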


Our proof will be given in \Cref{sec:magnetic}, and we also discuss a few symmetry-reduced cases in \Cref{sec:symmetric}.
As mentioned, it will be more natural to postpone 
our discussion of the proof and how it relates to any previous results
until \Cref{thm:magneticstability} is reframed in the context of the application below.
However, we can here make a few immediate general remarks.

\begin{remark}[Supersymmetry]\label{rmk:susy}
The nonlinearity, nonlocality, and inhomogeneity in \eqref{eq:LGN-selfmagnetic} add several layers of complexity in comparison to the external field problem \eqref{eq:LGN-magnetic}.
The lower bounds $\gamma_*(\beta) \ge \max\left\{\CLGN, 2\pi\beta\right\}$ were known previously (cf.\ \cite{LunRou-15,CorLunRou-17}) as they indeed follow immediately from the diamagnetic inequality \eqref{eq:diamag} resp.\ Bogomolnyi's bound \eqref{eq:bogobound}, 
which in the self-generated field reads
\begin{equation}\label{eq:bogobound-self}
    \cE_{\beta}[u]
	\ge \int_{\R^2} B[|u|^2]|u|^2 = 2\pi\beta \int_{\R^2} |u|^4, 
        \quad \beta \ge 0, \ u \in H^1.
\end{equation}
The minimizers \eqref{eq:mag-solution} at 
$\beta \in 2\N$  
exactly saturate this bound (then yielding a Pohozaev identity; cf.\ \cite{Pohozaev-65}, \cite[Ch.~6.2]{Fibich-15}) and thus realize supersymmetry in a sense to be clarified below (and crucial to our proof), whereas we find that such supersymmetry is broken for $0 \le \beta < 2$ due to the strict inequality in \eqref{eq:mag-bermuda}.
To the best of our knowledge, the results \labelcref{eq:mag-bermuda,eq:mag-susy} for $\beta \notin 2\Z$ are new and follow from the monotonicity of 
$\gamma_*(\beta)/\beta$
and a concentration-compactness argument.
\end{remark}

\begin{remark}[Vorticity and the degree of zeros of the minimizers]
\label{rmk:terms} 
The magnetic self-energy \eqref{eq:AF-func} expands to
\begin{equation}\label{eq:AF-crossterms}
    \cE_{\beta}[u] = \int_{\R^2} |\nabla u|^2 
        + 2\beta \int_{\R^2} \bA\left[|u|^2\right] \cdot \bJ[u]
        + \beta^2 \int_{\R^2} \left|\bA\left[|u|^2\right]\right|^2 |u|^2,
\end{equation}
where we have defined the \keyword{current} of $u$ by
\begin{equation}\label{eq:current}
	\bJ[u] := \frac{{\rm i}}{2}(u\nabla \bar{u} - \bar{u}\nabla u),
\end{equation}
and which vanishes identically in the case that $u$ is real-valued or has a constant phase.
The last integral in \eqref{eq:AF-crossterms} depends only on the density of $u$ and may also be written
\begin{equation}\label{eq:MM-curvature}
    \int_{\R^2} \left|\bA[\varrho]\right|^2 \varrho
    = \frac{1}{6} \int_{\R^6} \frac{\varrho(\bx)\varrho(\by)\varrho(\bx)}{\cR(\bx,\by,\bz)^2}\,  \dd\bx \dd\by \dd\bz,
\end{equation}
which is known as the \keyword{Menger--Melnikov curvature} of the probability measure with density $\varrho = |u|^2$,
and where $\cR(\bx,\by,\bz)$ denotes the radius of the circle defined by the
three points $\bx,\by,\bz \in \R^2$;
see \cite[Lem.~3.2]{HofLapTid-08} and \cite{LunRou-15}.
Thus, \eqref{eq:AF-crossterms} for real $u$ involves a competition between diffusion on the plane and localization on a line segment ($\cR \to \infty$).

We note that
there are significant 
differences between the minimizers for $\gamma_{\ast}(\beta)$ for the case that $\beta = 2$ and $\beta >2.$
Indeed, if $\beta = 2$, by setting $P=z$ and $Q=1$ in \eqref{eq:mag-solution}, we have a real minimizer
\begin{align}\label{eq:mag-solution-real}
    u_{P,Q}(z) = \sqrt{\frac{1}{\pi}} \, \frac{1}{|z|^2+ 1},
\end{align}
for $\gamma_{\ast}(2)$, and 
all the minimizers for $\gamma_{\ast}(2)$ are given by \eqref{eq:mag-solution-real} up to translation, dilation, and a constant phase.
However, for every $\beta >2$, there exist no real minimizers for $\gamma_{\ast}(\beta)$, and 
this is highly non-intuitive as we prove, for every $\beta \geq 2$, 
that there exists a sequence of real-valued functions $u_n \in C^{\infty}_c(\R^2;\R)$ which minimizes the ratio 
\eqref{eq:defgamma}
among $u \in H^1$ such that $\int_{\R^2} |u|^2 = 1 = \int_{\R^2} |u|^4$. 
See the further discussion in \Cref{rem:realminimizers,rem:realgammastar}. 

Another interesting difference is the degree of zeros (which is the same as vorticity in this case).
For every $\beta \in 2 \N$, the degree of zeros
of minimizers $u_{P,Q}$, together with their multiplicity, is between $\frac{\beta}{2}-1$ and $\beta -2$. This is an immediate implication of Theorem \ref{thm:magneticstability} and the degree of the Wronskian (see Lemma \ref{lem:degreeofwronskian}). In particular, the minimizers \eqref{eq:mag-solution-real} for $\gamma_{\ast}(2)$ have no zeros, and, for every $\beta \in 2 \N \setminus \{2\}$, the minimizers for $\gamma_{\ast}(\beta)$ have at least one zero (vortex).
We leave it as 
an open problem whether for $0 < \beta < 2$ any minimizers are necessarily non-real and whether they have any zeros.
\end{remark}

\begin{remark}[Vortex quantization]\label{rmk:quantization}
    The quantization of flux $\beta$ to even integers takes its root in complex analysis (compare the degrees of polynomials, i.e.\ entire analytic functions) via the specific form of the corresponding generalized Liouville equation, 
    as clarified below.
    A special case of \eqref{eq:mag-solution} at $\beta=2n$ is the \keyword{``vortex ring''}:    
	\begin{equation}\label{eq:vortexring}
        u_n(z) := \sqrt{\frac{n}{\pi}} \frac{b \, \overline{a(z-z_0)^{n-1}}}{|a(z-z_0)^n + c|^2 + b^2},
        \quad n \in \N,\ a \in \C \setminus \{0\},\ b>0,\ c,z_0 \in \C,
    \end{equation}
    for every $z \in \C,$ which describes a single vortex of degree $n-1$ at $z_0$ 
    (for $n=1$ we let $c=0$ and reobtain \eqref{eq:mag-solution-real}).
    For $c=0$ it is radially symmetric about $z_0$.
    We also note that, taking $c=0$ and replacing $n$ by any $\beta/2 \ge 1$ formally yields a minimizer $u_{\beta/2}$ with a regular and radially symmetric density on $\R^2$, but is not single-valued on $\R^2 \setminus \{z_0\}$ for $\beta \notin 2\Z$.
    This mirrors our above observations for the Liouville problem, and
    whether or not such multi-valued functions should be included in the analysis, i.e.\ whether to extend the domain $H^1$ of the functional \eqref{eq:AF-func}, is a question concerning potential applications and the foundations of quantum mechanics that we do not address in this work (compare Dirac's quantization condition \cite{Dirac-31}).
\end{remark}

\begin{remark}[Soliton manifold completeness]\label{rmk:JP}
    In the case that $Q(z) = a\prod_{j=1}^n (z-z_j)$, where $a \in \C \setminus \{0\}$ and $z_j \in \C$, has exactly $n=\beta/2$ simple roots,
    then 
    $$P = fQ \qquad \text{for} \qquad 
    f(z) = c_0 + \sum_{j=1}^n c_j(z-z_j)^{-1}, \quad 
    c_j \in \C,$$
    the partial fractions,
    and the minimizer \eqref{eq:mag-solution} becomes
    \begin{equation}\label{eq:JP-solutions}
        u_{P,Q} = \sqrt{\frac{2 }{\pi \beta}} \frac{\overline{f'Q^2}}{|fQ|^2 + |Q|^2}
        = \sqrt{\frac{2 }{\pi \beta}} \frac{\overline{f'}}{|f|^2 + 1}
        \frac{\overline{a}^2}{|a|^2}
        \prod_{j=1}^n \frac{(\overline{z-z_j})^2}{|z-z_j|^2}.
    \end{equation}
    Such generic solutions were found by Jackiw and Pi \cite{JacPi-90a},
    while the special case of the radially symmetric vortex rings \eqref{eq:vortexring} with $c=0$ were discussed earlier by the same authors \cite{JacPi-90b}, and later by others \cite{Dunne-99,Hagen-91} (see also \cite{HorYer-98,HorZha-09,JacPi-92}). 
    We are able to show by means of \Cref{thm:liouville} that \eqref{eq:mag-solution} exhausts all possibilities for $\beta \ge 2$ and thus prove the conjecture (claim) in \cite{JacPi-90a} that indeed the entire space of minimizers at $\beta=2n$ has real dimension $4n$, since it is identified as the $4(n+1)$-dimensional space of admissible pairs of polynomials $P,Q$ modulo the faithful orbits of a $4$-dimensional Lie group 
    (a subgroup of the M\"obius group).
\end{remark}

\begin{figure}
\scalebox{1.0}{\includegraphics{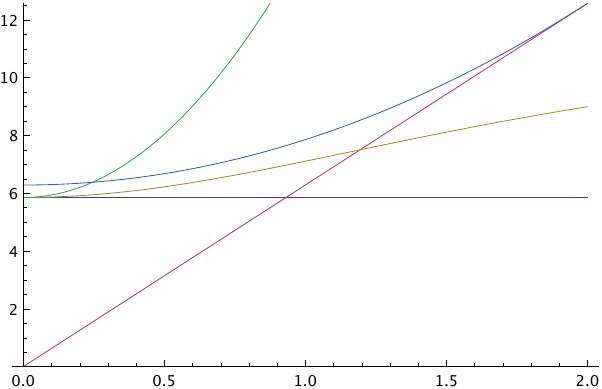}}
\caption{Our current upper and lower bounds for $\gamma_*(\beta)$; see \Cref{rmk:bermuda}. The upper bound from the left (green) is given by Townes soliton $u_0$ and the one from the right (blue) by the lowest ``vortex ring'' $u_1$. The lower bounds are $\CLGN$ (blue), $2\pi\beta$ (red) and \eqref{eq:gamma-bounds-refined} (yellow). Horiz./vert.\ axis for $\beta$ / $\gamma$.}
\label{fig:bounds}
\end{figure}

\begin{remark}[Improved bounds]\label{rmk:bermuda}
    Although we do not have a precise expression for $\gamma_*(\beta)$ in the region $0 < \beta < 2$,
    we are able to give refined lower and upper bounds, summarized in \Cref{cor:basicbounds,prop:vortexringenergy} (see \Cref{fig:bounds}):
	\begin{equation}\label{eq:gamma-bounds-refined}
		\max\left\{ \CLGN + \frac{\pi^2}{\gamma_*(\beta)}\beta^2, 2\pi\beta \right\} 
		\le \gamma_*(\beta) 
		\le \min\left\{ \CLGN\left(1+\frac{3}{2}\beta^2\right), 2\pi\beta + \frac{\pi}{2}(2-\beta)_+^2 \right\}
    \end{equation}
    for all $\beta \ge 0$. In particular, $\gamma_*(\beta)-\CLGN = O(\beta^2)$ as $\beta \to 0$.
    The upper bounds are derived from Townes soliton $u_0 := \tau/\|\tau\|_{L^2}$ and a Hardy inequality involving the circumradius \eqref{eq:MM-curvature} (the optimal constant for which is not known), respectively the unit ``vortex ring'' $u_1$ in \eqref{eq:mag-solution-real}-\eqref{eq:vortexring}.
    Further, by making an expansion of the form \eqref{eq:AF-crossterms} in \eqref{eq:LGN-selfmagnetic}, and applying Bogomolnyi's bound \eqref{eq:bogobound-self}, we derive that for any $u \in H^1$ and $\beta \neq 0$
    \begin{multline*}
        |\beta|^{-1} \int_{\R^2} |u|^2 \int_{\R^2} |\nabla u|^2 
        + |\beta| \left( \int_{\R^2} |u|^2 \right)^{-1} \int_{\R^2} \left|\bA\left[|u|^2\right]\right|^2 |u|^2 \\
        \ge 2\pi \int_{\R^2} |u|^4 
        - 2\sign(\beta) \int_{\R^2} \bA\left[|u|^2\right] \cdot \bJ[u].
    \end{multline*}
    Choosing an appropriate conjugation for $u$ and optimizing over all $\beta \neq 0$, we obtain the following homogeneous interpolation inequality involving the curvature \eqref{eq:MM-curvature}:
    \begin{equation}\label{eq:MM-interpol}
        \left( \int_{\R^2} |\nabla u|^2 \right)^{\frac{1}{2}}
        \left( \int_{\R^2} \left|\bA\left[|u|^2\right]\right|^2 |u|^2 \right)^{\frac{1}{2}}
        \ge \pi \int_{\R^2} |u|^4 + \left| \int_{\R^2} \bA\left[|u|^2\right] \cdot \bJ[u] \right|,
        \quad \forall u \in H^1,
    \end{equation}
    with equality if and only if $u \propto u_{P,Q}$ (corresponding to $\norm{u}_2^{-2} \norm{\nabla u}_2^{-1} \norm{\bA[|u|^2] u}_2 \beta \in 2\Z \setminus \{0\}$).
    This generalizes \cite[Prop.~2.3]{ByeHuhSeo-12}, in which non-negativity and radial symmetry of $u$ was assumed.
\end{remark}

\begin{remark}[Extended interpolation] \label{rmk:extint}
Finally, we comment about the specific form of the interpolation inequality \eqref{eq:LGN-selfmagnetic}. As considered by Dolbeault et al.\ in the case of the external field (see also \cite{FouRay-16}), one may study magnetically improved embedding into any intermediate $L^p$ space, $2 < p < \infty$, and again obtain lower bounds by means of the diamagnetic inequality \eqref{eq:diamag}, however, it is a priori not clear whether Bogomolnyi's bound and our crucial supersymmetric factorization method will be applicable if $p \neq 4$.
The case $p=4$ also comes natural from our application, which we discuss next.
\end{remark}

We summarize the main ideas for the proof of \Cref{thm:magneticstability}. First, we note that minimizers for $\gamma_{\ast}(\beta)$ are equivalent to the functions $u \in H^1$ which are zero levels of
\begin{equation}\label{functional:AFP}
\cE_{\beta,\gamma}[u] :=  \cE_{\beta}[u] - \gamma \int_{\R^2} |u|^4,   
\end{equation}
for $\gamma = \gamma_{\ast}(\beta).$
Now, to apply \Cref{thm:liouville}, we need to change the form of the functional $
\cE_{\beta,\gamma}[u]$
 into an integral of a square for $\gamma =2 \pi \beta$ and $\beta \geq 0$. This idea originates from \cite{AhaCas-79} for general external magnetic fields with finite flux; see also the discussion in \Cref{rmk:Pauli}. We need first our definition of the superpotential.
\begin{definition}
\label{def:superpotentialdef}
    The \keyword{superpotential functional} $\Phi$ is defined, in the principal value sense, by 
\begin{align*}
    \Phi[\varrho](\bx) := \int_{\R^2} \left(\log|\bx-\by| - \log(|\by|+1)\right) \varrho(\by) \,\dd \by,
\end{align*}
    for every $\bx \in \R^2$ and $\varrho \in L^1(\R^2;\R_+)$.
\end{definition}

Note that $\bA[\varrho] = \nabla^{\perp} \Phi[\varrho]$ for every $\varrho \in L^1(\R^2;\R_+)$ 
in the sense of distributions; see Lemma \ref{lem:superpot-relations}. 
Now, to prove \Cref{thm:magneticstability}, we first derive, by \Cref{lem:mag-identity}, the identity 
\begin{align*}
\cE_{\beta,2\pi \beta}[u] 
= \int_{\R^2} \left[
\left|(\nabla + {\rm i}\beta \bA\left[|u|^2\right]) u\right|^2
- 2\pi\beta |u|^4\right] 
= \int_{\R^2} \left|(\partial_1 - {\rm i} \partial_2) \left( e^{\beta \Phi[|u|^2]} u \right) \right|^2 e^{-2 \beta \Phi[|u|^2]}
\end{align*}
for every $\beta \geq 0$ and $u \in H^1$. Afterwards, we need the regularity of the minimizers for $\gamma_{\ast}(\beta)$ (see \Cref{lem:regularityofminimizer}) to imply that $u$ is a minimizer for $\gamma_{\ast}(\beta) = 2\pi \beta \ge 0$ if and only if 
\begin{align*}
u =\bar{f} \,e^{-\beta \Phi[|u|^2]},
\end{align*}
where $f$ is a complex analytic function. Then, by $u \in L^2(\R^2)$ and asymptotic behavior of $\Phi[|u|^2]$ at infinity (see \Cref{lem:newtonlemma}), we derive that $f$ is a polynomial. Hence, by defining the function $$\psi := -2\beta \Phi[|u|^2],$$ it is obtained that 
\begin{align*}
-\Delta \psi = \left|\sqrt{4 \pi \beta} f\right|^2 e^{\psi} \quad \text{in } \R^2,
\end{align*}
where $\sqrt{4 \pi \beta} f$ is a polynomial. Since $$\int_{\R^2} |\sqrt{4 \pi \beta} f|^2 e^{\psi} =\int_{\R^2} 4 \pi \beta |u|^2  = 4 \pi \beta< \infty,$$ we are in the position to apply \Cref{thm:liouville} to derive all the possible solutions $\psi$ and that $\beta \in 2 \N$. Moreover, the symmetry of solutions is derived by the symmetry of the generalized Liouville's equation in \Cref{thm:liouville}. Finally, the value of $\gamma_{\ast}(\beta)$ for every $\beta \geq 2$ will be determined precisely by the fact that $\gamma_{\ast}(\beta)=2\pi \beta$ for $\beta \in 2 \N$ and that $\frac{\gamma_{\ast}(\beta)}{\beta}$ is monotonically decreasing in $\beta$ (see \Cref{lem:keylemma}). In the case where $0 < \beta <2$, by employing further the concentration-compactness method (see \Cref{lem:minimizertrick}), we derive that $\gamma_{\ast}(\beta) > 2 \pi \beta$ for every $0<\beta <2$ and that the minimizers for $\gamma_{\ast}(\beta)$ exist for all $\beta \geq 0$ small enough.

\subsection{An equivalent optimization problem and stability in quantum mechanics}\label{sec:intro-stability}

The original LGN inequality \eqref{eq:LGN} specializes the Sobolev inequality and is a powerful quantitative measure of the uncertainty principle, responsible for the stability of quantum systems with few particles or weak attractions. 
Indeed, such stability is manifest in the 
Gross--Pitaevskii (GP) density functional theory \cite{Gross-61,Pitaevskii-61} (also known as the nonlinear Schr\"odinger (NLS) theory) which is fundamental to the qualitative and quantitative description of the behavior of quantum gases at low temperature. 
Also the Ginzburg--Landau (GL) effective theory of superconductivity \cite{GinLan-50,Ginzburg-09}
manifests a similar functional stability,
and both theories concern the phenomenon of Bose--Einstein condensation (BEC). 
Indeed, when many identical particles converge, their quantum statistics becomes crucially important, which usually refers to whether particles are bosons (such as the photons in laser light) which are free from exclusion and thus prone to condensation, or fermions (such as the electrons in matter) which obey Pauli's exclusion principle.
One of the most celebrated results in mathematical physics is the rigorous proof of stability of fermionic matter in 3D, due to Dyson and Lenard \cite{DysLen-67}, and simplified by Lieb and Thirring \cite{LieThi-75,LieThi-76} with the invention of new functional inequalities which elegantly combine uncertainty and exclusion.
Originally, only electrostatic interactions were considered, but later on mathematically interesting issues concerning self-generated magnetic fields were found \cite{FroLieLos-86,LosYau-86}, and subsequently resolved for fermionic matter \cite{Fefferman-95,LieLosSol-95,LieSei-10}.

In 2D, trapped effective particles may exhibit intermediate quantum statistics \cite{GolMenSha-80,GolMenSha-81,GolMenSha-85,LeiMyr-77,Wilczek-82a,Wilczek-82b}, then termed ``anyons'', and an important and still largely unresolved problem concerns the relationship between the exchange symmetry (given by the braid group) and the exclusion properties for a gas of many such particles; 
see \cite{Girardot-21,Lundholm-24,LunQva-20} for recent reviews.
Proposed effective models for anyon gases use
Chern--Simons (CS) theory to self-consistently couple a magnetic field to a mean-field particle distribution, in the form \eqref{eq:self-field}-\eqref{eq:AF-func};
see \cite{Dunne-95,Dunne-99,Enger-98,IenLec-92,Khare-05,Tarantello-08}
for review. 
Such models have been proposed to describe fundamental aspects of the fractional quantum Hall effect (FQHE) (see \cite{AroSchWil-84,EzaHotIwa-92,LopFra-91,ZhaHanKiv-89}, and \cite{Froehlich-etal-95,LamLunRou-23} for mathematical results),
and some such aspects were even recently experimentally confirmed \cite{Nakamura-etal-20}\footnote{Although as pointed out in \cite{ReaDas-23}, some ambiguities remain, rooted in the mathematical description.}.
Therefore,
we consider the following to be a contribution towards a mathematically rigorous approach to that problem, which generalizes the well-known NLS, GP, and GL theories to a density functional theory (DFT) of anyons.

Our application concerns the following \keyword{``average-field-Pauli'' functional} (with two parameters and an external potential)
\begin{equation}\label{eq:AFP-func}
    \cE_{\beta,\gamma,V}[u] 
    := 
    \int_{\R^2} \left[ \left|(\nabla + {\rm i}\beta\bA[|u|^2]) u\right|^2
    - \gamma |u|^4 
    + V|u|^2 \right],
    \qquad u \in H^1,
\end{equation}
with the constraint $\int_{\R^2} |u|^2=1$.
Its minimizers $u\colon \R^2 \to \C$ describe the approximate collective states (Bose--Einstein condensates) of a gas consisting of a large number of ``almost-bosonic'' anyons
with probability distribution $\varrho = |u|^2$.
The parameter $\beta \in \R$ describes the 
exchange statistics of particles, modeled as magnetically self-interacting by means of \labelcref{eq:self-field,eq:AF-func}, with $\beta$ total flux units.
Thus, for $\beta=0$ they are bosons while $\beta \gg 1$ (on the order of the number of particles) for a ``less bosonic'' and ``more fermionic'' anyon gas.
The parameter $\gamma \in \R$ describes the strength of a contact interaction, which is a simplification of a scalar two-body interaction. 
It is attractive if $\gamma>0$ and repulsive if $\gamma<0$.
Further, the system \eqref{eq:AFP-func} may be trapped by an external potential $V\colon \R^2 \to \R$ (typically $V(\bx) = |\bx|^2$ in a laboratory experiment), and has the ground-state energy (per particle)
\begin{equation}\label{eq:AFP-gse}
	E_{\beta,\gamma,V} := 
		\inf \left\{ \cE_{\beta,\gamma,V}[u] : 
		u \in H^1(\R^2;\C), 
		\int_{\R^2} |V| |u|^2 < \infty,
		\int_{\R^2} |u|^2 = 1 \right\}.
\end{equation}

Noting that the first two terms in \eqref{eq:AFP-func} scale similarly w.r.t.\ dilation (explained below), we obtain the following convenient reframing of \Cref{thm:magneticstability}.

\begin{maintheorem}[Stability for the almost-bosonic anyon gas]\label{thm:stability}
    Let $\beta \in \R$ and the potential $V$ be smooth and bounded from below. The optimal constant $\gamma_*(\beta)$ studied in \Cref{thm:magneticstability} is exactly the critical value for stability of \eqref{eq:AFP-gse} in the sense that $E_{\beta,\gamma,V} > -\infty$
    if $\gamma \le \gamma_*(\beta)$
    and $E_{\beta,\gamma,V} = -\infty$ if $\gamma > \gamma_*(\beta)$.
    Thus, if $V=0$ 
    and $\gamma=\gamma_*(\beta)$ then $E_{\beta,\gamma_*(\beta),0}=0$ for all $\beta$, 
    and if furthermore
    $\beta\ge 2$ then zero-energy ground states exist if and only if $\beta \in 2\Z$, and are then given exactly by the $2\beta$-dimensional soliton manifold \eqref{eq:mag-solution}.
	Finally, for any $\beta,\gamma \in \R$,
	\begin{equation}\label{eq:AFP-KLT}
		E_{\beta,\gamma,V} \ge E^{\rm KLT}_{\beta,\gamma,V} 
		:= \inf \left\{
		\int_{\R^2} \left[ (\gamma_*(\beta)-\gamma) \varrho^2 + V\varrho \right]
		: \varrho \in L^2(\R^2;\R_+), \ \int_{\R^2} \varrho = 1
		\right\}.
	\end{equation}
\end{maintheorem}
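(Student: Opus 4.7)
The plan is to derive everything from the inequality $\cE_{\beta}[u] \ge \gamma_*(\beta) \int_{\R^2} |u|^4$ (valid for $u \in H^1$ with $\int_{\R^2}|u|^2 = 1$, which is simply the definition of $\gamma_*(\beta)$) coupled with the mass-preserving rescaling $u \mapsto u_{\lambda}(\bx) := \lambda u(\lambda \bx)$. A direct computation using the homogeneity $\bA[|u_\lambda|^2](\bx) = \lambda \bA[|u|^2](\lambda\bx)$ shows that $\cE_\beta[u_\lambda] = \lambda^2 \cE_\beta[u]$ and $\int |u_\lambda|^4 = \lambda^2 \int |u|^4$, while $\int V|u_\lambda|^2$ stays bounded whenever $V$ is smooth at the concentration point and $u_0$ is compactly supported.

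The Keller--Lieb--Thirring bound \eqref{eq:AFP-KLT} is then immediate: for any admissible $u$ with $\int |u|^2 = 1$,
\[
\cE_{\beta,\gamma,V}[u] \ge \int_{\R^2}\bigl[(\gamma_*(\beta) - \gamma) |u|^4 + V|u|^2\bigr],
\]
and setting $\varrho = |u|^2 \in L^2(\R^2;\R_+)$ with $\int \varrho = 1$, the right-hand side equals $\int [(\gamma_*(\beta)-\gamma)\varrho^2 + V\varrho]$; taking the infimum over the strictly larger class of all such densities yields $E_{\beta,\gamma,V} \ge E^{\rm KLT}_{\beta,\gamma,V}$. Consequently, if $\gamma \le \gamma_*(\beta)$ then the integrand is bounded below by $(\inf V)\varrho$, so $E_{\beta,\gamma,V} \ge \inf V > -\infty$. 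Conversely, if $\gamma > \gamma_*(\beta)$, I would pick a smooth compactly supported $u_0$ with $\int|u_0|^2 = 1$ and $\cE_\beta[u_0] < \gamma \int |u_0|^4$ (possible by density of $C^\infty_c$ in $H^1$ together with the definition of $\gamma_*$), translate its support to a point $\bx_0$ where $V(\bx_0)$ is finite, and then consider $u_\lambda$ centered at $\bx_0$. As $\lambda \to \infty$, the kinetic and contact terms combine into $\lambda^2(\cE_\beta[u_0] - \gamma\int|u_0|^4) \to -\infty$, while $\int V|u_{\lambda}|^2 \to V(\bx_0)$ by dominated convergence; hence $E_{\beta,\gamma,V} = -\infty$.

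The remaining assertions follow from \Cref{thm:magneticstability}. For $V \equiv 0$ and $\gamma = \gamma_*(\beta)$, the KLT bound forces $E_{\beta,\gamma_*(\beta),0} \ge 0$, and equality is saturated by any minimizing sequence for the quotient \eqref{eq:defgamma}, so $E_{\beta,\gamma_*(\beta),0} = 0$. A zero-energy ground state $u$ therefore achieves equality $\cE_\beta[u] = \gamma_*(\beta)\int|u|^4$, i.e.\ is precisely a minimizer of \eqref{eq:defgamma}; for $\beta \ge 2$, \Cref{thm:magneticstability} yields existence iff $\beta \in 2\Z$, with every such minimizer of the form \eqref{eq:mag-solution}. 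For the dimension count at $\beta = 2n$: the admissible pairs $(P,Q)$ of coprime linearly independent complex polynomials with $\max(\deg P, \deg Q) = n$ form an open subset of $\C^{n+1} \times \C^{n+1}$ of real dimension $4(n+1)$, and by the symmetry part of \Cref{thm:magneticstability} the residual gauge action is that of $\R^+ \times \sSU(2)$ of real dimension $1 + 3 = 4$, so the soliton moduli space has real dimension $4(n+1) - 4 = 4n = 2\beta$.

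The main obstacle is purely in the instability direction, namely making sure that the potential term stays bounded under the concentration $u_\lambda$. This reduces to choosing $u_0$ compactly supported in a small neighborhood of a point where $V$ is finite and invoking dominated convergence, which is straightforward under the stated smoothness of $V$; everything else is bookkeeping with \Cref{thm:magneticstability} and the scaling identities.
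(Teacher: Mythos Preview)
Your proposal is correct and follows essentially the same approach as the paper: the paper's argument (given in the paragraph immediately after the theorem) also uses the dilation $u_\lambda(\bx)=\lambda u(\lambda\bx)$ together with the definition of $\gamma_*(\beta)$ to obtain the KLT bound and stability for $\gamma\le\gamma_*(\beta)$, and sends $\lambda\to\infty$ on an approximate minimizer to obtain instability for $\gamma>\gamma_*(\beta)$. You have in fact supplied more detail than the paper does, in particular the dominated-convergence control of $\int V|u_\lambda|^2$ and the explicit $4(n+1)-4=2\beta$ dimension count (the latter appears in the paper only as \Cref{rmk:JP}).
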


The equivalence between \Cref{thm:magneticstability,thm:stability} follows by taking dilations of $u \in H^1$,
\begin{equation}\label{eq:scaling}
u_{\lambda}(\bx) := \lambda u(\lambda \bx),
\quad \lambda > 0,
\end{equation}
which preserves the $L^2$-norm but scales both the magnetic self-energy $\cE_{\beta}$ and the $L^4$-term by $\lambda^2$ (see \cite[Lemma~3.4]{CorLunRou-17}).
Thus, if $\gamma > \gamma_*(\beta)$, we may take $\lambda \to \infty$ with an approximate minimizer of \eqref{eq:defgamma} to obtain instability in \eqref{eq:AFP-gse}, while if $\gamma \le \gamma_*(\beta)$ then stability holds by \eqref{eq:AFP-KLT} (which follows immediately by \Cref{def:gamma}).
In the following, we discuss the previous results concerning the minimization problem \eqref{eq:AFP-gse}, which thus relates both to \Cref{thm:magneticstability,thm:stability}.


The magnetically non-interacting case $\beta=0$ is the well-studied 2D GP/NLS theory;
see \cite[Section~6.2]{LieSeiSolYng-05} and \cite{Rougerie-EMS} for a mathematical exposition.
The 2D cubic NLS equation 
\begin{equation}\label{eq:NLS}
	-\Delta u + u - |u|^2u = 0
\end{equation}
is (after a suitable rescaling) the Euler--Lagrange equation for \eqref{eq:AFP-gse} with $\beta=0$, $V=0$.
It also emerges in laser optics with a self-focusing photon field \cite{CiaGarTow-64,CiaGarTow-65},
and its solution $u=\tau$, now known as ``Townes soliton'', was first studied in this context;
see \cite{Fibich-15} for a comprehensive mathematical treatment.
Existence of a solution was proved in \cite{Yankauskas-66}, its smoothness, positivity and radial symmetry up to constants and translations in 
\cite{GidNiNir-81} (see also \cite{BerLioPel-81}), uniqueness in \cite{LeoSer-87,Kwong-89}, and its relationship to the optimal constant $\gamma_*(0)=\CLGN$ in \cite{Weinstein-83}; 
see, e.g., \cite[Section~5.12 \& 6.3]{Fibich-15}.
Various physically motivated extensions of \labelcref{eq:AFP-func,eq:NLS} to the case of an external magnetic (or rotation) field with the magnetic Laplacian $\Delta_\bA := \nabla_\bA \cdot \nabla_\bA$ were considered in \cite{Seiringer-02,LieSei-06,Aftalion-06}.
See also \cite{DinNguRou-23,LewNamRou-17-proc,Rougerie-EMS}
for related works on rotating Bose gases in the focusing regime, and \cite{BleFou-18,ErdSol-04,FouRupMagAymMik-23} for some relevant works concerning the magnetic Pauli operator.
Further, the lower bound \eqref{eq:AFP-KLT} is known as a Keller--Lieb--Thirring inequality \cite{Keller-61,LieThi-76} and may also be used to estimate the negative spectrum for Schr\"odinger operators with negative potentials, relevant for both interacting bosons and fermions;
see \cite{FraLapWei-22,Lundholm-19} for recent reviews.

For $\beta,\gamma > 0$, the functional \eqref{eq:AFP-func} appears implicitly (and lacking domain considerations) in the Chern--Simons--Ginzburg--Landau--Higgs (CSGLH) theory; for the background and historical developments, see \cite[p.~37--38]{RebSol-84} and \cite{HloSpe-93} concerning GLH and Bogomolnyi's bound, and \cite{Dunne-95,Khare-05,Tarantello-08} concerning CS and self-duality\footnote{%
We here follow the later CS-Schr\"odinger/NLS/GP-like approach. In the GL case the main formal difference is the presence of an external constant magnetic field (this could also be included in \eqref{eq:AFP-func}) which adds a constant term to the Liouville equation, and, more crucially, a change of boundary conditions at $\infty$. ``Higgs'' concerns the form of the potential terms involving the critical $\gamma$.}. 
Briefly, taking variations of a corresponding action functional of fields $u$ and $\bA$, 
in which $\beta$ enters as a coefficient of a topological CS term,
the critical interaction $\gamma=2\pi\beta$ (``self-dual coupling'') amounts to a reduction of the otherwise second-order system of nonlinear PDEs to the first-order self-dual Bogomolnyi equations (see, e.g., \cite[p.~23]{Dunne-95} and \cite[Chapter~7.4]{Khare-05})
\begin{align}\label{eq:self-dual}
	(\partial_1 + \ii A_1)u = \pm \ii(\partial_2 + \ii A_2)u,
\end{align}
and
\begin{align}\label{eq:self-dual-B}
    \partial_1 A_2 - \partial_2 A_1 = 2\pi\beta |u|^2.
\end{align}
Away from any zeros of $u$, we may write $u = \sqrt{\varrho} e^{{\rm i}\omega}$, where $\varrho = |u|^2 > 0$. Then \eqref{eq:self-dual} becomes
$$
    \nabla\omega + \bA = \mp \frac{1}{2}\nabla^\perp \log \varrho,
$$
which, together with the constraint \eqref{eq:self-dual-B}, yields
$$
    -\Delta \log\varrho = \pm 4\pi\beta\varrho,
$$
i.e.\ the pointwise Liouville equation \eqref{eq:Liouville-rho},
provided that we take the same sign as $\beta$.
Seeking regular solutions to \eqref{eq:self-dual}-\eqref{eq:self-dual-B} for both $u$ and $\bA$, Jackiw and Pi first found the radial vortex ring solutions \eqref{eq:vortexring} in \cite{JacPi-90b}, and then the multi-vortex solutions (see \Cref{rmk:JP}) in \cite{JacPi-90a}.
Hagen in \cite{Hagen-91} pointed out a number of mathematical inconsistencies,
and subsequently a more careful but still incomplete mathematical analysis was done by Horv\'athy and Y\'era in \cite{HorYer-98}. Further, because of the substantial interest in determining the completeness of these explicit solutions, methods to evaluate their dimension based on index theory were also developed, but remained non-rigorous; see \cite{HonKimPac-90,JacWei-90,JacLeeWei-90,KimSohYee-90} and further comments below.
Note that there were related issues in the GLH theory, then with only an implicit solution but fully rigorous treatment by Taubes, who proved existence and uniqueness of a multi-vortex solution, starting from an assumption on quantized flux; see \cite{JafTau-80,Taubes-80,Tarantello-08}.
For CS, some further mathematical progress has been made with an equivariant (radial density) assumption; see e.g. \cite{LiLiu-22} for a recent overview.

For $\beta \gg 1$, $\gamma=-2\pi\beta$ (contact repulsion) and $V(\bx)=|\bx|^2$ (harmonic trap) the problem \eqref{eq:AFP-gse} was studied in the context of ideal anyons by Chitra and Sen \cite{ChiSen-92} who obtained an approximation 
$E_{\beta,\gamma,V} \approx \frac{14\sqrt{3}}{9}\sqrt{\beta}$, 
assuming radial symmetry of the density.

For $\beta > 0$, $\gamma=0$ and trapping $V(\bx) \sim |\bx|^s$ with $s>0$, the problem \eqref{eq:AFP-gse} was rigorously shown by Lundholm and Rougerie \cite{LunRou-15} to emerge from an underlying many-body quantum mechanics of extended (non-ideal) anyons in an almost-bosonic limit (see also \cite{Girardot-20} for external magnetic fields).
Further, Correggi et al.\ \cite{CorLunRou-17} proved, to leading order as $\beta \to \infty$, the local density approximation
$$
	E_{\beta,\gamma=0,V} \sim E^{\rm LDA}_{\beta,\gamma=0,V}
		:= \inf \left\{
		\int_{\R^2} \left[ 2\pi c\beta \varrho^2 + V\varrho \right]
		: \varrho \in L^2(\R^2;\R_+), \ \int_{\R^2} \varrho = 1
		\right\},
$$
for a universal constant $c \ge 1$, 
which yet has only been estimated numerically: $c \approx 1.18$ \cite{CorDubLunRou-19}.
Thus, by \eqref{eq:AFP-KLT} this provides an upper bound 
$\gamma_*(\beta)/(2\pi\beta) \lesssim c$ as $\beta \to \infty$.
We stress that $c>1$ is not a contradiction to our theorem, because indeed, guided by the numerics, Correggi et al.\ also conjectured the emergence of a locally homogeneous vortex distribution in the minimizers of $E_{\beta,0,V}$ at a microscopic scale, similar to the Abrikosov lattices of GL and GP theory \cite{AftBlaNie-06b}.
In other words, the densities $\varrho$ for minimizers of $E_{\beta,\gamma,V}$ 
in \eqref{eq:AFP-KLT} cannot be locally constant 
(an often made assumption in the physics literature; see also \cite{CorLunRou-18}), 
which matches with our observations concerning vorticity
for minimizers of 
$E_{\beta,\gamma_*(\beta),0}$
in \Cref{rmk:terms}. We end this section with a brief comment on this and leave this topic for future study.

As far as we are aware, except for the exceptional points $\beta \in 2\Z$, the precise critical coupling $\gamma = \gamma_*(\beta)$ in the stability problem of \Cref{thm:magneticstability,thm:stability} has not been studied previously in this generality.
Finally, for weakly interacting fermions as well as for ideal and almost-fermionic anyons it is known that there is ample margin for stability due to Pauli's exclusion principle \cite{LieThi-76} and its local generalizations \cite{FroMar-89,GolMaj-04,GirRou-23,LarLun-18,LunSei-18,LunQva-20,LunSol-13a,LunSol-13b,LunSol-14,Lundholm-17}, 
and effective DFTs replacing \eqref{eq:AFP-func} for such anyons have been proposed in \cite{ChiSen-92,GirRou-21,Hu-etal-21,LiBhaMur-92,Lundholm-24}.

\medskip

We are now in a position to summarize the main ideas in our proof of \Cref{thm:magneticstability,thm:stability}, and explain how we are able to resolve the issues found in 
the earlier works
\cite{JacPi-90a,JacPi-90b,Hagen-91,Jackiw-91,HorYer-98}.
Similarly to these, 
our starting point is to study the saturation of Bogomolnyi's bound \eqref{eq:bogobound-self} at $\gamma = 2\pi\beta$, and thus seek zero-energy states
$$
	\cE_{\beta,2\pi\beta,0}[u] = \cE_{\beta}[u] - 2\pi\beta \int_{\R^2} |u|^4 = 0,
	\quad \text{for some} \ u \in H^1 \text{ with } \int_{\R^2} |u|^2=1,
$$
leading up to the corresponding self-dual equations \eqref{eq:self-dual}, as shown (non-rigorously) in \cite{JacPi-90a}.
However, we must point out that there is more to these Bogomolnyi relations, 
rooted in a supersymmetric factorization of the Pauli operator, 
which was noted originally by Aharonov and Casher \cite{AhaCas-79} in the following form (without the notion of supersymmetric quantum mechanics),
independently by others in the supersymmetric context
\cite{deVScha-76,BarCasLus-76,deCRit-83,GenKri-85}
(attributing the completion of the square already to Feynman),
and even by Jackiw \cite{Jackiw-84,Jackiw-86} (but not used in \cite{JacPi-90b,JacPi-90a,Jackiw-91,JacPi-92}!):

\begin{remark}[Pauli supersymmetry]\label{rmk:Pauli}
Given an external magnetic field $B = \curl \bA$, 
the following factorization identity holds:
\begin{equation}\label{eq:susy-factorization}
\int_{\R^2} \left[ |\nabla_\bA u|^2 \pm B |u|^2 \right]
= \int_{\R^2} \bigl|(\partial_1 \pm {\rm i}\partial_2)(e^{\mp \Phi}u)\bigr|^2 e^{\pm 2\Phi},
\quad \text{if} \ \bA = \nabla^\perp \Phi,
\end{equation}
where 
$\Phi\colon \R^2 \to \R$ is (nowadays) known as a ``superpotential''.
If the gauge $\bA$ is divergenceless and sufficiently regular then this representation is always possible (also noted in \cite{FroLieLos-86} in 3D).
The l.h.s.\ of \eqref{eq:susy-factorization} describes the two spin components of a 2D Pauli operator, which is the square of a 2D Dirac operator (the ``supercharge''), and thus its joint kernel may be identified as the kernel of the Dirac operator 
(supersymmetric invariants).
In fact, Aharonov and Casher explicitly identified their (non-rigorous) theorem as a variant of the Atiyah--Singer 
index theorem \cite{AtiSin-63,Freed-21}.
Various mathematical issues and generalizations were subsequently pointed out in
\cite{Miller-82},
\cite[Chapter~6.4]{CycFroKirSim-87},
\cite{ErdSol-01,ErdVou-02},
\cite[Chapter~1.4]{FouHel-book},
and
\cite{FouRupMagAymMik-23}.
\end{remark}

In our self-generated case, 
we need to be more careful to define the appropriate superpotential $\Phi = \beta\Phi[|u|^2]$ (compare \Cref{def:superpotentialdef} and $\bA = \beta\bA[|u|^2] = \beta\nabla^\perp\Phi[|u|^2]$) and to derive the corresponding identity \eqref{eq:susy-factorization}.
This is done after the preliminary subsections, in \Cref{sec:mag-susy}.
We find that the kernel 
of \eqref{eq:susy-factorization}
in this case consists of functions $u \in H^1$ on the form
$$
u(\bx) = e^{-\Phi(\bx)} \overline{f(z)},
$$
where $f\colon \C \to \C$ is analytic with a limited growth depending on $\beta$, by the normalization
$\int_{\R^2} |f|^2 e^{-2\Phi} = 1$ and ``Newton's lemma'' for our $\Phi(\bx) \sim \beta \log |\bx|$ (see \Cref{lem:newtonlemma}).
Further, it is also required of the superpotential that 
$$
\Delta \Phi = \curl \bA = B = 2\pi\beta|f|^2e^{-2\Phi}.
$$
Thus, identifying $\psi = -2\Phi$ and rescaling $f$ by $\sqrt{4\pi\beta}$
yields precisely the generalized Liouville equation \eqref{eq:Liouville-psi} of \Cref{thm:liouville} with an integrability condition, 
thereby necessitating the 
general solution $\psi = \psi_{P,Q}$ with $P'Q-PQ' \propto f$
and with quantized flux $\beta \in 2\N$.
Therefore, up to a normalization constant, $\varrho = |u|^2$ is precisely the density in \eqref{eq:Liouville-rho}.
This first part of the proof of \Cref{thm:magneticstability} is given in \Cref{sec:mag-proof-existence,sec:mag-proof-largebeta}.

Our remaining study of $\gamma_*(\beta)$, 
which rests on the fact that $\gamma_*(\beta)=2\pi \beta$ for $\beta \in 2 \N$ and that $\beta \mapsto \frac{\gamma_*(\beta)}{\beta}$ is monotonically decreasing,
is carried out in \Cref{sec:mag-proof-largebeta,sec:mag-proof-smallbeta},
and finally, the symmetry considerations for our
solutions are derived in \Cref{sec:mag-symm,sec:symmetric}.

Note that we have avoided, in the above, some very nontrivial and important questions concerning regularity, and actually this will take up a significant portion of our proofs.
In particular, we need a priori regularity of the minimizers, obtained using bootstrap of the variational equation, 
in \Cref{sec:mag-variation}.
We also note that, although Jackiw and Pi were seeking regular solutions and were aware of some of the potential ambiguities with using only the pointwise Liouville equation \eqref{eq:Liouville-rho} for the density, the arbitrariness in their method, due to their gauge choice, did not allow them to correct it. Hagen pointed towards a better choice of gauge with a superpotential representation, and indeed Jackiw and Pi discussed this technique further and were content with it \cite{JacPi-91b,Jackiw-91}, but neither of them actually succeeded in completing the argument since they did not use the full force of the factorization \eqref{eq:susy-factorization} (including ``Newton's lemma'') to bound the total degree of vortices.
More importantly, they were lacking the complete proof of \Cref{thm:liouville} and the precise roles of integrability and regularity, which rest on more recent developments in analysis, as discussed in \Cref{sec:intro-liouville}. 
Neither Horv\'athy and Y\'era had a complete proof 
since they also assumed the single-valued Liouville representation \eqref{eq:Liouvillerepresentation}, together with additional local regularity and decay of $B \propto |u|^2$ (without studying the variational equation).
We note further that the implicit index-theory based approaches that aimed to verify the completeness of solutions also had ambiguities, either yielding only half the dimension of solutions \cite{Weinberg-79, HonKimPac-90, JacWei-90}, or were otherwise non-rigorous due to the lack of compactness \cite{JacLeeWei-90,KimSohYee-90}
(see \cite{Lundholm-10} for a related problem).
Therefore, we can now finally agree with Hagen \cite{Hagen-91} that,
``one thus sees that the final results of Jackiw and Pi [...] can be rigorously established. This could be of considerable importance in the future if studying such solutions develops into an active field of endeavor.''

\medskip

Before proceeding to the proofs sections, 
we conclude with an observation concerning our spaces of minimizers, of relevance to the FQHE problem (see, e.g., \cite{RajSig-20}).

\begin{remark}[Linear Landau levels]\label{rmk:Landau}
An important example of supersymmetric factorization and index theorem that we have drawn significant inspiration from is that of a constant magnetic field, originally considered by Landau \cite{Landau-30} (in a different gauge), in which
\begin{equation}\label{eq:Landau-field}
B = \text{constant} > 0,
\quad
\bA(\bx) = \frac{B}{2}\bx^\perp, \quad \text{and} \quad 
\Phi(\bx) = \frac{B}{4}|\bx|^2.
\end{equation}
Then \eqref{eq:susy-factorization} with the minus sign is the quadratic form of the Landau Hamiltonian operator:
\begin{equation}\label{eq:Landau-Hamiltonian}
H^{\rm Landau} := -\Delta_\bA - B = \bigoplus_{n=0}^\infty 2nB \1_{\cL_n}.
\end{equation}
Its eigenspaces $\cL_n \subseteq H^1(\R^2)$ are known as the Landau levels, each of which is of infinite dimension due to the infinite flux of the field.
For example, the lowest Landau level
$\cL_0 = \ker H^{\rm Landau}$ is the Segal--Bargmann space \cite{Bargmann-61} spanned by the Darwin--Fock \cite{Darwin-31,Fock-28} orthonormal eigenfunctions of increasing angular momenta $l$: 
$$
\psi_l(\bx) = 
\frac{(B/2)^{(l+1)/2}}{\sqrt{l!\,\pi}}
e^{-B|z|^2/4} \bar{z}^l,
\quad l=0,1,2,\ldots
$$
\end{remark}

In analogy to these conventional \emph{linear} Landau levels, we define for the self-generated field $B = 2\pi\beta|u|^2$, $\bA = \beta\bA[|u|^2]$, $\Phi = \beta\Phi[|u|^2]$, the following \emph{nonlinear} version:
    The \keyword{nonlinear Landau level (NLL)} at flux $\beta \ge 0$
    is defined as the $H^1$-sub\-manifold
    \begin{equation}\label{eq:definoffunctiongammastar}
	\NLL(\beta) := \left\{ u \in H^1 : \cE_{\beta,2\pi\beta,0}[u] = 0, \int_{\R^2} |u|^2 = 1 \right\}.
    \end{equation}
    In accordance with 
    \Cref{thm:magneticstability}, we refer to $\NLL(\beta=2n)$, $n \in \N$, as the $n$:th nonlinear Landau level, with $n=1$ the lowest level.
    Further, we consistently define $\NLL(-\beta) := \overline{\NLL(\beta)}$ for any $\beta > 0$.
We obtain from \Cref{thm:magneticstability} the following immediate corollary:

\begin{corollary}\label{cor:NLL-functional}
    In the $n$:th NLL, $\beta=2n$, if $\gamma\le 4\pi n$ then $E_{\beta,\gamma,V}$ is bounded above by the infimum of
    \begin{equation}\label{eq:AFP-NLL}
        \cE_{\beta,\gamma,V}[u_{P,Q}] 
        = \int_{\R^2} \left[
		(4\pi n - \gamma) \frac{1}{\pi^2 n^2} \frac{|P'Q-PQ'|^4}{(|P|^2+|Q|^2)^4}
		+ V \frac{1}{\pi n} \frac{|P'Q-PQ'|^2}{(|P|^2+|Q|^2)^2} \right]
    \end{equation}
    over the $4(n+1)$-dimensional space of all coprime and linearly independent complex polynomials $P$ and $Q$
    s.t. $\max(\deg P,\deg Q) = n$, 
    or, equivalently, over the $(4n-1)$-dimensional space of all coprime complex polynomials $P$ and $Q$ with highest-order coefficients $a>0$ resp. $1$ and s.t. $0 \le \deg Q < \deg P = n$.
\end{corollary}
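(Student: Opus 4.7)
The plan is to use the explicit NLL functions $u_{P,Q}$ from \Cref{thm:magneticstability}\ref{itm:mstab-mini} as trial states in \eqref{eq:AFP-gse}. Every admissible $(P,Q)$ yields $u_{P,Q}\in H^1$ with $\int_{\R^2}|u_{P,Q}|^2=1$ and, by Bogomolnyi saturation (cf.\ \Cref{rmk:susy}), $\cE_\beta[u_{P,Q}]=2\pi\beta\int_{\R^2}|u_{P,Q}|^4=4\pi n\int_{\R^2}|u_{P,Q}|^4$. Hence every $u_{P,Q}$ is an admissible competitor for $E_{\beta,\gamma,V}$.

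I would next evaluate $\cE_{\beta,\gamma,V}[u_{P,Q}]$ by direct substitution. Squaring \eqref{eq:mag-solution} gives $|u_{P,Q}|^2=\tfrac{1}{\pi n}|P'Q-PQ'|^2/(|P|^2+|Q|^2)^2$ and $|u_{P,Q}|^4=\tfrac{1}{\pi^2n^2}|P'Q-PQ'|^4/(|P|^2+|Q|^2)^4$. Using Bogomolnyi saturation to combine the first two terms of $\cE_{\beta,\gamma,V}[u]=\cE_\beta[u]-\gamma\int|u|^4+\int V|u|^2$ into $(4\pi n-\gamma)\int|u_{P,Q}|^4$ produces the asserted identity \eqref{eq:AFP-NLL}. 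Taking the infimum over admissible pairs gives the upper bound, which is nontrivial under the hypothesis $\gamma\le4\pi n=\gamma_*(\beta)$ ensuring (via \Cref{thm:stability}) that $E_{\beta,\gamma,V}>-\infty$.

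For the dimension counts, the first parametrization has real dimension $2\cdot 2(n+1)=4(n+1)$, since $P,Q$ each have $n+1$ complex coefficients. For the second parametrization, I would combine the $\R^+\times\sSU(2)$ invariance of $u_{P,Q}$ from \Cref{thm:magneticstability}\ref{itm:mstab-symm} with the global phase invariance $\cE_{\beta,\gamma,V}[e^{i\alpha}u]=\cE_{\beta,\gamma,V}[u]$; the latter corresponds on the parameter side to $(P,Q)\mapsto(e^{i\alpha}P,e^{i\alpha}Q)$, which sends $u_{P,Q}\mapsto e^{-2i\alpha}u_{P,Q}$. The resulting effective invariance group on the parameter space is thus $\R^+\times\sU(2)$, of real dimension~$5$. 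Using $\sSU(2)$ (two real parameters) one clears the $z^n$-coefficient of $Q$ to enforce $\deg Q<\deg P=n$, which is possible by linear independence of $P,Q$; the remaining two real parameters in $\sU(2)$ together with the $\R^+$ scaling (three parameters in total) independently adjust the phases of the leading coefficients $p_n,q_m$ and their common modulus to reach $p_n=a>0$ and $q_m=1$. These $2+3=5$ normalizations exhaust the full symmetry, leaving a section of real dimension $4(n+1)-5=4n-1$, attained in the top stratum $\deg Q=n-1$.

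The proof reduces to a substitution, and the main delicate point is the dimension count for the reduced parametrization together with the verification that every $\R^+\times\sU(2)$-orbit actually meets the normalized slice, so that the infima over the two parameter spaces coincide. A minor secondary issue is that $\int_{\R^2}V|u_{P,Q}|^2$ may diverge if $V$ grows too fast compared to the polynomial decay of $|u_{P,Q}|^2$ at infinity, in which case the upper bound is trivially $+\infty$ and the statement remains valid.
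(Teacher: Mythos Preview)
Your proposal is correct and matches the paper's approach: the paper states this as an immediate corollary of \Cref{thm:magneticstability} without further proof, and your argument supplies exactly the substitution and dimension count that make it immediate. The one place where your write-up is slightly loose is the parameter accounting in the reduction to the $(4n-1)$-dimensional slice: clearing the $z^n$-coefficient of $Q$ while simultaneously forcing $P$'s leading coefficient to be real positive uses three real parameters of $\R^+\times\sU(2)$ (the orbit of the top-coefficient vector in $\C^2\setminus\{0\}$ is $4$-dimensional and the target ray is $1$-dimensional), leaving a $2$-dimensional stabilizer $\{(\lambda,\mathrm{diag}(1,e^{i\phi}))\}$ that then normalizes $Q$'s leading coefficient to $1$; the total $3+2=5$ matches your count, but the split ``two from $\sSU(2)$, then three more'' is not quite the right bookkeeping. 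This does not affect the validity of the argument.
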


\medskip\noindent\textbf{Acknowledgments.} 
Financial support from the Swedish Research Council 
(D.\,L., grant no.\ 2021-05328, ``Mathematics of anyons and intermediate quantum statistics'') 
and the Knut and Alice Wallenberg Foundation (D.-T.\,N.) is gratefully acknowledged.
An initial numerical analysis was based on Matlab code developed by Romain Duboscq. 
We also thank Eduardo Venturini for the shortened proof of Lemma \ref{lem:symmetryofamplitude}, as well as
Daniele Bartolucci, Alexandre Eremenko, Nicolas Rougerie, Wolfgang Staubach, Lars Svensson, and Gabriella Tarantello for their helpful comments and references.

\section{Generalized Liouville equation}\label{sec:liouville}

In this section, we prove \Cref{thm:liouville} and derive some special cases.
We need a few observations concerning the Wronskian of polynomials, the regularity of solutions to the generalized Liouville's equation, and a technical lemma.

\subsection{Preliminary observations}

We denote $\N = \{1,2,3,\ldots\}$, $\R_+ = [0,\infty)$, $\R^+ = (0,\infty)$ and $\C^\times = \C \setminus \{0\}$.
The space of complex polynomials $\C \to \C$ of arbitrary (finite) degree is denoted by $\cP$, and the nonzero polynomials by $\cP^\times = \cP \setminus \{0\}$.

\begin{definition}\label{def:wronskian}
The \keyword{Wronskian} $W$ is the bilinear and antisymmetric functional $W\colon \cP \times \cP \to \cP$ which is defined by\footnote{Note our sign convention.}
\begin{align*}
W(P,Q) := P' Q -P Q'.
\end{align*}
\end{definition}

\begin{lemma}[Degree of Wronskian]
\label{lem:degreeofwronskian}
Let $P,Q \in \cP$ be two complex polynomials. Then, we have two possible cases: 
\begin{enumerate}[label=\text{(\roman*)}]
\item\label{itm:wronsk-zero}
    $W(P,Q)=0$ if and only if $P$ and $Q$ are linearly dependent. 
\item\label{itm:wronsk-nonzero}
    $W(P,Q) \neq 0$, and 
\begin{align}
\label{eq:degreeofwronskian-nonmonic}
\deg(W(P,Q)) \le 2\max(\deg P,\deg Q) - 2.
\end{align}
Further, if $P,Q$ are monic with $\deg(Q) \geq \deg(P)$, then
\begin{align}
\label{eq:degreeofwronskian-monic}
\deg(W(P,Q)) = \deg(P) + \deg (Q-P) - 1.
\end{align}
\end{enumerate}
\end{lemma}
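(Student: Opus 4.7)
The plan is to dispatch part \ref{itm:wronsk-zero} by the classical quotient-derivative trick, and part \ref{itm:wronsk-nonzero} by direct computation of leading coefficients, treating the $\deg P = \deg Q$ cancellation case via the bilinearity identity $W(P,Q) = W(P,Q-P)$.

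For \ref{itm:wronsk-zero}, the ``if'' direction is immediate from bilinearity and antisymmetry: if $P = \lambda Q$ for some $\lambda \in \C$ (or vice versa) then $W(P,Q) = \lambda W(Q,Q) = 0$. For the converse, assume $W(P,Q) = 0$ and that $(P,Q) \neq (0,0)$; without loss of generality $Q \not\equiv 0$. Away from the finitely many zeros of $Q$ we may compute
\[
\left(\frac{P}{Q}\right)' = \frac{P'Q - PQ'}{Q^2} = \frac{W(P,Q)}{Q^2} = 0,
\]
so $P/Q$ is locally constant on the connected cofinite set $\C \setminus Q^{-1}(0)$, hence equal to a single constant $\lambda \in \C$, and thus $P = \lambda Q$ on all of $\C$ by continuity.

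For \ref{itm:wronsk-nonzero}, set $m := \deg P$ and $n := \deg Q$, and let $a,b \in \C^\times$ denote the leading coefficients of $P$ and $Q$, respectively. A direct expansion gives that the coefficient of $z^{m+n-1}$ in $W(P,Q) = P'Q - PQ'$ is $(m-n)\,ab$. If $m \neq n$ this coefficient is nonzero, yielding $\deg W(P,Q) = m+n-1$, which is at most $2\max(m,n)-2$ since $\min(m,n) \le \max(m,n)-1$. If $m = n$ the leading contributions cancel and one only concludes $\deg W(P,Q) \le m+n-2 = 2\max(m,n)-2$, establishing \eqref{eq:degreeofwronskian-nonmonic}.

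For the monic refinement \eqref{eq:degreeofwronskian-monic}, assume $P,Q$ are monic with $n = \deg Q \ge \deg P = m$. If $n > m$, then $a = b = 1$ and the computation of the previous paragraph gives $\deg W(P,Q) = m + n - 1$; since $\deg(Q - P) = n$ in this case, the right-hand side of \eqref{eq:degreeofwronskian-monic} is $m + n - 1$ as well. If $n = m$, use the bilinearity identity
\[
W(P,Q) = W(P, Q - P) + W(P,P) = W(P, Q-P),
\]
and note that $R := Q - P$ has degree $r := \deg R < m$ (with $R \not\equiv 0$ since $W(P,Q) \neq 0$ forces $P,Q$ linearly independent by part \ref{itm:wronsk-zero}). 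Applying the leading-coefficient computation to the pair $(P,R)$, whose degrees $m > r$ are now unequal, gives $\deg W(P,R) = m + r - 1 = \deg P + \deg(Q-P) - 1$, as required.

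The only mildly delicate point is the cancellation when $\deg P = \deg Q$, which is why the bound in \eqref{eq:degreeofwronskian-nonmonic} cannot in general be improved beyond $2\max(m,n)-2$; the monic hypothesis in \eqref{eq:degreeofwronskian-monic} is precisely what is needed to pin down the actual degree through the reduction $Q \mapsto Q - P$.
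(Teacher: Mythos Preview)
Your proof is correct and follows essentially the same route as the paper's: both dispatch part~\ref{itm:wronsk-zero} via the quotient-derivative identity $(P/Q)' = W(P,Q)/Q^2$, and both handle part~\ref{itm:wronsk-nonzero} by computing the coefficient of $z^{m+n-1}$ directly (the paper writes out $P = z^m + P_1$, $Q = z^n + Q_1$ explicitly, you just name the leading coefficients), then reduce the equal-degree monic case via $W(P,Q) = W(P,Q-P)$ to the unequal-degree case already treated.
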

\begin{proof}
  Recall that 
  $P'Q-PQ' = 0$ if and only if $P$ and $Q$ are linearly dependent (by $Q^2(P/Q)' = 0$; see also \cite{Bocher-00} for generalizations).
 Now, assume that $P'Q-PQ' \neq 0$. Then, $P$ and $Q$ are both non-zero polynomials, where one of them is not a constant,
and further, to prove \eqref{eq:degreeofwronskian-nonmonic}, by rescaling we may w.l.o.g.\ assume that they are monic and that $\deg Q \ge \deg P$.

Now, assume that $m:=\deg(P) < n := \deg(Q).$ Then, $P(z) =  z^m + P_1(z)$ and $Q(z) = z^n + Q_1(z)$ for every $z \in \C$, where $P_1,Q_1$ are polynomials of degree at most $m-1,n-1,$ respectively. In conclusion,
\begin{align*}
W(P,Q)(z) ={} & (m z^{m-1} + P_1'(z))(z^n + Q_1(z))- (z^m + P_1(z)) (n z^{n-1} + Q_1'(z)) \\
={} & (m-n) z^{m+n-1} + m z^{m-1} Q_1(z) - n z^{n-1} P_1(z) + z^nP_1'(z) -  z^m Q_1'(z) \\
& + P_1'(z)Q_1(z) - P_1(z)Q_1'(z),
\end{align*}
for every $z \in \C.$ This implies \eqref{eq:degreeofwronskian-nonmonic} and
\begin{align}
\label{eq:degreeofwronskiandiff}
    \deg(W(P,Q)) = \deg(P)+ \deg(Q)- 1=\deg(P)+ \deg(Q-P)- 1.
\end{align}

It remains to consider the case that $m:= \deg(P)=\deg(Q) \ge 1$ and $P,Q$ are not linearly dependent. Then, $P(z) =  z^m + P_1(z)$ and $Q(z) = z^m + Q_1(z)$ for every $z \in \C$, where $P_1 \neq Q_1$ are polynomials of degree at most $m-1$. Hence, $0 \le \deg(Q - P) < \deg(Q)$ and therefore, by the properties of the Wronskian and our previous case \eqref{eq:degreeofwronskiandiff}, we derive
\begin{align*}
   \deg( W(P,Q)) = \deg(W(P, Q -  P)) = \deg(P) + \deg(Q -  P) -1,
\end{align*}
which completes the proof.
\end{proof}

\begin{lemma}[A priori regularity]
\label{lem:apriorireg}
Let $V \in C^{\infty}(\R^2;\R)$ and $\psi \in L^1_\loc(\R^2;\R)$ be a weak solution of
\begin{equation}\label{eq:L-regularity}
-\Delta \psi = V e^{\psi},
\end{equation}
in $\R^2$ such that $V e^\psi \in L^1_{\loc}(\R^2;\R)$. 
Then, $\psi \in C^\infty(\R^2;\R)$.
\end{lemma}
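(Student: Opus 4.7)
The plan is to run an elliptic bootstrap on $\psi$, whose initial step leverages a Brezis--Merle type exponential integrability estimate to convert the bare hypothesis $Ve^\psi \in L^1_{\loc}$ into higher local integrability of $e^\psi$. From there, standard Calder\'on--Zygmund and Schauder theory yield $C^\infty$ regularity.

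Fix $x_0 \in \R^2$ and work on a ball $B = B_r(x_0)$, whose radius $r$ will be chosen small below. Since $Ve^\psi \in L^1_{\loc}$, the source $\rho := V e^\psi \chi_B$ belongs to $L^1(\R^2)$ with compact support, so its Newtonian convolution $w := -\frac{1}{2\pi}(\log|\cdot|) * \rho$ is locally integrable and distributionally satisfies $-\Delta w = \rho$ on $\R^2$. Consequently $h := \psi - w$ is distributionally harmonic on $B$ and hence smooth (in particular locally bounded) by Weyl's lemma. Next I would invoke the Brezis--Merle inequality \cite{BreMer-91}, which asserts that $\exp(p|w|) \in L^1(B)$ whenever $p\,\|\rho\|_{L^1(B)} < 4\pi$. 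By absolute continuity of the integral, $r$ can be shrunk so that $\|Ve^\psi\|_{L^1(B)}$ falls below any prescribed threshold, producing an exponent $p > 1$ for which $\exp(p|w|) \in L^1(B)$. Combining with the local boundedness of $h$ on any sub-ball $B' \Subset B$ yields $e^{p\psi} \in L^1(B')$, and therefore $V e^\psi \in L^p(B')$ (using $V \in L^\infty_{\loc}$).

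The bootstrap then unrolls automatically: Calder\'on--Zygmund theory applied to $-\Delta \psi = V e^\psi \in L^p_{\loc}$ with $p > 1$ gives $\psi \in W^{2,p}_{\loc}$, and the two-dimensional Sobolev embedding $W^{2,p} \hookrightarrow C^{0,\alpha}$ (valid for $p > 1$) then gives $\psi \in L^\infty_{\loc}$. Since $V$ is smooth, this forces $V e^\psi \in C^{0,\alpha}_{\loc}$, so Schauder theory upgrades $\psi$ to $C^{2,\alpha}_{\loc}$; iterating this cycle (each round of regularity for $\psi$ pushes $V e^\psi$ up by the same amount) concludes that $\psi \in C^\infty$ near $x_0$. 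As $x_0$ was arbitrary, $\psi \in C^\infty(\R^2)$. The main obstacle is precisely the first step: extracting any integrability of $e^\psi$ beyond the initial $L^1_{\loc}$ hypothesis. This is exactly what Brezis--Merle supplies, the crucial point being that it is only after localizing to balls on which the $L^1$ mass of $V e^\psi$ lies below the universal threshold $4\pi/p$ that the exponential estimate becomes operative; all subsequent improvements are purely standard once that first integrability gain is secured.
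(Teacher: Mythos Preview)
Your proof is correct and follows a genuinely different route from the paper in the initial integrability step. You decompose $\psi = w + h$ with $w$ the Newtonian potential of the localized source and $h$ harmonic, then invoke the Brezis--Merle inequality directly (applied to the source $Ve^\psi$, which is in $L^1_{\loc}$ by hypothesis) to obtain $e^{p|w|} \in L^1$ on sufficiently small balls; local boundedness of $h$ then yields $e^{p\psi} \in L^1_{\loc}$. The paper instead builds this exponential integrability from scratch: it mollifies $\psi$, solves an auxiliary Dirichlet problem, tests against a truncation $T_k$, and applies the Trudinger--Moser inequality followed by a layer-cake/Markov argument to arrive at $e^{2|\psi|} \in L^1_{\loc}$. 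Both proofs then bootstrap identically via Calder\'on--Zygmund and Schauder theory. Your approach is more economical and transparent, leaning on Brezis--Merle as a black box; the paper's approach is more self-contained and in effect reproves (a version of) the Brezis--Merle estimate via Trudinger--Moser, at the cost of a longer argument.
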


\begin{proof}
We first prove that the r.h.s.\ of \eqref{eq:L-regularity} is actually in $L^2_\loc$.
Consider a smooth non-negative regularization
$$
\phi_{\varepsilon}(\bx) =  \varepsilon^{-2}\phi(\varepsilon^{-1}\bx),
$$ for $\bx \in \R^2$ and $\varepsilon>0$, where $\phi \in C^{\infty}_c(\R^2;\R_+)$ such that $\int_{\R^2} \phi=1$. Then, $\psi_{\varepsilon}:= \phi_{\varepsilon} \ast \psi$ satisfies 
\begin{align*}
-\Delta \psi_{\varepsilon} = \phi_{\varepsilon} \ast (V e^{\psi})
\end{align*}
strongly in $\R^2.$ Now, let $B \subset \R^2$ be an arbitrary (finite) ball and let $\Tilde{\psi}_{\varepsilon} \in W^{1,2}_0(B;\R)$ 
satisfy the Dirichlet boundary-value problem
\begin{equation}
\label{eq:Dirichletequation}
\begin{aligned}
-\Delta \Tilde{\psi}_{\varepsilon} &= \phi_{\varepsilon} \ast (V e^{\psi}), && \textup{in } B,\\
\Tilde{\psi}_{\varepsilon} &=0, && \textup{on } \partial B.
\end{aligned}
\end{equation}
Then, by standard elliptic regularity (see \cite[Chapter 9]{GilTru-01}), $\Tilde{\psi}_{\varepsilon} \in C^{\infty}(B;\R).$ Moreover, 
\begin{align*}
-\Delta (\Tilde{\psi}_{\varepsilon} - \psi_{\varepsilon}) = 0,\quad \textup{in } B, 
\end{align*}
and by the mean value property of harmonic functions, we have 
\begin{align}
\label{eq:harmonicfunctionestim}
\|\Tilde{\psi}_{\varepsilon} - \psi_{\varepsilon}\|_{L^{\infty}\left(\frac{1}{2}B\right)} \leq \frac{1}{|B|} \|\Tilde{\psi}_{\varepsilon} - \psi_{\varepsilon}\|_{L^1(B)}.
\end{align}
Hence, to obtain regularity properties of $\psi_{\varepsilon}$ and $\psi$, we need to derive uniform regularity estimates on $\Tilde{\psi}_{\varepsilon}$.
To estimate $\Tilde{\psi}_{\varepsilon}$, we define the truncation function
\begin{align*}
T_k(\Tilde{\psi}_{\varepsilon})(\bx) := \begin{cases}
\Tilde{\psi}_{\varepsilon}(\bx), \quad &|\Tilde{\psi}_{\varepsilon}(\bx)| \leq k,\\
k \sign(\Tilde{\psi}_{\varepsilon}(\bx)), \quad &|\Tilde{\psi}_{\varepsilon}(\bx)| > k,
\end{cases}
\end{align*}
for every $k>0.$ Now, let us fix $k>0$ which will be determined later. Then, $T_k(\Tilde{\psi}_{\varepsilon}) \in W^{1,2}_0(B;\R)$ by standard properties of Sobolev functions; see \cite[Chapter~4]{EvaGar-92}. Using $T_k(\Tilde{\psi}_{\varepsilon})$ as a test function in \eqref{eq:Dirichletequation}, we derive
\begin{align}
\label{eq:w12estimateontruncation}
\int_{B} |\nabla T_k(\Tilde{\psi}_{\varepsilon})|^2 \leq k C(B,\varepsilon),
\end{align}
where $C(B,\varepsilon) := \int_{B} \phi_{\varepsilon}\ast \left(V e^{\psi}\right)$ for every $\varepsilon>0.$ Then, by the Trudinger--Moser inequality \cite{Moser-71,Trudinger-67}
(see also \cite[Chapter~2.4]{Tarantello-08}), we have
\begin{align}
\label{eq:expoentialestimate}
\int_{B} e^{4\pi \frac{|T_k(\Tilde{\psi}_{\varepsilon})|^2}{kC(B,\varepsilon)}} \leq C |B|,
\end{align}
where $C$ is a constant independent of $B,\varepsilon,k.$ Now, we use \eqref{eq:expoentialestimate} to prove $L^p$-bounds on $e^{|\Tilde{\psi}_{\varepsilon}|}$ for every $p \geq 1.$ 
Define the set 
\begin{align*}
A_{\varepsilon,k} := \left\{\bx \in B: \left|\Tilde{\psi}_{\varepsilon}(\bx)\right| \geq k\right\}.
\end{align*}
Then, by \eqref{eq:expoentialestimate} and Markov's inequality, we obtain
\begin{align*}
|A_{\varepsilon,k}| \leq e^{-\frac{4\pi}{C(B,\varepsilon)} k} \int_{B} e^{4\pi \frac{|T_k(\Tilde{\psi}_{\varepsilon})|^2}{kC(B,\varepsilon)}} \leq C |B| e^{-\frac{4\pi}{C(B,\varepsilon)} k}.
\end{align*}
Hence, for every $0<\alpha <\frac{4\pi}{C(B,\varepsilon)}$, we have, by the layer-cake representation,
\begin{align*}
\int_{B} e^{\alpha |\Tilde{\psi}_{\varepsilon}|} &=  \int_{\{\bx \in B: |\Tilde{\psi}_{\varepsilon}(\bx)| \leq 1\}} e^{\alpha |\Tilde{\psi}_{\varepsilon}|} + \int_{\{\bx \in B: |\Tilde{\psi}_{\varepsilon}(\bx)| > 1\}} e^{\alpha |\Tilde{\psi}_{\varepsilon}|}
\\ & \leq |B| (e^{\alpha}+1) + \alpha \int_{1}^{\infty}  e^{\alpha \lambda} |A_{\varepsilon,\lambda}| \,\dd\lambda
\\ & \leq |B| (e^{\alpha}+1) + C |B|\alpha\int_{1}^{\infty}  e^{\left(\alpha-\frac{4\pi}{C(B,\varepsilon)}\right) \lambda}   \,\dd\lambda 
\\ & \leq |B| (e^{\alpha}+1) + \frac{ C |B|\alpha }{ \left(\frac{4\pi}{C(B,\varepsilon)}-\alpha\right)} e^{\left(\alpha-\frac{4\pi}{C(B,\varepsilon)}\right) }.
\end{align*}
Now, note that we have $\frac{4\pi}{C(B,\varepsilon)} \to \infty$ if $|B|+ \varepsilon \to 0$. Therefore, for every $\alpha >0$ there exist $\varepsilon>0$ and a ball $B$ small enough, depending on $\alpha$, such that
\begin{align*}
\int_{B} e^{\alpha |\Tilde{\psi}_{\varepsilon}|} 
\leq |B| (e^{\alpha}+1) + \frac{ C |B|\alpha }{ \left(\frac{4\pi}{C(B,\varepsilon)}-\alpha\right)} e^{\left(\alpha-\frac{4\pi}{C(B,\varepsilon)}\right) } < \infty,
\end{align*}
Let us take $B,\varepsilon$ small enough such that $\alpha =2 <\frac{4\pi}{C(B,\varepsilon)}$. Then,
\begin{align*}
\int_{B} e^{2 |\Tilde{\psi}_{\varepsilon}|}  
\leq |B| (e^{2}+1) + \frac{ 2 C |B|}{ \left(\frac{4\pi}{C(B,\varepsilon)}-2\right)} e^{\left(2-\frac{4\pi}{C(B,\varepsilon)}\right) }.
\end{align*}
Letting $\varepsilon \to 0$ and using \eqref{eq:harmonicfunctionestim}, together with Fatou's lemma, we have 
\begin{align}
\label{eq:L2boundonexpoent}
\int_{B} e^{2 |\psi|} < \infty,
\end{align}
by taking $B$ small enough. In fact, by a covering argument, \eqref{eq:L2boundonexpoent} holds for any (finite) ball $B \subset \R^2$. 
Hence, the r.h.s. of \eqref{eq:L-regularity} is in $L^2(B)$.
By elliptic regularity (see \cite[Chapter~9]{GilTru-01}), we obtain $\psi \in W^{2,2}(B;\R)$, and by Sobolev embedding, $\psi \in L^\infty_\loc$.
In conclusion, by a standard bootstrap argument 
we arrive at $\psi \in C^{\infty}(\R^2;\R)$.
\end{proof}

\begin{lemma}
\label{lem:magicalcalculation}
Let $P,Q$ be two complex polynomials and $z \in \C$ a point where $$|P(z)|^2 + |Q(z)|^2 >0.$$ Then,
\begin{equation}\label{eq:magicalcalculation}
\frac{1}{4}\Delta \log(|P|^2 + |Q|^2)(z) = \frac{|P'(z)Q(z) -P(z) Q'(z)|^2}{(|P(z)|^2 + |Q(z)|^2)^2}.
\end{equation}
\end{lemma}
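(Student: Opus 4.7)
The identity is local and purely algebraic once we restrict to a neighborhood of $z$ where $u:=|P|^2+|Q|^2=P\bar P+Q\bar Q>0$, so $\log u$ is smooth there. My plan is to carry out the computation in the Wirtinger calculus, using that $\Delta=4\partial_z\partial_{\bar z}$ where
$$\partial_z=\tfrac{1}{2}(\partial_x-\ii\partial_y),\qquad \partial_{\bar z}=\tfrac{1}{2}(\partial_x+\ii\partial_y),$$
and then apply a Lagrange-type algebraic identity to recognize the Wronskian on the right-hand side.

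First, I would record the holomorphicity of $P,Q$ in the Wirtinger form: $\partial_{\bar z}P=\partial_{\bar z}Q=0$, $\partial_z\bar P=\partial_z\bar Q=0$, while $\partial_zP=P'$ and $\partial_{\bar z}\bar P=\overline{P'}$, and similarly for $Q$. This immediately gives
$$\partial_z u=P'\bar P+Q'\bar Q,\qquad \partial_{\bar z} u=\overline{\partial_z u}=P\overline{P'}+Q\overline{Q'},\qquad \partial_z\partial_{\bar z}u=|P'|^2+|Q'|^2.$$

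Next, I would apply the chain rule to $\log u$ in the form
$$\partial_z\partial_{\bar z}\log u=\frac{\partial_z\partial_{\bar z}u}{u}-\frac{(\partial_z u)(\partial_{\bar z} u)}{u^2}=\frac{u\bigl(|P'|^2+|Q'|^2\bigr)-|P'\bar P+Q'\bar Q|^2}{u^2},$$
so that $\tfrac14\Delta\log u$ equals the right-hand fraction above. It then remains to prove the algebraic identity
$$\bigl(|P|^2+|Q|^2\bigr)\bigl(|P'|^2+|Q'|^2\bigr)-\bigl|P'\bar P+Q'\bar Q\bigr|^2=|P'Q-PQ'|^2,$$
which I would verify by expanding both sides: the ``diagonal'' terms $|P'|^2|P|^2$ and $|Q'|^2|Q|^2$ cancel on the left, leaving $|P'|^2|Q|^2+|P|^2|Q'|^2$ minus the cross-terms $P'Q\bar P\overline{Q'}+PQ'\bar{P'}\bar Q$, which is precisely the expansion of $(P'Q-PQ')\overline{(P'Q-PQ')}$ on the right. (This is a special case of the Lagrange identity $(|a|^2+|b|^2)(|c|^2+|d|^2)-|\bar ac+\bar bd|^2=|ad-bc|^2$ applied to $a=P,b=Q,c=\overline{P'},d=\overline{Q'}$.)

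The computation has no real obstacle; the only point that needs care is keeping track of which factors are conjugates in the Wirtinger derivatives and matching the cross-terms correctly when expanding the Lagrange identity. Multiplying the final expression by $4$ gives \eqref{eq:magicalcalculation}.
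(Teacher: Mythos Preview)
Your proof is correct and follows essentially the same route as the paper: both compute $\tfrac14\Delta\log u=\partial_{\bar z}\partial_z\log u$ via the Wirtinger calculus and the quotient rule, arriving at $\frac{u(|P'|^2+|Q'|^2)-|P'\bar P+Q'\bar Q|^2}{u^2}$, with the final algebraic simplification to $|P'Q-PQ'|^2/u^2$ (which you spell out more explicitly than the paper does). One small slip: in your parenthetical Lagrange identity the substitution should be $a=P',\,b=Q',\,c=P,\,d=Q$ (so that $\bar ac+\bar bd=\overline{P'}P+\overline{Q'}Q$ is the conjugate of $P'\bar P+Q'\bar Q$ and $ad-bc=P'Q-Q'P$), but your direct expansion just before it is correct and already suffices.
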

\begin{proof}
Note that $|P|^2 + |Q|^2>0$ in a neighborhood around the point $z$. Hence,
\begin{align*}
\text{l.h.s. of } \eqref{eq:magicalcalculation}  
& = \partial_{\bar{z}}\partial_z \log (|P(z)|^2+|Q(z)|^2) \\
& = \partial_{\bar{z}} \frac{P'(z)\overline{P(z)} + Q'(z)\overline{Q(z)}}{|P(z)|^2+|Q(z)|^2} \\
&= \frac{|P'(z)|^2 + |Q'(z)|^2}{|P(z)|^2+|Q(z)|^2}
- \frac{(P'(z)\overline{P(z)} + Q'(z)\overline{Q(z)})(P(z)\overline{P'(z)} + Q(z)\overline{Q'(z)})}{(|P(z)|^2+|Q(z)|^2)^2} \\
&= \text{r.h.s. of } \eqref{eq:magicalcalculation},
\end{align*}
which completes the proof.
\end{proof}

\subsection{Proof of Theorem~\ref{thm:liouville}}

The proof of our first main theorem is divided into three parts, namely concerning the sufficient and necessary form of the general solutions and the symmetry of the solutions.

\subsubsection{Sufficient form of solutions}

Let $f, P,Q$ be non-zero complex polynomials, which satisfy both $\gcd(P,Q)=1$ (coprime) and $f = P' Q - P Q'$. Since $P$ and $Q$ are non-zero and $\gcd(P,Q)=1$, we have $|P|^2 + |Q|^2 >0$ in $\R^2$. 
Hence,
$\frac{|P'Q - P Q'|^2}{(|P|^2+ |Q|^2)^2}$ is a smooth function. Moreover, by
\Cref{lem:degreeofwronskian} 
we have 
\begin{align*}
\frac{|P'(z)Q(z) - P(z) Q'(z)|^2}{(|P(z)|^2+ |Q(z)|^2)^2} \leq C |z|^{-4},
\end{align*}    
for large enough $z \in \C$ and a constant $C>0$, which depends on the degrees and coefficients of $P,Q$. In conclusion,
\begin{align*}
\frac{|P'(z)Q(z) - P(z) Q'(z)|^2}{(|P(z)|^2+ |Q(z)|^2)^2} \in L^1(\R^2).  
\end{align*}

Now, we prove that 
\begin{align*}
\psi = \psi_{P,Q} :=  \log(8) - 2\log(|P|^2 + |Q|^2)
\end{align*}
satisfies  $\int_{\R^2} |f|^2 e^{\psi} < \infty$ and $-\Delta \psi = |f|^2 e^{\psi}$ strongly in $\R^2.$ By definition of $\psi$, we have
\begin{align*}
\int_{\R^2} |f|^2 e^{\psi} =  8 \int_{\R^2} \frac{|P' Q -P Q'|^2}{(|P|^2 + |Q|^2)^2} < \infty.
\end{align*}
Moreover, by \Cref{lem:magicalcalculation}, we have 
\begin{align*}
& -\frac{1}{8} \Delta \psi = \frac{1}{4}\Delta \log (|P|^2+|Q|^2)   
= \frac{1}{8} |f|^2 e^{\psi},
\end{align*}
which proves that $-\Delta \psi = |f|^2 e^{\psi}$ in $\R^2$.

Finally, by the Gauss--Green theorem, we have 
\begin{align*}
& \frac{\int_{\R^2} |f|^2 e^{\psi}}{8\pi} = -\frac{1}{8\pi  } \lim_{R \to \infty} \int_{B(0,R)} \Delta \psi
= -\frac{1}{8 \pi} \lim_{R \to \infty} \int_{\partial B(0,R)} \nabla \psi \cdot \bn \,|\dd z| \\
&= \frac{1}{8 \pi} \lim_{R \to \infty} \int_{\partial B(0,R)} 4\frac{(\Re( P(z)\overline{P'(z)} + Q(z)\overline{Q'(z)}), \Im(P(z)\overline{P'(z) }+Q(z)\overline{Q'(z)})) \cdot \bn}{|P(z)|^2 + |Q(z)|^2}\, |\dd z|
\\ &=\max(\deg(P),\deg(Q)),
\end{align*}
where $\bn$ is the \emph{outward} unit normal vector on $\partial B(0,R)$.
This completes the first part of the theorem.
\qed

\subsubsection{Necessary form of solutions}
In this step, we prove the general form of solutions to the generalized Liouville equation \eqref{eq:Liouville-psi}. The proof is divided into three parts:
\begin{itemize}
    \item We construct local meromorphic solutions (sections) around every point.
    \item We use M\"obius transformations to patch such local solutions to a global one.
    \item We study its behavior at infinity.
\end{itemize}

\smallskip

Let $f\colon \C \to \C$ be a nonzero polynomial and 
$\psi \in L^1_\loc(\R^2;\R)$ 
be a weak solution of 
\begin{align*}
-\Delta \psi = |f|^2 e^{\psi},
\end{align*}
in $\R^2$, where $\int_{\R^2} |f|^2 e^{\psi}< \infty$. Then, by \Cref{lem:apriorireg}, we obtain that $\psi \in C^{\infty}(\R^2).$ Now, define the new function
\begin{align}
\label{eq:newfunction}
\phi := \log(|f|^2) + \psi.
\end{align}
Then,
\begin{align*}
-\Delta \phi = e^{\phi} - \sum_{j=1}^{k} 4\pi n_j \delta_{z_j},
\end{align*}
in $\R^2$ and $\int_{\R^2} e^{\phi} < \infty$, where $z_j$ are the roots of $f$ with multiplicities $n_j$ for $1 \leq j \leq k$ and $\delta_{z_j}$ is the unit Dirac mass at $z_j$. Then, by a generalization of Liouville's theorem on radially symmetric domains; see \cite[Corollary 3.4]{BriHouLei-05} and \cite[Theorem 3]{ChoWan-94}, we have the generalized Liouville representation 
\begin{align}\label{eq:liouvillelocalfomulation}
e^{\phi} = 8 \frac{|g_{D}'|^2}{(1+ |g_{D}|^2)^2},
\end{align}
on the set $D \setminus \{z_j\}_{j=1}^{\deg(f)}$ for every disk $D$ which contains (at its center) at most one root of $f$.
Further, either $g_{D}$ is meromorphic (if $D$ contains no root of $f$) or $g_{D}(z) = h_D(z) (z-z_i)^{\alpha_{D}}$ if $z_i \in D$, where $\alpha_D \in \R$ and $h_D$ is single-valued meromorphic on the set $D \setminus  \{z_j\}_{j=1}^{\deg(f)}$. Moreover, since $e^{\phi} \in L^1(\R^2)$, by the proof of \cite[Thm.~5]{ChoWan-94}, we have that such $h_{D}$ in above has in fact no essential singularity on $D$. Let $D$ be a disk such that $D \cap \{z_j\}_{j=1}^{\deg(f)} = z_i$ where $1 \leq i \leq n.$ Then, $g_{D} =\Tilde{h}_D (z-z_i)^{\alpha_{D}+k}$ where $k \in \Z$ and $\Tilde{h}_D $ is a holomorphic function in a neighborhood $z_i$, which is nonzero at $z_i$. Thus, by \Cref{lem:apriorireg},
\eqref{eq:newfunction}, and \eqref{eq:liouvillelocalfomulation}, we have 
\begin{align*}
O(1) + 2 n_i \log|z-z_i| = \phi = O(1) + 2(\alpha_D + k) \log|z-z_i|, 
\end{align*}
in a neighborhood of $z_i$. In conclusion, $\alpha_D = n_i-k \in \Z$ and $g_D$ is a meromorphic function in $D$. 

Now, we want to define a global meromorphic function by patching M\"obius transformations of functions $g_{D}$. 
Since there are finitely many roots of $f$, by taking an $\varepsilon>0$ small enough, we can assume that for each square $Q$ in $\R^2$ of side length $\varepsilon$ or smaller, there exists a meromorphic function $g_{Q}$ on $Q$ such that the Liouville representation
\begin{align}
\label{eq:liouvillefomulationcube}
e^{\phi} = 8 \frac{|g_{Q}'|^2}{(1+ |g_{Q}|^2)^2}
\end{align}
is valid in $Q$. Let $Q_1 := \left(-\frac{\varepsilon}{2},\frac{\varepsilon}{2}\right) \times \left(-\frac{\varepsilon}{2},\frac{\varepsilon}{2}\right)$. Define $g_1$ as the corresponding meromorphic function on $Q_1$. Now, we define the meromorphic functions $g_n$ on 
$$Q_n := \left(-\frac{n \varepsilon}{2},\frac{n \varepsilon}{2}\right) \times \left(-\frac{n \varepsilon}{2},\frac{n \varepsilon}{2}\right)$$ for $n>1$ by induction as follows: 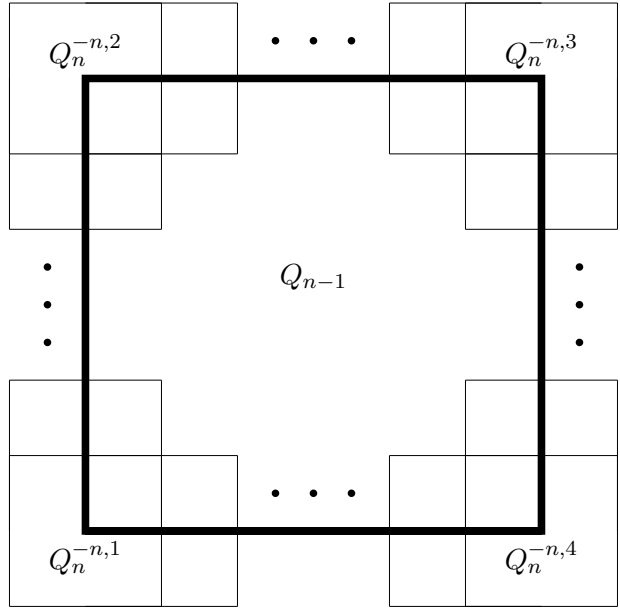
\begin{figure}
\centering
\begin{tikzpicture}
\node[above] at (4,7) {$Q^{-n,2}_n$};
\node[below] at (4,1) {$Q^{-n,1}_n$};
\node[above] at (10,7) {$Q^{-n,3}_n$};
\node[below] at (10,1) {$Q^{-n,4}_n$};
\node[circle,inner sep=1pt,fill=white,label=above:{$Q_{n-1}$}] at (7,4) {};
\draw[line width=1mm,black] (4,1) -- (10,1) -- (10,7) -- (4,7) -- (4,1) -- (10,1);
\draw[line width=0.1mm,black] (3,6) -- (3,8);
\draw[line width=0.1mm,black] (3,8) -- (5,8);
\draw[line width=0.1mm,black] (5,8) -- (5,6);
\draw[line width=0.1mm,black] (5,6) -- (3,6);
\draw[line width=0.1mm,black] (4,8) -- (6,8);
\draw[line width=0.1mm,black] (6,8) -- (6,6);
\draw[line width=0.1mm,black] (6,6) -- (4,6);
\draw[line width=0.1mm,black] (3,7) -- (3,5); 
\draw[line width=0.1mm,black] (3,5) -- (5,5);
\draw[line width=0.1mm,black] (5,5) -- (5,7);
\node[circle,inner sep=1pt,fill=black] at (6.5,7.5) {};
\node[circle,inner sep=1pt,fill=black] at (7,7.5) {};
\node[circle,inner sep=1pt,fill=black] at (7.5,7.5) {};
\node[circle,inner sep=1pt,fill=black] at (3.5,4.5) {};
\node[circle,inner sep=1pt,fill=black] at (3.5,4) {};
\node[circle,inner sep=1pt,fill=black] at (3.5,3.5) {};
\draw[line width=0.1mm,black] (3,0) -- (5,0);
\draw[line width=0.1mm,black] (5,0) -- (5,2);
\draw[line width=0.1mm,black] (5,2) -- (3,2);
\draw[line width=0.1mm,black] (3,2) -- (3,0);
\draw[line width=0.1mm,black] (3,1) -- (3,3);
\draw[line width=0.1mm,black] (3,3) -- (5,3);
\draw[line width=0.1mm,black] (5,3) -- (5,1);
\draw[line width=0.1mm,black] (4,0) -- (6,0);
\draw[line width=0.1mm,black] (6,0) -- (6,2);  
\draw[line width=0.1mm,black] (6,2) -- (4,2);  
\node[circle,inner sep=1pt,fill=black] at (6.5,1.5) {}; \node[circle,inner sep=1pt,fill=black] at (7,1.5) {}; \node[circle,inner sep=1pt,fill=black] at (7.5,1.5) {};
\draw[line width=0.1mm,black] (9,0) -- (11,0);
\draw[line width=0.1mm,black] (11,0) -- (11,2);
\draw[line width=0.1mm,black] (11,2) -- (9,2);
\draw[line width=0.1mm,black] (9,2) -- (9,0);
\draw[line width=0.1mm,black] (8,0) -- (10,0);
\draw[line width=0.1mm,black] (10,2) -- (8,2);
\draw[line width=0.1mm,black] (8,2) -- (8,0);
\draw[line width=0.1mm,black] (11,1) -- (11,3);
\draw[line width=0.1mm,black] (11,3) -- (9,3);
\draw[line width=0.1mm,black] (9,3) -- (9,1);
\draw[line width=0.1mm,black] (9,6) -- (11,6);
\draw[line width=0.1mm,black] (11,6) -- (11,8);
\draw[line width=0.1mm,black] (11,8) -- (9,8);
\draw[line width=0.1mm,black] (9,8) -- (9,6);
\draw[line width=0.1mm,black] (11,7) -- (11,5);
\draw[line width=0.1mm,black] (11,5) -- (9,5);
\draw[line width=0.1mm,black] (9,5) -- (9,7);
\draw[line width=0.1mm,black] (8,6) -- (10,6);
\draw[line width=0.1mm,black] (10,8) -- (8,8);
\draw[line width=0.1mm,black] (8,8) -- (8,6);
\node[circle,inner sep=1pt,fill=black] at (10.5,4.5) {};
\node[circle,inner sep=1pt,fill=black] at (10.5,4) {};
\node[circle,inner sep=1pt,fill=black] at (10.5,3.5) {};
\end{tikzpicture}
\caption{Construction of squares.}
\label{fig:squares}
\end{figure}
Define the squares
\begin{align*}
Q_n^{i,1} &:=\left( -\frac{n \varepsilon}{2},  -\frac{(n-2) \varepsilon}{2}\right) \times \left(\frac{i \varepsilon}{2},\frac{(i+2) \varepsilon}{2}\right), \\
Q_n^{i,2} &:= \left(\frac{i \varepsilon}{2},\frac{(i+2) \varepsilon}{2}\right) \times \left( \frac{(n-2) \varepsilon}{2},  \frac{n \varepsilon}{2}\right), \\
Q_n^{i,3} &:= \left( \frac{(n-2) \varepsilon}{2},  \frac{n \varepsilon}{2}\right) \times \left(-\frac{(i+2) \varepsilon}{2},-\frac{i \varepsilon}{2}\right), \\
Q_n^{i,4} &:= \left(-\frac{(i+2) \varepsilon}{2},-\frac{i \varepsilon}{2}\right) \times \left( -\frac{n \varepsilon}{2},  -\frac{(n-2) \varepsilon}{2}\right) 
\end{align*}
for every $-n \leq i < n-2$. 
Now, we construct meromorphic functions $g_n$ on $Q_n$ using induction.  Let $g_{n-1}$ be the constructed meromorphic function on $Q_{n-1}$, and assume that meromorphic $g_{i,j}$ are defined on $Q_n^{i,j}$ for integers $-n\leq i < n-2$, $1 \leq j \leq 4$ and satisfying \eqref{eq:liouvillefomulationcube}. Since there are finitely many points $\{z_l:1 \leq l \leq k \}$ in the non-empty convex set $Q^{i,j}_n \cap Q_{n-1}$, we can find a smaller square $\Tilde{Q}^{i,j}_n \subset Q^{i,j}_n \cap Q_{n-1}$ which does not contain any points in $\{z_l:1 \leq l \leq k \}$ and has the representation \eqref{eq:liouvillefomulationcube}. Hence, by uniqueness up to M\"obius transformations, \cite[Rem.~3]{BriLei-03}, we must have 
\begin{equation}\label{eq:liouvillesmallsquare}
    g_{n-1} = \frac{a_{i,j} g_{i,j}-\overline{c_{i,j}}}{c_{i,j} g_{i,j} + \overline{a_{i,j}}},
\end{equation}
in any such $\Tilde{Q}^{i,j}_n$ for every integers $-n\leq i < n-2,1 \leq j \leq 4$, where $a_{i,j}, c_{i,j} \in \mathbb{C}$ satisfy $|a_{i,j}|^2 +  |c_{i,j}|^2>0$. By the unique continuation property of meromorphic functions, we obtain that 
\eqref{eq:liouvillesmallsquare} holds
in any $ Q^{i,j}_n \cap Q_{n-1}$ for every integers $-n\leq i < n-2$, $1 \leq j \leq 4$. 
Thus, by redefining $g_{i,j}$ by 
\begin{align*}
\Tilde{g}_{i,j} := \frac{a_{i,j} g_{i,j}-\overline{c_{i,j}}}{c_{i,j} g_{i,j} + \overline{a_{i,j}}}, 
\end{align*}
on $Q_n^{i,j}$, we obtain meromorphic functions which are equal to $g_{n-1}$ in $Q_n^{i,j} \cap Q_{n-1}$ and satisfy
\begin{align*}
e^{\phi} = 8 \frac{|\Tilde{g}_{i,j} '|^2}{(1+ |\Tilde{g}_{i,j} |^2)^2},
\end{align*}
in $Q_n^{i,j}$ for every  $-n\leq i < n-2$, $1 \leq j \leq 4$. We need to show that the function $g_n$ defined as $g_{n-1}$ on $Q_{n-1}$ and $\Tilde{g}_{i,j}$ on $Q^{i,j}_n$ is the suitable candidate on $Q_n$, which only requires that $\Tilde{g}_{i_1,j_1}, \Tilde{g}_{i_2,j_2}$ coincide in the intersection of two different squares $Q^{i_1,j_1}_n,Q^{i_2,j_2}_n$. To prove this, we prove the claim that $Q^{i_1,j_1}_n \cap Q^{i_2,j_2}_n \cap Q_{n-1}$ is a non-empty open convex set if $Q^{i_1,j_1}_n \cap Q^{i_2,j_2}_n$ is non-empty.
By definition $Q_n^{i_1,j_1},Q_n^{i_2,j_2}$ are different and have non-empty intersection if and only if $j_1 = j_2$, $|i_1 -i_2| =1$ or $j_2 = j_1+1$, $i_1 = n-3,i_2 = -n+1$, where $Q_n^{i_1,5}:= Q_n^{i_1,1}$ for every $-n \leq i_1 \leq n-2.$ We prove the claim in one special case and the other cases follow by the same argument. Assume that $i_2=i_1+1$, $j_2 = j_1=1$. Then,
\begin{align*}
Q_n^{i_1,j_1} \cap Q_n^{i_2,j_2} \cap Q_{n-1} &= \left( -\frac{n \varepsilon}{2},  -\frac{(n-1) \varepsilon}{2}\right) \times \left(\frac{(i_1+1) \varepsilon}{2},\frac{(i_1+2) \varepsilon}{2}\right), 
\end{align*}
and $\Tilde{g}_{i_1,j_1} = \Tilde{g}_{i_2,j_2} = g_{n-1}$ on $Q_n^{i_1,j_1} \cap Q_n^{i_2,j_2} \cap Q_{n-1}.$ This completes the proof of the claim. Since $\Tilde{g}_{i_1,j_1}, \Tilde{g}_{i_2,j_2}$ are meromorphic functions which coincide on the subset $ Q_n^{i_1,j_1} \cap Q_n^{i_2,j_2} \cap Q_{n-1}$ of $Q_n^{i_1,j_1} \cap Q_n^{i_2,j_2},$ we imply that $\Tilde{g}_{i_1,j_1}$ and $ \Tilde{g}_{i_2,j_2}$ coincide in $Q_n^{i_1,j_1} \cap Q_n^{i_2,j_2}.$ Hence, $g_n$ is a well-defined meromorphic function on $Q_n$, which coincides with $g_{n-1}$ on $Q_{n-1}$ and satisfies 
\begin{align*}
e^{\phi} = 8 \frac{|g'_{n}|^2}{(1+ |g_{n}|^2)^2}
\end{align*}
on $Q_n$. This completes the construction of $g_n\colon Q_n \to \C$. Now, define $g\colon \C \to \C$ by $g= g_n$ on $Q_n$. Then, $g$ is a well-defined meromorphic function on $\C$ (a global section) and satisfies  
\begin{align}
\label{eq:meromorphicfun}
e^{\phi} = 8 \frac{|g'|^2}{(1+ |g|^2)^2}.
\end{align}

Now, we prove that the singularity of $g$ at $\infty$ is not essential (see also \cite[Lem.~3]{HorYer-98} for an alternative proof).
First, we note that $\int_{\R^2} |f|^2 e^{\psi} < \infty$ and $\psi \in C^{\infty}(\R^2)$ by \Cref{lem:apriorireg}. Then, $\int_{\R^2} (|f| ^2+1) e^{\psi} < \infty$. Hence,
by \cite[Thm.~2]{CheLi-93}, we have 
\begin{align*}
-C -\beta \log(|\bx|+1) \leq \psi(\bx) \leq - \beta \log(|\bx|+1) + C
\end{align*}
for every $\bx \in \R^2$ and some positive constants $C,\beta$ such that $\beta > 2\deg(f)+2$.
In conclusion, 
\begin{align}\label{eq:derivativearctangent}
|\nabla \arctan(|g|)|(\bx)	=	\frac{|\nabla |g(\bx)|\, |}{1+ |g(\bx)|^2} \leq  \frac{|g'(\bx)|}{1+ |g(\bx)|^2} =  \sqrt{\frac{1}{8}} e^{\frac{\phi(\bx)}{2}} = \sqrt{\frac{1}{8}} |f(\bx)| e^{\frac{\psi(\bx)}{2}}  \leq \frac{\Tilde{C}}{|\bx|^{\alpha+1}},
\end{align}
for every $|\bx| >1,$ which does not belong to the zeros or poles of $g,$ and positive constants $\Tilde{C}$ and $\alpha$. Now, take $R>1$ such that there exists no zero or pole of $g$ on the set $\partial B(0,R)$. First, by using the polar coordinate description of 
$\nabla = \be_r \frac{\partial}{\partial r} + \be_\theta \frac{1}{r} \frac{\partial}{\partial \theta}$
and integrating \eqref{eq:derivativearctangent} from $\theta_1 \in \R$ to $\theta_2 \in \R$ (mod $2\pi$), we obtain
\begin{align*} 
  \frac{1}{R} \biggl|  \int_{\theta_1}^{\theta_2} \partial_{\theta} \arctan(|g|)(R e^{i \theta}) \, \dd \theta \biggr| \leq \frac{2 \pi \Tilde{C}}{R^{\alpha+1}}.
\end{align*}
Hence,
\begin{align}\label{eq:thetaintegration}
    \left| \arctan(|g|)(R e^{i \theta_2}) - \arctan(|g|)(R e^{i\theta_1}) \right| \leq \frac{2 \pi \Tilde{C}}{R^{\alpha}},
\end{align}
for every $\theta_1,\theta_2 \in \R.$ Now, if $|g(\bx)| > \frac{1}{2}$ for every $\bx \in \R^2$ large enough, then by Casorati--Weierstrass theorem, we obtain that $g$ has no essential singularity at $\infty$ and hence it is a rational polynomial. Thus, we can restrict ourselves to the case that for every $M>0$ there exist $r >M$ and $\theta  \in \R$ such that 
\begin{align*}
    |g(r e^{i \theta})| \leq \frac{1}{2}.
\end{align*}
Now, by using \eqref{eq:thetaintegration}, we derive that for every $M>0$ there exists $r >M$ such that 
\begin{align}
\label{eq:smallaboslute}
    |g(r e^{i \theta})| < 1,
\end{align}
for every $\theta \in \R.$ Hence, we can choose $R$ large enough such that there exists no root or pole of $g$ on $\partial B(0,R)$ and 
\begin{align}
\label{eq:conditiononarctan}
  \arctan(|g|)(R e^{i \theta}) + \frac{2 \Tilde{C} \alpha}{R^{\alpha}}\leq \frac{\pi}{3}.
\end{align} for every $\theta \in \R$.
Now, by integrating \eqref{eq:derivativearctangent} from $R$ to $r>R$ and using \eqref{eq:conditiononarctan}, it is obtained that
\begin{align*}
 0 \leq \arctan(|g(r e^{{\rm i}\theta})|) \leq \arctan(|g(R e^{{\rm i}\theta})|) +\Tilde{C} \alpha \left( \frac{1}{R^{\alpha }} - \frac{1}{r^{\alpha }}\right) \leq \frac{\pi}{3},
\end{align*}
for every $r >R$ and real $\theta.$ By taking the tangent of both sides, we derive that
\begin{align*}
|g(r e^{{\rm i}\theta})| \leq \frac{|g(R e^{{\rm i}\theta})| + \tan\left( \Tilde{C} \alpha \left( \frac{1}{R^{\alpha }} - \frac{1}{r^{\alpha}} \right) \right)}{1-|g(R e^{{\rm i}\theta})| \tan\left(\Tilde{C} \alpha \left( \frac{1}{R^{\alpha }} - \frac{1}{r^{\alpha}} \right) \right)}.
\end{align*}
In conclusion, 
\begin{align*}
\lim_{r \to \infty} \sup_{\theta} \frac{ |g(r e^{{\rm i}\theta})|}{r^{\alpha}} =0,
\end{align*}
which implies that $g$ has at most pole type singularity at infinity. Hence, $g = \frac{P}{Q}$ for two non-zero and linearly independent polynomials $P,Q$ such that $\gcd(P,Q)=1$.

Finally, we use the equation for $g$ to derive the properties of $P,Q$. By \eqref{eq:meromorphicfun}, we have 
\begin{align*}
\frac{|P' Q - Q' P|}{|P|^2 + |Q|^2} = \sqrt{\frac{1}{8}} e^{\frac{\psi}{2}} |f|. 
\end{align*}
Hence, by considering the roots, we may identify
\begin{align*}
P' Q - Q' P = c f \quad \text{and} \quad
|P|^2 + |Q|^2 =\sqrt{8}\,  |c|\,  e^{-\frac{\psi}{2}}
\end{align*}
for a constant $c \in \C^\times$. Now, replacing $P$ and $Q$ by $\frac{c}{\sqrt{|c|}}P$ and $\sqrt{|c|}Q$, respectively, we obtain 
\begin{align*}
P' Q -  P Q' =  f, \quad \gcd(P,Q)= 1, \quad \text{and} \quad |P|^2 + |Q|^2 =  \sqrt{8} \,  e^{-\frac{\psi}{2}}.
\end{align*}
In conclusion,
\begin{align*}
\psi = \log( e^{\psi} ) = \log\left( \frac{8}{(|P|^2 + |Q|^2)^2} \right)= \log(8) - 2\log(|P|^2 + |Q|^2) = \psi_{P,Q},
\end{align*}
which proves the second part of the theorem.
\qed

\subsubsection{The symmetry space of the solutions}

In this subsection, we prove the final part of \Cref{thm:liouville}, i.e.\ $\sU(2)$-symmetry of the solutions to the generalized Liouville's equation. To complete the proof, we need to recall the connection between the Wronskian and ODEs.
We note that the space of $2 \times 2$ complex matrices $\C^{2 \times 2}$ acts on bipolynomials $\cP^2$ by $\Lambda (P,Q) (z) = \Lambda (P(z),Q(z))$ for every $z \in \C.$

\begin{proposition}
\label{prop:actiononwronksian}
    For every $\Lambda \in \C^{2 \times 2}$ and $(P,Q) \in \cP^2$, we have 
\begin{align*}
    W(\Lambda (P,Q)) = \det(\Lambda) W(P,Q).
\end{align*}
In particular, the action of the special linear group $\sSL(2) = \sSL(2;\C)$ on $\cP^2$ preserves the Wronskian.
\end{proposition}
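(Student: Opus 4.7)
The plan is to carry out a direct algebraic computation, writing $\Lambda = \begin{pmatrix} a & b \\ c & d \end{pmatrix}$ and expanding $W(aP+bQ,\,cP+dQ)$. The cross-terms involving $P'P$ and $Q'Q$ cancel because the Wronskian is antisymmetric, and what remains should collect into $(ad-bc)(P'Q - PQ') = \det(\Lambda) W(P,Q)$.

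More concretely, I would first note that $W$ is bilinear, so it suffices to apply $W$ to the pair of linear combinations coordinatewise:
\begin{align*}
W(aP+bQ,\,cP+dQ) &= (aP'+bQ')(cP+dQ) - (aP+bQ)(cP'+dQ') \\
&= ad(P'Q - PQ') + bc(Q'P - QP') \\
&= (ad-bc)(P'Q - PQ') = \det(\Lambda)\,W(P,Q),
\end{align*}
where the $acPP'$ and $bdQQ'$ terms drop out. The ``in particular'' assertion is then immediate since $\det(\Lambda) = 1$ for $\Lambda \in \sSL(2)$.

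I do not anticipate any real obstacle: this is a one-line identity that is essentially the statement that the Wronskian, viewed as an alternating bilinear form $W\colon \cP \times \cP \to \cP$, transforms by the determinant under a change of basis of the source $\C^2$. An equivalent conceptual phrasing is that $W(P,Q)$ equals the determinant of the $2 \times 2$ matrix $\begin{pmatrix} P' & Q' \\ P & Q \end{pmatrix}$, and post-composing $(P,Q)$ with $\Lambda$ multiplies this determinant by $\det(\Lambda)$ by multiplicativity. The only thing to be careful about is the sign convention in \Cref{def:wronskian} (we use $W(P,Q) = P'Q - PQ'$), but this is consistent throughout the expansion above.
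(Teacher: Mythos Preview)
Your proposal is correct and essentially matches the paper's proof. The paper compresses your direct expansion into the single observation that $W(\Lambda(P,Q)) = \det\bigl[\Lambda\begin{smallmatrix} P' & P \\ Q' & Q \end{smallmatrix}\bigr] = \det(\Lambda)\det\begin{smallmatrix} P' & P \\ Q' & Q \end{smallmatrix}$, which is precisely the ``equivalent conceptual phrasing'' you already mention at the end; your explicit bilinear expansion is just this determinant identity written out.
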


\begin{proof}
    The proof follows by noting that 
\begin{align*}
    W(\Lambda(P,Q)) 
    = \det \left[ \Lambda \begin{pmatrix}
P' & P \\
Q' & Q
\end{pmatrix} \right]
= \det(\Lambda) \det  \begin{pmatrix}
P' & P \\
Q' & Q
\end{pmatrix},
\end{align*}
for every $\Lambda \in \C^{2 \times 2}$ and $ (P,Q) \in \cP^2.$
\end{proof}

\begin{proposition}
\label{prop:Wronskiantrick}
Let $P,Q \in \cP$ and $f = W(P,Q).$ Then, $P,Q$ are solutions $y \in \cP$ to the ODE 
\begin{align*}
f y'' - f' y' + W(P',Q') \, y =0.
\end{align*}
\end{proposition}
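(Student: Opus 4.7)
The approach is to identify the left-hand side
\begin{equation*}
L(y) := f y'' - f' y' + W(P',Q')\,y
\end{equation*}
with a single $3\times 3$ determinant, from which the vanishing at $y = P$ and $y = Q$ follows instantly from the repeated-row rule. As a preparatory step I would compute
\begin{equation*}
f' = (P'Q - PQ')' = P''Q + P'Q' - P'Q' - PQ'' = P''Q - PQ'',
\end{equation*}
the cross-term $P'Q'$ cancelling, so that each of the three coefficients of $L$ is itself a $2\times 2$ Wronskian built from the rows $(P,P',P'')$ and $(Q,Q',Q'')$.

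The key observation is then the determinantal identity
\begin{equation*}
L(y) = -\det\begin{pmatrix} P & P' & P'' \\ Q & Q' & Q'' \\ y & y' & y'' \end{pmatrix},
\end{equation*}
which one verifies by Laplace expansion along the third row: the three resulting $2\times 2$ minors are precisely $P'Q'' - P''Q' = -W(P',Q')$, $PQ'' - P''Q = -f'$, and $PQ' - P'Q = -f$. Substituting $y = P$ (respectively $y = Q$) makes the first (respectively second) and third rows of this matrix coincide, forcing the determinant, and hence $L(y)$, to vanish identically. This proves the proposition.

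The only potential obstacle here is bookkeeping with the signs in the cofactor expansion, and there is no genuine difficulty. Conceptually, the proposition is an instance of Abel's formula: the unique second-order linear ODE $y'' + p y' + q y = 0$ admitting $P$ and $Q$ as linearly independent solutions must have $p = -f'/f$ and $q = W(P',Q')/f$, and the stated formulation simply clears the denominator $f$ so that the coefficients remain polynomials. This linearization is what will subsequently convert the nonlinear Wronskian constraint $W(P,Q) = f$ on the pair $(P,Q)$ into a linear spectral-type problem on a two-dimensional polynomial solution space, underpinning the $\sU(2)$-symmetry part of \Cref{thm:liouville}.
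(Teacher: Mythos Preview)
Your proof is correct. The paper takes the more pedestrian route of expanding $fP'' - f'P'$ directly: after observing $f' = P''Q - PQ''$ (as you do), it computes
\[
fP'' - f'P' = (P'Q - PQ')P'' - (P''Q - PQ'')P' = (P'P''Q - P''QP') + (PQ''P' - PQ'P'') = -W(P',Q')\,P,
\]
and then says ``likewise for $Q$''. Your determinantal packaging $L(y) = -\det\bigl(\begin{smallmatrix} P & P' & P'' \\ Q & Q' & Q'' \\ y & y' & y'' \end{smallmatrix}\bigr)$ is a genuinely nicer way to organize the same algebra: it makes the symmetry in $P$ and $Q$ manifest, replaces two separate computations by a single repeated-row observation, and transparently identifies the statement as the classical fact that the third-order Wronskian of $P,Q,y$ vanishes iff $y$ lies in the span of $P,Q$ (when $W(P,Q)\neq 0$). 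Your closing remark about Abel's identity also anticipates exactly how the paper uses this proposition downstream.
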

\begin{proof}
If $f = W(P,Q),$ then by taking a derivative we arrive at $f' = P'' Q - P Q''.$ Hence,
\begin{align*}
f P'' - f' P' =  (P' Q -P Q') P'' -(P'' Q - P Q'') P' =    W(Q',P')\, P.
\end{align*}
Likewise, we derive that $W(Q',P')\, Q = f Q'' - f' Q'$, which completes the proof.
\end{proof}

\begin{lemma}
\label{lem:Wronskian-ODEequivalence}
Let $f \in \cP $ be non-zero. Then, up to multiplication by constants, all the polynomial solutions $P,Q$ to  
\begin{align*}
W(P,Q) = f,
\end{align*}
are equivalent (up to constant factors) to two linearly independent polynomial solutions $y$ of the ODE
\begin{align*}
f y'' - f' y' + R y=0, 
\end{align*}
where $R \in \cP$.
\end{lemma}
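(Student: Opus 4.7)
The strategy is to prove two directions, establishing a correspondence (modulo rescaling) between pairs $(P,Q)$ with $W(P,Q)=f$ and pairs of linearly independent polynomial solutions of an ODE of the stated form, in which only the coefficient $R \in \cP$ is allowed to vary.

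For the forward direction, given $(P,Q) \in \cP^2$ with $W(P,Q)=f \neq 0$, I would invoke \Cref{prop:Wronskiantrick} directly: it gives that $P$ and $Q$ are both solutions of $f y'' - f' y' + R y = 0$ with the explicit polynomial choice $R := W(P',Q') \in \cP$. Linear independence of $P$ and $Q$ is then immediate from \Cref{lem:degreeofwronskian}\labelcref{itm:wronsk-nonzero}, since $W(P,Q) = f \neq 0$ forces $P,Q$ to be linearly independent.

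For the converse, suppose $y_1, y_2 \in \cP$ are linearly independent solutions of
\begin{equation*}
    f y'' - f' y' + R y = 0
\end{equation*}
for some $R \in \cP$. I would compute the derivative of the Wronskian: a short calculation gives $W(y_1,y_2)' = y_1'' y_2 - y_1 y_2''$. Substituting $f y_i'' = f' y_i' - R y_i$ from the ODE yields
\begin{equation*}
    f \, W(y_1,y_2)' = (f' y_1' - R y_1) y_2 - y_1(f' y_2' - R y_2) = f' \, W(y_1,y_2).
\end{equation*}
This is the polynomial analogue of Abel's identity: it says $(W(y_1,y_2)/f)' = 0$ as rational functions on $\C$ away from the zeros of $f$, hence $W(y_1,y_2) = c f$ for some constant $c \in \C$. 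Linear independence of $y_1,y_2$ together with \Cref{lem:degreeofwronskian}\labelcref{itm:wronsk-zero} gives $W(y_1,y_2) \neq 0$, so $c \in \C^\times$. Replacing $(y_1,y_2)$ by $(y_1/c, y_2)$, which only rescales one entry by a nonzero constant, produces a pair of linearly independent polynomial solutions of the \emph{same} ODE with $W(y_1,y_2)=f$. This rescaling freedom is exactly the ``up to constant factors'' qualifier in the statement, so the two descriptions match.

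The only genuinely delicate point is the Abel identity step, since it requires the substitution from the ODE to be legitimate on all of $\C$ even where $f$ vanishes; this is fine because both sides of $f\, W' = f' W$ are polynomials, and the identity of polynomials extends from the cofinite set $\{f \neq 0\}$ by continuity. Everything else is essentially bookkeeping built on \Cref{prop:Wronskiantrick} and \Cref{lem:degreeofwronskian}.
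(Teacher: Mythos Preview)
Your proposal is correct and follows essentially the same route as the paper: both directions rest on \Cref{prop:Wronskiantrick} for the forward implication and on Abel's identity for the converse, with \Cref{lem:degreeofwronskian} supplying the linear-independence bookkeeping. Your treatment is in fact slightly more careful than the paper's on two minor points: you explicitly justify extending the identity $f\,W' = f'\,W$ across the zeros of $f$ by polynomial continuity, and your rescaling $(y_1/c,y_2)$ is the correct one (the paper writes $(CP,Q)$, which is a small slip).
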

\begin{proof}
Consider the first implication and assume that $(P,Q) \in \cP^2$ satisfies $W(P,Q) = f$.
Then, by \Cref{prop:Wronskiantrick}, we obtain that $P$ and $Q$ are solutions $y \in \cP$ of 
\begin{align*}
f y'' - f' y' + R y=0,
\end{align*}
where $R= W(P',Q').$ 

For the other implication, consider three polynomials $P,Q,R$ which satisfy 
\begin{align*}
f y'' - f' y' + R y=0, \qquad y \in \{P,Q\},
\end{align*}
where $P$ and $Q$ are linearly independent.
Then, by Abel's identity (away from the zeros of $f$ and $W(P,Q)$), we have 
\begin{align*}
\frac{W(P,Q)'}{W(P,Q)} = \frac{f'}{f} 
\quad \Leftrightarrow \quad 
P' Q -P Q' = C e^{\int \frac{f'}{f}} = C f,
\end{align*}
where $C \in \C^\times$.
Hence, replacing the pair $(P,Q)$ by $(C P,Q)$ completes the proof.
\end{proof}

\begin{lemma}
\label{lem:degreecondi}
Let $R,f \in \cP$ and $R \neq 0$. Assume that the ODE
\begin{align}
\label{eq:odeformat}
f y'' - f' y' +Ry =0
\end{align}
has at least two linearly independent polynomial solutions.
Then, all the solutions of \eqref{eq:odeformat}
are of the form $\lambda P + \mu Q$, where $\lambda,\mu \in \C$ and $P,Q$ are two linearly independent polynomial solutions to \eqref{eq:odeformat}, which satisfy
\begin{align*}
\deg(P) > \deg(Q) \quad \text{and} \quad 
\deg(P) +\deg(Q) = \deg(f) +1.
\end{align*}

\end{lemma}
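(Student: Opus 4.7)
The plan is to proceed in three steps: establish the dimensionality of the polynomial solution space, reduce to a basis of distinct degrees, and then pin down the degree sum via the Wronskian.

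First, I would show that the space of polynomial solutions of \eqref{eq:odeformat} has dimension exactly two. Since $f$ is a nonzero polynomial, it has only finitely many zeros $z_1, \ldots, z_k$, and the ODE is a regular second-order linear ODE on the connected open set $\Omega := \C \setminus \{z_1, \ldots, z_k\}$. By Picard--Lindel\"of, its holomorphic solution space on $\Omega$ is two-dimensional. Polynomial solutions inject into this space --- and two polynomials are linearly independent as polynomials if and only if they are linearly independent as holomorphic functions on $\Omega$, by the identity principle --- so the polynomial solution space has dimension at most two; by hypothesis it equals two. Hence, for any basis $(P,Q)$ of polynomial solutions, every polynomial solution (indeed, every solution on $\Omega$ that happens to extend to an entire function) has the form $\lambda P + \mu Q$.

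Second, I would construct a basis $(P,Q)$ with $\deg P > \deg Q$. Given any two linearly independent polynomial solutions $P_0, Q_0$, if $\deg P_0 \neq \deg Q_0$ we are done after possibly swapping. Otherwise, let $c \in \C^\times$ be the ratio of leading coefficients of $Q_0$ and $P_0$; then $Q_0 - c P_0$ is a nonzero polynomial solution of strictly smaller degree than $P_0$, and still linearly independent from $P_0$. Iterating (or simply relabeling) yields the desired basis with $\deg P > \deg Q$.

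Third, I would apply the Wronskian machinery. By the argument in the second half of the proof of \Cref{lem:Wronskian-ODEequivalence} (Abel's identity $W(P,Q)'/W(P,Q) = f'/f$ away from the zeros of $f$), we have $W(P,Q) = C f$ for some $C \in \C^\times$. By \Cref{lem:degreeofwronskian}, specifically the intermediate identity \eqref{eq:degreeofwronskiandiff} which, after a monic rescaling of $P,Q$, applies to any pair of polynomials of distinct degrees, $\deg W(P,Q) = \deg P + \deg Q - 1$. Combining the two gives $\deg f = \deg P + \deg Q - 1$, i.e., $\deg P + \deg Q = \deg f + 1$, as desired.

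The subtlety, which I view as the main obstacle, lies in the first step: the ODE is singular precisely at the zeros of $f$, so the two-dimensionality of the global polynomial solution space is not a priori automatic. It must be inferred from the regular locus $\Omega$ via the identity principle, using crucially that polynomial solutions extend across the singular points as entire functions.
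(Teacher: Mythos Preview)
Your proof is correct and follows essentially the same route as the paper: reduce to a basis of distinct degrees, invoke Abel's identity (\Cref{lem:Wronskian-ODEequivalence}) to get $W(P,Q)=Cf$, read off the degree sum from \Cref{lem:degreeofwronskian}, and handle the ``all solutions'' clause via local ODE uniqueness plus analytic continuation. The only cosmetic difference is that the paper argues on a real interval $I\subset\R$ avoiding the zeros of $f$, whereas you work on the punctured plane $\Omega$; your phrasing ``holomorphic solution space on $\Omega$ is two-dimensional'' glosses over possible monodromy, but since polynomials are single-valued entire functions the injection into the \emph{local} two-dimensional solution space at any regular point suffices, and your argument goes through.
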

\begin{proof} First, note that if $f=0$, then $0$ is the only solution $y$ of \eqref{eq:odeformat}. Hence, $f \neq 0$.  
Let $P,Q$ be two linearly independent complex polynomial solutions of \eqref{eq:odeformat}. If $\deg(P) = \deg(Q)$ and highest power terms in $P,Q$ are $a z^{\deg(P)},b z^{\deg(Q)}$, respectively, then we can replace $Q$ by $ Q - \frac{b}{a} P$ which satisfies \eqref{eq:odeformat} and $\deg\left(Q - \frac{b}{a} P\right) < \deg(P).$ Hence, without loss of generality, we can assume that $P$ and $Q$ satisfy also $\deg(Q) < \deg(P)$ and are monic. Now, by \Cref{lem:Wronskian-ODEequivalence}, up to a multiplication by constants, we have $W(P,Q) = f$. 
In conclusion, by \Cref{lem:degreeofwronskian} and in particular
\eqref{eq:degreeofwronskian-monic}, we have 
\begin{align*}
    \deg(P) + \deg(Q) = \deg(f)+1.
\end{align*}
Let $y$ be any complex polynomial solution to \eqref{eq:odeformat} and  $I$ be an open interval in $\R$ which does not contain any roots of $f$. By uniqueness of solutions to ODEs, we have $y = \lambda P +\mu Q$ in $I$, where $\lambda, \mu \in \C$. By unique continuation of analytic functions, we derive that $y = \lambda P +\mu Q$ in $\C$, which completes the proof.
\end{proof}

\begin{remark}[Bound on degrees]
\label{rem:algorithmWronsk}
\Cref{lem:degreeofwronskian,lem:Wronskian-ODEequivalence,lem:degreecondi} imply that $\deg(R) \leq \deg(f)-2$ and $\deg(y) \leq \deg(f)+1$ in \eqref{eq:odeformat}. Hence, by finite steps, one can derive all the possible polynomial solutions to the polynomial ODE \eqref{eq:odeformat}.
\end{remark}

Finally, to complete the proof of \Cref{thm:magneticstability}, it is sufficient to prove the following lemma.
\begin{lemma}
\label{lem:symmetryofamplitude}
Let $P_i,Q_i \in \cP$
for $i=1,2$.  Then, $$|P_1|^2 + |Q_1|^2 = |P_2|^2 + |Q_2|^2,$$ if and only if there exists a constant $\Lambda \in \sU(2)$ such that $(P_2,Q_2) = \Lambda (P_1,Q_1)$.
\end{lemma}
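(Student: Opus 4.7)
My plan is to handle the easy ``if'' direction by direct pointwise verification, since any $\Lambda \in \sU(2)$ preserves the standard Hermitian norm on $\C^2$, and then to reduce the ``only if'' direction to building a single linear unitary $\Lambda$ sending the polynomial map $v_1 := (P_1, Q_1) \colon \C \to \C^2$ to $v_2 := (P_2, Q_2)$ pointwise.

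The first key step is polarization. The identity $|v_1(z)|^2 = |v_2(z)|^2$ is a polynomial identity in $z$ and $\bar z$; equating the coefficients of each monomial $z^a \bar z^b$ (or, equivalently, noting that the diagonal $\{(z, \bar z) : z \in \C\} \subset \C^2$ is a totally real submanifold of maximal real dimension, on which polynomial identities in two independent variables are already determined) promotes it to
\[
\langle v_1(z), v_1(w) \rangle_{\C^2} = \langle v_2(z), v_2(w) \rangle_{\C^2}, \quad \forall z, w \in \C,
\]
i.e.\ $P_1(z)\overline{P_1(w)} + Q_1(z)\overline{Q_1(w)} = P_2(z)\overline{P_2(w)} + Q_2(z)\overline{Q_2(w)}$.

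I would then define $\Lambda$ on $V_1 := \mathrm{span}\{v_1(z) : z \in \C\} \subseteq \C^2$ by $\Lambda v_1(z) := v_2(z)$, extended linearly. The polarized identity makes this well-defined and isometric onto $V_2 := \mathrm{span}\{v_2(z)\}$: if a finite linear combination $\sum_j c_j v_1(z_j)$ vanishes, then expanding $\bigl|\sum_j c_j v_2(z_j)\bigr|^2$ via the Gram data $\langle v_2(z_j), v_2(z_k)\rangle = \langle v_1(z_j), v_1(z_k)\rangle$ shows $\sum_j c_j v_2(z_j) = 0$ as well, and the same computation gives isometry. In particular $\dim V_1 = \dim V_2$. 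It then remains to extend $\Lambda$ to an element of $\sU(2)$: if $\dim V_1 = 2$ this is automatic, and if $\dim V_1 = 0$ then all four polynomials vanish and $\Lambda = I$ works.

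The main delicacy lies in the one-dimensional case $\dim V_1 = 1$: here $P_1, Q_1$ are linearly dependent, so $v_1 = h_1 u_1$ for some polynomial $h_1$ and unit vector $u_1 \in \C^2$, and similarly $v_2 = h_2 u_2$. From $|h_1(z)|^2 = |h_2(z)|^2$ on $\C$ I need to conclude $h_2 = e^{\ii\theta} h_1$ for some $\theta \in \R$, which I would obtain by comparing zero sets with multiplicity (forcing $h_2 = c\, h_1$ as polynomials) and reading off $|c| = 1$ at any regular point. Any unitary $\Lambda \in \sU(2)$ with $\Lambda u_1 = e^{\ii\theta} u_2$ then satisfies $\Lambda v_1 = v_2$, and such a $\Lambda$ exists since both $u_1$ and $e^{\ii\theta} u_2$ are unit vectors in $\C^2$.
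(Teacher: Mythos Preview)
Your proof is correct and takes a genuinely different route from the paper's. Your argument is a clean polarization/Gram-matrix argument: the hypothesis $|v_1(z)|^2 = |v_2(z)|^2$, read as a polynomial identity in $z$ and $\bar z$, forces equality of all mixed coefficients, which is exactly the Gram identity $\langle v_1(z), v_1(w)\rangle = \langle v_2(z), v_2(w)\rangle$; this yields a partial isometry $V_1 \to V_2$ extendable to $\sU(2)$. (Your case split at the end is in fact unnecessary: once $\dim V_1 = \dim V_2$, any isometry between the orthogonal complements in $\C^2$ extends $\Lambda$, regardless of dimension.) The paper instead leverages the Wronskian/ODE machinery developed in the surrounding section: applying $\partial_z \partial_{\bar z}$ together with the identity $\tfrac{1}{4}\Delta\log(|P|^2+|Q|^2) = |W(P,Q)|^2/(|P|^2+|Q|^2)^2$ (and then once more to the derivatives) shows that $W(P_1,Q_1)$ and $W(P_2,Q_2)$, as well as $W(P_1',Q_1')$ and $W(P_2',Q_2')$, agree up to unimodular constants; a leading-coefficient comparison forces these constants to coincide, so all four polynomials satisfy the \emph{same} second-order ODE $fy'' - f'y' + Ry = 0$. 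Since that ODE has a two-dimensional polynomial solution space, the two pairs are related by some $\Lambda \in \mathrm{GL}(2,\C)$, which a density argument for the image of $z \mapsto (P_0(z),Q_0(z))$ in $\C^2$ then pins down as unitary. Your approach is shorter, more elementary, and generalizes verbatim to $\C^n$-valued polynomial maps; the paper's approach is more structural, ties the symmetry directly to the inverse Wronskian problem, and produces along the way the distinguished basis $(P_0,Q_0)$ with $\deg P_0 > \deg Q_0$ used elsewhere in the argument.
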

\begin{proof}
We start with the observation that $\sU(2)$ preserves the norm/amplitude $|(P,Q)| := \sqrt{|P|^2+|Q|^2}$ pointwise. Hence, if $(P_2,Q_2) = \Lambda (P_1,Q_1)$ for $\Lambda \in \sU(2),$ then 
\begin{align*}
|P_1|^2 + |Q_1|^2 = |P_2|^2 + |Q_2|^2.
\end{align*}

Now, for the other implication, assume that 
\begin{align}
\label{eq:normequality}
|P_1|^2 + |Q_1|^2 = |P_2|^2 + |Q_2|^2
\quad \text{on} \ \C.
\end{align}
By using transformations in $\sU(2)$, we can w.l.o.g. assume that $\deg(P_1) >\deg(Q_1)$, $\deg(P_2) >\deg(Q_2)$, and that the coefficients of the highest order are real and positive for $P_1,P_2$. By \eqref{eq:normequality}, we have $\deg(P_1)=\deg(P_2)$. Write $P_1 = \sum_{n=0}^{\deg(P_1)} c_n^1 z^n, P_2 =\sum_{n=0}^{\deg(P_1)} c_n^2 z^n, Q_1=\sum_{n=0}^{\deg(P_1)} d_n^1 z^n, Q_1=\sum_{n=0}^{\deg(P_1)} d_n^2 z^n.$
Then, for every non-negative integers $ m,n $, we have 
\begin{align*}
    \partial_z^m \partial_{\bar{z}}^n \left( |P_1|^2 + |Q_1|^2 \right)(0) = \partial_z^m \partial_{\bar{z}}^n \left( |P_2|^2 + |Q_2|^2 \right)(0),
\end{align*}
which implies that 
\begin{align}
\label{eq:crucialequalforcoeff}
    c_m^1 \overline{c_n^1} + d_m^1 \overline{d_n^1} = c_m^2 \overline{c_n^2} + d_m^2 \overline{d_n^2}.
\end{align}
Hence, by setting $m=n= \deg(P_1)$ and using $\deg(P_1) = \deg(P_2) > \max(\deg(Q_1),\deg(Q_2))$, 
\begin{align*}
     |c_n^1|^2 = |c_n^2|^2.
\end{align*}
Since the $c_n^1,c_n^2$ are positive real constants, we get $c_n^1 = c_n^2$. Now, by letting $n= \deg(P_1)$ and $m$ be arbitrary in \eqref{eq:crucialequalforcoeff}, we obtain
\begin{align*}
    c_m^1 \overline{c_n^1}  = c_m^2 \overline{c_n^2} = c_m^2 \overline{c_n^1}.
\end{align*}
Hence,
\begin{align*}
    c_m^1 = c_m^2,
\end{align*}
or equivalently 
\begin{align*}
    P_1 = P_2.
\end{align*}
By a similar argument, we get $Q_1=Q_2$ up to multiplication by an element in $\sU(1)$. This concludes the proof.
\end{proof}

\begin{remark}
\label{rmk:degreesless}
    Note that we may use the symmetry of \Cref{lem:symmetryofamplitude} together with \Cref{lem:Wronskian-ODEequivalence,lem:degreecondi} to obtain that $\psi_{P,Q} = \psi_{\tilde{P},\tilde{Q}}$ with $\deg \tilde{P} > \deg \tilde{Q}$.
    This is also exemplified below.
\end{remark}

\subsection{Some special solutions of interest}

In this part, we study several interesting special cases. First, we note that by \Cref{thm:liouville}, to find all the solutions to \eqref{eq:Liouville-psi} we need all the coprime pairs of polynomials $(P,Q) \in \cP^2$ such that $W(P,Q)= f$. A simple example of such pairs of polynomials is $P = C F$ and $Q = \frac{1}{C}$, where $C \neq 0$ is a constant and $F$ is a primitive of $f.$ To simplify the notations, we make the following definition.
\begin{definition}\label{def:wronskian-pair}
We call a pair $(P,Q) \in \cP^2$ a \keyword{coprime Wronskian pair} for $f \in \cP$ if  $P,Q$ are both non-zero, $W(P,Q) =f$, and $\gcd(P,Q)=1.$
\end{definition}
Now, we show that if $f \in \cP$ has only one root (with possible multiplicity) then all the coprime Wronskian pairs for $f$ are indeed generated by $1$ and a primitive of $f.$

\begin{lemma}
\label{lem:vortexwronskian}
Let $n$ be a non-negative integer, $a \in \C^{\times},$ and $z_0 \in \C$. Then, $(P,Q)$ is a coprime Wronskian pair for $a (z-z_0)^n$ if and only if
$$
P= \alpha_1 (z-z_0)^{n+1} + \beta_1 \quad \text{and} \quad Q=  \alpha_2 (z-z_0)^{n+1} + \beta_2,
$$
where 
\begin{align*}
\alpha_1 \beta_2 - \alpha_2 \beta_1 = \frac{a}{n+1}.
\end{align*}
\end{lemma}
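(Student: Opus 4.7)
After translating $z\mapsto z-z_0$, we may assume $z_0=0$, so the goal is to characterize coprime Wronskian pairs for $f(z)=az^n$. The ``if'' direction is a one-line computation:
\begin{equation*}
    W(\alpha_1 z^{n+1}+\beta_1,\,\alpha_2 z^{n+1}+\beta_2) = (n+1)(\alpha_1\beta_2-\alpha_2\beta_1)\,z^n,
\end{equation*}
which equals $az^n$ under the stated condition, and coprimality is automatic since a common root $z_1$ would force $\alpha_i z_1^{n+1}=-\beta_i$ for $i=1,2$, giving $\alpha_1\beta_2-\alpha_2\beta_1=0$ and contradicting $a\neq 0$.

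For the ``only if'' direction I would begin with a root analysis. First, every nonzero root of $P$ is simple: a double zero $r\neq 0$ would give $W(P,Q)(r)=P'(r)Q(r)-P(r)Q'(r)=0$, contradicting $az^n\neq 0$ at $r$. Second, if $P(0)=0$ with multiplicity $m_0\ge 1$, coprimality forces $Q(0)\neq 0$, and Taylor expanding $W(P,Q)$ at $0$ gives a leading term $m_0\,p_{m_0}q_0\, z^{m_0-1}$, which matched with $az^n$ yields $m_0=n+1$. The symmetric statements hold for $Q$.

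The central step is a residue computation on the rational function $g:=Q/P$, for which $g'(z)=-az^n/P(z)^2$. Since $g$ has no logarithmic part, the residue of $g'$ at every pole must vanish. At a nonzero simple root $r_i$ of $P$, writing $P=(z-r_i)\hat P$ with $\hat P(r_i)=P'(r_i)$ and $\hat P'(r_i)=P''(r_i)/2$ and expanding $az^n/P^2$ in a Laurent series at $r_i$ produces the identity $r_iP''(r_i)=nP'(r_i)$. Consequently, the polynomial $S(z):=zP''(z)-nP'(z)$ vanishes at each of the $k$ distinct nonzero simple roots of $P$, and a direct Taylor expansion at $0$ further shows that $S$ vanishes at $0$ to order at least $n+1$ whenever $P$ has multiplicity $n+1$ at $0$. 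In either case the total multiplicity of zeros of $S$ strictly exceeds $\deg S$ --- namely $k$ against $\le k-1$ when $P(0)\neq 0$, and $k+n+1$ against $\le n+k$ when $P(0)=0$ with multiplicity $n+1$ --- forcing $S\equiv 0$. Integrating $zP''=nP'$ yields $P'(z)=Cz^n$, so $P(z)=\alpha_1 z^{n+1}+\beta_1$. The same analysis applied to $Q$ gives $Q(z)=\alpha_2 z^{n+1}+\beta_2$, and substituting back into the Wronskian equation pins down $\alpha_1\beta_2-\alpha_2\beta_1=a/(n+1)$.

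The main obstacle is the Laurent residue computation at each nonzero simple root and the accompanying sharp degree count that forces $S\equiv 0$. This identity neatly places $P$ (and $Q$) in the two-dimensional subspace $\operatorname{span}\{1,z^{n+1}\}$ without requiring any explicit manipulation of the roots themselves.
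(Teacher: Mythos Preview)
Your proof is correct, and it reaches the same endpoint as the paper --- the first-order ODE $(z-z_0)P'' = nP'$ --- by a genuinely different route.

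The paper invokes its general Wronskian--ODE machinery (\Cref{prop:Wronskiantrick}): since $W(P,Q)=f$, both $P$ and $Q$ solve $fy''-f'y'+W(P',Q')\,y=0$. Specializing to $f=a(z-z_0)^n$ and rearranging gives
\[
P\cdot W(Q',P') \;=\; a(z-z_0)^{n-1}\bigl((z-z_0)P''-nP'\bigr),
\]
and a degree comparison shows that if $W(P',Q')\not\equiv 0$ then $P(z_0)=Q(z_0)=0$, contradicting coprimality. Hence $W(P',Q')\equiv 0$ and the ODE collapses to $(z-z_0)y''=ny'$.

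You instead bypass \Cref{prop:Wronskiantrick} entirely: from $g=Q/P$ rational you know the residues of $g'=-az^n/P^2$ vanish at every pole, and a direct Laurent computation at each simple nonzero root $r_i$ of $P$ produces the pointwise identity $r_iP''(r_i)=nP'(r_i)$. Combined with the Taylor analysis at $0$ (forcing multiplicity exactly $n+1$ there if $P(0)=0$), your zero-count on $S(z)=zP''-nP'$ against $\deg S\le \deg P-1$ forces $S\equiv 0$. This is a clean, self-contained argument that does not rely on the paper's ODE framework; its cost is the explicit residue calculation, while the paper's version instead leans on machinery it has already set up for the later symmetry analysis in \Cref{sec:mag-symm}. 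Both approaches handle the ``if'' direction and the coprimality check identically.
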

\begin{proof}
Let $(P,Q)$ be a coprime Wronskian pair for $a (z-z_0)^n.$ Then, $P,Q$ are linearly independent.
Hence, by \Cref{prop:Wronskiantrick}, we have
\begin{align*}
P (Q'' P' -P'' Q' ) = a(z-z_0)^{n-1} ((z-z_0) P'' - n P'),
\end{align*}
and similarly for $Q$. Now, we consider two cases. First, we assume that 
$Q'' P' -P'' Q' =W(Q',P')$ is non-zero, but necessarily it has at most $n-2$ roots. Then, since 
$$
\deg((z-z_0) P'' - n P') \leq \deg(P)-1,
$$
and $P$ has $\deg(P)$ roots, 
we have $P(z_0)=0$. Likewise, $Q(z_0)=0$, which contradicts $\gcd(P,Q)=1.$ This concludes that $Q'' P' -P'' Q' =0$ and 
\begin{align*}
(z-z_0)  P''(z) - n P' =(z-z_0) Q''(z) - n Q'(z) =0,
\end{align*}
for every $z \in \C$.
Hence,
\begin{align*}
\left ( \frac{P'}{(z-z_0)^n} \right )' = \left ( \frac{Q'}{(z-z_0)^n} \right )' =0,
\end{align*}
for all $z \in \C \setminus \{z_0\}$. 
Then, by integrating both sides, we derive that 
\begin{align*}
P(z)= \alpha_1 (z-z_0)^{n+1} + \beta_1 \quad \text{and} \quad
Q(z) = \alpha_2 (z-z_0)^{n+1} + \beta_2.
\end{align*}
Moreover, by inserting $P,Q$ in $W(P,Q)=a (z-z_0)^n$, we obtain
\begin{align*}
\alpha_1 \beta_2 - \alpha_2 \beta_1 = \frac{a}{n+1}.
\end{align*}
Now, we show that $\gcd(P,Q)=1$ if $\alpha_1 \beta_2 - \alpha_2 \beta_1 = \frac{a}{n+1}$ holds. Assume that $P(w_0)=Q(w_0)=0$ for some $w_0 \in \C.$ Then, 
\begin{align*}
\beta_1 =- \alpha_1 w_0^{n+1} \quad \text{and} \quad \beta_2 = -\alpha_2 w_0^{n+1}.
\end{align*}
Hence, $\alpha_1 \beta_2 - \alpha_2 \beta_1 = 0$ which is in contradiction with $|a|>0$. Thus, $\gcd(P,Q)=1$ which completes the proof.
\end{proof}

\begin{corollary}[Single-root solutions]
\label{cor:radiallysymmetric}
Let $z_0 \in \C$, $n$ be a non-negative integer, and $K$ be a positive constant.  Then, all the weak solutions $\psi \in L^1_{\loc}(\R^2;\R)$ of $-\Delta \psi = K |z-z_0|^{2n} e^{\psi}$ in $\C \cong \R^2$, which satisfy the condition $\int_{\R^2} |z-z_0|^{2n} e^{\psi}<\infty$, are of the form 
\begin{align}
\label{eq:vortexliouville}
\psi(z) = \log\left( \frac{8}{\left(| a (z-z_0)^{n+1} + c |^2 + \frac{K}{|a|^2 (n+1)^2}\right)^2} \right),
\end{align}
for $z \in \C$, where $c\in \C$ and $a \in \C^{\times}$.
Furthermore, if $z_0 =0$ and $\psi$ is radially symmetric, then
\begin{align*}
\psi(z) = \log\left( \frac{8}{\left(|a|^2 |z|^{2n+2} + \frac{K}{|a|^2 (n+1)^2}\right)^2} \right),
\end{align*}
for every $z \in \C,$ where $a \in \C^{\times}$. 
\end{corollary}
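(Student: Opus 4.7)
The plan is to specialize \Cref{thm:liouville} to the polynomial $f(z) = \sqrt{K}\,(z-z_0)^n$, since then the equation $-\Delta\psi = K|z-z_0|^{2n}e^{\psi}$ takes exactly the form $-\Delta\psi = |f|^2 e^{\psi}$ covered by the theorem, and the finite-integral hypothesis $\int_{\R^2}|z-z_0|^{2n}e^{\psi} < \infty$ is precisely $\int_{\R^2}|f|^2 e^{\psi} < \infty$. By \Cref{thm:liouville}, every solution is then of the form $\psi_{P,Q} = \log(8) - 2\log(|P|^2+|Q|^2)$ for some coprime Wronskian pair $(P,Q)$ with $W(P,Q)=f$. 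Since $f$ has a single root of multiplicity $n$ at $z_0$, I can directly invoke the single-root classification in \Cref{lem:vortexwronskian}: all such pairs are
\[
    P = \alpha_1 (z-z_0)^{n+1}+\beta_1, \qquad Q = \alpha_2 (z-z_0)^{n+1}+\beta_2,
\]
subject to the constraint $\alpha_1\beta_2-\alpha_2\beta_1 = \sqrt{K}/(n+1)$.

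The main computational step is then to show that $|P|^2+|Q|^2$ always completes to a ``square plus constant.'' Expanding gives
\[
    |P|^2+|Q|^2 = |\alpha|^2\,|z-z_0|^{2(n+1)} + \gamma\,(z-z_0)^{n+1} + \bar\gamma\,\overline{(z-z_0)^{n+1}} + |\beta|^2,
\]
where $|\alpha|^2 := |\alpha_1|^2+|\alpha_2|^2$, $|\beta|^2 := |\beta_1|^2+|\beta_2|^2$, and $\gamma := \alpha_1\bar\beta_1+\alpha_2\bar\beta_2$. Note $|\alpha|>0$, since otherwise $W(P,Q)=0 \neq f$. Setting $a \in \C^{\times}$ with $|a|=|\alpha|$ (the phase being the free $\sU(1)$-redundancy predicted by the $\sU(2)$-symmetry of \Cref{thm:liouville}) and $c := \bar\gamma/\bar a$, one obtains $|P|^2+|Q|^2 = |a(z-z_0)^{n+1}+c|^2 + D$ where
\[
    D = |\beta|^2 - \tfrac{|\gamma|^2}{|\alpha|^2} = \tfrac{1}{|\alpha|^2}\bigl(|\alpha|^2|\beta|^2 - |\gamma|^2\bigr).
\]
The key simplification is the Lagrange-type identity $|\alpha|^2|\beta|^2 - |\gamma|^2 = |\alpha_1\beta_2-\alpha_2\beta_1|^2$, which by the Wronskian constraint equals $K/(n+1)^2$. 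Hence $D = K/(|a|^2(n+1)^2)$ and substituting back gives exactly \eqref{eq:vortexliouville}. The converse direction (that every such $\psi$ solves the equation) follows from the sufficiency part of \Cref{thm:liouville} applied to the explicit pair $(\alpha_1,\beta_1,\alpha_2,\beta_2)=(a,c,0,\sqrt{K}/(a(n+1)))$, which indeed satisfies the Wronskian constraint and the coprimality condition.

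For the radial symmetric case with $z_0 = 0$, the density $|P|^2+|Q|^2 = |a\,z^{n+1}+c|^2 + K/(|a|^2(n+1)^2)$ must be invariant under rotations $z \mapsto ze^{{\rm i}\phi}$. Expanding the square shows that the cross-terms $a\bar c\,z^{n+1} + \bar a c\,\bar z^{n+1}$ pick up a nontrivial phase $e^{\pm {\rm i}(n+1)\phi}$ under this rotation, so radial symmetry forces $a\bar c = 0$, and since $a \neq 0$ this gives $c = 0$. The stated radially symmetric formula follows immediately. The only mildly subtle point throughout is bookkeeping the $\sU(2)$-redundancy (equivalently, the phase of $a$ relative to $c$), but since the Lagrange identity depends only on $|\alpha|$ and not on the phase, no case distinction is actually needed.
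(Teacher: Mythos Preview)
Your proof is correct and follows the same overall route as the paper: apply \Cref{thm:liouville} with $f(z)=\sqrt{K}(z-z_0)^n$, then invoke \Cref{lem:vortexwronskian} to classify the coprime Wronskian pairs, then reduce $|P|^2+|Q|^2$ to the canonical form $|a(z-z_0)^{n+1}+c|^2 + K/(|a|^2(n+1)^2)$.

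The one genuine difference is in how you carry out that last reduction. The paper uses the $\sU(2)$-symmetry clause of \Cref{thm:liouville}: it writes down an explicit matrix $\Lambda\in\sSU(2)$ (built from $\alpha_1,\alpha_2$) and applies it to $(P,Q)$, which sends the pair to one of the form $(\text{constant},\ |\alpha|(z-z_0)^{n+1}+\bar\gamma/|\alpha|)$, from which \eqref{eq:vortexliouville} is immediate. You instead expand $|P|^2+|Q|^2$ directly and complete the square, with the Lagrange/Binet--Cauchy identity $|\alpha|^2|\beta|^2-|\gamma|^2=|\alpha_1\beta_2-\alpha_2\beta_1|^2$ supplying the constant term. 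Your route is more elementary and self-contained (it does not need the symmetry statement from \Cref{thm:liouville}), while the paper's route is a cleaner illustration of why that symmetry statement is useful. For the radial case your argument is essentially identical to the paper's, with a bit more detail on why $c=0$ is forced.
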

\begin{remark}\label{rmk:Liouvillesolsymm}
    The latter, radial solution was already known from \cite{JacPi-90b}.
    The possibility of the non-radial solutions $c \neq 0$, and thus the complete solution for a single root of arbitrary multiplicity, was discussed in \cite{ChaKie-94,PraTar-01}.
\end{remark}
\begin{proof}
By \Cref{thm:liouville}, we have all the solutions $\psi \in L^1_{\loc}(\R^2)$ of $-\Delta \psi = K|z-z_0|^{2n} e^{\psi}$ in $\R^2$, where $\int_{\R^2} |z- z_0|^{2n} e^{\psi}<\infty$, are of the form \begin{align*}
\psi = \log(8) - 2\log(|P|^2 +|Q|^2),
\end{align*}
where $(P,Q)$ is a coprime Wronskian pair for $\sqrt{K} z^n$. Hence, by \Cref{lem:vortexwronskian}, we derive
\begin{align*}
P(z)= \alpha_1 (z-z_0)^{n+1} + \beta_1, \quad
Q(z) = \alpha_2 (z-z_0)^{n+1} + \beta_2,
\end{align*}
where 
\begin{align}
\label{eq:determincondi}
\alpha_1 \beta_2 - \alpha_2 \beta_1 = \frac{\sqrt{K}}{n+1}.
\end{align}
In conclusion, all the solutions are of the form
\begin{align*}
\psi(z) = \log\left( \frac{8}{(| \alpha_1 (z- z_0)^{n+1} + \beta_1 |^2 + | \alpha_2 (z-z_0)^{n+1} + \beta_2|^2)^2} \right),
\end{align*}
where the constants $\alpha_1,\alpha_2,\beta_1,\beta_2$ satisfy \eqref{eq:determincondi}. Now, define the matrix
(cp. \Cref{rmk:degreesless})
\begin{align*}
    \Lambda := \frac{1}{\sqrt{|\alpha_1| ^2 + |\alpha_2|^2}} \begin{pmatrix}
    \alpha_2 & -\alpha_1 \\
    \overline{\alpha_1} & \overline{\alpha_2}
    \end{pmatrix},
\end{align*}
for every $z \in \C.$ Then, $\Lambda \in \sSU(2)$ and, by the symmetry of solutions in Theorem \ref{thm:liouville} and action of $\Lambda$ on $(P,Q)$, we obtain \eqref{eq:vortexliouville}.  Finally, if $z_0=0$ and we focus on the radially symmetric solutions $\psi$, then we have the extra condition $c=0$. Hence,
\begin{align*}
\psi(z) = \log\left( \frac{8}{\left(|a|^2 |z|^{2n+2} + \frac{K}{ |a|^2 (n+1)^2 } \right)^2} \right),
\end{align*}
for every $z \in \C$.
\end{proof}

Now, we consider the case of the Wronskian having degree $2$.
\begin{lemma}
\label{lem:coprimewronsdegree2}
Let $a \in \C^{\times}, b,c \in \C$. Then, $(P,Q)$ is a coprime Wronskian pair for the polynomial $f(z) = a z^2 + bz +c$ if and only if $(P,Q) = \Lambda (P_0,Q_0)$ where $\Lambda \in \sSL(2)$ and one of the following two cases occur:
\begin{enumerate}[label=(\roman*)]
\item The constants $a \in \C^\times$, $b,c \in \C$ are arbitrary and
$$P_0 =  \frac{a}{3} z^3 + \frac{b}{2} z^2 + c z \quad \text{and} \quad Q_0 = 1,$$ 

\item It holds that $c \neq \frac{b^2}{4 a}$ 
($f$ has two distinct roots) and
$$P_0 =  z^2  -\frac{c}{a} \quad \text{and} \quad Q_0 = a z + \frac{b}{2}.$$
\end{enumerate}
\end{lemma}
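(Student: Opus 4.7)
The plan is to apply the ODE framework already developed in Lemmas 2.12--2.13 and Remark 2.14 to the quadratic polynomial $f(z) = az^2 + bz + c$. Given any coprime Wronskian pair $(P,Q)$ for $f$, Lemma 2.12 identifies $P,Q$ with two linearly independent polynomial solutions of the linear ODE
$$f y'' - f' y' + R y = 0, \qquad R = W(P', Q') \in \cP.$$
Remark 2.14 gives $\deg R \le \deg f - 2 = 0$, so $R \in \C$ is constant. Moreover, by Lemma 2.13 we may choose a canonical basis with $\deg P > \deg Q$ and $\deg P + \deg Q = \deg f + 1 = 3$, leaving only the two degree profiles $(3,0)$ and $(2,1)$. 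Since $\sSL(2)$ preserves both the Wronskian (Proposition 2.10) and coprimality (as $(\alpha\delta - \beta\gamma) P = \delta(\alpha P + \beta Q) - \beta(\gamma P + \delta Q)$), and by Lemma 2.13 acts transitively on bases of the solution space, it suffices to exhibit one representative in each case and verify when it is actually coprime.

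To pin down $R$, I would substitute a degree-$m$ monic polynomial into the ODE and read off the leading coefficient, which yields the necessary relation $R = am(3-m)$. This recovers $R=0$ for $m \in \{0,3\}$ and $R = 2a$ for $m \in \{1,2\}$ (so exactly the pairs $(3,0)$ and $(2,1)$ realize two linearly independent polynomial solutions). For the $(3,0)$ profile, $Q$ is a nonzero constant; rescaling by a diagonal $\sSL(2)$ matrix I normalize $Q = 1$, and then $W(P,1) = P' = f$ forces $P$ to be a primitive of $f$, the constant of integration being absorbed by the unipotent shift $(P,Q) \mapsto (P + \beta Q, Q) \in \sSL(2)$; this reduces $P$ to $P_0 = \tfrac{a}{3}z^3 + \tfrac{b}{2}z^2 + cz$, giving case (i).

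For the $(2,1)$ profile, writing $P = p_2 z^2 + p_1 z + p_0$ and $Q = q_1 z + q_0$, the expansion $W(P,Q) = p_2 q_1\, z^2 + 2 p_2 q_0\, z + (p_1 q_0 - p_0 q_1)$ together with $W(P,Q) = f$ yields three scalar equations. A diagonal $\sSL(2)$ scaling normalizes $p_2 = 1$ (whence $q_1 = a$ and $q_0 = b/2$), and then a further unipotent shift $P \mapsto P + \beta Q$ with $\beta = -p_1/a$ kills the linear term of $P$; the third equation $-p_0 q_1 = c$ then forces $p_0 = -c/a$, so we reach $(P_0,Q_0) = (z^2 - c/a,\, az + b/2)$, which is case (ii). Finally, coprimality: case (i) is automatic since $Q_0$ is a unit, whereas in case (ii) the pair shares the root $z = -b/(2a)$ precisely when $P_0(-b/(2a)) = (b^2 - 4ac)/(4a^2) = 0$, i.e. exactly when $c = b^2/(4a)$; this is the excluded double-root scenario.

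The only delicate point I anticipate is the bookkeeping around the $\sSL(2)$ reduction in the $(2,1)$ case: one must verify that the chosen scaling and shift are indeed $\sSL(2)$ (not merely $\sGL(2)$) and that the resulting normalization is unique up to a residual stabilizer, but this is direct since the scaling and unipotent shifts used have determinant one by construction. Combined with the converse direction, which is a short explicit check that each $(P_0,Q_0)$ listed satisfies $W(P_0,Q_0) = f$ and $\gcd(P_0,Q_0)=1$ under the stated conditions, this establishes the lemma.
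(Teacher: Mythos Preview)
Your proposal is correct and follows essentially the same route as the paper: invoke the ODE framework of Lemmas~\ref{lem:Wronskian-ODEequivalence}--\ref{lem:degreecondi} to reduce to the two degree profiles $(3,0)$ and $(2,1)$, then use upper-triangular $\sSL(2)$ moves (a diagonal scaling followed by a unipotent shift) to normalize to the canonical $(P_0,Q_0)$ in each case, and finally check coprimality in case~(ii) via the discriminant. Your explicit computation of $R = am(3-m)$ is a nice added detail (the paper simply appeals to Lemma~\ref{lem:degreecondi} for the degree split without isolating $R$), and your remark that invertible matrices preserve coprimality is a point the paper leaves implicit; the only slight looseness is the phrase ``$\sSL(2)$ acts transitively on bases'' --- what you actually use (and what holds) is that any two bases with the \emph{same} Wronskian $f$ are related by an $\sSL(2)$ matrix, which follows from Proposition~\ref{prop:actiononwronksian}.
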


\begin{proof}
   It is easy to check that if $(P,Q) = \Lambda (P_0,Q_0)$ where $\Lambda \in \sSL(2)$ and one of $(i)$ or $(ii)$ holds, then $(P,Q)$ is a coprime Wronskian pair for $a z^2 + bz + c.$ Now, assume that $(P,Q)$ is a Wronskian pair for $a z^2 + bz + c.$ Then,
   by \Cref{lem:Wronskian-ODEequivalence} and \Cref{lem:degreecondi}, we have $(P,Q)= \Lambda (P_0,Q_0)$, where $\Lambda \in \sSL(2)$ and  $P_0,Q_0$ are polynomials, where $\deg(P_0) >\deg(Q_0)$, $\deg(P_0)+ \deg(Q_0) = 3$, and
\begin{align*}
W(P_0,Q_0) = a z^2 + bz  + c.
\end{align*}
Hence, we have either $\deg(P_0)=3$, $\deg(Q_0)=0$ or $\deg(P_0)=2$, $\deg(Q_0)=1$.

Now, let us assume the first case that $\deg(P_0)=3$ and $\deg(Q_0)=0$. 
Then, $Q_0 = C \neq 0$ is a constant and
\begin{align*}
a z^2 + bz  + c= P_0'Q_0 - P_0 Q_0' = P_0' Q_0.
\end{align*}
Moreover, by the freedom of choice of $\Lambda \in \sSL(2)$, we can take any $\Tilde{\Lambda} (P_0,Q_0)$ instead of $(P_0,Q_0)$ such that
\begin{align*}
   \Tilde{\Lambda} = \begin{pmatrix}
        \lambda & \mu \\
        0 & \frac{1}{\lambda},
    \end{pmatrix}
\end{align*}
where $\lambda \in \C^{\times}, \mu \in \C.$ In particular,
we can assume that $Q_0=1$ and $P_0(0)=0$.
Hence, $C= 1$ and
\begin{align*}
P_0 = \frac{a}{3} z^3 + \frac{b}{2} z^2 + c z.
\end{align*}
Note that in this case, we have also $\gcd(P_0,Q_0)=1.$

In the second case, where $\deg(P_0)=2$ and $\deg(Q_0)=1$, again by freedom of choice of $\Tilde{\Lambda} (P_0,Q_0)$, where $\Tilde{\Lambda} \in \sSL(2)$, we can assume that $P_0$ is a monic polynomial and the coefficient of $P_0$ of power $\deg(Q_0)=1$ is zero. In conclusion, $P_0 =  z^2 + a_1$, $Q_0 = a_2 z + b_2$. Then,
\begin{align*}
a z^2 + bz  + c= P_0' Q_0 - P_0 Q_0' = a_2 z^2+ 2b_2 z-a_1 a_2,
\end{align*}
which implies that
\begin{align*}
a_2 =a,\quad b_2 = \frac{b}{2} \quad \text{and} \quad a_1 = -\frac{c}{a}.
\end{align*}
Moreover, in the latter case, $P_0$ and $Q_0$ have a common root if and only if 
\begin{align*}
0= P_0\left( -\frac{b}{2a} \right) = \frac{1}{a} \left(\frac{b^2}{4a} -c \right).
\end{align*}
Thus, $P_0, Q_0$ are coprime if and only if $c \neq \frac{b^2}{4a}$.
\end{proof}

The next corollary determines all the coprime Wronskian pairs for polynomials of degree two. This is partly motivated by a question posed in \cite{BriHouLei-05}.

\begin{corollary}[Degree-two solutions]
\label{cor:degreetwopoly}
Let $b,c \in \C$ and $a \in \C^{\times}$. Then, all the weak solutions $\psi \in L^{1}_{\loc}(\R^2;\R)$ of $$-\Delta \psi = |a z^2 + b z + c|^{2} e^{\psi},$$ in $\R^2$, where $\int_{\R^2} |a z^2 + b z + c|^{2} e^{\psi}<\infty$, are of the form 
\begin{align*}
\psi = \log(8) - 2 \log(|P|^2 + |Q|^2),
\end{align*}  
where $P = \lambda P_0 +\mu Q_0$, $Q = \frac{1}{\lambda} P_0 $ for some $\mu \in \C, \lambda \in \C^{\times},$
and polynomials $P_0,Q_0$ are of one the following forms:
\begin{enumerate}[label=(\roman*)]
\item The constants $a \in \C^\times$, $b,c \in \C$ are arbitrary and
$$P_0 =  \frac{a}{3} z^3 + \frac{b}{2} z^2 + c z \quad \text{and} \quad Q_0 = 1,$$ 

\item It holds that $c \neq \frac{b^2}{4 a}$ and
$$P_0 =  z^2  -\frac{c}{a} \quad \text{and} \quad Q_0 = a z + \frac{b}{2}.$$
\end{enumerate}
\end{corollary}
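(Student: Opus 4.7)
\medskip

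The plan is to combine Theorem~\ref{thm:liouville} and Lemma~\ref{lem:coprimewronsdegree2} with the $\sU(2)$-symmetry of solutions. Since $a \in \C^{\times}$, the polynomial $f(z) = az^2 + bz + c$ is a nonzero element of $\cP$. By Theorem~\ref{thm:liouville}, every weak solution $\psi \in L^1_\loc(\R^2;\R)$ of $-\Delta\psi = |f|^2 e^\psi$ satisfying $\int_{\R^2} |f|^2 e^\psi < \infty$ is necessarily of the form $\psi = \log 8 - 2\log(|P|^2+|Q|^2)$ where $(P,Q)$ is a coprime Wronskian pair for $f$, in the sense of \Cref{def:wronskian-pair}.

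Next, I would invoke Lemma~\ref{lem:coprimewronsdegree2}, which classifies all such coprime Wronskian pairs for a degree-two polynomial: there exist canonical pairs $(P_0,Q_0)$ of one of the two listed forms (depending on whether $c = b^2/(4a)$, i.e., whether $f$ has a repeated root) such that $(P,Q) = \Lambda (P_0,Q_0)$ for some $\Lambda \in \sSL(2)$. Substituting into the formula for $\psi$ yields the first part of the claim; only the parametrization of $\Lambda$ remains to be pinned down.

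The final step is to exploit the $\sU(2)$-symmetry from Theorem~\ref{thm:liouville}: two coprime pairs give the same $\psi$ precisely when they differ by a left $\sU(2)$-action, and therefore the distinct solutions are indexed by the left cosets $(\sU(2) \cap \sSL(2)) \backslash \sSL(2) = \sSU(2) \backslash \sSL(2)$. By an Iwasawa-type decomposition (concretely, apply Gram--Schmidt to the columns of $\Lambda^{-1}$ over $\C$), every coset has a representative of upper-triangular form $\Lambda = \begin{pmatrix} \lambda & \mu \\ 0 & 1/\lambda \end{pmatrix}$ with $\lambda \in \C^{\times}$ and $\mu \in \C$. Replacing $(P,Q)$ by this $\sSU(2)$-equivalent pair gives $P = \lambda P_0 + \mu Q_0$ and $Q = \tfrac{1}{\lambda} Q_0$, i.e., the stated form (modulo what appears to be a typographical exchange of $P_0 \leftrightarrow Q_0$ in the statement).

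The only delicate point is bookkeeping: one must ensure that the upper-triangular reduction is compatible with both canonical cases from Lemma~\ref{lem:coprimewronsdegree2} and that the residual freedom inside $\sSU(2)$ stabilizing the canonical $(P_0,Q_0)$ does not remove any solutions. This is straightforward since $P_0, Q_0$ are linearly independent, so the map $\sSL(2) \to \{\text{coprime Wronskian pairs}\}$ is injective, and the symmetry quotient is exactly $\sSU(2) \backslash \sSL(2)$; there is no hidden obstruction. No deeper analytic input is required beyond the two cited results, so the corollary is essentially an algebraic book-keeping exercise once Theorem~\ref{thm:liouville} and Lemma~\ref{lem:coprimewronsdegree2} are in hand.
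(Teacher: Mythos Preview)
Your proposal is correct and follows essentially the same approach as the paper: apply \Cref{thm:liouville} to get the general form with a coprime Wronskian pair, invoke \Cref{lem:coprimewronsdegree2} to write $(P,Q)=\Lambda(P_0,Q_0)$ with $\Lambda\in\sSL(2)$, and then use the $\sU(2)$-symmetry together with a QR/Iwasawa decomposition to reduce $\Lambda$ to upper-triangular form. Your observation that the statement should read $Q=\tfrac{1}{\lambda}Q_0$ rather than $Q=\tfrac{1}{\lambda}P_0$ is also correct.
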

\begin{proof}
By \Cref{thm:liouville} and \Cref{lem:degreecondi}, we derive that all the weak solutions $\psi \in L^1_{\loc}(\R^2)$ of  $$-\Delta \psi = |a z^2 + b z + c|^{2} e^{\psi},$$ where $\int_{\R^2} |a z^2 + b z + c|^{2} e^{\psi}<\infty$, are of the form 
\begin{align*}
\psi = \log(8) - 2 \log(|P|^2 + |Q|^2),
\end{align*}
where $(P,Q)$ is a coprime Wronskian pair for $a z^2 + bz  + c$.
Hence, by \Cref{lem:coprimewronsdegree2}, we have $(P,Q)= \Lambda (P_0,Q_0)$, where $\Lambda \in \sSL(2)$ and  $P_0,Q_0$ are polynomials, where either $(i)$ or $(ii)$ holds. Moreover, we can use the symmetry of solutions in Theorem \ref{thm:liouville} and QR decomposition of the matrix $\Lambda,$ to reduce to the case that $\Lambda$ is upper triangular, which completes the proof.
\end{proof}

\begin{remark}
In \Cref{cor:degreetwopoly}, by change of variable $\varrho =|a z^2 + bz + c|^2 e^{\psi}$, we have classified all the solutions to 
\begin{align*}
- \Delta \log(\varrho) = \varrho - \sum_{i=1}^2 4 \pi \delta_{z_i} 
\end{align*}
in $\R^2$ with $\int_{\R^2} \varrho < \infty,$ where $z_i \in \C$
(compare \cite{BriHouLei-05}). Furthermore, if $c \neq \frac{b^2}{4a}$, or $f(z) := a z^2 + bz + c$ has two distinct roots, then there exist coprime Wronskian pairs for $f$, which are not generated by $1$ and a primitive of $f$.
We have not found these observations elsewhere in the mathematical literature.
(\emph{Note added:} After submitting our first version of the manuscript, we became aware of \cite[Lemma~6.1]{TT-24}, which discusses one of the special cases above.)
\end{remark}

\newpage 

\section{Magnetic stability}\label{sec:magnetic}

In this section, we prove \Cref{thm:magneticstability} by applying \Cref{thm:liouville}. We derive the result in several steps as follows.
\begin{itemize}
\item We set our notation and prove some preliminary results on magnetic energy inequalities.
\item  We derive the regularity and Euler--Lagrange equation for the minimizers of \eqref{eq:defgamma}.
\item  We show the supersymmetric factorization of $\cE_{\beta,2 \pi \beta}[u]$ and asymptotic behavior of the superpotential $\Phi[|u|^2]$ for $u \in H^1(\R^2).$
\item  For every $\beta \in 2 \N$, we prove the sufficient form of $u_{P,Q}$ for the minimizer of \eqref{eq:defgamma}, where $P,Q \in \cP$ are linearly independent with $\gcd(P,Q)=1$ and\\ $\max(\deg(P),\deg(Q))= \frac{\beta}{2}.$ 
\item  By using the monotonicity of $\frac{\gamma_{\ast}(\beta)}{\beta}$ for every $\beta>0$, we deduce that $\gamma_{\ast}(\beta)=2\pi \beta$ for every $\beta \geq 2$. Furthermore, the necessity of $\beta \in 2 \N$ with the general form of minimizers of $\gamma_{\ast}(\beta)$ are shown for \eqref{eq:defgamma} if $\beta \geq 2.$ 
\item Afterwards, we use bounds on $\gamma_{\ast}(\beta)$, local Lipschitz regularity of $\gamma_{\ast}(\beta),$ and the concentration-compactness lemma of Lions to prove that $\gamma_{\ast}(\beta)>\CLGN$ for $\beta>0$, $\gamma_{\ast}(\beta) >2\pi \beta$ for $\beta <2$, and that for small enough $\beta >0$, there exists a minimizer for \eqref{eq:defgamma}. 
\item  Finally, by using the symmetry of the generalized Liouville's solution \eqref{eq:Liouville-solution}, we prove the $\R^+ \times \sSU(2)$-symmetry of minimizers of \eqref{eq:defgamma}.
\end{itemize}

\subsection{Preliminaries on magnetic inequalities}\label{sec:mag-prelim}

In the following, we derive some properties of the magnetic self-energy \eqref{eq:AF-func}.

\begin{lemma}\label{lem:basicinequalities}
Let $u \in H^1(\R^2)$. We have the following inequalities.
\begin{enumerate}[label=(\roman*)]
\item {\bf Diamagnetic inequality.} For every $\beta \in \R$,
\begin{equation}\label{eq:AF-diamagnetic}
\int_{\R^2} \bigl|\nabla |u|\bigr|^2 \leq \int_{\R^2} \left|(\nabla + {\rm i}\beta \bA\left[|u|^2\right]) u\right|^2.
\end{equation}

\item {\bf Hardy-type inequality.} There exists a universal constant 
$0 < \CH \le 3/2$  
such that
\begin{equation}\label{eq:AF-H1}
\int_{\R^2} \left|\bA\left[|u|^2\right]u\right|^2
\le \CH \left(\int_{\R^2} |u|^2\right)^2 \left(\int_{\R^2} \bigl|\nabla|u|\bigr|^2\right).
\end{equation}

\item {\bf Gagliardo--Nirenberg inequality.} For every $p \geq 2$, there exists a universal constant $C = C(p) > 0$ such that
\begin{align}\label{eq:gagliardonirenberg}
C \int_{\R^2} |u|^p \leq \left(\int_{\R^2} |u|^2\right) \left(\int_{\R^2} |\nabla u|^2\right)^{\frac{p-2}{2}}.
\end{align}
\end{enumerate}
\end{lemma}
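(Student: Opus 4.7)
The proof naturally splits into three independent arguments, and my plan is to handle (ii) before (i) because (i) needs (ii) to know that its right-hand side is finite.

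For part (i), the starting point is the classical pointwise (Kato-type) diamagnetic inequality, valid for any a.e.\ defined real vector potential $\mathbf{a}\colon \R^2 \to \R^2$ and any $u \in H^1$. Writing $|u|\,\nabla|u| = \re(\bar u \nabla u) = \re\bigl(\bar u(\nabla + \ii\mathbf{a})u\bigr)$ --- the second equality holds since $\bar u\,\ii\mathbf{a}\,u = \ii\mathbf{a}|u|^2$ is purely imaginary --- and applying Cauchy--Schwarz, together with the fact that $\nabla|u|=0$ a.e.\ on $\{u=0\}$, yields $|\nabla|u|| \le |(\nabla+\ii\mathbf{a})u|$ pointwise a.e. Squaring and integrating with $\mathbf{a} := \beta\,\bA[|u|^2]$ produces \eqref{eq:AF-diamagnetic}.

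For part (ii), the pointwise estimate
\[
\bigl|\bA[|u|^2](\bx)\bigr| \;\le\; \int_{\R^2} \frac{|u(\by)|^2}{|\bx-\by|}\,\dd\by \;=\; (I_1 * |u|^2)(\bx),
\]
with $I_1(\bx)=|\bx|^{-1}$ the Riesz kernel of order one in $\R^2$, is the key. Cauchy--Schwarz gives $\int_{\R^2} |\bA[|u|^2]\,u|^2 \le \|\bA[|u|^2]\|_{L^4}^2 \|u\|_{L^4}^2$. The Hardy--Littlewood--Sobolev inequality in $\R^2$, with matching exponents $\tfrac{1}{4/3} - \tfrac14 = \tfrac12$, bounds $\|\bA[|u|^2]\|_{L^4} \le C\,\||u|^2\|_{L^{4/3}} = C\,\|u\|_{L^{8/3}}^2$. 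The log-convex interpolation $\|u\|_{L^{8/3}} \le \|u\|_{L^2}^{1/2}\,\|u\|_{L^4}^{1/2}$ then yields $\int_{\R^2} |\bA[|u|^2]\,u|^2 \le C\,\|u\|_{L^2}^2\,\|u\|_{L^4}^4$. A final application of the classical LGN inequality \eqref{eq:LGN} to $|u| \in H^1$, which gives $\|u\|_{L^4}^4 \le \CLGN^{-1}\,\|u\|_{L^2}^2\,\|\nabla|u|\|_{L^2}^2$, combines with the above to deliver \eqref{eq:AF-H1} with $\CH = C\,\CLGN^{-1}$.

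Part (iii) is the standard two-dimensional Gagliardo--Nirenberg interpolation: for $u \in H^1(\R^2)$ and $p \ge 2$, $\|u\|_{L^p} \le C(p)\,\|u\|_{L^2}^{2/p}\,\|\nabla u\|_{L^2}^{1-2/p}$, which raised to the $p$th power gives exactly \eqref{eq:gagliardonirenberg}. This is classical and I would simply cite it; it follows, for instance, by iterating one-dimensional Sobolev embedding on slices, by testing the planar Sobolev inequality against $|u|^{p/2}$, or by log-convex interpolation between the trivial $p=2$ identity and the LGN bound \eqref{eq:LGN} at $p=4$ (combined with Sobolev embedding for larger $p$).

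The main obstacle is really only the exponent bookkeeping in (ii); everything else is standard. A quick scale-invariance check --- both sides of \eqref{eq:AF-H1} pick up a factor of $\lambda^2$ under the dilation $u \mapsto u_\lambda(\bx) = \lambda u(\lambda \bx)$ --- confirms consistency of the chain. An alternative route for (ii) would write $\bA[|u|^2] = \nablap\bigl(G*|u|^2\bigr)$ with $G = \tfrac{1}{2\pi}\log|\cdot|$ and invoke Calder\'on--Zygmund estimates for the associated Riesz transform, but the direct HLS--interpolation--LGN chain above is the most transparent.
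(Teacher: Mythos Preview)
Your proposal is correct. The paper itself does not give a proof but simply cites references: \cite[Theorem~7.21]{LieLos-02} for (i), \cite[Lemma~3.4]{LunRou-15} for (ii), and \cite[Theorem~8.5]{LieLos-02} for (iii). Your sketches for (i) and (iii) match the standard arguments behind those citations.

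For (ii) your route genuinely differs from the one the paper points to. The reference \cite{LunRou-15} proceeds via the Menger--Melnikov identity \eqref{eq:MM-curvature}, rewriting $\int_{\R^2}|\bA[\varrho]|^2\varrho$ as a symmetric three-body integral against the inverse-square circumradius, and then invokes a many-body Hardy inequality from \cite{HofLapTid-08} to obtain the explicit constant $\CH=\tfrac{3}{2}$ (which the paper later uses quantitatively, e.g.\ in \Cref{cor:basicbounds} and \Cref{prop:Lipschitz}). Your HLS\,+\,interpolation\,+\,LGN chain is more elementary and entirely self-contained, but the constant it produces is whatever the sharp HLS constant times $\CLGN^{-1}$ happens to be (note: strictly $\CH=C^2\CLGN^{-1}$, not $C\CLGN^{-1}$, since you square $\|\bA\|_{L^4}$), and in particular you do not recover $\tfrac{3}{2}$. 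So your argument suffices for the lemma as stated, but would need to be replaced by the circumradius argument wherever the paper uses the numerical value of $\CH$.
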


We refer to \cite[Theorem~7.21]{LieLos-02} for the proof of the diamagnetic inequality \eqref{eq:AF-diamagnetic} and \cite[Lemma~3.4]{LunRou-15} for the proof of the Hardy inequality \eqref{eq:AF-H1} involving the circumradius $\cR$ in \eqref{eq:MM-curvature}. It is worth noting that the constant in \eqref{eq:AF-H1} was given by $\CH=\frac{3}{2}$ in \cite{LunRou-15} but it might not be optimal (see \cite[Remark~3.7(ii)]{HofLapTid-08}).
Finally, the Gagliardo--Nirenberg inequality \eqref{eq:gagliardonirenberg} is a consequence of the Sobolev embedding \cite{Gagliardo-59,Nirenberg-59} 
(see also, e.g., \cite[Theorem~8.5]{LieLos-02}).

\begin{proposition}\label{prop:LGN-bound}
For every $\beta \in \R$, we have
\begin{align*}
\gamma_{\ast}(\beta) \geq \gamma_{\ast}(0) >0.
\end{align*}
\end{proposition}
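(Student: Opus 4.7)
The plan is to reduce the magnetic problem to the non-magnetic one by invoking the diamagnetic inequality \eqref{eq:AF-diamagnetic}, which is precisely the tool set up in the preceding lemma. Concretely, for any admissible $u \in H^1$ with $\int_{\R^2}|u|^2=1$, Lemma \ref{lem:basicinequalities}\,(i) gives
\[
\cE_{\beta}[u] \;=\; \int_{\R^2}\left|(\nabla + {\rm i}\beta\bA[|u|^2])u\right|^2
\;\ge\; \int_{\R^2}\bigl|\nabla|u|\bigr|^2 \;=\; \cE_{0}[\,|u|\,].
\]
Note that $|u|$ is itself an admissible trial function for the $\beta=0$ problem: it lies in $H^1(\R^2;\R_+)$ with the same $L^2$-mass, and $\int_{\R^2}\bigl||u|\bigr|^4=\int_{\R^2}|u|^4$. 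Dividing by $\int_{\R^2}|u|^4$ and taking the infimum over admissible $u$ therefore yields $\gamma_*(\beta) \ge \gamma_*(0)$.

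The strict positivity $\gamma_*(0)>0$ is then a direct consequence of the classical Ladyzhenskaya–Gagliardo–Nirenberg inequality \eqref{eq:LGN}: imposing $\int_{\R^2}|u|^2=1$ in \eqref{eq:LGN} gives $\int_{\R^2}|\nabla u|^2 \ge \CLGN \int_{\R^2}|u|^4$, so $\gamma_*(0) \ge \CLGN > 0$. (As already recalled in \Cref{rmk:susy}, one in fact has the saturation $\gamma_*(0)=\CLGN$, realized by the Townes soliton.)

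There is no genuine obstacle in this argument: the entire content of the proposition is packaged into the diamagnetic inequality combined with the classical LGN bound, both of which are already on the table at this point in the paper. The only verification needed is that the passage $u \mapsto |u|$ preserves both the mass constraint and the $L^4$-norm, which is immediate from the pointwise identity $||u||=|u|$.
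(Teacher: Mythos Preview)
Your proof is correct and follows essentially the same approach as the paper: apply the diamagnetic inequality \eqref{eq:AF-diamagnetic} to bound $\cE_{\beta}[u]/\int|u|^4$ below by $\int|\nabla|u||^2/\int|u|^4$, then invoke the LGN inequality \eqref{eq:LGN} to get $\CLGN = \gamma_*(0)$. The paper's proof is slightly more compressed (it goes directly to $\CLGN$ rather than explicitly passing through $\cE_0[|u|]$), but the content is identical.
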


\begin{proof}
By the diamagnetic inequality \eqref{eq:AF-diamagnetic} and the LGN inequality \eqref{eq:LGN}, we have
$$
\frac{\cE_{\beta}[u]}{\int_{\R^2} |u|^4} \geq \frac{\int_{\R^2} |\nabla |u||^2}{\int_{\R^2} |u|^4} \geq \CLGN = \gamma_{\ast}(0) 
$$
for every $u \in H^1(\R^2)$ and $\beta \in \R$. This yields the desired result. 
\end{proof}

\begin{lemma}
For any $u \in H^1(\R^2)$, we have
\begin{equation}\label{lem:A.Jbound}
 \int_{\R^2} \bigl| \bA\left[|u|^2\right] \bigr| \, | \bJ[u] |
\le \sqrt{\frac{3}{2}} \left(\int_{\R^2} |u|^2\right) \left(\int_{\R^2} |\nabla u|^2\right).
\end{equation}
Moreover, for every $\beta \geq 0$,
\begin{equation}\label{lem:boundgradient}
\int_{\R^2} |\nabla u|^2 \le \left( 1 + \sqrt{\frac{3}{2}}\beta \int_{\R^2}|u|^2 \right)^2 \cE_{\beta}[u].
\end{equation}
\end{lemma}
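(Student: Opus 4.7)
The two inequalities both rest on combining a pointwise/triangle estimate with the Hardy inequality \eqref{eq:AF-H1} and, for the second bound, the diamagnetic inequality \eqref{eq:AF-diamagnetic}. The constant $\sqrt{3/2}$ in both estimates is precisely $\sqrt{\CH}$ with $\CH = 3/2$ as recalled after \Cref{lem:basicinequalities}.

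\textbf{Inequality \eqref{lem:A.Jbound}.} First I would note the pointwise bound $|\bJ[u]| \le |u|\,|\nabla u|$, which follows directly from $\bJ[u] = \im(\bar u \nabla u)$ and Cauchy--Schwarz for complex numbers. Then, by Cauchy--Schwarz in $L^2$,
\begin{equation*}
  \int_{\R^2} \bigl|\bA[|u|^2]\bigr|\,|\bJ[u]|
  \le \int_{\R^2} \bigl|\bA[|u|^2]\bigr|\,|u|\,|\nabla u|
  \le \Bigl( \int_{\R^2} \bigl|\bA[|u|^2]u\bigr|^2 \Bigr)^{1/2}
      \Bigl( \int_{\R^2} |\nabla u|^2 \Bigr)^{1/2}.
\end{equation*}
Applying the Hardy-type bound \eqref{eq:AF-H1} with $\CH=3/2$ to the first factor, and finally using $\int |\nabla|u||^2 \le \int |\nabla u|^2$ (another form of the diamagnetic inequality applied with $\bA=0$), yields \eqref{lem:A.Jbound}.

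\textbf{Inequality \eqref{lem:boundgradient}.} The idea is to treat $\bA$ as a perturbation of the gradient. Writing $\nabla u = (\nabla + {\rm i}\beta\bA[|u|^2])u - {\rm i}\beta \bA[|u|^2]u$, the triangle inequality in $L^2$ gives
\begin{equation*}
  \left( \int_{\R^2} |\nabla u|^2 \right)^{1/2}
  \le \sqrt{\cE_\beta[u]} + \beta\,\Bigl( \int_{\R^2} \bigl|\bA[|u|^2]u\bigr|^2 \Bigr)^{1/2}.
\end{equation*}
For the second term I would apply Hardy \eqref{eq:AF-H1} and then the diamagnetic inequality \eqref{eq:AF-diamagnetic} to pass from $\|\nabla|u|\|_2$ to $\sqrt{\cE_\beta[u]}$:
\begin{equation*}
  \Bigl( \int_{\R^2} \bigl|\bA[|u|^2]u\bigr|^2 \Bigr)^{1/2}
  \le \sqrt{\tfrac{3}{2}}\, \Bigl(\int_{\R^2} |u|^2 \Bigr) \Bigl( \int_{\R^2} \bigl|\nabla|u|\bigr|^2 \Bigr)^{1/2}
  \le \sqrt{\tfrac{3}{2}}\, \Bigl(\int_{\R^2} |u|^2 \Bigr) \sqrt{\cE_\beta[u]}.
\end{equation*}
Combining these and squaring produces exactly \eqref{lem:boundgradient}.

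\textbf{Remarks on obstacles.} There is no real obstacle; the only subtle point is being careful about the order in which the two auxiliary inequalities are invoked in the second bound. It is essential to convert $\|\nabla|u|\|_2$ to $\sqrt{\cE_\beta[u]}$ via the diamagnetic inequality \emph{after} Hardy, rather than bounding $\|\nabla|u|\|_2 \le \|\nabla u\|_2$, which would produce the quantity $\|\nabla u\|_2$ on both sides and leave a non-closed estimate valid only for $\beta\|u\|_2^2$ small. The chosen ordering closes the estimate for every $\beta\ge 0$ and every $u \in H^1(\R^2)$.
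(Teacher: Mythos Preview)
Your proof is correct and follows essentially the same approach as the paper: pointwise bound $|\bJ[u]|\le |u|\,|\nabla u|$ plus Cauchy--Schwarz and Hardy for \eqref{lem:A.Jbound}, and the triangle inequality for $\nabla u$ followed by Hardy and the diamagnetic inequality for \eqref{lem:boundgradient}. Your remark on the ordering of Hardy and the diamagnetic bound in the second part is apt and makes explicit the point the paper leaves implicit.
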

\begin{proof}
Note that by definition \eqref{eq:current}, we have $\bJ[u] = \Im(\bar{u} \, \nabla u) $. Hence, by the Cauchy--Schwarz inequality and the Hardy inequality \eqref{eq:AF-H1}, as well as the diamagnetic inequality \eqref{eq:AF-diamagnetic} for $\beta=0$, we have
\begin{align*}
\int_{\R^2} \bigl| \bA\left[|u|^2\right] \bigr|\,  | \bJ[u] |  
\leq  \left(\int_{\R^2} |\bA\left[|u|^2\right] u|^2\right)^{\frac{1}{2}} \left(\int_{\R^2} |\nabla u|^2\right)^{\frac{1}{2}}
\leq \sqrt{\frac{3}{2}} \left(\int_{\R^2} |u|^2\right) \left(\int_{\R^2} |\nabla u|^2\right),
\end{align*}
which proves \eqref{lem:A.Jbound}.

On the other hand, \eqref{lem:boundgradient} is deduced from the diamagnetic inequality \eqref{eq:AF-diamagnetic}, the Hardy inequality \eqref{eq:AF-H1}, and the following triangle inequality:
$$
\norm{\nabla u}_{L^2} 
\le \norm{\nabla u + {\rm i}\beta \bA\left[|u|^2\right]u}_{L^2}
+ \beta \norm{\bA\left[|u|^2\right]u}_{L^2}.
$$
\end{proof}

\begin{proposition}\label{prop:youngconv}
Let $v \in L^p(\R^2)$ for $1 < p < 2$ and $r = \frac{2p}{2-p}$. Then, $\bA[v] \in L^r(\R^2)$ and 
$$
\norm{\bA[v]}_{L^r} \leq C_p \norm{v}_{L^p},
$$
where $C_p$ is a universal constant depending on $p$.
In particular, $\bA[|u|^2] \in L^r(\R^2)$ for all $2 < r < \infty$ and for $u \in H^1(\R^2)$.
\end{proposition}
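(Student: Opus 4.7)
The key observation is that the magnetic potential is controlled pointwise by the Riesz potential of order one: from the definition \eqref{eq:mag-pot} we have
\[
|\bA[v](\bx)| \le \int_{\R^2} \frac{|v(\by)|}{|\bx-\by|}\,\dd\by = \bigl(I_1 * |v|\bigr)(\bx),
\]
where the kernel $I_1(\bx) = |\bx|^{-1}$ belongs to the weak Lebesgue space $L^{2,\infty}(\R^2)$. The plan is therefore to invoke the Hardy--Littlewood--Sobolev inequality (equivalently, the weak Young convolution inequality) on $\R^2$ for the Riesz potential of order $1$.

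Concretely, for $1<p<2$ and $r$ defined by $\frac{1}{r} = \frac{1}{p} - \frac{1}{2}$, one has $\|I_1 * f\|_{L^r(\R^2)} \le C_p \|f\|_{L^p(\R^2)}$; this is the classical HLS bound with Riesz exponent $\lambda = 1$ in dimension $2$ (see, e.g., \cite[Theorem 4.3]{LieLos-01}). Substituting $f = |v|$ and combining with the pointwise estimate above yields
\[
\|\bA[v]\|_{L^r(\R^2)} \le C_p \|v\|_{L^p(\R^2)},
\]
with the exponent relation rewritten as $r = \frac{2p}{2-p}$, which proves the first assertion.

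For the ``in particular'' statement, let $u \in H^1(\R^2)$. By the Gagliardo--Nirenberg inequality \eqref{eq:gagliardonirenberg} (or Sobolev embedding), $u \in L^q(\R^2)$ for every $q \in [2,\infty)$, hence $|u|^2 \in L^p(\R^2)$ for every $p \in [1,\infty)$. Picking $p \in (1,2)$ and applying the first part, one obtains $\bA[|u|^2] \in L^r(\R^2)$ for the corresponding $r = \frac{2p}{2-p}$. As $p$ ranges over $(1,2)$, the value $r$ ranges over $(2,\infty)$, giving the claimed range.

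There is no significant obstacle here: the proof is essentially an invocation of HLS together with Sobolev embedding. The only minor point to verify is the pointwise domination by the Riesz kernel, which is immediate from $|(\bx-\by)^\perp|/|\bx-\by|^2 = |\bx-\by|^{-1}$, and the fact that the principal-value definition of $\bA[v]$ coincides with the absolutely convergent integral once the HLS bound places $\bA[v]$ in $L^r$.
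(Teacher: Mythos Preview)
Your proof is correct and follows essentially the same approach as the paper: both rely on the pointwise domination of $\bA[v]$ by $|v|*|\bx|^{-1}$, the fact that $|\bx|^{-1}\in L^{2,w}(\R^2)$, and the weak Young/HLS inequality (the paper cites \cite[Section~4.3]{LieLos-01}), followed by Sobolev embedding for the second statement.
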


\begin{proof}
Note that $\frac{1}{|\bx|} \in L^{2,w}(\R^2)$ (weak $L^2$).
Hence, by weak-type Young's convolution inequality (see, e.g., \cite[Section~4.3]{LieLos-01}),
we have
$$
\norm{\bA[v]}_{L^r} \leq C_p \norm{ \frac{1}{|\bx|} }_{L^{2,w}} \norm{v}_{L^p}.
$$
The second statement follows by the above inequality, together with Sobolev embedding $H^1(\R^2) \hookrightarrow L^p(\R^2)$ for all $2 \leq p <\infty$.
\end{proof}

\begin{lemma}
\label{lem:productlemma}
Let $p \geq 2$, $n,k \in \N$, and
$f_i \in W^{k,p}(\R^2)$ for every $1 \leq i \leq n$. 
Then, $\prod_{i=1}^n f_i \in W^{k,q}_{\loc}(\R^2)$ for every $1\leq q <p$.
\end{lemma}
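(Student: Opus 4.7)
The plan is to expand $\partial^\alpha \bigl( \prod_i f_i \bigr)$ using the Leibniz rule and to bound each resulting term via Sobolev embedding together with H\"older's inequality on $\R^2$. First I would apply the generalized Leibniz formula for weak derivatives to write, for any multi-index $\alpha$ with $|\alpha| \le k$,
\begin{equation*}
\partial^\alpha\Bigl(\prod_{i=1}^n f_i\Bigr) = \sum_{\beta_1 + \cdots + \beta_n = \alpha} c_\beta \prod_{i=1}^n \partial^{\beta_i} f_i,
\end{equation*}
so that it suffices to place each product $\prod_i \partial^{\beta_i} f_i$ in $L^q_{\loc}(\R^2)$ for an arbitrary $q \in [1,p)$.

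The key accounting is the following. Because $p \ge 2$ and $k \ge 1$, Sobolev embedding on $\R^2$ yields $\partial^{\beta_i} f_i \in W^{k-|\beta_i|,p}(\R^2) \hookrightarrow L^r_{\loc}(\R^2)$ for every finite $r$ whenever $|\beta_i| \le k-1$, since then $(k-|\beta_i|)p \ge p \ge 2$. For $|\beta_i| = k$, one only has $\partial^{\beta_i} f_i \in L^p(\R^2)$. As $\sum_i |\beta_i| \le k$ and $k \ge 1$, any Leibniz term falls into one of two cases: (a) all $|\beta_i| < k$; or (b) exactly one index $i_0$ has $|\beta_{i_0}| = k$ with all other $|\beta_j| = 0$.

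In case (a) each factor is locally in $L^r$ for arbitrary finite $r$, so H\"older's inequality places the product in $L^s_{\loc}$ for arbitrarily large $s$, in particular for some $s > q$. In case (b), I would apply H\"older's inequality on a compact $K \subset \R^2$ to get
\begin{equation*}
\Bigl\| \prod_i \partial^{\beta_i} f_i \Bigr\|_{L^s(K)} \le \|\partial^{\beta_{i_0}} f_{i_0}\|_{L^p(K)} \prod_{j \ne i_0} \|f_j\|_{L^r(K)}, \qquad \frac{1}{s} = \frac{1}{p} + \frac{n-1}{r},
\end{equation*}
and since $q < p$, choosing $r$ sufficiently large forces $s > q$, so the product lies in $L^q(K)$. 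Summing the finitely many Leibniz terms yields $\partial^\alpha\bigl(\prod_i f_i\bigr) \in L^q_{\loc}$, and letting $\alpha$ range over $|\alpha| \le k$ gives $\prod_i f_i \in W^{k,q}_{\loc}(\R^2)$. No genuine technical obstacle is expected; the only essential point is that the ``worst'' Leibniz term, which keeps one factor at its top-order $L^p$ derivative while leaving the remaining factors arbitrarily integrable, is precisely what dictates the restriction $q < p$ in the conclusion.
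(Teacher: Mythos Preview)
Your proposal is correct and uses essentially the same ingredients as the paper's proof: the Leibniz rule, the Sobolev embedding $W^{1,p}(\R^2) \hookrightarrow L^r_{\loc}$ for all $r<\infty$ when $p\ge 2$ (so any factor carrying fewer than $k$ derivatives is in every $L^r_{\loc}$), and H\"older's inequality to handle the single factor that may carry the full $k$ derivatives and is only in $L^p$. The paper organizes this via induction on $n$, peeling off one factor at a time and applying the two-factor Leibniz rule, whereas you expand the $n$-fold product directly; your route is a bit cleaner, while the inductive version has the minor advantage that the product rule for weak derivatives is justified incrementally rather than assumed in its $n$-fold form at the outset.
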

\begin{proof}
For each $1 \leq i \leq n$, by H\"older's inequality, $f_i \in W^{k,s}_{\loc}(\R^2)$ for every $1\leq s\leq p$. By Sobolev's inequality (see e.g., \cite[Theorem 8.8]{LieLos-01}), $f_i \in W^{k-1,r}_{\loc}(\R^2)$ for every $1 \leq s <2$ and $s\leq r \leq \frac{2s}{2-s}$. In other words, $f_i \in W^{k-1,r}_{\loc}(\R^2)$ for every $1 \leq r< \infty$. Now, we prove the lemma by induction on $n$. For $n=1$, the Lemma follows immediately from H\"older's inequality. Now, if the lemma holds for $n$, we have $\prod_{i=1}^{n} f_i \in W^{k-1,q}_{\loc}(\R^2)$ for every $1 \leq q <p$.
Hence, by the Leibniz formula and H\"older's inequality, we derive
\begin{align*}
\int_{B} \left|\nabla^k  \prod_{i=1}^{n+1} f_i\,\right|^q \leq{} & (k+1)^{q+1} \int_{B}  \left|\sum_{j=0}^{k-1} \left | \nabla^{j} \prod_{i=1}^n f_i \right | \, \right|^q \left|\sum_{j=0}^{k}  | \nabla^{j}  f_{n+1}  | \, \right|^q \\
& + (k+1)^{q+1}\left|\sum_{j=0}^{k} \left | \nabla^{j} \prod_{i=1}^n f_i \right | \, \right|^q \left|\sum_{j=0}^{k-1}  | \nabla^{j}  f_{n+1}  | \, \right|^q  \\
\leq{} & (k+1)^{q+1} \left(\int_{B}  \left|\sum_{j=0}^{k-1} \left | \nabla^{j} \prod_{i=1}^n f_i \right | \, \right|^{\frac{pq}{p-q}}\right)^{\frac{p-q}{p}}  \left(\int_{B}\left|\sum_{j=0}^{k}  | \nabla^{j}  f_{n+1}  | \, \right|^p\right)^{\frac{q}{p}} \\
& + (k+1)^{q+1} \left(\int_{B} \left|\sum_{j=0}^{k} \left | \nabla^{j} \prod_{i=1}^n f_i \right |  \, \right|^p \right)^{\frac{q}{p}} \left(\int_{B}  \left|\sum_{j=0}^{k-1} | \nabla^{j} f_{n+1} | \, \right|^{\frac{pq}{p-q}}\right)^{\frac{p-q}{p}} \\
<{} & \infty,
\end{align*}
for every $1 \leq q<p$ and ball $B \subset \R^2$, where $\nabla^j$ denote the corresponding matrix of $j$th derivatives. This completes the proof.
\end{proof}

In order to make use of \Cref{lem:productlemma}, we need the following Lemma which relies on the regularity of the superpotential functional $\Phi$ defined in \Cref{def:superpotentialdef}.

\begin{lemma}\label{lem:superpot-relations}
    For every $f \in L^1(\R^2)$, we have 
    $\Phi[f] \in L^1_{\loc}(\R^2)$, 
    $\bA[f] \in L^1_{\loc}(\R^2;\R^2),$ and 
    $\bA[f] = \nabla^{\perp} \Phi[f]$ in the distribution sense. Moreover, $\Delta \Phi[f] = 2\pi f$ in the distribution sense in $\R^2$ for every $f \in L^1(\R^2)$.
\end{lemma}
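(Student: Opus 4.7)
The strategy is to establish the local integrability claims via Fubini--Tonelli by bounding the kernel integrals uniformly in the outer variable, and then to derive the distributional identities by integration by parts together with the classical facts that $\nabla_\bx \log|\bx-\by| = (\bx-\by)/|\bx-\by|^2$ in $L^1_\loc$ and $\Delta_\bx \log|\bx-\by| = 2\pi\delta_\by$.

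For $\Phi[f] \in L^1_\loc$, I would fix a ball $B \subset \R^2$ of radius $R$ and apply Tonelli to the non-negative integrand to obtain
\begin{align*}
\int_B |\Phi[f](\bx)|\,\dd\bx \le \int_{\R^2} |f(\by)|\, G(\by)\,\dd\by, \qquad G(\by) := \int_B \bigl|\log|\bx-\by| - \log(|\by|+1)\bigr|\,\dd\bx.
\end{align*}
I would then show $\|G\|_\infty < \infty$ by splitting on $|\by|$: for $|\by| \le 2R$, the estimate $G(\by) \le \int_B |\log|\bx-\by||\,\dd\bx + |B|\log(2R+1)$ is finite by the local integrability of $\log|\cdot|$ in $\R^2$; for $|\by| > 2R$, the elementary bounds $|\by|/2 \le |\bx-\by| \le 2|\by|$ for $\bx \in B$ yield $\bigl|\log\tfrac{|\bx-\by|}{|\by|+1}\bigr| \le \log 4$, hence $G(\by) \le |B|\log 4$. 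Thus $\int_B|\Phi[f]| \le \|G\|_\infty \|f\|_{L^1}$. The proof that $\bA[f] \in L^1_\loc(\R^2;\R^2)$ is analogous and easier: the pointwise identity $|(\bx-\by)^\perp|/|\bx-\by|^2 = 1/|\bx-\by|$ combined with $\sup_\by \int_B \dd\bx/|\bx-\by| < \infty$ (polar coordinates around $\by$) suffices.

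For the identity $\bA[f] = \nabla^\perp \Phi[f]$, I would test against a scalar $\varphi \in C_c^\infty(\R^2)$ and expand via the definition of the distributional derivative:
\begin{align*}
\langle \partial_i \Phi[f], \varphi \rangle = -\int_{\R^2} \partial_i\varphi(\bx) \int_{\R^2} \bigl[\log|\bx-\by| - \log(|\by|+1)\bigr] f(\by)\,\dd\by\,\dd\bx.
\end{align*}
The uniform bound on $G$ with $B = \supp\varphi$ makes this double integral absolutely convergent, so Fubini applies. For each fixed $\by$, the inner $\bx$-integral simplifies because the subtracted term contributes $\log(|\by|+1)\int_{\R^2}\partial_i\varphi\,\dd\bx = 0$, and classical integration by parts (valid since $\log|\cdot-\by|$ and its distributional gradient are both in $L^1_\loc$) converts the remainder into $\int \varphi(\bx) (\bx-\by)_i/|\bx-\by|^2\,\dd\bx$. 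A second Fubini swap (justified by the same estimate used for $\bA[f] \in L^1_\loc$) identifies this with the pairing of $\varphi$ against the $i$-th component of the gradient analogue of $\bA[f]$; taking $\perp$ then yields $\bA[f] = \nabla^\perp \Phi[f]$. The proof of $\Delta\Phi[f] = 2\pi f$ is structurally identical, testing against $\Delta\varphi$ instead of $\partial_i\varphi$, using $\int_{\R^2}\Delta\varphi\,\dd\bx = 0$ to discard the subtracted constant, and invoking $\Delta_\bx\log|\bx-\by| = 2\pi\delta_\by$ at the end.

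The only mildly delicate point is the uniform bound on $G$ for large $|\by|$, which is precisely the purpose of the regularizing subtraction $-\log(|\by|+1)$ in the definition of $\Phi$: without it, $G(\by) \sim \log|\by|$ would force a logarithmic moment condition on $f$ that the lemma does not assume. Once that bound is in hand, every remaining step is a routine application of Fubini and distributional integration by parts.
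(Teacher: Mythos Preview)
Your proposal is correct and follows essentially the same route as the paper: Fubini--Tonelli with the large-$|\by|$ log-ratio bound for local integrability, then Fubini plus distributional integration by parts for the identities. The only packaging differences are that the paper makes the integration-by-parts step explicit via a cutoff $\eta_{\by,\varepsilon}$ near the singularity (rather than citing the classical distributional gradient of $\log|\cdot|$), and obtains $\Delta\Phi[f] = 2\pi f$ by writing $\Delta = \nabla^\perp \cdot \nabla^\perp$ and invoking the already-proved $\bA[f] = \nabla^\perp\Phi[f]$ rather than testing directly against $\Delta\varphi$.
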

\begin{proof}
    Let $f \in L^1(\R^2)$ and $R>1.$ Then,
\begin{align*}
   \frac{1}{4} \leq  \frac{|\by|-|\bx|}{|\by|+1} \leq \frac{|\bx-\by|}{|\by|+1} \leq \frac{|\bx| +|\by|}{|\by|+1} \leq \frac{3}{2}
\end{align*}
for every $\bx \in B(0,R)$, $\by \in \R^2 \setminus B(0,2R).$ 
Hence,
\begin{align*}
       \int_{B(0,R)} |\Phi[f](\bx)| \,\dd \bx 
       ={} & \int_{B(0,R)} \biggl|  \int_{\R^2} (\log(|\bx-\by|) - \log(|\by|+1)) f(\by) \dd \by \biggr| \,\dd \bx \\ 
       \leq{} & \int_{B(0,R)} \biggl|  \int_{\R^2 \setminus B(0,2R)} (\log(|\bx-\by|) - \log(|\by|+1)) f(\by) \dd \by \biggr| \,\dd \bx \\ 
       & + \int_{B(0,R)} \biggl|  \int_{ B(0,2R)} (\log(|\bx-\by|) - \log(|\by|+1)) f(\by) \dd \by \biggr| \,\dd \bx \\ 
       \leq{} & (\log(4) + \log(2R +1)) |B(0,R)| \, \|f\|_{L^1}  \\ 
       &+ \|f\|_{L^1(B(0,2R))} \int_{B(0,3R)} |\log|\bx|| \dd \bx < \infty,
    \end{align*}
where we used Fubini's theorem in the last inequality.
Moreover,
\begin{align*}
     \int_{B(0,R)} |\bA[f](\bx)| \,\dd \bx 
     & \leq \int_{B(0,R)}  \int_{\R^2} \frac{|f(\by)|}{|\bx-\by|} \dd \by \dd \bx \\ 
     & = \int_{B(0,R)} \left(\int_{B(0,2R)} \frac{|f(\by)|}{|\bx-\by|} \dd \by + \int_{\R^2 \setminus B(0,2R)} \frac{|f(\by)|}{|\bx-\by|} \dd \by  \right) \dd \bx \\ 
     & \leq 6 R \pi \, \|f\|_{L^1(B(0,2R))} + \frac{|B(0,R)| \, \|f\|_{L^1}}{R} < \infty.
\end{align*}
Now, for every $\psi \in C_c^{\infty}(B(0,R))$, we have
\begin{equation}
\label{eq:fubini}
\begin{aligned}
    \int_{\R^2} \Phi[f](\bx) \, \frac{\partial}{\partial x_1} \psi(\bx) \dd \bx 
    & =  \int_{\R^2}   \int_{\R^2} (\log(|\bx-\by|) - \log(|\by|+1)) f(\by)   \, \frac{\partial}{\partial x_1} \psi(\bx) \dd \by \dd \bx
    \\ & = \int_{\R^2}  f(\by)  \int_{\R^2} (\log(|\bx-\by|) - \log(|\by|+1))   \, \frac{\partial}{\partial x_1} \psi(\bx) \dd \bx \dd \by,
\end{aligned}
\end{equation}
where we used Fibini's theorem in the second equality.
Now, let $\eta_{\by,\varepsilon} \in C_c^{\infty}(B(\by,2\varepsilon))$ be a sequence of functions for $\by \in \R^2,\varepsilon>0$, such that $0 \leq \eta_{\by,\varepsilon} \leq 1 $, $ \eta_{\by,\varepsilon}|_{B(\by,\varepsilon)}=1$, and $|\nabla \eta_{\by,\varepsilon}| \leq \frac{C}{\varepsilon}$ where $C>0$ is a constant. Then,
\begin{align*}
   &\lim_{\varepsilon \to 0} \int_{\R^2} (\log(|\bx-\by|) - \log(|\by|+1))   \, \frac{\partial}{\partial x_1} (\eta_{\by,\varepsilon}(\bx) \psi(\bx)) \,\dd \bx \\
   &= \int_{\R^2} (\log(|\bx-\by|) - \log(|\by|+1))   \, \frac{\partial}{\partial x_1}  \psi(\bx) \,\dd \bx
\\ & + \lim_{\varepsilon \to 0}\int_{\R^2} (\log(|\bx-\by|) - \log(|\by|+1)) \, \psi(\bx)  \, \frac{\partial}{\partial x_1} \eta_{\by,\varepsilon}(\bx)  \,\dd \bx,
\end{align*}
and 
\begin{align*}
  & \biggl| \int_{\R^2} (\log(|\bx-\by|) - \log(|\by|+1)) \, \psi(\bx)   \, \frac{\partial}{\partial x_1} \eta_{\by,\varepsilon}(\bx)  \,\dd \bx \biggr|
\\&\leq  \frac{C \, \|\psi\|_{L^{\infty}}}{\varepsilon}\int_{B(\by,2\varepsilon)} |\log(|\bx-\by|)| \,\dd \bx + \frac{C \, \|\psi\|_{L^{\infty}} \, |B(\by,2\varepsilon)| \, \log(|\by|+1)}{\varepsilon}
\\ & \leq 2\pi C \, \|\psi\|_{L^{\infty}} \, \varepsilon (2|\log(2\varepsilon)| + 1) + \frac{C \, \|\psi\|_{L^{\infty}} \, |B(\by,2\varepsilon)| \, \log(|\by|+1)}{\varepsilon}.
\end{align*}
Hence, 
\begin{align*}
   & \lim_{\varepsilon \to 0} \int_{\R^2} (\log(|\bx-\by|) - \log(|\by|+1))   \, \frac{\partial}{\partial x_1} (\eta_{\by,\varepsilon}(\bx) \psi(\bx)) \dd \bx \\&= \int_{\R^2} (\log(|\bx-\by|) - \log(|\by|+1))   \, \frac{\partial}{\partial x_1}  \psi(\bx) \dd \bx.
\end{align*}
In conclusion, by \eqref{eq:fubini}, integration by parts, and Lebesgue's dominated convergence, we derive that
\begin{align*}
      \int_{\R^2} &\Phi[f](\bx) \, \frac{\partial}{\partial x_1} \psi(\bx) \,\dd \bx = \int_{\R^2}  f(\by)  \int_{\R^2} (\log(|\bx-\by|) - \log(|\by|+1))   \, \frac{\partial}{\partial x_1} \psi(\bx) \,\dd \bx \dd \by \\
       &=  \int_{\R^2}  f(\by) \lim_{\varepsilon \to 0} \int_{\R^2} (\log(|\bx-\by|) - \log(|\by|+1))   \, \frac{\partial}{\partial x_1} (\eta_{\by,\varepsilon}(\bx) \psi(\bx)) \,\dd \bx \dd \by
       \\ &=  -\int_{\R^2}  f(\by)  \int_{\R^2} \frac{x_1 - y_1}{|\bx-\by|^2} \, \psi(\bx) \,\dd \bx \dd \by =-\int_{\R^2} A_2[f](\bx) \psi(\bx) \,\dd \bx,
\end{align*}
which proves that $ \frac{\partial}{\partial x_1}\Phi[f] =A_2[f] $ in the distribution sense. Here, we used Fubini's theorem in the last equality.
Likewise, we have $A_1[f] =-\frac{\partial}{\partial x_2}\Phi[f]  $
in the distribution sense, which completes the proof. Finally, 
\begin{align*}
    \Delta \Phi[f] = \nabla^{\perp} \cdot \nabla^{\perp} \Phi[f] = \nabla^{\perp} \cdot \bA[f] = 2 \pi f,
\end{align*}
in distribution sense in $\R^2$.
\end{proof}

\begin{lemma}[Regularity of superpotential]
\label{lem:regularityofgauge}
Let $f \in L^{1} \cap W^{k,p}_{\loc}(\R^2) $ for some non-negative integer $k$ and $p>1$.  Then, $\Phi[f] \in L^{\infty}_{\loc} \cap W^{k+2,p}_{\loc}(\R^2)$,
and if $f \in C^\infty$ then $\Phi[f] \in C^\infty$.
\end{lemma}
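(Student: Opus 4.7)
My plan is to combine the distributional identity $\Delta \Phi[f] = 2\pi f$ from \Cref{lem:superpot-relations} with interior elliptic regularity, after first establishing local boundedness of $\Phi[f]$ by a direct estimate, and then upgrade to $C^\infty$ via Sobolev embedding.

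\medskip

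\emph{Step 1 (local boundedness).} Fix $R>0$ and $\bx \in B(0,R)$, and split the defining integral into a near part $I_1(\bx)$ over $\by \in B(0,2R)$ and a far part $I_2(\bx)$ over the complement. For $I_2$, the two-sided bound $\tfrac14 \le |\bx-\by|/(|\by|+1) \le \tfrac32$ established in the proof of \Cref{lem:superpot-relations} gives $|I_2(\bx)| \le C\|f\|_{L^1}$ uniformly in $\bx \in B(0,R)$. For $I_1$, the contribution of $-\log(|\by|+1)$ is bounded by $\log(2R+1)\|f\|_{L^1(B(0,2R))}$, while the remaining term is estimated via H\"older's inequality as
\[
\left| \int_{B(0,2R)} \log|\bx-\by|\, f(\by)\, \dd\by \right|
\le \|\log|\bx-\slot|\,\|_{L^{p'}(B(0,3R))}\, \|f\|_{L^p(B(0,2R))},
\]
where $p' = p/(p-1) < \infty$ since $p>1$. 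The $L^{p'}$-norm of $\log|\bx-\slot|$ on $B(0,3R)$ is finite and uniformly bounded for $\bx \in B(0,R)$, because $\log$ is locally $L^{p'}$ for every $p'<\infty$ and translation by a bounded amount does not affect this norm. Hence $\Phi[f] \in L^\infty(B(0,R))$ for every $R$, i.e.\ $\Phi[f] \in L^\infty_\loc(\R^2)$.

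\emph{Step 2 (Sobolev regularity).} By \Cref{lem:superpot-relations}, $\Phi[f]$ is a distributional solution of $\Delta \Phi[f] = 2\pi f$ on $\R^2$, and by Step~1 it lies in $L^p_\loc$. Since $f \in W^{k,p}_\loc$ with $p>1$, interior $L^p$ regularity for the Laplacian (see, e.g., \cite[Ch.~9]{GilTru-01}) then upgrades $\Phi[f]$ to $W^{k+2,p}_\loc(\R^2)$.

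\emph{Step 3 (smoothness).} If $f \in C^\infty \cap L^1$, then $f \in W^{k,p}_\loc$ for every $k \in \N$ and every $p \in (1,\infty)$, so Step~2 yields $\Phi[f] \in W^{k+2,p}_\loc$ for all such $k,p$. The Sobolev embedding $W^{k+2,p}_\loc(\R^2) \hookrightarrow C^k_\loc(\R^2)$, valid whenever $p>1$, then gives $\Phi[f] \in C^k(\R^2)$ for every $k$, hence $\Phi[f] \in C^\infty(\R^2)$.

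The only slightly delicate ingredient is the uniform local $L^{p'}$-bound on translates of $\log|\slot|$ in Step~1, but this is routine; once local boundedness is in hand, the remainder is a direct application of standard elliptic regularity and Sobolev embedding, so I do not anticipate any serious obstacle.
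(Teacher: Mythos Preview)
Your proof is correct and follows essentially the same approach as the paper: a near/far splitting with H\"older on the near part to get $L^\infty_\loc$, followed by the distributional identity $\Delta\Phi[f]=2\pi f$ and interior elliptic regularity. The only cosmetic differences are that the paper splits at radius $2|\bx|+1$ rather than at a fixed $2R$, and that you spell out the $C^\infty$ upgrade via Sobolev embedding whereas the paper leaves it implicit.
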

\begin{proof}
We first show that $\Phi[f] \in L^{\infty}_{\loc}(\R^2)$. Indeed, for every $\bx \in \R^2$, we have  
\begin{align*}
|\Phi[f](\bx)| \leq{} \int_{\R^2} \bigl|\log|\bx-\by| - \log(|\by|+1) \bigr| \, |f(\by)| \,\dd\by. 
\end{align*}
On the one hand, for $\by \in B(0, 2|\bx|+1)$, we use H\"older's inequality to obtain
\begin{align*}
& \int_{ B(0, 2|\bx| +1)} \bigl|\log|\bx-\by| - \log(|\by|+1) \bigr| \, |f(\by)| \,\dd\by \\ 
\leq & \int_{ B(0, 2|\bx| +1)} \left( \bigl|\log|\bx-\by|\bigr| |f(\by)| + \bigl|\log(|\by|+1)\bigr| |f(\by)| \right) \dd\by \\ 
\leq &  \left(\int_{ B(0, 2|\bx| +1)} |\log|\by|  \, |^{\frac{p}{p-1}} \dd  \by \right)^{\frac{p-1}{p}} \left(\int_{B(0, 2|\bx| +1)} |f(\by)|^p \dd  \by\right)^{\frac{1}{p}} \\ 
&+  \left(\int_{ B(0, 2|\bx| +1)}|\, \log(|\by|+1) |^{\frac{p}{p-1}} \dd  \by \, \right)^{\frac{p-1}{p}}\left(\int_{B(0, 2|\bx| +1)} |f(\by)|^p \dd  \by\right)^{\frac{1}{p}}
\end{align*}
On the other hand, for $\by \in \R^2 \setminus  B(0, 2|\bx| +1)$, we have
\begin{align*}
\frac{1}{2} \leq \frac{|\by|-|\bx|}{|\by|+1} \leq \frac{|\bx -\by|}{|\by|+1} \leq \frac{|\bx| +|\by|}{|\by|+1} \leq \frac{3}{2},
\end{align*}
which implies that 
\begin{align*}
\int_{\R^2 \setminus B(0, 2|\bx| +1)} |\log|\bx-\by| - \log(|\by|+1) | \, |f(\by)| \dd \by &= \int_{\R^2 \setminus B(0, 2|\bx| +1)} \left|\log \frac{|\bx-\by| }{|\by|+1} \right| \, |f(\by)| \dd  \by \\
& \leq 2\log(2)\int_{\R^2 \setminus B(0, 2|\bx| +1)}  |f(\by)| \dd  \by. 
\end{align*}
Hence, $\Phi[f] \in L^{\infty}_{\loc}(\R^2)$. Now, by \Cref{lem:superpot-relations}, we derive $\Delta \Phi[f] =2 \pi f$. Since $\Phi[f] \in L^{\infty}_{\loc}(\R^2)$ and $f \in W^{k,p}_{\loc}(\R^2)$, we derive that $ \Phi[f] \in W^{k+2,p}_{\loc}(\R^2)$, by a standard elliptic regularity estimate; see \cite[Chapter 9]{GilTru-01}.
\end{proof}

\subsection{Variational equation}\label{sec:mag-variation}

In this subsection, we show the regularity of the minimizers for $\gamma_{\ast}(\beta)$.
We need the following definition and lemma.

\begin{definition}\label{def:A-general}
    For every $\bF \in L^1(\R^2;\C^2)$ and $\bx \in \R^2$, we define
\begin{align*}
   \bA * [\bF](\bx) := \lim_{\varepsilon \to 0}\int_{\R^2 \setminus B(\bx, \varepsilon)} \frac{(\bx-\by)^{\perp}}{|\bx-\by|^2} \cdot \bF(\by) \,\dd \by.
\end{align*}
\end{definition}

\begin{lemma}
\label{lem:regularityofmagneticgauge}
 Let $f \in L^{1} \cap W^{k,p}_{\loc}(\R^2)$ and $\bF \in  L^{1} \cap W^{k,p}_{\loc}(\R^2;\C^2)$ for some non-negative integer $k$ and real $p>1$.  Then, $\bA[f] \in L^{\infty}_{\loc} \cap W^{k+1,p}_{\loc}(\R^2;\C^2)$ and $\bA * [\bF] \in L^{\infty}_{\loc} \cap W^{k+1,p}_{\loc}(\R^2)$.
\end{lemma}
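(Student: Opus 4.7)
My plan is to reduce both conclusions to the regularity of the scalar superpotential $\Phi$ treated in \Cref{lem:regularityofgauge}, by expressing $\bA[f]$ and $\bA*[\bF]$ as first-order differential operators applied to scalar superpotentials.

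The first step is representation. By \Cref{lem:superpot-relations} we already have $\bA[f] = \nabla^{\perp}\Phi[f]$ in the distribution sense. For $\bA*[\bF]$, I would repeat the mollification-plus-Fubini argument from the proof of \Cref{lem:superpot-relations} componentwise. Writing $(\bx-\by)^{\perp}\cdot\bF(\by) = -(x_2-y_2)F_1(\by) + (x_1-y_1)F_2(\by)$ and recalling that, distributionally, $\partial_j \Phi[g](\bx) = \int \frac{x_j-y_j}{|\bx-\by|^2}\,g(\by)\,\dd\by$, one obtains
\[
\bA*[\bF] \;=\; \partial_1 \Phi[F_2] \;-\; \partial_2 \Phi[F_1]
\]
in the sense of distributions on $\R^2$.

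The second step is to bootstrap regularity. Applying \Cref{lem:regularityofgauge} to each scalar input $f$ and $F_j$ gives $\Phi[f], \Phi[F_1], \Phi[F_2] \in L^\infty_{\loc} \cap W^{k+2,p}_{\loc}(\R^2)$. Taking one distributional derivative of these representations then delivers the Sobolev claim $\bA[f], \bA*[\bF] \in W^{k+1,p}_{\loc}(\R^2)$ immediately, componentwise in the first case.

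For the $L^{\infty}_{\loc}$ claim, I would work directly from the integral kernel. Splitting as in the proof of \Cref{lem:regularityofgauge},
\[
|\bA[f](\bx)| \le \int_{|\by|\le 2|\bx|+1} \frac{|f(\by)|}{|\bx-\by|}\,\dd\by + \int_{|\by|>2|\bx|+1} \frac{|f(\by)|}{|\bx-\by|}\,\dd\by,
\]
the far piece is bounded by $2\|f\|_{L^1}$ since $|\bx-\by|\ge 1/2$ on that region. For the near piece I would apply H\"older's inequality against $|\bx-\by|^{-1}\in L^{q'}_{\loc}$ for every $q'<2$, and use Sobolev embedding to upgrade $f\in W^{k,p}_{\loc}$ into $L^q_{\loc}$ for some $q>2$ (which works for any $k\ge 1$, $p>1$, as well as for $k=0,p>2$). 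The same split handles $\bA*[\bF]$. The main obstacle is the borderline corner $k=0$, $1<p\le 2$, in which neither H\"older nor Sobolev directly delivers $L^{\infty}_{\loc}$; here one must exploit the finer smoothing from $\Delta\Phi[f]=2\pi f$ together with the global integrability $f\in L^1(\R^2)$ (to harvest decay at infinity) and iterate, or invoke Calder\'on--Zygmund / Morrey-type estimates for the gradient of the logarithmic potential, to conclude that $\nabla\Phi[f]$ is locally bounded.
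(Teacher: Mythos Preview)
Your reduction to \Cref{lem:regularityofgauge} via the identities $\bA[f]=\nabla^{\perp}\Phi[f]$ and $\bA*[\bF]=-\partial_2\Phi[F_1]+\partial_1\Phi[F_2]$ is exactly the paper's proof, and it immediately yields the $W^{k+1,p}_{\loc}$ conclusion. (Incidentally, your sign on the $\partial_1\Phi[F_2]$ term is the correct one; the paper's displayed formula has a typo there, but this is irrelevant for regularity.)

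Your additional worry about the $L^\infty_{\loc}$ claim in the range $k=0$, $1<p\le 2$ is well-founded, but your proposed remedy via ``finer smoothing'' or Calder\'on--Zygmund/Morrey estimates cannot succeed: the claim is simply \emph{false} in that range. For a concrete counterexample, take $\psi(\bx)=\chi(\bx)\,|\bx|^{1-\varepsilon}$ with $\chi\in C_c^\infty$ a cutoff equal to $1$ near the origin and $0<\varepsilon<1$ small, and set $f:=\tfrac{1}{2\pi}\Delta\psi$. Then $f\in L^1\cap L^p_{\loc}(\R^2)$ for every $1<p<2/(1+\varepsilon)$, and $\Phi[f]-\psi$ is harmonic (hence smooth) by Weyl's lemma since $\Delta(\Phi[f]-\psi)=0$ and both are in $L^1_{\loc}$. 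But $|\nabla\psi(\bx)|=(1-\varepsilon)|\bx|^{-\varepsilon}\to\infty$ as $\bx\to 0$, so $\bA[f]=\nabla^\perp\Phi[f]\notin L^\infty_{\loc}$. Thus the $L^\infty_{\loc}$ assertion in the lemma (and in the paper) holds only when $k\ge 1$ or $p>2$, where it follows either from Sobolev embedding $W^{k+1,p}_{\loc}(\R^2)\hookrightarrow L^\infty_{\loc}$ (for $k\ge 1$, $p>1$) or from your direct H\"older split (for $k=0$, $p>2$). Fortunately, in the only application of this lemma in the paper---the bootstrap in \Cref{lem:regularityofminimizer}---one always has $k\ge 1$, so the defective range is never invoked.
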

\begin{proof}
    The proof follows by \Cref{lem:superpot-relations,lem:regularityofgauge}, $\bA[f] = (-\partial_2 \Phi[f], \partial_1 \Phi[f])$, and noting that 
    \begin{align*}
        \bA * [(F_1,F_2)] = -\partial_2 \Phi[F_1]  -\partial_1 \Phi[F_2],
    \end{align*}
    where $\bF = (F_1,F_2).$
\end{proof}

\begin{lemma}[Regularity of minimizers]
\label{lem:regularityofminimizer}
Let $\beta \in \R$. If $u \in H^1(\R^2)$ is a minimizer for $\gamma_{\ast}(\beta)$, i.e., $\cE_{\beta,\gamma_*(\beta)}[u]=0$, then it satisfies the Euler--Lagrange equation
\begin{multline}\label{eq:EL}
\Big[-\left(\nabla + {\rm i}\beta\bA\left[|u|^2\right]\right)\cdot \left(\nabla + {\rm i}\beta\bA\left[|u|^2\right]\right) - 2\beta^2 \bA * \left[\bA\left[|u|^2\right]|u|^2\right]\\
- 2\beta \bA * \left[\bJ[u]\right] - 2\gamma_{*}(\beta)|u|^2\Big] u = \lambda u
\end{multline}
weakly in $\R^2$, where
\begin{align*}
\lambda  := 
\int_{\R^2} \left[-|\nabla u|^2 + \beta^2 \left|\bA\left[|u|^2\right]\right|^2|u|^2\right].
\end{align*}
Moreover, $u$ belongs to $C^{\infty}(\R^2)$.
\end{lemma}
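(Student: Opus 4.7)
The plan is to derive the weak Euler--Lagrange equation by variational calculus, and then bootstrap regularity via elliptic estimates together with the smoothing properties of $\bA$, $\bA*$, and $\Phi$ already established in \Cref{sec:mag-prelim}.

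For the EL equation, I would perturb $u$ along $u_\epsilon := u+\epsilon\phi$ with $\phi\in C_c^\infty(\R^2;\C)$ and $\epsilon\in\R$, using that $u$ is a critical point of $\cE_\beta[\cdot]-\gamma_*(\beta)\int|\cdot|^4$ subject to the constraint $\int|\cdot|^2=1$, which supplies the Lagrange multiplier $\lambda$. Expanding $\cE_\beta$ as in \eqref{eq:AF-crossterms}, the kinetic term gives the usual $-\Delta u$; the cross interaction $2\beta\int\bA[|u|^2]\cdot\bJ[u]$ and the self-energy $\beta^2\int|\bA[|u|^2]|^2|u|^2$ each depend on $|u|^2$ both inside $\bA$ and via the explicit factors, producing two groups of contributions. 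For the groups in which $\bA[|u|^2]$ is itself differentiated -- giving $\bA[2\Re(\bar u\phi)]$ by linearity of $\bA[\cdot]$ -- I would apply the Fubini-type duality
\[
\int_{\R^2}\bA[f]\cdot\bF=-\int_{\R^2} f\,\bA*[\bF],
\]
valid under the integrability supplied by \Cref{prop:youngconv,lem:superpot-relations}, to transfer the variation of the density off $\bA$. This produces exactly the pointwise multipliers $\bA*[\bJ[u]]$ and $\bA*[\bA[|u|^2]|u|^2]$ appearing in \eqref{eq:EL}. Collecting coefficients of $\bar\phi$ (and using the Coulomb-gauge identity $\div\bA[|u|^2]=0$ from \Cref{lem:superpot-relations}) yields the stated weak form; pairing \eqref{eq:EL} with $u$ and invoking $\cE_\beta[u]=\gamma_*(\beta)\int|u|^4$ then identifies $\lambda$.

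For the regularity, I would bootstrap through \eqref{eq:EL}. Starting from $u\in H^1\hookrightarrow L^p(\R^2)$ for every finite $p$, one has $|u|^2\in L^1\cap L^p_{\loc}$ for all $p<\infty$, so \Cref{lem:regularityofmagneticgauge} gives $\bA[|u|^2]\in L^\infty_{\loc}\cap W^{1,p}_{\loc}$. Similarly $\bJ[u]\in L^1\cap L^q_{\loc}$ for $q<2$ and $\bA[|u|^2]|u|^2\in L^1\cap L^q_{\loc}$ for $q<\infty$ (by \Cref{prop:youngconv} and H\"older), so $\bA*[\bJ[u]]$ and $\bA*[\bA[|u|^2]|u|^2]$ belong to $L^\infty_{\loc}$ by the same lemma. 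With $\div\bA[|u|^2]=0$ the magnetic Laplacian expands as $-\Delta u-2i\beta\bA[|u|^2]\cdot\nabla u+\beta^2|\bA[|u|^2]|^2u$, rewriting \eqref{eq:EL} in the form $-\Delta u=G$ with $G\in L^2_{\loc}$ (the marginal term being $\bA\cdot\nabla u$). Standard elliptic regularity \cite[Ch.~9]{GilTru-01} upgrades $u$ to $W^{2,2}_{\loc}$, and Sobolev embedding promotes $\nabla u$ to $L^p_{\loc}$ for every $p<\infty$. Iterating -- using \Cref{lem:productlemma} to control products of Sobolev functions and \Cref{lem:regularityofgauge,lem:regularityofmagneticgauge} to propagate one extra derivative through each application of $\bA$, $\bA*$, or $\Phi$ -- gains one derivative at each step, so that $u\in W^{k,p}_{\loc}$ for every $k,p$, hence $u\in C^\infty(\R^2)$.

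The main obstacle, in my view, is the careful bookkeeping of the variational derivative of the nonlocal/nonlinear interaction terms, where the density $|u|^2$ appears simultaneously inside $\bA$ and as an explicit factor: differentiating each occurrence separately and then reassembling the outcome in the compact form \eqref{eq:EL} requires both the antisymmetry of the kernel $(\bx-\by)^\perp/|\bx-\by|^2$ and enough integrability of $\bA[|u|^2]$, $\bJ[u]$, and their products to invoke Fubini. Once the EL equation is in hand, the bootstrap is routine, with the $\bA*$-terms acting at each step as multipliers by functions in $L^\infty_{\loc}\cap W^{k,p}_{\loc}$.
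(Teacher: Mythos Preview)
Your proposal is correct and follows essentially the same route as the paper: derive the Euler--Lagrange equation by varying the (constrained) functional and exploiting the antisymmetry of the kernel to move the density variation off $\bA$, then bootstrap regularity via \Cref{lem:productlemma,lem:regularityofgauge,lem:regularityofmagneticgauge} and standard elliptic estimates. The only cosmetic differences are that the paper handles the constraint by normalising $u+\varepsilon v$ rather than introducing a Lagrange multiplier (arriving at the same $\lambda$ after using $\cE_{\beta,\gamma_*(\beta)}[u]=0$), and that it writes out the double and triple integrals and swaps variables explicitly rather than packaging this as the duality $\int\bA[f]\cdot\bF=-\int f\,\bA*[\bF]$.
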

\begin{remark}
    Note that with $\beta=0$ this reduces to the well-known cubic NLS equation \eqref{eq:NLS} studied in \cite{Weinstein-83}.
    Also, an Euler--Lagrange equation for the corresponding confined problem \eqref{eq:AFP-gse} at $\beta \in \R$ and $\gamma=0$ was given in \cite[Appendix]{CorLunRou-17}, but without a proof of regularity.
    By following the same proof as below, assuming $V \in L^1_\loc$ and the existence of a minimizer $u \in H^1 \cap L^2(\R^2,|V|\dd\bx)$ s.t.\ $\cE_{\beta,\gamma,V}[u] = E_{\beta,\gamma,V} > -\infty$, it introduces an extra term $+Vu$ in the l.h.s.\ of \eqref{eq:EL}, replacing $\gamma_*(\beta)$ by $\gamma$, and the constant 
    $$\lambda = 2E_{\beta,\gamma,V} - \int_{\R^2} \left[|\nabla u|^2 + V|u|^2 - \beta^2 \left|\bA\left[|u|^2\right]\right|^2|u|^2\right].$$ 
    Further, if $V \in C^\infty$ then also $u \in C^\infty$.
\end{remark}

\begin{proof}[Proof of \Cref{lem:regularityofminimizer}]
Let $v \in C^{\infty}_c(\R^2)$ and $u \in H^1(\R^2)$ satisfy $\cE_{\beta,\gamma_*(\beta)}[u]=0$. Then, by 
\Cref{def:gamma}, the function $u$ minimizes $\cE_{\beta,\gamma_*(\beta)}[w] \ge 0$ among all $w \in H^1(\R^2)$ (normalized in $L^2$).
Hence,
$$
\frac{\dd }{\dd \varepsilon}\Big|_{\varepsilon = 0} \cE_{\beta,\gamma_*(\beta)}\left[\frac{u+\varepsilon v}{\|u+\varepsilon v\|_{L^2}}\right] = 0.
$$
We write
$$
\cE_{\beta,\gamma_*(\beta)}\left[\frac{u+\varepsilon v}{\|u+\varepsilon v\|_{L^2}}\right] = \frac{\mathcal{E}_1[u+\varepsilon v]}{\|u+\varepsilon v\|_{L^2}^2} + \frac{\mathcal{E}_2[u+\varepsilon v]}{\|u+\varepsilon v\|_{L^2}^4} + \frac{\mathcal{E}_3[u+\varepsilon v]}{\|u+\varepsilon v\|_{L^2}^6},
$$
where, with the notation $w_0(\bx) := \log |\bx|$,
\begin{align*}
\mathcal{E}_1[u]  :={} & \int_{\R^2} |\nabla u|^2, \\
\mathcal{E}_2[u] :={} & \int_{\R^2} \left[ 2\beta \bA\left[|u|^2\right] \cdot \bJ[u] - \gamma_{*}(\beta)|u|^4 \right] \\
={} & \beta \iint_{\R^4} \nabla^{\perp} w_0(\bx-\by) |u(\by)|^2 \cdot {\rm i}(u \nabla \bar{u}-\bar{u} \nabla u)(\bx) \,\dd \bx \dd \by - \gamma_{*}(\beta) \int_{\R^2} |u|^4,\\
\mathcal{E}_3[u] :={} & \beta^2 \int_{\R^2} \left|\bA\left[|u|^2\right]\right|^2 |u|^2 \\
={} & \beta^2 \iiint_{\R^6} \nabla^{\perp} w_0(\bx-\by) \cdot \nabla^{\perp} w_0(\bx-\bz) |u(\bx)|^2 |u(\by)|^2 |u(\bz)|^2 \,  \dd  \bx \dd  \by \dd  \bz.
\end{align*}
First, using $\|u\|_{L^2}=1$, we have
$$
\frac{\dd }{\dd \varepsilon}\Big|_{\varepsilon = 0} \|u+\varepsilon  v\|_{L^2}^{2k} = \lim_{\varepsilon \to 0} \frac{\|u+\varepsilon  v\|_{L^2}^{2k} - \|u\|_{L^2}^{2k}}{\varepsilon} = 2k\Re\int_{\R^2}  \bar{v}u
$$
for $k \in \N$, and
(in the distribution sense)
$$
\frac{\dd }{\dd \varepsilon}\Big|_{\varepsilon = 0} \mathcal{E}_1[u+\varepsilon  v] = \lim_{\varepsilon \to 0} \frac{\mathcal{E}_1[u+\varepsilon  v] - \mathcal{E}_1[u]}{\varepsilon} = 2\Re\int_{\R^2} \nabla \bar{v} \cdot \nabla u = 2\Re\int_{\R^2} \bar{v}(-\Delta u).
$$
Second,
\begin{align*}
\frac{\dd }{\dd \varepsilon}\Big|_{\varepsilon = 0} \mathcal{E}_2[u+\varepsilon  v] ={} & \lim_{\varepsilon \to 0} \frac{\mathcal{E}_2[u+\varepsilon  v] - \mathcal{E}_2[u]}{\varepsilon} \\
={} & 2\beta \Re \iint_{\R^4}  \Big( \nabla^{\perp} w_0(\bx-\by) u(\by) \bar{v}(\by) \cdot {\rm i}(u \nabla \bar{u}-\bar{u} \nabla u)(\bx) \\
& +\nabla^{\perp} w_0(\bx-\by)|u(\by)|^2 \cdot {\rm i}(u\nabla\bar{v}-\bar{v}\nabla u)(\bx)\Big) \dd  \bx \dd  \by - 4\gamma_*(\beta) \Re\int_{\R^2} \bar{v}|u|^2u \\
={} & -4 \Re\int_{\R^2} \bar{v}\left(\beta u\nabla^{\perp} w_0 * \bJ[u] + {\rm i}\beta \nabla u \cdot \bA\left[|u|^2\right] + \gamma_*(\beta)|u|^2u\right),
\end{align*}
by using integration by parts and $\div \bA = 0$.
Third,
\begin{align*}
\frac{\dd }{\dd \varepsilon}\Big|_{\varepsilon = 0} \mathcal{E}_3[u+\varepsilon  v] ={} & \lim_{\varepsilon \to 0} \frac{\mathcal{E}_3[u+\varepsilon  v] - \mathcal{E}_3[u]}{\varepsilon} \\
={} & 2 \beta^2 \Re \iiint_{\R^6} \nabla^{\perp}w_0(\bx-\by) \cdot \nabla^{\perp} w_0(\bx-\bz) \bigg[u(\bx) \bar{v}(\bx)|u(\by)|^2|u(\bz)|^2 \\
& +|u(\bx)|^2 u(\by) \bar{v}(\by)|u(\bz)|^2+|u(\bx)|^2|u(\by)|^2 u(\bz) \bar{v}(\bz)\bigg]\, \dd  \bx \dd  \by \dd  \bz \\
={} & 2\beta^2 \Re \int_{\R^2} \bar{v}\left( \big|\bA\left[|u|^2\right]\big|^2u - 2(\nabla^{\perp} w_0) * (|u|^2 \bA\left[|u|^2\right])u\right).
\end{align*}
Thus, using $\|u\|_{L^2}=1$, we get
\begin{align*}
0 ={} & \frac{\dd }{\dd \varepsilon}\Big|_{\varepsilon = 0} \cE_{\beta,\gamma_*(\beta)}\left[\frac{u+\varepsilon v}{\|u+\varepsilon v\|_{L^2}}\right] \\
={} & \frac{\dd }{\dd \varepsilon}\Big|_{\varepsilon = 0} \mathcal{E}_1[u+\varepsilon v] + \frac{\dd }{\dd \varepsilon}\Big|_{\varepsilon = 0} \mathcal{E}_2[u+\varepsilon v] + \frac{\dd }{\dd \varepsilon}\Big|_{\varepsilon = 0} \mathcal{E}_3[u+\varepsilon v] \\
& - \mathcal{E}_1[u]\frac{\dd }{\dd \varepsilon}\Big|_{\varepsilon = 0} \|u+\varepsilon v\|_{L^2}^2 - \mathcal{E}_2[u]\frac{\dd }{\dd \varepsilon}\Big|_{\varepsilon = 0} \|u+\varepsilon v\|_{L^2}^4 - \mathcal{E}_3[u]\frac{\dd }{\dd \varepsilon}\Big|_{\varepsilon = 0} \|u+\varepsilon v\|_{L^2}^6 \\
={} & 2 \Re \int_{\R^2} \bar{v}\Big[-\Delta u + \beta^2\left|\bA\left[|u|^2\right]\right|^2u - 2\beta^2 \left(\nabla^{\perp} w_0 * \left[|u|^2 \bA\left[|u|^2\right] \right]\right) u \\
& - 2\beta (\nabla^{\perp} w_0 * \bJ[u]) u - 2\ii \beta \bA\left[|u|^2\right] \cdot \nabla u - 2\gamma_*(\beta)|u|^2u -  \left(\mathcal{E}_1[u]+2\mathcal{E}_2[u]+3\mathcal{E}_3[u]\right)u\Big] \\
={} & 2 \Re \int_{\R^2} \bar{v}\Big[-\left(\nabla + \ii\beta \bA\left[|u|^2\right]\right) \cdot \left(\nabla + \ii\beta \bA\left[|u|^2\right]\right) - 2\beta^2 \bA * \left[|u|^2 \bA\left[|u|^2\right]\right] \\
& - 2 \beta \bA * \left[\bJ[u]\right] - 2\gamma_*(\beta)|u|^2 - \lambda\Big]u.
\end{align*}
Here, we have set 
$$
\lambda = \mathcal{E}_1[u] + 2\mathcal{E}_2[u] + 3\mathcal{E}_3[u] = 2\cE_{\beta,\gamma_*(\beta)}[u] - \mathcal{E}_1[u] + \mathcal{E}_3[u] = \int_{\R^2} \left[-|\nabla u|^2 + \beta^2 \left|\bA\left[|u|^2\right]\right|^2|u|^2\right].
$$
Replacing $v$ by ${\rm i}v$ in the above equation, we obtain the desired equation \eqref{eq:EL}.

Now, by using a bootstrap argument, we prove that the solution $u \in H^1$ of \eqref{eq:EL} is smooth. Note that $u\in L^{r}(\R^2)$ for every $2\leq r<\infty$, by Sobolev embedding. We rewrite the equation  \eqref{eq:EL} as follows
\begin{align}
\label{eq:shorterformofequa}
\Delta u = \sum_{i=1}^6 I_i,
\end{align}
where
\begin{align*}
I_1 &:= \beta^2\left|\bA\left[|u|^2\right]\right|^2u, \\
I_2 &:= - 2\beta^2 \bA 
*\left[|u|^2 \bA\left[|u|^2\right]\right] u,\\
I_3 &:= - 2\beta \bA * \left[\bJ[u]\right] u,\\
I_4 &:=  - 2i \beta \bA\left[|u|^2\right] \cdot \nabla u,\\
I_5 &:=  - 2\gamma_*(\beta)|u|^2u,\\
I_6 &:= -  \lambda u.
\end{align*}
Let $u \in W^{k,p}_{\loc}(\R^2;\C)$ for some $p>1$ and $k \in \N$
(we can start with $p=2$ and $k=1$ since $u\in H^1(\R^2;\C)$). Our first step is to prove that $I_i \in W^{k-1,q}_{\loc}(\R^2)$ for every $1<q<p$. 

For the term $I_1$, we use \Cref{lem:productlemma} to deduce that $|u|^2 \in L^1(\R^2) \cap W^{k,q}_{\loc}(\R^2)$ for every $1< q <p$. Then, by \Cref{lem:regularityofmagneticgauge}, we obtain
\begin{align}
\label{eq:regularityA}
\bA\left[|u|^2\right] \in W^{k+1,q}_{\loc}(\R^2;\C^2)
\end{align}
for every $1 \leq q<p$. Hence, by Sobolev embedding theorem, we have $\bA\left[|u|^2\right] \in W^{k,p}_{\loc}(\R^2)$, and, by \Cref{lem:productlemma} and Sobolev embedding theorem, we conclude that $I_1 \in W^{k-1,q}_{\loc}(\R^2)$ for every $1 \leq q <\infty$. 

For the term $I_2$, we use 
H\"older's inequality and \Cref{prop:youngconv} to obtain
\begin{align*}
\int_{\R^2} |u|^2 |\bA\left[|u|^2\right]| 
\leq \| u\|^{2}_{L^{2r}} \|\bA\left[|u|^2\right]\|_{L^{\frac{r}{r-1}}} 
\leq C_r \| u\|^{2}_{L^{2r}} \|u\|^2_{L^{\frac{4r}{3r-2}}} < \infty.
\end{align*}
for every $1<r<2$, where $C_r$ depends on $r$. Hence, $|u|^2 \bA\left[|u|^2\right] \in L^1(\R^2).$ Moreover, $|u|^2 \bA\left[|u|^2\right] \in W^{k,q}_{\loc}(\R^2)$ for every $1 \leq q<p$, by \Cref{lem:productlemma}, Sobolev embedding, and \eqref{eq:regularityA}. Hence, by Sobolev embedding, we have $|u|^2 \bA\left[|u|^2\right] \in W^{k-1,p}_{\loc}(\R^2)$. In conclusion, by \Cref{lem:regularityofmagneticgauge}, we derive that $\bA * [|u|^2 \bA\left[|u|^2\right]] \in W^{k,p}_{\loc}(\R^2)$. This together with \Cref{lem:productlemma} implies that $I_2 \in W^{k,q}_{\loc}(\R^2)$ for every $1<q<p.$ 

Now, we prove the estimate on $I_3$. By Cauchy--Schwarz inequality, we imply that
\begin{align*}
\int_{\R^2} |\bJ[u] | \leq \int_{\R^2} |\nabla u| \, |u| \leq \| \nabla u \|_{L^2} \| u\|_{L^2}< \infty.
\end{align*}
Hence, $\bJ[u] \in L^1(\R^2)$ and, by \Cref{lem:productlemma}, we have $\bJ[u] \in W^{k-1,q}_{\loc}(\R^2;\C)$ for every $1 \leq q <p.$ In conclusion, by \Cref{lem:regularityofmagneticgauge} and Sobolev embedding, it is obtained that $\bA \ast [\bJ[u]] \in W^{k-1,p}_{\loc}(\R^2;\C)$. Combining this with \Cref{lem:productlemma} gives $I_3 \in W^{k-1,q}_{\loc}(\R^2)$ for every $1 \leq q <p$. 

For the term $I_4$, by \eqref{eq:regularityA} and \Cref{lem:productlemma}, we derive $I_4 \in W^{k-1,q}_{\loc}(\R^2;\C)$. 

Finally, for the terms $I_5, I_6$, we can use \Cref{lem:productlemma} to derive that both belong to $W^{k-1,q}_{\loc}(\R^2)$ for every $1 \leq q < p$. This completes the first step. 

Now, we use \eqref{eq:shorterformofequa} and elliptic regularity, see \cite[Chapter 9]{GilTru-01}, to derive that $u \in W^{k+1,q}_{\loc}(\R^2)$ for every $1\leq q < p$. Hence, by Sobolev embedding, we conclude $u \in W^{k,r}_{\loc}(\R^2)$ for every $1<r < \infty$. 
To complete the bootstrap argument, we use the first step and the above argument, to conclude $u \in W^{k+1,p}_{\loc}(\R^2)$. This proves a higher regularity for $u.$ By repeating the argument and using the fact that $u \in H^1(\R^2),$ we derive that $u \in W^{k,2}_{\loc}(\R^2)$ for all integer $k>0.$ Finally, by Morrey's inequality, we conclude that $u \in C^{\infty}(\R^2)$.
\end{proof}

\subsection{Supersymmetric magnetic identity}\label{sec:mag-susy}

In this subsection, we prove the square factorization of the magnetic self-energy and the asymptotic behavior of the superpotential $\Phi[|u|^2]$ for $u \in H^1(\R^2)$
(a 2D variant of ``Newton's theorem'' for gravitational potentials).

\begin{lemma}[Magnetic identity]
\label{lem:mag-identity}
For any magnetic potential $\bA = (A_1,A_2) \in C^{\infty}(\R^2;\R^2)$ 
and $u \in C^{\infty}(\R^2;\C)$ we have the 
pointwise identity
\begin{equation}\label{eq:mag-identity-pt}
|(\nabla + {\rm i}\bA)u|^2 \mp \bA \cdot \nabla^\perp(|u|^2)
= |(\partial_1 \pm {\rm i}\partial_2)u + {\rm i}(A_1 \pm {\rm i}A_2)u|^2 \pm \nabla^\perp \cdot \bJ[u].
\end{equation}
Further, if $u$ has compact support, we derive
\begin{equation}\label{eq:mag-identity}
\int_{\R^2} \left[ |(\nabla + {\rm i}\bA)u|^2 \pm \curl\bA |u|^2 \right]
= \int_{\R^2} |(\partial_1 \pm {\rm i}\partial_2)u + {\rm i}(A_1 \pm {\rm i}A_2)u|^2.
\end{equation}
\end{lemma}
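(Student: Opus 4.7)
The plan is to prove the pointwise identity \eqref{eq:mag-identity-pt} by a direct algebraic expansion, after which the integrated version \eqref{eq:mag-identity} will follow by a simple integration by parts argument.

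First, I would introduce the covariant-derivative components $D_j u := \partial_j u + {\rm i} A_j u$ so that
\[
(\partial_1 \pm {\rm i}\partial_2)u + {\rm i}(A_1 \pm {\rm i} A_2) u = D_1 u \pm {\rm i} D_2 u
\quad \text{and} \quad
|(\nabla + {\rm i}\bA)u|^2 = |D_1 u|^2 + |D_2 u|^2.
\]
Expanding the squared modulus on the RHS of \eqref{eq:mag-identity-pt} and using that $\overline{D_2 u \, \overline{D_1 u}} = D_1 u \, \overline{D_2 u}$, I obtain the cross-term identity
\[
|D_1 u \pm {\rm i} D_2 u|^2 = |D_1 u|^2 + |D_2 u|^2 \pm 2 \Im\bigl( D_1 u \, \overline{D_2 u} \bigr).
\]
The key computation is then to isolate the pure-$u$ and pure-$\bA$ contributions to the cross term. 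Writing out $D_1 u \, \overline{D_2 u} = (\partial_1 u + {\rm i} A_1 u)(\partial_2 \bar u - {\rm i} A_2 \bar u)$ and taking imaginary parts, the mixed $\bA$-terms collapse via $\Re(\bar u \partial_j u) = \tfrac{1}{2}\partial_j|u|^2$ to give
\[
2\Im\bigl( D_1 u \, \overline{D_2 u} \bigr)
= 2\Im(\partial_1 u \, \partial_2 \bar u) + A_1 \partial_2 |u|^2 - A_2 \partial_1 |u|^2.
\]

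Next, I would recognise the two resulting pieces geometrically. A short calculation using $J_j[u] = \Im(\bar u \partial_j u)$ yields
\[
\nabla^\perp \cdot \bJ[u] = \partial_1 J_2 - \partial_2 J_1 = 2 \Im(\partial_1 \bar u \, \partial_2 u) = -2 \Im(\partial_1 u \, \partial_2 \bar u),
\]
so the first term is $\mp \nabla^\perp \cdot \bJ[u]$ after including the overall $\pm$. Meanwhile, by the definition $\nabla^\perp = (-\partial_2,\partial_1)$, the remaining terms are
\[
A_1 \partial_2 |u|^2 - A_2 \partial_1 |u|^2 = -\bA \cdot \nabla^\perp |u|^2.
\]
Substituting back, rearranging, and reading off both choices of sign yields precisely \eqref{eq:mag-identity-pt}.

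Finally, for the integrated version \eqref{eq:mag-identity}, I would integrate \eqref{eq:mag-identity-pt} over $\R^2$ for $u \in C^\infty_c$. The term $\nabla^\perp \cdot \bJ[u] = \div(-J_2,J_1)$ is a divergence of a compactly supported vector field, so integrates to zero. For the other boundary-style term, integration by parts (permissible since $u$, and hence $|u|^2$, has compact support) gives
\[
\int_{\R^2} \bA \cdot \nabla^\perp |u|^2
= \int_{\R^2} (A_2 \partial_1 - A_1 \partial_2)|u|^2
= -\int_{\R^2} (\partial_1 A_2 - \partial_2 A_1)|u|^2
= -\int_{\R^2} (\curl \bA)\, |u|^2,
\]
which produces the $\pm \curl \bA \, |u|^2$ term with the correct sign. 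There is no real obstacle; the only care needed is in tracking the sign of $\nabla^\perp \cdot \bJ[u]$ in terms of $\Im(\partial_1 u \, \partial_2 \bar u)$, which is where an erroneous sign would corrupt the Bogomolnyi bound.
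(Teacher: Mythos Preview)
Your proof is correct and follows essentially the same direct-expansion approach as the paper, just organized slightly more cleanly via the covariant components $D_j u$; the paper expands the right-hand side of \eqref{eq:mag-identity-pt} term by term and arrives at the left-hand side in the same way. One inconsequential slip: $\nabla^\perp\cdot\bJ[u] = \div(J_2,-J_1)$ rather than $\div(-J_2,J_1)$, but this does not affect the argument since it is a total divergence of a compactly supported field either way.
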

\begin{proof}
We have
\begin{align*}
\text{r.h.s. of } \eqref{eq:mag-identity-pt} ={} & |(\partial_1 \pm {\rm i}\partial_2)u|^2 + |(A_1 \pm {\rm i}A_2)u|^2 + 2\Re (\partial_1 \pm {\rm i}\partial_2)u \, \overline{{\rm i}(A_1 \pm {\rm i}A_2)u} \\
& \pm \frac{{\rm i}}{2} \bigl((-\partial_2 (u \partial_1 \overline{u} - \overline{u}\partial_1 u)+ \partial_1(u \partial_2 \overline{u} - \overline{u}\partial_2 u)\bigr) \\
={} & |\partial_1 u|^2 + |\partial_2 u|^2 \pm  2 \Re (\partial_1 u 
\, \overline{{\rm i} \partial_2 u}) + 2\Re(\partial_1 u \overline{{\rm i}A_1 u} + \partial_2 u \overline{{\rm i}A_2 u})  \\
& \pm 2\Re(-\partial_1 u \overline{A_2 u} + \partial_2 u \overline{A_1 u}) + |\bA u|^2 \pm 2 \Im (\partial_2 u \partial_1 \overline{u}) \\
={} & |\nabla u|^2 + |\bA u|^2+ 2 \Re (\nabla u \cdot \overline{{\rm i}\bA u}) \mp \bA \cdot \nabla^{\perp} (|u|^2)
\\
={} & \text{l.h.s. of } \eqref{eq:mag-identity-pt}.
\end{align*}
The identity \eqref{eq:mag-identity} is a direct result of integration by parts.
\end{proof}

\begin{proposition}
\label{cor:magneticselfdualformula}
If $\beta \in \R$ and $u \in H^1(\R^2)$, then 
\begin{align*}
\int_{\R^2} \left[ |(\nabla+{\rm i}\beta \bA\left[|u|^2\right])u|^2 \pm 2\pi \beta |u|^4 \right]
= \int_{\R^2} \left| (\partial_1 \pm \ii\partial_2)u + \ii\beta (A_1[|u|^2] \pm \ii A_2[|u|^2])u \right|^2.
\end{align*}
In particular, the Bogomolnyi bound \eqref{eq:bogobound-self} holds.
\end{proposition}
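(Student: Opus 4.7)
The plan is to specialize the magnetic identity \eqref{eq:mag-identity} of \Cref{lem:mag-identity} to the self-generated vector potential $\bA = \beta\bA[|u|^2]$. By \Cref{def:self-field}, we have $\curl(\beta\bA[|u|^2]) = 2\pi\beta|u|^2$, so the curvature term on the left-hand side of \eqref{eq:mag-identity} becomes precisely $\pm 2\pi\beta|u|^2 \cdot |u|^2 = \pm 2\pi\beta|u|^4$, matching the claimed formula. Everything then reduces to justifying the required regularity in order to apply \Cref{lem:mag-identity}, and an approximation argument from $C_c^\infty$ to $H^1$.

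First I would fix $u \in C_c^\infty(\R^2;\C)$. Then $|u|^2 \in C_c^\infty \subset L^1 \cap W^{k,p}_\loc$ for every $k,p$, so by \Cref{lem:regularityofgauge} the superpotential $\Phi[|u|^2]$ is smooth on $\R^2$, and hence $\bA[|u|^2] = \nabla^\perp \Phi[|u|^2] \in C^\infty(\R^2;\R^2)$ by \Cref{lem:superpot-relations}. Thus \Cref{lem:mag-identity} applies verbatim with $\bA = \beta\bA[|u|^2]$, proving the identity on the dense subset $C_c^\infty(\R^2;\C) \subset H^1$.

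Next I would extend to all $u \in H^1$ by density: take $u_n \in C_c^\infty$ with $u_n \to u$ in $H^1$. The Gagliardo--Nirenberg inequality \eqref{eq:gagliardonirenberg} yields $u_n \to u$ in $L^4$, so $\int |u_n|^4 \to \int |u|^4$. For the remaining two terms, I would write
\[
\bA[|u_n|^2]u_n - \bA[|u|^2]u = \bA\bigl[|u_n|^2 - |u|^2\bigr]u_n + \bA[|u|^2](u_n - u),
\]
and handle each piece via H\"older's inequality combined with \Cref{prop:youngconv} (which gives $\|\bA[v]\|_{L^r} \le C_p\|v\|_{L^p}$ for $r = 2p/(2-p)$) applied with $v = |u_n|^2 - |u|^2 \to 0$ in every $L^p$, $1 \le p < \infty$, since $u_n \to u$ in every Sobolev $L^q$ space by Sobolev embedding. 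This shows $\bA[|u_n|^2]u_n \to \bA[|u|^2]u$ in $L^2$. Combined with $\nabla u_n \to \nabla u$ in $L^2$ and $(\partial_1 \pm \ii\partial_2)u_n \to (\partial_1 \pm \ii\partial_2)u$ in $L^2$, both sides of the identity pass to their limits for general $u \in H^1$.

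The main technical obstacle is the nonlinear and nonlocal $\bA[|u_n|^2]u_n \to \bA[|u|^2]u$ convergence, but it is under control thanks to the $L^p$-mapping properties of \Cref{prop:youngconv} and the Sobolev embedding $H^1 \hookrightarrow L^p$ for every $2 \le p < \infty$ in two dimensions. Once the identity is established, the Bogomolnyi bound \eqref{eq:bogobound-self} is immediate from non-negativity of the right-hand side (taking the lower sign if $\beta \ge 0$ and the upper sign if $\beta \le 0$).
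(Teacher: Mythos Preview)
Your proposal is correct and follows essentially the same approach as the paper: apply \Cref{lem:mag-identity} for $u \in C_c^\infty$ (after verifying smoothness of $\bA[|u|^2]$ via the superpotential), then pass to general $u \in H^1$ by density. The paper's own proof is more terse, deferring the density step to \cite[Prop.~3.5]{LunRou-15}, whereas you spell out the $L^2$-convergence of $\bA[|u_n|^2]u_n$ explicitly via \Cref{prop:youngconv}; this is precisely the estimate the paper later writes out in \eqref{eq:L2differenceofgauge}, so your argument is fully in line with the tools available.
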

\begin{proof}
If $u \in C^{\infty}_c(\R^2;\C)$, then the identity was derived directly by \Cref{lem:mag-identity}. If $u \in H^1(\R^2)$, then the result can be obtained by \cite[Prop.~3.5]{LunRou-15} and a density argument.
\end{proof}

\begin{lemma}[Supersymmetric factorization]
\label{lem:mag-super}
If $\beta \in \R$ and $u \in H^1(\R^2)$, 
then
\begin{equation}\label{eq:mag-super}
\int_{\R^2} \left[\left|\left(\nabla+{\rm i}\beta \bA\left[|u|^2\right]\right)u\right|^2 \pm 2 \pi \beta |u|^4 \right]
= \int_{\R^2} \left|(\partial_1 \pm {\rm i}\partial_2)(e^{\mp \beta \Phi[|u|^2]}u)\right|^2 e^{\pm 2\beta \Phi[|u|^2]}
\end{equation}
\end{lemma}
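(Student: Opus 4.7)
The plan is to deduce \eqref{eq:mag-super} as a reformulation of \Cref{cor:magneticselfdualformula} by expressing $\bA[|u|^2]$ through its superpotential $\Phi[|u|^2]$. Recall from \Cref{lem:superpot-relations} that $\bA[|u|^2] = \nabla^\perp\Phi[|u|^2] = (-\partial_2 \Phi[|u|^2], \partial_1 \Phi[|u|^2])$ in the sense of distributions. This gauge identification is the crucial link, and together with the already-proven identity
\begin{equation}\label{eq:plan-prev}
\int_{\R^2} \left[ \left|(\nabla+{\rm i}\beta \bA[|u|^2])u\right|^2 \pm 2\pi \beta |u|^4 \right] = \int_{\R^2} \left| (\partial_1 \pm \ii\partial_2)u + \ii\beta (A_1[|u|^2] \pm \ii A_2[|u|^2])u \right|^2
\end{equation}
it reduces the lemma to a pointwise algebraic identity.

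Concretely, writing $\Phi := \beta\Phi[|u|^2]$ for brevity and using $A_1[|u|^2] = -\partial_2(\Phi/\beta)$, $A_2[|u|^2] = \partial_1(\Phi/\beta)$, the key observation is the factorization
\begin{equation}\label{eq:plan-key}
(\partial_1 \pm \ii\partial_2)u + \ii\beta (A_1[|u|^2] \pm \ii A_2[|u|^2])u = e^{\pm \Phi}\,(\partial_1 \pm \ii\partial_2)\bigl(e^{\mp \Phi} u\bigr),
\end{equation}
which I would verify by direct computation: on the one hand the Leibniz rule gives $(\partial_1 \pm \ii\partial_2)(e^{\mp \Phi}u) = e^{\mp \Phi}\bigl[(\partial_1 \pm \ii\partial_2)u \mp u(\partial_1 \pm \ii\partial_2)\Phi\bigr]$, while on the other hand $\ii(A_1[|u|^2] \pm \ii A_2[|u|^2]) = -\ii\partial_2(\Phi/\beta) \mp \partial_1(\Phi/\beta) = \mp \beta^{-1}(\partial_1 \pm \ii\partial_2)\Phi$, so both sides of \eqref{eq:plan-key} agree. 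Taking the squared modulus of \eqref{eq:plan-key} yields exactly the integrand on the r.h.s.\ of \eqref{eq:mag-super}, and combining with \eqref{eq:plan-prev} completes the proof.

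The only genuine technical point is making the pointwise manipulations rigorous for arbitrary $u \in H^1(\R^2)$. For this I would invoke the regularity framework already established: since $|u|^2 \in L^1(\R^2) \cap L^p_\loc(\R^2)$ for every $p<\infty$, \Cref{lem:regularityofgauge} gives $\Phi[|u|^2] \in L^\infty_\loc \cap W^{2,p}_\loc$ for all $p<\infty$, so both $e^{\mp\beta\Phi[|u|^2]}u \in H^1_\loc$ and the weak derivatives involved are well defined and satisfy the Leibniz rule almost everywhere. The identity \eqref{eq:plan-key} then holds a.e., which is enough to identify the integrands. As an alternative (cleaner) route, one may first prove \eqref{eq:mag-super} for $u \in C^\infty_c(\R^2;\C)$, for which all computations are classical, and then extend to $u \in H^1$ by density, using the continuity of both sides in $u$ (the l.h.s.\ is continuous by \cite[Prop.~3.5]{LunRou-15} as cited in \Cref{cor:magneticselfdualformula}, and the r.h.s.\ inherits continuity through the same $\bA[|u|^2]$-dependence via \eqref{eq:plan-key}).

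The main obstacle, modest in scope, is precisely this regularity/density bookkeeping: one must control the weak-type behaviour of $\Phi[|u|^2]$ and verify that neither the (possibly unbounded) growth $\Phi[|u|^2](\bx) \sim (\int|u|^2)\log|\bx|$ at infinity nor any local defect of smoothness obstructs the identity. Both issues are resolved by the above $L^\infty_\loc \cap W^{2,p}_\loc$ regularity and by noting that the factors $e^{\pm\beta\Phi[|u|^2]}$ and $e^{\mp\beta\Phi[|u|^2]}$ in the r.h.s.\ of \eqref{eq:mag-super} exactly cancel any spurious weight, so the r.h.s.\ is manifestly equal to the integral of $|(\partial_1\pm\ii\partial_2)u + \ii\beta(A_1 \pm \ii A_2)u|^2$ by \eqref{eq:plan-key}.
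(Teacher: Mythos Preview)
Your proposal is correct and follows essentially the same route as the paper: both deduce \eqref{eq:mag-super} from \Cref{cor:magneticselfdualformula} by substituting $\bA[|u|^2] = \nabla^\perp \Phi[|u|^2]$ (via \Cref{lem:superpot-relations}) and invoking the regularity from \Cref{lem:regularityofgauge} with $|u|^2 \in L^1 \cap L^p$. You simply make explicit the pointwise algebraic identity \eqref{eq:plan-key} and the regularity bookkeeping that the paper leaves to the reader.
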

\begin{proof}
The lemma follows by using $$\bA\left[|u|^2\right] = (-\partial_2 \Phi[|u|^2], \partial_1 \Phi[|u|^2]),$$
in \Cref{cor:magneticselfdualformula}, where we apply \Cref{lem:superpot-relations,lem:regularityofgauge}, together with $|u|^2 \in L^1 \cap L^p(\R^2)$, $p > 1$.
\end{proof}

\begin{lemma}[``Newton's lemma'']
\label{lem:newtonlemma}
If $f \in L^1 \cap L^p(\R^2)$ for some $p>1$, then
\begin{align*}
\lim_{|\bx| \to \infty}\frac{\Phi[f](\bx)}{\log|\bx|} = \int_{\R^2} f .
\end{align*}
\end{lemma}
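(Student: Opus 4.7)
\smallskip

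The plan is to isolate the leading logarithmic growth and show the remainder is sublogarithmic. Writing
\begin{equation*}
\Phi[f](\bx) - (\log|\bx|)\int_{\R^2}f = \int_{\R^2} \log\frac{|\bx-\by|}{|\bx|(|\by|+1)}\,f(\by)\,\dd\by,
\end{equation*}
the task reduces to proving this integral is $o(\log|\bx|)$ as $|\bx|\to\infty$. I would split the integration region according to the ratio $|\by|/|\bx|$ into $\Omega_1 = \{|\by|\le|\bx|/2\}$, $\Omega_2=\{|\bx|/2<|\by|\le 2|\bx|\}$, and $\Omega_3=\{|\by|>2|\bx|\}$, and estimate each contribution separately, ultimately dividing by $\log|\bx|$ and sending $|\bx|\to\infty$.

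On $\Omega_1$ the triangle inequality gives $|\bx-\by|/|\bx|\in[1/2,3/2]$, so $\log(|\bx-\by|/|\bx|)=O(1)$ and the integrand reduces essentially to $-\log(|\by|+1)\,f(\by)$. Splitting this at an auxiliary threshold $|\by|=M$, the contribution from $|\by|\le M$ is bounded by $\log(M+1)\|f\|_{L^1}$, independent of $\bx$, while that from $M<|\by|\le|\bx|/2$ is bounded by $\log(|\bx|+1)\int_{|\by|>M}|f|$; since $f\in L^1$, choosing $M$ large makes the tail arbitrarily small, yielding $\varepsilon\log|\bx|+O_M(1)$ for any $\varepsilon>0$ and hence a vanishing quotient. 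On $\Omega_3$ the ratios $|\bx-\by|/|\by|$ and $(|\by|+1)/|\by|$ both lie in fixed compact subintervals of $(0,\infty)$, so $\log(|\bx-\by|/(|\by|+1))=O(1)$, which means the full integrand equals $(O(1)-\log|\bx|)\,f(\by)$. Integrating gives a quantity of size $(\log|\bx|+O(1))\int_{|\by|>2|\bx|}|f|$, which is $o(\log|\bx|)$ by integrability of $f$.

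The main obstacle is $\Omega_2$, where $|\by|\sim|\bx|$ and the logarithmic singularity of $\log|\bx-\by|$ at $\by=\bx$ comes into play. Here I would expand
\begin{equation*}
\log\frac{|\bx-\by|}{|\bx|(|\by|+1)} = \log|\bx-\by|-2\log|\bx|+O(1),
\end{equation*}
so that the constant and $\log|\bx|$ pieces, paired with $\int_{\Omega_2}|f|\le\int_{|\by|>|\bx|/2}|f|\to 0$, contribute $o(\log|\bx|)$. For the remaining term $\int_{\Omega_2}\log|\bx-\by|\,f(\by)\,\dd\by$ I would split once more at $|\bx-\by|=1$: outside the unit ball around $\bx$ one has $|\log|\bx-\by||\le\log(3|\bx|)$, and the same small-tail argument yields $o(\log|\bx|)$; inside the unit ball I invoke H\"older's inequality with exponents $p$ and $q=p/(p-1)$, using that $|\log|\bx-\cdot||^q$ is integrable on $B(\bx,1)$ (with $\bx$-independent norm) together with $\|f\|_{L^p(|\by|>|\bx|/2)}\to 0$ (dominated convergence), to obtain an $o(1)$ bound (the case $p=\infty$ is handled directly by $\|f\|_{L^\infty}\int_{B(\bx,1)}|\log|\bx-\by||\,\dd\by=O(1)$). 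Combining the three regional bounds and dividing by $\log|\bx|$ yields the stated limit; the essential inputs are the $L^1$-integrability to control the far-field tail, and the auxiliary $L^p$-integrability ($p>1$) to tame the logarithmic singularity in the transition annulus $\Omega_2$.
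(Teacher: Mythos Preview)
Your proof is correct and follows essentially the same strategy as the paper (which in turn cites Chen--Li): isolate the logarithmic singularity near $\by=\bx$ and control it via H\"older's inequality using $f\in L^p$, and handle the remaining regions using only the $L^1$ tail of $f$. The paper's decomposition is slightly different---it uses $S_1=\{|\by-\bx|\le 1\}$, $S_2=\{|\by-\bx|>1,\ |\by|\le R\}$, $S_3=\{|\by-\bx|>1,\ |\by|>R\}$ with a fixed auxiliary $R$ sent to infinity after $|\bx|\to\infty$, whereas you use annuli scaling with $|\bx|$ and an auxiliary threshold $M$ inside $\Omega_1$---but this is only an organizational difference; the key analytic inputs are identical.
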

\begin{proof} We follow the same argument as the proof of \cite[Lemma~1.1]{CheLi-93}.
Let $R>1$ be an arbitrary constant and $\bx \in \R^2$ be such that $|\bx|>4R$. We write $\R^2 = S_1 \cup S_2 \cup S_3$ where
\begin{align*}
S_1 &= \{\by \in \R^2: |\by-\bx|\leq 1\}, \\
S_2 &= \{\by \in \R^2: |\by-\bx|>1, |\by| \leq R\}, \\
S_3 &= \{\by \in \R^2: |\by-\bx|>1, |\by|> R\}.
\end{align*}
Then, we have 
\begin{align*}
\frac{\left| \Phi[f](\bx) - \log |\bx| \int_{\R^2} f \right|}{\log|\bx|} 
&\le \frac{\int_{\R^2} \bigl| \log |\bx-\by| - \log (|\by|+1) - \log|\bx| \bigr| \, |f(\by)| \,\dd \by}{\log|\bx|}
\\ &\le I_1 +I_2 +I_3, 
\end{align*}
where 
\begin{align*}
I_i :=  \frac{\int_{S_i} \bigl| \log |\bx-\by| - \log (|\by|+1) - \log|\bx| \bigr| |f(\by)| \, \dd \by}{\log|\bx|},
\end{align*}
for every $i=1,2,3.$

If $\by \in S_1$, then 
$\left| |\by| - |\bx| \right| \leq |\by-\bx| \leq 1$
and, since $|\bx|>4R>4$, 
$$0 \leq \log (|\by|+1) \leq \log(|\bx|+|\by-\bx|+1) \leq \log(|\bx|+2) \le 2\log|\bx|.$$
Hence, by the triangle inequality and H\"older's inequality, we have
\begin{align*}
I_1  &\leq 3 \int_{B(\bx,1)} |f(\by)|\, \dd \by + \frac{\int_{B(\bx,1)} \left| \log |\bx-\by| \right|\, |f(\by)|\, \dd \by}{\log|\bx|} \\ 
&\leq 3 \int_{B(\bx,1)}    |f(\by)| \,\dd \by + \frac{\left(\int_{B(0,1)} \left| \log |\by| \right|^{\frac{p}{p-1}}\, \dd  \by \right)^{\frac{p-1}{p}} \left(\int_{B(\bx,1)} |f(\by)|^p \dd \by\right)^{\frac{1}{p}}}{\log|\bx|} \\ 
&\leq 3 \int_{B(\bx,1)}    |f(\by)|\, \dd \by + \frac{\left(\int_{B(0,1)} \left| \log |\by| \right|^{\frac{p}{p-1}}\, \dd  \by \right)^{\frac{p-1}{p}} \left(\int_{\R^2} |f(\by)|^{p} \dd \by\right)^{\frac{1}{p}}}{\log|\bx|},
\end{align*}
which implies that $I_1$ goes to zero uniformly as $|\bx| \to \infty$.

Next, if $\by \in S_2$ then
\begin{align*}
\frac{3}{4} \leq \frac{|\bx| -|\by|}{|\bx|}  \leq \frac{|\bx -\by|}{|\bx|} \leq \frac{|\bx| +|\by|}{|\bx|} \leq \frac{5}{4}.
\end{align*}
Hence
\begin{align*}
I_2 &\leq  \frac{\int_{B(0,R) } \left| \log (|\by|+1) \right|\, |f(\by)| \,\dd \by}{\log|\bx|} +  \frac{\int_{B(0,R)} \left| \log \frac{|\bx-\by|}{|\bx|} \right| \, |f(\by)|\, \dd \by}{\log|\bx|}
\\ &\leq  \log (R+1) \frac{\int_{B(0,R) }  |f(\by)| \dd \by}{\log|\bx|} +  2\frac{\int_{B(0,R)} |f(\by)|\, \dd \by}{\log|\bx|},
\end{align*}
which implies that $I_2 $ goes to zero uniformly as $|\bx| \to \infty$ (for a fixed $R$).

Finally, to estimate $I_3$, we note that 
\begin{align*}
\frac{\left| \log |\bx-\by| - \log (|\by|+1) - \log|\bx| \right| }{\log|\bx|},
\end{align*}
is bounded for $\by \in S_3$ by a constant independent of $R$. In fact, if $\by \in S_3, |\by| \leq 2|\bx|$, then
\begin{align*}
\frac{\left| \log |\bx-\by| - \log (|\by|+1) - \log|\bx| \right| }{\log|\bx|} \leq   \frac{\left| \log |\bx-\by| \right| }{\log|\bx|}+  \frac{\left|\log (|\by|+1)\right| }{\log|\bx|} + 1 \leq \frac{2 \log(3)}{\log(4)} + 3,
\end{align*}
and for $\by \in S_3, |\by| > 2|\bx|$, we have 
\begin{align*}
\frac{\left| \log |\bx-\by| - \log (|\by|+1) - \log|\bx| \right| }{\log|\bx|} &\leq  \frac{\left| \log \frac{|\bx-\by|}{|\by|+1}    \right| }{\log|\bx|} +1 \leq \log(4)+1,
\end{align*}
where we used
\begin{align*}
\frac{1}{4}\leq  \frac{1}{2}  \frac{|\by|}{|\by|+1} \leq  \frac{|\by|- |\bx|}{|\by|+1}  \leq \frac{|\bx-\by|}{|\by|+1} \leq \frac{|\bx|+|\by|}{|\by|+1} \leq \frac{3}{2}. 
\end{align*}
In conclusion, 
\begin{align*}
I_3 \leq \left(\frac{2 \log(3)}{\log(4)} + 3\right) \int_{\R^2 \setminus B(0,R)} |f(\by)|\, \dd  \by,
\end{align*}
and taking $R$ as large as we want, by $f \in L^1(\R^2)$, completes the proof.
\end{proof}

\subsection{Proof of Theorem~\ref{thm:magneticstability}}\label{sec:mag-proof}

After these preliminaries, we may now proceed with the proof of our second main theorem, starting with the existence of the explicit solutions \eqref{eq:mag-solution} and then proceeding with their uniqueness for $\beta \ge 2$.

\subsubsection{Quantization of $\beta$: sufficient form of solutions}\label{sec:mag-proof-existence}

For $\beta \in 2 \N$, we
prove that if $P,Q \in \cP$ are linearly independent and such that $\max(\deg P,\deg Q) = \frac{\beta}{2}$ and $\gcd(P,Q)=1$, then
\begin{equation}\label{eq:uPQ}
u_{P,Q} := \sqrt{\frac{2}{\pi \beta}} \, \frac{\overline{P'Q - PQ'}}{|P|^2 + |Q|^2}
\end{equation}
is a minimizer for $\gamma_{\ast}(\beta) = 2\pi\beta$. 
We first need the following lemma. 

\begin{lemma}[Nonlinear Landau level]
\label{cor:generalformofminimizer}
Let $\beta \geq 0$. Then, a function $u \in H^1(\R^2)$
is a minimizer for $\gamma_{\ast}(\beta)= 2\pi \beta$ if and only if 
\begin{align*}
u = e^{-\beta \Phi[|u|^2]} \overline{f},
\end{align*} where $f$ is a polynomial of degree at most $\beta-1$, and $\int_{\R^2} |u|^2 = 1$.   
\end{lemma}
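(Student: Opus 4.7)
The plan is to convert the zero-energy condition $\cE_{\beta,2\pi\beta}[u]=0$ into a pointwise anti-holomorphicity equation via the supersymmetric factorization of \Cref{lem:mag-super}, then constrain the resulting entire function by ``Newton's lemma'' (\Cref{lem:newtonlemma}) together with the normalization $\int_{\R^2}|u|^2 = 1$. The Bogomolnyi lower bound \eqref{eq:bogobound-self} closes the equivalence.

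For the forward direction, assume $u \in H^1$ with $\int_{\R^2}|u|^2=1$ is a minimizer and $\gamma_*(\beta)=2\pi\beta$, so that $\cE_{\beta,2\pi\beta}[u]=0$. The minus case of \Cref{lem:mag-super} reads
$$
\int_{\R^2} \left|(\partial_1 - {\rm i}\partial_2)\!\left(e^{\beta\Phi[|u|^2]}u\right)\right|^2 e^{-2\beta\Phi[|u|^2]}\,\dd\bx = 0,
$$
and since $\Phi[|u|^2]$ is locally bounded (\Cref{lem:regularityofgauge}), the weight is pointwise positive, forcing $(\partial_1-{\rm i}\partial_2)(e^{\beta\Phi[|u|^2]}u)=0$ almost everywhere. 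By the a priori smoothness of $u$ (\Cref{lem:regularityofminimizer}) and of $\Phi[|u|^2]$ (\Cref{lem:regularityofgauge}), the field $v:=e^{\beta\Phi[|u|^2]}u$ is smooth and solves $\partial_z v \equiv 0$, so $v$ is anti-holomorphic and equal to $\overline{f}$ for some entire $f$; thus $u=\overline{f}\,e^{-\beta\Phi[|u|^2]}$. To bound $\deg f$, I will apply \Cref{lem:newtonlemma} to $|u|^2\in L^1\cap L^p$ (with $p>1$ from Sobolev) to obtain $\Phi[|u|^2](\bx)/\log|\bx|\to 1$; hence for any $\epsilon>0$ there is $R_\epsilon$ with $e^{-2\beta\Phi[|u|^2](\bx)}\ge|\bx|^{-2\beta(1+\epsilon)}$ on $|\bx|>R_\epsilon$. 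Expanding $f(z)=\sum_n a_n z^n$ and using the circle mean $\int_0^{2\pi}|f(re^{{\rm i}\theta})|^2\,\dd\theta = 2\pi\sum_n|a_n|^2 r^{2n}$ together with monotone convergence in the identity $\int_{\R^2}|f|^2 e^{-2\beta\Phi[|u|^2]}=1$, I get $\int_{R_\epsilon}^\infty r^{2n+1-2\beta(1+\epsilon)}\,\dd r<\infty$ whenever $a_n\neq 0$, i.e.\ $n<\beta(1+\epsilon)-1$; letting $\epsilon\to 0^+$ yields $\deg f\le\beta-1$.

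The backward direction is immediate: if $u \in H^1$ with $\int_{\R^2}|u|^2=1$ satisfies $u=\overline{f}\,e^{-\beta\Phi[|u|^2]}$ for a polynomial $f$, then $e^{\beta\Phi[|u|^2]}u=\overline{f}$ is anti-holomorphic, so $(\partial_1-{\rm i}\partial_2)\overline{f}\equiv 0$, and \Cref{lem:mag-super} yields $\cE_{\beta,2\pi\beta}[u]=0$, i.e.\ $\cE_\beta[u]/\int_{\R^2}|u|^4=2\pi\beta$. Combined with the Bogomolnyi inequality \eqref{eq:bogobound-self}, which gives $\gamma_*(\beta)\ge 2\pi\beta$ for every $\beta\ge 0$, this forces $\gamma_*(\beta)=2\pi\beta$ and shows that $u$ attains the infimum. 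The principal obstacle is the polynomial-degree step, which requires the sharp logarithmic asymptotics of $\Phi[|u|^2]$ at infinity and its term-by-term translation into a Taylor-coefficient restriction on $f$; the finer quantization $\beta\in 2\N$ and the explicit Wronskian form $f=P'Q-PQ'$ claimed in \Cref{thm:magneticstability} are not obtained at this step, but will arise subsequently by feeding the resulting Liouville-type equation for $\psi=-2\beta\Phi[|u|^2]$ into \Cref{thm:liouville}.
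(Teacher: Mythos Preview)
Your proof is correct and follows essentially the same approach as the paper: both use the regularity of minimizers (\Cref{lem:regularityofminimizer}) together with the supersymmetric factorization (\Cref{lem:mag-super}) to obtain the anti-holomorphic representation $e^{\beta\Phi[|u|^2]}u=\overline{f}$, then invoke Newton's lemma (\Cref{lem:newtonlemma}) and a Taylor expansion of $f$ in polar coordinates to bound $\deg f\le\beta-1$. Your backward direction is slightly more explicit than the paper's one-line appeal to \Cref{lem:mag-super}, since you spell out why the Bogomolnyi bound \eqref{eq:bogobound-self} is needed to conclude that $\gamma_*(\beta)=2\pi\beta$ and that $u$ is actually a minimizer.
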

\begin{proof}
Assume that $u \in H^1(\R^2;\C)$ is a minimizer for $\gamma_{\ast}(\beta)=2\pi \beta$. Then, by \Cref{lem:regularityofminimizer}, we obtain that $u \in C^{\infty}(\R^2)$ and
$\cE_{\beta,\gamma_{\ast}(\beta)}[u]=0.$ Hence, by \Cref{lem:regularityofgauge,lem:mag-super}, we derive that 
\begin{align}
\label{eq:initialformminimizer}
u(z) = e^{-\beta \Phi[|u|^2](z)} \overline{f(z)},
\end{align}
for every $z \in \C$, where $f$ is an analytic function. Now, by \Cref{lem:newtonlemma}, we have 
\begin{align*}
e^{-\beta \Phi[|u|^2](\bx)} \geq \frac{C_{\varepsilon}}{(|\bx|+1)^{\beta+\varepsilon}}
\end{align*}
for every $\bx \in \R^2, \varepsilon>0$, where $C_{\varepsilon}$ is a positive constant depending on $\varepsilon.$ Hence, by $\|u\|_{L^2}=1$, we obtain that 
\begin{align*}
\int_{\R^2} \frac{|f(\bx)|^2}{(|\bx|+1)^{2\beta+\varepsilon}} \dd  \bx < \infty,
\end{align*}
for every $\varepsilon>0.$ Therefore, by writing $f(z) = \sum_{n=0}^{\infty} a_n z^n$ for every $z \in \C$ and using polar coordinates, we derive that 
\begin{align*}
2 \pi \int_0^{\infty} \frac{\sum_{n=0}^{\infty} |a_n|^2 r^{2n+1}}{(r+1)^{2\beta+\varepsilon}} \dd  r <\infty,
\end{align*}
for every $\varepsilon>0.$ In conclusion, $a_n =0$ for every $n >\beta -1 $ which, together with \eqref{eq:initialformminimizer}, derives the necessary part of the corollary. The other direction follows directly from \Cref{lem:mag-super}.
\end{proof}

Now, we prove our claim that \eqref{eq:uPQ} is a minimizer.

\begin{proposition}
\label{prop:integercaseminmizer}
Let $\beta \in 2 \N$ and $P,Q$ be two coprime and linearly independent complex polynomials such that 
$\max (\deg P,\deg Q) =\frac{\beta}{2}$.
Then, \eqref{eq:uPQ} is a minimizer for 
$\gamma_{\ast}(\beta) = 2\pi\beta$.
\end{proposition}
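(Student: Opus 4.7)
The plan is to verify directly, through the supersymmetric factorization identity of \Cref{lem:mag-super}, that $u_{P,Q}$ saturates the Bogomolnyi lower bound $\cE_{\beta}[u]\geq 2\pi\beta\int_{\R^2}|u|^4$. Since the latter bound (\Cref{cor:magneticselfdualformula}) already holds on all of $H^1$, the resulting two-sided estimate will simultaneously yield $\gamma_{\ast}(\beta)=2\pi\beta$ and exhibit $u_{P,Q}$ as a minimizer.

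First I would introduce the polynomial $f := W(P,Q) = P'Q-PQ'$, which by \Cref{lem:degreeofwronskian} is nonzero (by linear independence of $P,Q$) and of degree at most $\beta-2$, together with the conformal weight $\psi := \log 8 - 2\log(|P|^2+|Q|^2)$. Coprimality of $P,Q$ ensures $|P|^2+|Q|^2>0$ on $\R^2$, so that $u_{P,Q}$ is smooth, and the rewritings $|u_{P,Q}|^2 = |f|^2 e^\psi/(4\pi\beta)$ and $u_{P,Q} = (2\sqrt{\pi\beta})^{-1}\bar f\, e^{\psi/2}$ are immediate. The Gauss--Green flux computation already carried out in the sufficiency part of \Cref{thm:liouville} then gives $\int_{\R^2}|u_{P,Q}|^2 = \max(\deg P,\deg Q)/(\beta/2)=1$, while the polynomial asymptotics $(|P|^2+|Q|^2)\sim |\bx|^\beta$ and $|f|\lesssim |\bx|^{\beta-2}$ give pointwise decay $|u_{P,Q}|^2 \lesssim |\bx|^{-4}$, placing $u_{P,Q}$ (and, by direct differentiation, $\nabla u_{P,Q}$) in $L^2$.

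The crux is to show that $e^{\beta\Phi[\varrho]}u_{P,Q}$ is anti-holomorphic, where $\varrho:=|u_{P,Q}|^2$. By \Cref{lem:magicalcalculation}, $-\Delta\psi = |f|^2 e^\psi = 4\pi\beta\varrho$, and by \Cref{lem:superpot-relations}, $\Delta\Phi[\varrho]=2\pi\varrho$, so the function $h := \psi/2+\beta\Phi[\varrho]$ is harmonic on $\R^2$. Now \Cref{lem:newtonlemma}, applied with $\int\varrho=1$, gives $\beta\Phi[\varrho](\bx)=\beta\log|\bx|+o(\log|\bx|)$ at infinity, matching the asymptotic $\psi(\bx)/2 = -\beta\log|\bx|+O(1)$ forced by the quantization $\max(\deg P,\deg Q)=\beta/2$. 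Hence $h$ is bounded and, by the classical Liouville theorem, constant, so $e^{\psi/2}$ is a positive multiple of $e^{-\beta\Phi[\varrho]}$ and $e^{\beta\Phi[\varrho]}u_{P,Q}$ is a constant multiple of $\bar f$, which is indeed anti-holomorphic. The minus-sign version of \Cref{lem:mag-super} then forces the Bogomolnyi deficit to vanish, yielding $\cE_{\beta}[u_{P,Q}] = 2\pi\beta\int|u_{P,Q}|^4$ and hence $\gamma_{\ast}(\beta)=2\pi\beta$ with $u_{P,Q}$ a minimizer.

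The principal technical point is the Liouville step: everything hinges on the exact matching of the $\log|\bx|$-leading asymptotics of $\psi/2$ and $-\beta\Phi[\varrho]$, which only \Cref{lem:newtonlemma} combined with the precise quantization $\max(\deg P,\deg Q)=\beta/2$ can guarantee. I should also note that one cannot shortcut the argument by invoking \Cref{cor:generalformofminimizer} directly, since its hypothesis $\gamma_{\ast}(\beta)=2\pi\beta$ is precisely what is being established; the underlying identity \Cref{lem:mag-super}, which is valid for every $u\in H^1$ without such a priori information on $\gamma_{\ast}$, must be used instead.
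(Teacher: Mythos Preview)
Your approach is essentially identical to the paper's: both identify the harmonic function $h=\psi/2+\beta\Phi[\varrho]$ (the paper writes this as $\beta\Psi$ plus a constant), control its growth at infinity via \Cref{lem:newtonlemma}, and invoke a Liouville-type argument to conclude it is constant, whence the factorization $u_{P,Q}\propto e^{-\beta\Phi[\varrho]}\bar f$ and the vanishing of the right-hand side of \Cref{lem:mag-super}.

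One imprecision at what you call the principal technical point: \Cref{lem:newtonlemma} only yields $\beta\Phi[\varrho](\bx)=(\beta+o(1))\log|\bx|$, so the cancellation against $\psi/2=-\beta\log|\bx|+O(1)$ gives $h=o(\log|\bx|)$, not $h$ bounded. The paper accordingly works with the weaker bound $|h|\le C\log|\bx|$ and supplies a slightly more explicit Liouville argument (via $e^G$ with $\Re G=h/C$). Your conclusion is nonetheless correct, since any entire harmonic function with sublinear growth is constant; just replace ``bounded'' by ``$o(|\bx|)$''. Finally, your caution about \Cref{cor:generalformofminimizer} is misplaced: its ``if'' direction does not presuppose $\gamma_\ast(\beta)=2\pi\beta$ but derives it from exactly the combination of \Cref{lem:mag-super} and the Bogomolnyi bound that you use, so invoking it (as the paper does) is not circular.
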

\begin{proof}
Since $P,Q$ are linearly independent, $W(P,Q)$ is not identically zero (but can have zeros).
Further, since $\gcd(P,Q)=1$ we have $u_{P,Q} \in C^\infty(\R^2)$.
By \Cref{cor:generalformofminimizer}, it is enough to verify that $u_{P,Q} \in H^1(\R^2),$  $\int_{\R^2} |u_{P,Q}|^2 = 1$, 
and that it can be written in the factorized form
$$
u_{P,Q} = K e^{-\beta \Phi[|u_{P,Q}|^2]}\bar{f},
$$
for $f := W(P,Q) \neq 0$ and a non-zero constant $K = K(P,Q)$ depending on $P,Q$. 

First, we prove that $\|u_{P,Q}\|_{L^2}=1.$
By using the identity \eqref{eq:Liouville-degree} in \Cref{thm:liouville}, we deduce that
\begin{align*}
\int_{\R^2} |u_{P,Q}|^2 
= \frac{1}{4\pi \beta} \int_{\R^2} |f|^2 e^{\psi}
= \frac{8\pi \max(\deg(P),\deg(Q))}{4\pi \beta} =1,
\end{align*}
where 
\begin{align*}
\psi = \psi_{P,Q} := \log(8) -2 \log\left( |P|^2 + |Q|^2 \right).
\end{align*}

Now, by \Cref{thm:liouville} and \Cref{lem:superpot-relations}, we have 
\begin{align}
\label{eq:connectionliouvileandLandau}
-\Delta \psi =  |f|^2 e^{\psi} = 4\pi \beta |u_{P,Q}|^2 =2\beta \Delta \Phi[|u_{P,Q}|^2]
\end{align}  in $\R^2.$
In conclusion, \begin{equation}\label{eq:Phi-general-solution}
\Phi[|u_{P,Q}|^2] = \frac{1}{\beta} \log(|P|^2 + |Q|^2) + \Psi, 
\end{equation}
in $\R^2$ where $\Psi = \Psi(P,Q)$ is a harmonic function $\R^2 \to \R$. 
By $u_{P,Q} \in C^\infty$, the fact that $P,Q$ are non-zero coprime polynomials, and \Cref{lem:regularityofgauge} we have that $\Psi \in C^\infty(\R^2;\R)$.

Now, we aim at proving that $\Psi$ is indeed a constant. To see this, we use \Cref{lem:newtonlemma} to deduce that 
\begin{align*}
    |\Psi(\bx)| \leq  C |\log|\bx||,
\end{align*}
for every $|\bx| \ge 2$, where $C =C(P,Q)$ is a positive constant which depends on $P,Q$. Then, by a simple application of Liouville's theorem, we obtain that $\Psi$ is a constant. Indeed, consider a complex analytic function $G: \C \to \C$ where $\Re(G)=\frac{ \Psi}{C}$. Then, $\frac{ e^{G}- e^{G(0)}}{z}$ is a meromorphic function such that 
\begin{align*}
    \biggl | \frac{e^{G(z)}-e^{G(0)}}{z} \biggr | \leq \frac{e^{ \frac{\Psi(z)}{C}} + e^{\frac{\Psi(0)}{C}}}{|z|} \leq 1+ e^{\frac{\Psi(0)}{C}},
\end{align*}
for $|z| \geq 2$. Moreover, $e^{G(z)}- e^{G(0)} = 0$ at $z = 0.$ Hence, $\frac{e^{G}-e^{G(0)}}{z}$ is a bounded entire function. By Liouville's theorem, we have that $\frac{e^{G}-e^{G(0)}}{z}$ is a constant. However, this implies that $e^{G}$ is a polynomial of degree at most $1$, 
and further, it cannot have a zero due to the regularity of $\Psi = C\Re(G)$.
Then, this can only occur if $e^G$ is a constant. Therefore, $\Psi = C \log|e^G|$ is a constant. Thus, this implies the desired factorization 
\begin{align*}
u_{P,Q} =e^{\beta \Psi} \sqrt{\frac{2}{\pi \beta}} e^{-\beta \Phi[|u|^2]} \Bar{f}.
\end{align*}

Finally, we show that $u_{P,Q} \in H^1(\R^2)$. Since $\|u_{P,Q}\|_{L^2}=1$, it is enough to prove that $(\partial_z u_{P,Q}, \partial_{\bar{z}} u_{P,Q} ) \in L^2(\R^2;\C^2).$ By \eqref{eq:mag-solution}, we have 
\begin{align*}
    \partial_z u_{P,Q} = -\frac{\overline{W(P,Q)} (P' \overline{P} + Q' \overline{Q})}{(|P|^2 + |Q| ^2)^2}.
\end{align*}
Hence, $\partial_z u_{P,Q}$ is smooth and, by \Cref{lem:degreeofwronskian}, we have 
\begin{align*}
     |\partial_z u_{P,Q}(z)| \leq C |z|^{-3},
\end{align*}
for $|z|>1$, where $C$ is a constant which depends on $P,Q$. In conclusion, $\partial_z u_{P,Q} \in L^2(\R^2)$. By a similar argument, we derive that $\partial_{\bar{z}} u_{P,Q} \in L^2(\R^2)$, which completes the proof.
\end{proof}

\subsubsection{Value of $\gamma_{\ast}(\beta)$ and general form of minimizers for $\beta \geq 2$}\label{sec:mag-proof-largebeta}

In this part, we derive that $\gamma_{\ast}(\beta)=2 \pi \beta$ for $\beta \ge 2$ and prove the quantization of the flux $\beta$ and the general necessary form of minimizers for every $\beta \in 2\N$.
We need the following lemma.

\begin{lemma}[Monotonicity property]\label{lem:keylemma}
The function $\beta \mapsto \frac{\gamma_*(\beta)}{\beta}$ is decreasing for $\beta >0$, i.e., for every $0< \beta' < \beta$, we have 
\begin{equation}\label{monotonicity}
\frac{\gamma_{\ast}(\beta)}{\beta}  \leq \frac{\gamma_{\ast}(\beta')}{\beta'}.
\end{equation}
\end{lemma}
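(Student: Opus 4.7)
The plan is to establish $\gamma_\ast(\beta)/\beta \leq \gamma_\ast(\beta')/\beta'$ for $0 < \beta' < \beta$ by an explicit trial-function construction, based on the nonlinear scaling identity
\begin{equation*}
\cE_\beta[\lambda u] = \lambda^2 \cE_{\lambda^2\beta}[u], \qquad \lambda > 0,\ u \in H^1,
\end{equation*}
which follows directly from the density-linearity $\bA[|\lambda u|^2] = \lambda^2 \bA[|u|^2]$. Fixing $\eps > 0$, I would first take an approximate minimizer $u \in H^1$ (smooth and compactly supported, by density) with $\|u\|_{L^2} = 1$ and $\cE_{\beta'}[u]/\int_{\R^2}|u|^4 \leq \gamma_\ast(\beta') + \eps$. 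With the choice $\lambda^2 = \beta'/\beta < 1$, the rescaled function $\tilde u := \lambda u$ then satisfies $\cE_\beta[\tilde u] = (\beta'/\beta)\cE_{\beta'}[u]$ and $\int_{\R^2}|\tilde u|^4 = (\beta'/\beta)^2\int_{\R^2}|u|^4$, so that
\begin{equation*}
\frac{\cE_\beta[\tilde u]}{\int_{\R^2}|\tilde u|^4} = \frac{\beta}{\beta'}\cdot\frac{\cE_{\beta'}[u]}{\int_{\R^2}|u|^4} \leq \frac{\beta}{\beta'}\bigl(\gamma_\ast(\beta') + \eps\bigr).
\end{equation*}
The obstruction is that $\|\tilde u\|_{L^2}^2 = \beta'/\beta < 1$, so $\tilde u$ alone is not admissible in \eqref{eq:defgamma}.

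To restore unit $L^2$-mass I would append a thin far-away tail: fix a real $\phi \in C_c^\infty(\R^2)$ with $\|\phi\|_{L^2} = 1$, put $c := \sqrt{(\beta-\beta')/\beta}$, and set $w_R(\bx) := cR^{-1}\phi((\bx-\by_R)/R)$, with $\by_R \in \R^2$ chosen so that $|\by_R| \gg R + \diam(\supp u)$ and $\supp\tilde u \cap \supp w_R = \emptyset$. Then $u_R := \tilde u + w_R$ has $\|u_R\|_{L^2}^2 = 1$ and is a valid trial function for $\gamma_\ast(\beta)$. By linearity of $\bA[\,\cdot\,]$ in the density and the disjointness of supports, $\bA[|u_R|^2] = A_1 + A_2$ with $A_1 := \bA[|\tilde u|^2]$ and $A_2 := \bA[|w_R|^2]$.

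It then remains to analyze the asymptotics as $R \to \infty$ with $|\by_R|/R \to \infty$: (i) by direct rescaling, $\cE_\beta[w_R] = c^2R^{-2}\cE_{\beta c^2}[\phi] \to 0$ and $\int_{\R^2}|w_R|^4 = c^4 R^{-2}\int_{\R^2}|\phi|^4 \to 0$, so $\int_{\R^2}|u_R|^4 \to \int_{\R^2}|\tilde u|^4$; (ii) $\|A_2\|_{L^\infty(\supp\tilde u)}$ and $\|A_1\|_{L^\infty(\supp w_R)}$ are both $O(1/|\by_R|) \to 0$, by the $1/|\bx|$ decay of $\bA[\varrho]$ far from $\supp\varrho$. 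Splitting
\begin{equation*}
\cE_\beta[u_R] = \int_{\supp\tilde u}|(\nabla + \ii\beta A_1)\tilde u + \ii\beta A_2\tilde u|^2 + \int_{\supp w_R}|(\nabla + \ii\beta A_2)w_R + \ii\beta A_1 w_R|^2
\end{equation*}
and expanding each square, (i)--(ii) together with Cauchy--Schwarz yield $\cE_\beta[u_R] \to \cE_\beta[\tilde u]$, whence
\begin{equation*}
\gamma_\ast(\beta) \leq \limsup_{R\to\infty}\frac{\cE_\beta[u_R]}{\int_{\R^2}|u_R|^4} = \frac{\cE_\beta[\tilde u]}{\int_{\R^2}|\tilde u|^4} \leq \frac{\beta}{\beta'}\bigl(\gamma_\ast(\beta') + \eps\bigr),
\end{equation*}
and $\eps \to 0$ gives \eqref{monotonicity}.

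The main obstacle I anticipate is the careful control of the mixed cross terms produced by the expansion of $\cE_\beta[u_R]$: specifically $\beta^2\int_{\R^2}|A_2|^2|\tilde u|^2$ and the mixed term $2\beta\Re\int_{\R^2}(\nabla + \ii\beta A_1)\tilde u\cdot\overline{\ii\beta A_2\tilde u}$, together with their two analogues on $\supp w_R$ involving $A_1$. Each must be shown to vanish in the limit using the uniform decay estimates (ii), the boundedness of $\cE_\beta[\tilde u]$ and $\cE_\beta[w_R]$, and Cauchy--Schwarz. The underlying intuition is that a localized, spread-out, far-away magnetic source acts as an asymptotically negligible perturbation on any fixed compact region.
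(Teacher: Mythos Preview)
Your proposal is correct and rests on the same decoupling idea as the paper: split the trial function into a ``main'' piece carrying the near-optimal ratio and a far-away piece supplying the missing $L^2$-mass, then show the magnetic cross terms vanish because $\bA[\varrho]$ decays like $1/|\bx|$ away from $\supp\varrho$. The only difference is a dual choice of scaling: the paper keeps the tail fixed and \emph{concentrates} the main piece via the dilation $\tilde u_n(\bx)=\varepsilon_n u_n(\varepsilon_n\bx)$ with $\varepsilon_n\to\infty$ (so the main piece dominates both numerator and denominator), whereas you keep the main piece fixed and \emph{spread out} the tail $w_R$ via $R\to\infty$ (so the tail contributes nothing to either). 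Both are instances of the same scale-invariance of the ratio $\cE_\beta[u]/\int|u|^4$ under dilation; your version is arguably slightly cleaner because the amplitude identity $\cE_\beta[\lambda u]=\lambda^2\cE_{\lambda^2\beta}[u]$ delivers the factor $\beta/\beta'$ up front, and the subsequent limit is just ``tail $\to 0$''. The paper, conversely, packages the amplitude scaling as the reformulation $\gamma_\ast(\beta)/\beta=\inf_{\|u\|_2^2=\beta}\cE_1[u]/\int|u|^4$ and then argues at fixed coupling $\beta=1$. The cross-term estimates you flag are exactly the ones the paper handles, by the same $1/d_n$ bound on $\bA$ across disjoint supports and Cauchy--Schwarz.
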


\begin{proof}
Note that, by scaling $u \mapsto \frac{u}{\sqrt{\beta}}$
(compare \eqref{eq:LGN-selfmagnetic}), 
we have
$$
\frac{\gamma_{\ast}(\beta)}{\beta} = \inf\left\{\frac{\cE_{1}[u]}{\int_{\R^2} |u|^4} = \frac{\int_{\R^2} \left|(\nabla + {\rm i}\bA\left[|u|^2\right]) u\right|^2}{\int_{\R^2} |u|^4} : u \in H^1(\R^2;\C), \int_{\R^2}|u|^2 = \beta\right\}.
$$
For every $0<\beta'<\beta$, let $\{u_n\}, \{v\} \subset C^{\infty}_c(\R^2;\mathbb{C})$ be (a sequence of) functions such that $\int_{\R^2} |u_n|^2 = \beta' = \beta - \int_{\R^2} |v|^2$ and
\begin{align*}
\frac{\gamma_{\ast}(\beta')}{\beta'} = \lim_{n \to \infty} \frac{\int_{\R^2} \left|(\nabla + {\rm i}\bA[|u_n|^2]) u_n\right|^2}{\int_{\R^2} |u_n|^4}.
\end{align*}
By dilation $r_n u_n(r_n \bx)$, $t v(t \bx)$ for $r_n =\|u_n\|_{L^4}^{-2}$, $t = \|v\|^{-2}_{L^4}$, we can also assume that $$\|u_n\|_{L^4} = 1 = \|v\|_{L^4}.$$ 
Let $\bx_n \in \R^2$ and $\varepsilon_n >1$ be sequences and define $\Tilde{u}_n(\bx) = \varepsilon_n u_n(\varepsilon_n (\bx+\bx_n))$ for every $\bx \in \R^2$, $n \in \N$. 
Now, by choosing $|\bx_n| = 1+ \diam\frac{\supp(u_n)}{\varepsilon_n} + \diam \supp(v)$, we have $$d_n := \dist(\supp(\Tilde{u}_n), \supp(v)) \geq 1.$$
Here, $\frac{\supp(u_n)}{\varepsilon_n}:=\{\bx \in \R^2: \varepsilon_n \bx \in \supp(u_n)\}.$
Then,
$\int_{\R^2} |\tilde{u}_n|^4 = \varepsilon_n^2$, 
$\cE[\tilde{u}_n] = \varepsilon_n^2 \cE[u_n]$, and
\begin{align*}
\int_{\R^2} |\Tilde{u}_n + v |^2 
= \int_{\R^2} \left( |\Tilde{u}_n|^2 + |v|^2 \right) 
= \int_{\R^2} |u_n|^2 + \int_{\R^2} |v|^2 = \beta.
\end{align*}
Moreover, by \Cref{def:superpotentialdef}, we have 
\begin{equation*}
\begin{aligned}\label{lem:distance}
\int_{\R^2} |\bA[|\Tilde{u}_n|^2]|^2 |v|^2 
\leq \int_{\R^2} \left(\int_{\R^2} \frac{|\Tilde{u}_n(\by)|^2}{|\bx-\by|} \dd\by\right)^{2} |v(\bx)|^2 \,\dd\bx 
\leq \frac{\|\Tilde{u}_n\|_{L^2}^{4} \|v\|_{L^2}^2}{d_n^2} \leq \frac{\beta^{3}}{d_n^2},
\end{aligned}
\end{equation*}
and similarly with $\tilde{u}_n$ and $v$ exchanged.
Therefore,
\begin{align*}
\frac{\gamma_{\ast}(\beta)}{\beta} \leq{} & \frac{\int_{\R^2} \left|(\nabla + {\rm i}\bA[|\Tilde{u}_n + v|^2])(\Tilde{u}_n + v)\right|^2}{\int_{\R^2} |\Tilde{u}_n + v|^4} \\ 
\leq{} & \frac{\int_{\R^2} \left|(\nabla + {\rm i}\bA[|\Tilde{u}_n|^2])\Tilde{u}_n\right|^2 + \int_{\R^2} \left|(\nabla + {\rm i}\bA[|v|^2])v\right|^2}{\int_{\R^2}|\Tilde{u}_n|^4 + \int_{\R^2}|v|^4} 
\\& + \frac{2\left(\int_{\R^2} \left|(\nabla + {\rm i}\bA[|\Tilde{u}_n|^2])\Tilde{u}_n\right|^2\right)^{\frac{1}{2}} \left(\int_{\R^2} |\bA[|v|^2]|^2| \Tilde{u}_n|^2\right)^{\frac{1}{2}}}{\int_{\R^2}|\Tilde{u}_n|^4 + \int_{\R^2}|v|^4}
\\ & + \frac{2\left(\int_{\R^2} \left|(\nabla + {\rm i}\bA[|v|^2])v\right|^2\right)^{\frac{1}{2}} \left(\int_{\R^2} |\bA[|\Tilde{u}_n|^2]|^2 |v|^2\right)^{\frac{1}{2}}}{\int_{\R^2}|\Tilde{u}_n|^4 + \int_{\R^2}|v|^4}
\\ 
\le{} &  \dfrac{\varepsilon_n^2 \dfrac{\gamma_{\ast}(\beta')}{\beta'} + \cE_{1}[v]}{\varepsilon_n^2 + 1} + 2\beta^{\frac{3}{2}} \dfrac{\biggl( \frac{\varepsilon_n^2 \gamma_{\ast}(\beta')}{\beta'}\biggr)^{\frac{1}{2}} + \left(\cE_1[v]\right)^{\frac{1}{2}}}{d_n (\varepsilon_n^2 + 1)}.
\end{align*}
By choosing $\varepsilon_n=n$ and taking the limit $n \to \infty$ in the above, we obtain the desired inequality \eqref{monotonicity}.
\end{proof}

\Cref{lem:keylemma} turns out to be our key tool
which yields various properties of $\gamma_{\ast}(\beta)$ and its minimizers. We have the following immediate corollary.

\begin{proposition}[Supersymmetric saturation]
\label{cor:minimumofenergy}
For every $\beta \geq 2$, we have $\gamma_{\ast}(\beta)= 2\pi \beta$.
\end{proposition}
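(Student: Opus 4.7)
The claim combines the upper and lower bounds already assembled in the preceding subsections, so the proof is essentially a short matching argument. My plan would be as follows.

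First, I would observe that Proposition \ref{prop:integercaseminmizer} applied with the admissible polynomial pair $(P,Q) = (z,1)$ (coprime, linearly independent, with $\max(\deg P, \deg Q) = 1 = \beta/2$ for $\beta = 2$) produces a genuine minimizer $u_{z,1} \in H^1(\R^2)$ of the ratio defining $\gamma_*(2)$, and simultaneously certifies the value
\begin{equation*}
\gamma_*(2) = 2\pi \cdot 2 = 4\pi.
\end{equation*}
In particular, $\gamma_*(2)/2 = 2\pi$.

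Next, for an arbitrary $\beta \ge 2$, I would invoke the monotonicity of $\beta \mapsto \gamma_*(\beta)/\beta$ from Lemma \ref{lem:keylemma} with $\beta' = 2$ to obtain the upper bound
\begin{equation*}
\frac{\gamma_*(\beta)}{\beta} \le \frac{\gamma_*(2)}{2} = 2\pi, \quad \text{i.e.} \quad \gamma_*(\beta) \le 2\pi \beta.
\end{equation*}
The matching lower bound $\gamma_*(\beta) \ge 2\pi\beta$ is exactly the Bogomolnyi saturation bound \eqref{eq:bogobound-self} (a direct consequence of the supersymmetric factorization in Proposition \ref{cor:magneticselfdualformula}), which holds for every $\beta \ge 0$ and every $u \in H^1(\R^2)$ with $\int_{\R^2}|u|^2 = 1$. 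Combining the two inequalities gives $\gamma_*(\beta) = 2\pi\beta$ for all $\beta \ge 2$.

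The entire argument is essentially bookkeeping; the substantive work has been done elsewhere. The only mild subtlety worth stressing is that the upper bound comes from a single concrete test configuration at $\beta = 2$ (the radial ``vortex ring'' $u_1$), propagated to all $\beta \ge 2$ by monotonicity, rather than from a direct construction at each $\beta$, which would be unavailable for non-integer $\beta$ by the quantization half of Theorem \ref{thm:magneticstability}. There is no genuine obstacle at this stage: regularity of minimizers, the supersymmetric square representation, and the existence of explicit saturators at $\beta = 2$ have all been established above.
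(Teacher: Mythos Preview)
Your proof is correct and follows essentially the same route as the paper: establish $\gamma_*(2)=4\pi$ via Proposition~\ref{prop:integercaseminmizer} with $(P,Q)=(z,1)$, then squeeze $\gamma_*(\beta)/\beta$ between $2\pi$ (from the Bogomolnyi bound \eqref{eq:bogobound-self}) and $\gamma_*(2)/2=2\pi$ (from the monotonicity Lemma~\ref{lem:keylemma}) for all $\beta\ge 2$. The paper's proof is identical in structure and cites the same three ingredients.
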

\begin{proof}
For $\beta = 2 $,
by choosing $P(z)= z$ and $Q(z)=1$ in \Cref{prop:integercaseminmizer}, we obtain $\gamma_{\ast}(2)=4\pi$. For other values of $\beta$, we use \Cref{lem:keylemma} and Bogomolnyi's bound \eqref{eq:bogobound-self} to obtain that
\begin{align*}
2 \pi \leq \frac{\gamma_{\ast}(\beta)}{\beta} \leq \frac{\gamma_{\ast}(2)}{ 2} = 2\pi
\end{align*}
for every $\beta > 2$, which  completes the proof.
\end{proof}

\begin{proposition}[Quantization of flux into nonlinear Landau levels]
\label{cor:nececcityofminimizers}
Let $\beta \ge 0$. Assume that there exists a minimizer $u \in H^1(\R^2)$ for $\gamma_{\ast}(\beta) = 2 \pi \beta$. Then, we must have that $\beta \in 2 \N$ and that there exist linearly independent $P,Q \in \cP^\times$ such that $\max(\deg P,\deg Q) =  \frac{\beta}{2}$, $\gcd(P,Q)=1$, and 
\begin{align*}
u = u_{P,Q} = \sqrt{\frac{2}{\pi \beta}} \, \frac{\overline{P'Q - PQ'}}{|P|^2 + |Q|^2}.
\end{align*}
\end{proposition}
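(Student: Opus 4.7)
The plan is to combine the structural result for minimizers of $\gamma_*(\beta) = 2\pi\beta$ (Lemma~\ref{cor:generalformofminimizer}) with the classification Theorem~\ref{thm:liouville}. First I would apply Lemma~\ref{cor:generalformofminimizer} to write $u = e^{-\beta\Phi[|u|^2]}\overline{f}$ for some polynomial $f$ of degree at most $\beta-1$. Since $\|u\|_{L^2}=1$, the polynomial $f$ cannot be identically zero; in particular the hypothesis $\gamma_*(\beta)=2\pi\beta$ forces $\beta>0$ (otherwise $\gamma_*(0)=\CLGN>0$ by \Cref{prop:LGN-bound}).

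Next I would set $\psi := -2\beta\,\Phi[|u|^2]$, so that pointwise $|u|^2 = |f|^2 e^{\psi}$. Using $\Delta\Phi[g]=2\pi g$ in the distribution sense (\Cref{lem:superpot-relations}), we get the generalized Liouville equation
\begin{equation*}
-\Delta\psi \;=\; 4\pi\beta\,|u|^2 \;=\; 4\pi\beta\,|f|^2 e^{\psi} \;=\; |F|^2 e^{\psi}, \qquad F := \sqrt{4\pi\beta}\,f,
\end{equation*}
with the integrability condition
\begin{equation*}
\int_{\R^2} |F|^2 e^{\psi} \;=\; 4\pi\beta \int_{\R^2} |u|^2 \;=\; 4\pi\beta \;<\; \infty.
\end{equation*}
The regularity $\psi \in L^1_{\loc}(\R^2;\R)$ (in fact $\psi \in C^\infty$) follows from \Cref{lem:regularityofminimizer,lem:regularityofgauge}. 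Since $F$ is a nonzero polynomial, Theorem~\ref{thm:liouville} is applicable and yields coprime polynomials $P,Q$ with $W(P,Q) = P'Q-PQ' = F$, the representation $\psi = \log 8 - 2\log(|P|^2+|Q|^2)$, and the quantization
\begin{equation*}
\max(\deg P, \deg Q) \;=\; \frac{1}{8\pi}\int_{\R^2} |F|^2 e^{\psi} \;=\; \frac{\beta}{2}.
\end{equation*}

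Since the left-hand side is a non-negative integer and $\beta>0$, this forces $\beta \in 2\mathbb{N}$, proving the first claim. Linear independence of $P,Q$ is immediate from $W(P,Q) = F \neq 0$ via \Cref{lem:degreeofwronskian}\ref{itm:wronsk-zero}. To recover the explicit formula, I would insert the expression for $\psi$ back into $u = e^{\psi/2}\overline{f}$: using $e^{\psi/2} = \sqrt{8}/(|P|^2+|Q|^2)$ and $f = F/\sqrt{4\pi\beta} = (P'Q-PQ')/\sqrt{4\pi\beta}$, we get
\begin{equation*}
u \;=\; \frac{\sqrt{8}}{|P|^2+|Q|^2}\cdot\frac{\overline{P'Q-PQ'}}{\sqrt{4\pi\beta}} \;=\; \sqrt{\frac{2}{\pi\beta}}\,\frac{\overline{P'Q-PQ'}}{|P|^2+|Q|^2} \;=\; u_{P,Q},
\end{equation*}
as claimed. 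The only step that requires genuine work is the application of Theorem~\ref{thm:liouville}, whose proof absorbs all the analytic difficulty (regularity, patching of Liouville representations, and the growth analysis at infinity). Everything else amounts to verifying hypotheses and bookkeeping the scaling constants.
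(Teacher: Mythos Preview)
Your proof is correct and follows essentially the same route as the paper: apply \Cref{cor:generalformofminimizer} to obtain $u=e^{-\beta\Phi[|u|^2]}\overline{f}$, translate this into the generalized Liouville equation for $\psi=-2\beta\Phi[|u|^2]$, and invoke \Cref{thm:liouville} to read off the quantization and the explicit form. The only cosmetic difference is that the paper rescales the Liouville pair $(P_0,Q_0)$ by $(4\pi\beta)^{-1/4}$ before writing $u=u_{P,Q}$, whereas you exploit the scale invariance of $u_{P,Q}$ directly; both are fine.
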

\begin{proof}
By \Cref{lem:regularityofminimizer}, $u \in C^\infty(\R^2;\C)$. 
Also, by \Cref{cor:generalformofminimizer}, we must have 
\begin{align}
\label{eq:formofu}
u(z) = e^{-\beta \Phi[|u|^2](z)} \overline{f(z)},
\end{align} for every $z \in \C$, where $f$ is a non-zero polynomial of degree at most $\beta -1$ (in particular $\beta \ge 1$). 
Define the real function $\psi := -2 \beta \Phi[|u|^2]$. Then, 
$\psi \in C^\infty(\R^2;\R)$
by \Cref{lem:regularityofgauge}, and
\begin{align*}
-\Delta \psi=4 \pi \beta |u|^2 = |\sqrt{4 \pi \beta } f|^2 e^{\psi}.
\end{align*}
Hence, by normalization $\int_{\R^2} |u|^2 =1$ 
and \Cref{thm:liouville}, we have 
\begin{align*}
\psi = \log(8) - 2 \log(|P_0|^2 + |Q_0|^2),
\end{align*}
where $P_0,Q_0$ are complex polynomials which satisfy $\gcd(P_0,Q_0)=1, W(P_0,Q_0) = \sqrt{4\pi \beta} f,$ and \begin{align*}
\max(\deg(P_0),\deg(Q_0)) = \frac{\int_{\R^2} 4 \pi \beta |u|^2}{8 \pi} = \frac{\beta}{2}.
\end{align*}
In particular, we must have $\beta \in 2 \N$.
Now, define 
$$
P:= (4\pi \beta)^{-\frac{1}{4}} P_0 
\quad \text{and} \quad 
Q:= (4\pi \beta)^{-\frac{1}{4}} Q_0.
$$
Hence, $W(P,Q)= f$ and
\begin{align*}
-2 \beta \Phi[|u|^2] = \psi = \log\left(  \frac{2}{\pi \beta (|P|^2+ |Q|^2)^2} \right).
\end{align*}
In conclusion, by \eqref{eq:formofu}, we have 
\begin{align*}
u = \sqrt{\frac{2}{\pi \beta}} \, \frac{\overline{P'Q - PQ'}}{|P|^2 + |Q|^2} = u_{P,Q},
\end{align*}
which completes the proof.
\end{proof}

\begin{remark}[Real minimizers]
\label{rem:realminimizers}
    By \Cref{cor:nececcityofminimizers}, we also find that $\gamma_{\ast}(\beta)$ has a real minimizer for $\beta \geq 2$ if and only if $\beta =2$. To see this, we note that by replacing $P=z$, $Q=1$ for $\beta =2,$ we derive a real minimizer
\begin{align*}
    u_{P,Q} = \sqrt{\frac{1}{\pi}} \,\frac{1}{|z|^2+ 1},
\end{align*}
    for $\gamma_{\ast}(2)$.
    Conversely, assume that  \begin{align*}
    u_{P,Q} = \sqrt{\frac{2}{\pi \beta}} \, \frac{\overline{P'Q - PQ'}}{|P|^2 + |Q|^2},
\end{align*}
is a real-valued function. Then, $P'Q - PQ'$ must be a non-zero real constant. Hence, by our characterization of Wronskian pairs in
\Cref{lem:Wronskian-ODEequivalence,lem:degreecondi}, we derive that 
\begin{align*}
\frac{\beta}{2} = \max(\deg(P),\deg(Q)) =1,
\end{align*}
which completes the proof.
\end{remark}

\begin{remark}[Real interpolation constant]
\label{rem:realgammastar}
   For every $\beta \in \R$, we define the optimal constant of the real-valued minimization problem
\begin{align*}
      \gamma_{\R}(\beta) := \inf \left\{\frac{\cE_{\beta}[u]}{\int_{\R^2}|u|^4}: u \in H^1(\R^2;\R), \int_{\R^2}|u|^2 = 1\right\}.
\end{align*}
We claim that $$\gamma_{\R}(\beta)=\gamma_{\ast}(\beta)=2\pi \beta,$$ for every $\beta \geq 2.$ 
By the previous remark, this shows that, for every $\beta \geq 2,$ $\gamma_{\R}(\beta)$ has a minimizer if and only if $\beta =2$, and that still there are smooth real-valued functions $u$, satisfying $\int_{\R^2} |u|^2=1$ , which makes $\frac{\cE_{\beta}[u]}{\int_{\R^2}|u|^4}$ as close as possible to $2 \pi \beta.$ Now, to prove the claim, we first observe that, by a similar argument as \Cref{lem:keylemma} and \Cref{cor:minimumofenergy} for real functions,
\begin{align*}
    \frac{ \gamma_{\R}(\beta)}{\beta} \leq \frac{\gamma_{\R}(2)}{2}, \quad \forall \beta \geq 2.
\end{align*}
Hence, since there are real-valued minimizers for $\gamma_{\ast}(2)$; see Remark \ref{rem:realminimizers}, we obtain $\gamma_{\R}(2) = \gamma_{\ast}(2) = 4 \pi$ and
\begin{align*}
    \frac{ \gamma_{\R}(\beta)}{\beta} \leq 2 \pi,\quad \forall \beta \geq 2.
\end{align*}
On the other hand, by Bogomolnyi's bound \eqref{eq:bogobound-self} (\Cref{cor:magneticselfdualformula}), we have
\begin{align*}
    \gamma_{\R}(\beta) \geq 2 \pi \beta,
    \quad \forall \beta \ge 0.
\end{align*}
    This completes the proof of the claim.
\end{remark}

\subsubsection{The region $0 < \beta < 2$}\label{sec:mag-proof-smallbeta}

In the previous subsection, we determined the value of $\gamma_*(\beta)$ for every $\beta \geq 2$ and classified 
minimizers for every $\beta \in 2\N$. It leaves an open question concerning the value of $\gamma_*(\beta)$ and its minimizers for $0 < \beta < 2$. In this subsection, we derive some partial results in this region.

First, we derive $\gamma_{\ast}(\beta)>\CLGN$ for every $\beta >0.$ To prove this, we need the following proposition.

\begin{proposition}
\label{prop:magneticbound}
Let $u \in C^{\infty}_c(\R^2;\C), \beta \in \R$. 
Then,
\begin{align*}
\pi^2 \beta^2 \left( \int_{\R^2} |u|^4 \right)^2 
\leq \int_{\R^2} |u|^2\, \biggl| \frac{\bJ[u]}{|u|^2} + \beta  \, \bA[|u|^2]  \biggr|^2 \1_{\{u \neq 0\}} \,
\int_{\R^2} \bigl|\nabla |u|\bigr|^2.
\end{align*}
\end{proposition}
\begin{proof}
By the Hardy-type inequality \eqref{eq:AF-H1} and Lebesgue's dominated convergence, we derive that 
\begin{align}
\label{eq:epsilonconvergence1}
      \lim_{\varepsilon \to 0^+} \int_{\R^2} |u|^2\, \biggl| \frac{\bJ[u]}{|u|^2+\varepsilon} + \beta  \, \bA[|u|^2]  \biggr|^2 = \int_{\R^2} |u|^2\, \biggl| \frac{\bJ[u]}{|u|^2} + \beta  \, \bA[|u|^2] \biggr|^2 \1_{\{u \neq 0\}}.
\end{align}
Here, we used that $ |u| \frac{|\bJ[u]|}{|u|^2+\varepsilon} \leq |\nabla u| $ for every $\varepsilon >0$. Moreover, 
\begin{align*}
     |u|^2 \curl\biggl(\frac{\bJ[u]}{|u|^2+\varepsilon} + \beta  \, \bA[|u|^2]\biggr) = 
     \frac{2\varepsilon |u|^2 \Im( \partial_2 \overline{u} \, \partial_1 u)}{(|u|^2 +\varepsilon)^2} + 2 \pi \beta |u|^4,
\end{align*}
for every $\varepsilon>0.$
In conclusion, by $ \biggl |\frac{2\varepsilon |u|^2 \Im( \partial_2 \overline{u} \, \partial_1 u)}{(|u|^2 +\varepsilon)^2} \biggr | \leq 2 |\partial_1 u| \, |\partial_2 u|$ and Lebesgue's dominated convergence, we derive that
\begin{align}
\label{eq:epsilonconvergece2}
     \lim_{\varepsilon \to 0^+} \int_{\R^2}  |u|^2 \curl\biggl(\frac{\bJ[u]}{|u|^2+\varepsilon} + \beta  \, \bA[|u|^2]\biggr) = 2\pi \beta \int_{\R^2} |u|^4.
\end{align}
Hence, by partial integration and the Cauchy-Schwarz inequality,  we have
\begin{align*}
  &  2 \pi |\beta| \int_{\R^2} |u|^4  = \lim_{\varepsilon \to 0^+}\left | \int_{\R^2} |u|^2 \curl\biggl(\frac{\bJ[u]}{|u|^2+\varepsilon} + \beta  \, \bA[|u|^2]\biggr) \right|  \\
& =\lim_{\varepsilon \to 0^+} \left| \int_{\R^2} \curl\biggl(|u|^2 \frac{\bJ[u]}{|u|^2+\varepsilon} + \beta |u|^2  \, \bA[|u|^2]\biggr) -\int_{\R^2 }  2 |u| \nabla^{\perp} |u| \cdot \biggl(\frac{\bJ[u]}{|u|^2+\varepsilon} + \beta  \, \bA[|u|^2]\biggr) \right| \\
& \leq \lim_{\varepsilon \to 0^+} 2 \left(\int_{\R^2} |u|^2 \, \left|\frac{\bJ[u]}{|u|^2+\varepsilon} + \beta  \, \bA[|u|^2]\right|^2   \right)^{\frac{1}{2}} \left(\int_{\R^2} \bigl|\nabla |u|\bigr|^2 \right)^{\frac{1}{2}}\\
&= 2 \left(\int_{\R^2} |u|^2 \, \left|\frac{\bJ[u]}{|u|^2} + \beta  \, \bA[|u|^2]\right|^2  \1_{\{u \neq 0\}}  \right)^{\frac{1}{2}} \left(\int_{\R^2} \bigl|\nabla |u|\bigr|^2 
\right)^{\frac{1}{2}},
\end{align*}
where we used \eqref{eq:epsilonconvergence1} in the first equality and \eqref{eq:epsilonconvergece2} in the last equality. This completes the proof.
\end{proof}

\begin{proposition}[Weak-field bounds]
\label{cor:basicbounds}
For every $\beta \ge 0$, we have 
\begin{align*}
\gamma_{\ast}(0) + \frac{\pi^2 }{\gamma_{\ast}(\beta)} \beta^2 \leq  \gamma_{\ast}(\beta) \leq \gamma_{\ast}(0) + \frac{3 \gamma_{\ast}(0)}{2}  \beta^2.
\end{align*}
\end{proposition}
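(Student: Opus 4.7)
The plan is to establish the two bounds independently, using Townes soliton for the upper estimate and a pointwise decomposition of the magnetic kinetic energy combined with \Cref{prop:magneticbound} for the lower one.

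For the upper bound I would test the ratio \eqref{eq:defgamma} against the Townes soliton $u_0 = \tau/\|\tau\|_{L^2}$, which is real-valued, positive, smooth, and satisfies $\int_{\R^2}|u_0|^2 = 1$ together with $\int_{\R^2}|\nabla u_0|^2 = \gamma_*(0)\int_{\R^2}|u_0|^4$. Because $u_0$ is real, the current $\bJ[u_0]$ vanishes, so the cross term in the expansion \eqref{eq:AF-crossterms} disappears and
\[
\cE_{\beta}[u_0] = \int_{\R^2}|\nabla u_0|^2 + \beta^{2}\int_{\R^2}\bigl|\bA[|u_0|^2]\bigr|^{2}|u_0|^{2}.
\]
Applying the Hardy-type inequality \eqref{eq:AF-H1} with $\CH=3/2$ and using $\|u_0\|_{L^2}=1$ gives $\int_{\R^2}|\bA[|u_0|^2]|^{2}|u_0|^{2} \le \tfrac{3}{2}\int_{\R^2}|\nabla u_0|^2 = \tfrac{3}{2}\gamma_*(0)\int_{\R^2}|u_0|^4$, and dividing by $\int_{\R^2}|u_0|^4$ yields $\gamma_*(\beta)\le \gamma_*(0)(1+\tfrac{3}{2}\beta^2)$.

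For the lower bound, the key is the pointwise decomposition
\begin{equation}\label{eq:planid}
\bigl|(\nabla+\ii\beta\bA[|u|^2])u\bigr|^{2} = \bigl|\nabla|u|\bigr|^{2} + |u|^{2}\left|\frac{\bJ[u]}{|u|^{2}} + \beta\bA[|u|^2]\right|^{2},
\end{equation}
which is obtained by writing $u=|u|e^{\ii\omega}$ locally (so that $\bJ[u]=|u|^{2}\nabla\omega$) and handling the zero set of $u$ by a standard $|u|^{2}+\eps$ regularization as in the proof of \Cref{prop:magneticbound}. Integrating \eqref{eq:planid} and using the LGN inequality \eqref{eq:LGN} with $\int_{\R^2}|u|^2 = 1$ on the first summand together with \Cref{prop:magneticbound} on the second one, one obtains for any admissible $u$
\[
\cE_{\beta}[u] \ge \gamma_*(0)\int_{\R^2}|u|^{4} + \frac{\pi^{2}\beta^{2}\bigl(\int_{\R^2}|u|^{4}\bigr)^{2}}{\int_{\R^2}|\nabla|u||^{2}}.
\]
Applying the diamagnetic inequality \eqref{eq:AF-diamagnetic} in the denominator (to replace $\int|\nabla|u||^2$ by $\cE_\beta[u]$) and setting $r := \cE_\beta[u]/\int_{\R^2}|u|^4$ gives $r \ge \gamma_*(0) + \pi^{2}\beta^{2}/r$. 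Taking a minimizing sequence $u_n$ with $r(u_n)\to\gamma_*(\beta)$ and passing to the limit by continuity of $x\mapsto \pi^2\beta^2/x$ on $(0,\infty)$ yields the claimed inequality $\gamma_*(\beta)\ge \gamma_*(0)+\pi^{2}\beta^{2}/\gamma_*(\beta)$.

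The only delicate step I anticipate is the rigorous justification of the identity \eqref{eq:planid} on the nodal set of $u$, since the phase $\omega$ is not globally defined and $\bJ[u]/|u|^{2}$ is singular there; this is precisely the issue that \Cref{prop:magneticbound} was tailored to handle via the $|u|^{2}+\eps$ truncation, and the same device applies here. All other steps are direct applications of the inequalities already established.
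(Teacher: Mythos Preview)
Your proposal is correct and follows essentially the same approach as the paper: the upper bound via the Townes soliton with the Hardy-type inequality \eqref{eq:AF-H1}, and the lower bound via the polar decomposition of $|(\nabla+\ii\beta\bA)u|^2$, \Cref{prop:magneticbound}, and the diamagnetic inequality in the denominator. The only cosmetic difference is that the paper normalizes $\int|u_n|^4=1$ along the minimizing sequence rather than working with the ratio $r$, and writes your identity \eqref{eq:planid} as an inequality (defining the second term to vanish on the nodal set), which is all you actually need.
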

\begin{remark}
    In the r.h.s.\ the factor $3/2$ stems from the constant $\CH$
    in \eqref{eq:AF-H1} which may have some room for improvement.
\end{remark}
\begin{proof}
Let $\beta > 0$ and $u_n \in C^{\infty}_c(\R^2;\C)$
be a minimizing sequence for $\gamma_{\ast}(\beta)$. By the dilation 
$\|u_n\|_{L^4}^{-2} u_n\left(\|u_n\|_{L^4}^{-2} \bx\right)$ for every $\bx \in \R^2$, we can assume that $\int_{\R^2}|u_n|^4 =1$ for every $n$. Now, by 
the expansion $\nabla u_n = \left(\nabla|u_n| + {\rm i}\frac{\bJ[u_n]}{|u_n|} \right) \frac{u_n}{|u_n|}$ 
on $\R^2 \setminus \{\bx \in \R^2: u_n(\bx)=0\}$,
the diamagnetic inequality \eqref{eq:AF-diamagnetic}, 
and \Cref{prop:magneticbound}, we obtain
\begin{align*}
\gamma_{\ast}(\beta) = \lim_{n \to \infty} \cE_{\beta}[u_n] & \geq \lim_{n \to \infty} \left(\int_{\R^2} \bigl|\nabla |u_n|\bigr|^2 + \int_{\R^2} |u_n|^2 \left|\frac{\bJ[u_n]}{|u_n|^2} + \beta \bA[|u_n|^2]\right|^2 \1_{\{u_n \neq 0\}} \right) 
\\ & \geq  \liminf_{n \to \infty} \left( \gamma_{\ast}(0) + \frac{\pi^2 \beta^2}{\int_{\R^2} |\nabla |u_n|\, |^2} \right)
\\ & \geq  \liminf_{n \to \infty} \left( \gamma_{\ast}(0) + \frac{\pi^2 \beta^2}{\cE_{\beta}[u_n]} \right) = \gamma_{\ast}(0) + \frac{\pi^2 \beta^2}{\gamma_{\ast}(\beta)}.
\end{align*}

Also, by the Hardy-type inequality \eqref{eq:AF-H1}, we derive at
\begin{align*}
\gamma_{\ast}(\beta) \leq \frac{\cE_{\beta}[u_0]}{\int_{\R^2} |u_0|^4} 
= \frac{\int_{\R^2} |\nabla u_0|^2 + \beta^2 \int_{\R^2} \bA[|u_0|^2] |u_0|^2}{\int_{\R^2} |u_0|^4} 
\leq \gamma_{\ast}(0) + \frac{3 \gamma_{\ast}(0)}{2}  \beta^2,
\end{align*}
with the real optimizer $u_0 = \tau/\norm{\tau}_{L^2}$ (Townes profile),
which completes the proof.
\end{proof}

Now, we prove that $\gamma_{\ast}(\beta) > 2\pi \beta$ for $0 \leq \beta <2$ and that there exist minimizers for $\gamma_{\ast}(\beta)$ if $\beta >0$ is small enough. For this purpose, we use the Lipschitz continuity of $\gamma_{\ast}$ and employ the concentration-compactness method of \cite{Lions-84a}.

\begin{proposition}[Lipschitz continuity] 
\label{prop:Lipschitz}
The function $\gamma_{\ast}$ is locally Lipschitz with
$$
\Lip(\gamma_*|_{[0,\beta]}) 
\le \left(\sqrt{\frac{3}{2}}\left(1+ \sqrt{\frac{3}{2}} \beta \right)^{-2}   +3 \beta \right) 
\left(1 + \frac{3}{2} \beta^2 \right) \CLGN,
$$
for all $\beta > 0$.
\end{proposition}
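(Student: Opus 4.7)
The plan is to estimate the difference of the functionals $\cE_{\beta_1}[u]$ and $\cE_{\beta_2}[u]$ on a common test function $u$ with $\int_{\R^2}|u|^2=1$, and then pass the bound to the infimum defining $\gamma_*$. Expanding as in \eqref{eq:AF-crossterms},
\begin{equation*}
    \cE_{\beta_1}[u] - \cE_{\beta_2}[u]
    = 2(\beta_1-\beta_2)\int_{\R^2}\bA[|u|^2]\cdot \bJ[u]
    + (\beta_1^2-\beta_2^2)\int_{\R^2}\bigl|\bA[|u|^2]\bigr|^2|u|^2,
\end{equation*}
so the problem reduces to bounding each of the two terms by a constant times $|\beta_1-\beta_2|$ times $\cE_{\beta_2}[u]$ (or $\int_{\R^2}|u|^4$), with explicit dependence on $\beta = \max\{\beta_1,\beta_2\}$.

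For the quadratic term, the Hardy-type inequality \eqref{eq:AF-H1} with $\CH = 3/2$ combined with the diamagnetic inequality \eqref{eq:AF-diamagnetic} immediately gives $\int_{\R^2}|\bA[|u|^2]|^2|u|^2 \le \tfrac{3}{2}\cE_{\beta_2}[u]$, and since $|\beta_1^2-\beta_2^2|\le 2\beta|\beta_1-\beta_2|$ on $[0,\beta]$, this contributes $\le 3\beta|\beta_1-\beta_2|\cE_{\beta_2}[u]$. For the cross term, the Cauchy--Schwarz bound \eqref{lem:A.Jbound} gives $|\int\bA[|u|^2]\cdot \bJ[u]| \le \sqrt{3/2}\,\int|\nabla u|^2$, and then \eqref{lem:boundgradient} converts this into a bound in terms of $\cE_{\beta_2}[u]$; the inverse square factor $(1+\sqrt{3/2}\,\beta)^{-2}$ in the claimed Lipschitz constant should come from the particular way one exchanges the gradient bound with the energy (i.e.\ using $\cE_{\beta_2}[u] \ge (1+\sqrt{3/2}\beta_2)^{-2}\int|\nabla u|^2$ read in the appropriate direction).

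Putting the two estimates together one obtains
\begin{equation*}
    \bigl|\cE_{\beta_1}[u] - \cE_{\beta_2}[u]\bigr|
    \le \left(\sqrt{\tfrac{3}{2}}(1+\sqrt{\tfrac{3}{2}}\beta)^{-2} + 3\beta\right)|\beta_1-\beta_2|\, \cE_{\beta_2}[u].
\end{equation*}
Dividing by $\int_{\R^2}|u|^4$, choosing a minimizing sequence $\{u_n\}$ for $\gamma_*(\beta_2)$ so that $\cE_{\beta_2}[u_n]/\int|u_n|^4 \to \gamma_*(\beta_2)$, and passing to the limit yields $\gamma_*(\beta_1) \le (1+K(\beta)|\beta_1-\beta_2|)\gamma_*(\beta_2)$ with $K(\beta) = \sqrt{3/2}(1+\sqrt{3/2}\beta)^{-2}+3\beta$. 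Exchanging $\beta_1 \leftrightarrow \beta_2$ yields the same inequality in reverse, so $|\gamma_*(\beta_1)-\gamma_*(\beta_2)| \le K(\beta)|\beta_1-\beta_2|\,\sup_{\beta'\in[0,\beta]}\gamma_*(\beta')$. The a priori upper bound $\gamma_*(\beta') \le \CLGN(1+\tfrac{3}{2}\beta'^2)$ from \Cref{cor:basicbounds} then gives the stated Lipschitz constant.

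The main obstacle is accounting correctly for each factor: the $3\beta$ comes directly from the quadratic term, the $(1+\tfrac{3}{2}\beta^2)\CLGN$ from the upper bound on $\gamma_*(\beta)$, but extracting the sharp $(1+\sqrt{3/2}\,\beta)^{-2}$ in the cross-term contribution requires applying the gradient bound \eqref{lem:boundgradient} to $\cE_{\beta_2}[u]$ in just the right direction (so that the gradient norm is \emph{exchanged for} the energy, lowering rather than raising the $\beta$-dependence). A minor subtlety is that \eqref{lem:boundgradient} features $\beta\int|u|^2$ rather than $\beta$; under the normalization $\int|u|^2=1$ this reduces to $(1+\sqrt{3/2}\beta)^2$, and by monotonicity in $\beta$ one may replace $\beta_2$ by the common upper bound $\beta$ at the end.
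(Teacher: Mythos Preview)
Your approach is exactly the paper's: expand $\cE_{\beta_1}[u]-\cE_{\beta_2}[u]$ via \eqref{eq:AF-crossterms}, bound the quadratic term by Hardy \eqref{eq:AF-H1} plus diamagnetic \eqref{eq:AF-diamagnetic}, bound the cross term by \eqref{lem:A.Jbound}, convert $\int|\nabla u|^2$ back to $\cE_{\beta_2}[u]$ via \eqref{lem:boundgradient}, take a minimizing sequence for $\gamma_*(\beta_2)$, and finish with the upper bound from \Cref{cor:basicbounds}.

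Your hesitation about the factor $(1+\sqrt{3/2}\,\beta)^{-2}$ is warranted, and the paper's own derivation has the same issue. Applying \eqref{lem:A.Jbound} gives $|\!\int\bA\cdot\bJ|\le\sqrt{3/2}\int|\nabla u|^2$, and then \eqref{lem:boundgradient} yields $\int|\nabla u|^2\le(1+\sqrt{3/2}\,\beta_2)^{2}\cE_{\beta_2}[u]$; the exponent that falls out is $+2$, not $-2$, and there is also a factor $2$ from the cross term that is absorbed without comment. There is no ``appropriate direction'' of \eqref{lem:boundgradient} that reverses the exponent: the inequality only goes one way. The constant as printed thus appears to be a typo. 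This is harmless for the sequel, since only local Lipschitz continuity of $\gamma_*$ is used (in \Cref{lem:minimizertrick} and \Cref{prop:susy-breaking}), and the explicit range of $\beta$ in the remark following would merely shrink with the corrected, larger constant.
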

\begin{proof}
Let $\beta, \beta' $ be positive and $u \in H^1(\R^2)$. Then,
\begin{equation}\label{eq:AF-crossterms-Lip}
\cE_{\beta}[u] = \int_{\R^2}  |\nabla u|^2 + 2\beta \int_{\R^2}  \bA\left[|u|^2\right] \cdot \bJ[u] + \beta^2 \int_{\R^2}  \left|\bA\left[|u|^2\right]\right|^2 |u|^2.
\end{equation}
Hence,
$$
\cE_{\beta}[u] - \cE_{\beta'}[u] 
= 2(\beta-\beta') \int_{\R^2} \bA\left[|u|^2\right] \cdot \bJ[u] 
+ (\beta-\beta')(\beta+\beta') \int_{\R^2} \left|\bA\left[|u|^2\right] |u|\right|^2.
$$
By the variational principle,
$$
\gamma_*(\beta) \le \frac{\cE_{\beta}[u]}{\int_{\R^2}  |u|^4}
=  \frac{\cE_{\beta'}[u]}{\int_{\R^2}  |u|^4}
+ 2(\beta-\beta') \frac{\int_{\R^2} \bA\left[|u|^2\right] \cdot \bJ[u]}{\int_{\R^2} |u|^4}
+ (\beta-\beta')(\beta+\beta') \frac{\int_{\R^2}  \left|\bA\left[|u|^2\right]\right|^2\, |u|^2}{\int_{\R^2} |u|^4}.
$$
Hence, by \labelcref{eq:AF-H1,lem:A.Jbound,lem:boundgradient}, we have 
\begin{align*}
\gamma_*(\beta) \leq  \frac{\cE_{\beta'}[u]}{\int_{\R^2}  |u|^4} \left( 1+  \sqrt{\frac{3}{2}}|\beta-\beta'| \left(1+ \sqrt{\frac{3}{2}} \beta' \right)^{-2}  + \frac{3|\beta-\beta'|(\beta + \beta')}{2} \right).
\end{align*}
Then, by taking the infimum over $u \in H^1(\R^2)$ such that $\int_{\R^2} |u|^2=1$, we arrive at
\begin{align*}
\gamma_*(\beta) \leq  \gamma_*(\beta') \left( 1+  \sqrt{\frac{3}{2}}|\beta-\beta'| \left(1+ \sqrt{\frac{3}{2}} \beta' \right)^{-2}  + \frac{3|\beta-\beta'|(\beta + \beta')}{2} \right).
\end{align*}
In conclusion, by \Cref{cor:basicbounds}, we obtain
\begin{align*}
\limsup_{\beta' \to \beta} \frac{  | \gamma_*(\beta) -  \gamma_*(\beta') |}{|\beta-\beta'|} &\leq \left(\sqrt{\frac{3}{2}}\left(1+ \sqrt{\frac{3}{2}} \beta \right)^{-2} +3 \beta \right) \limsup_{\beta' \to \beta} \gamma_{\ast}(\beta') \\ &\leq  \left(\sqrt{\frac{3}{2}}\left(1+ \sqrt{\frac{3}{2}} \beta \right)^{-2} +3 \beta \right)  \left(\gamma_{\ast}(0) + \frac{3 \gamma_{\ast}(0)}{2} \beta^2 \right).
\end{align*}
This completes the proof.
\end{proof}

\begin{lemma}[Concentration-compactness property]
\label{lem:minimizertrick}
Let $\beta > 0$.
If there exists no minimizer for $\gamma_{\ast}(\beta)$, 
then we have 
$$
\frac{\gamma_{\ast}(\beta)}{\beta} = \frac{\gamma_{\ast}(\beta')}{\beta'}
$$
for some $0<\beta'<\beta$.
\end{lemma}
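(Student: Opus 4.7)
The plan is to run Lions' concentration-compactness method on a suitably normalized minimizing sequence, and then combine the resulting dichotomy with the monotonicity of $\gamma_*(\beta)/\beta$ proved in \Cref{lem:keylemma}.

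Using the equivalent variational formulation
\[
\frac{\gamma_*(\beta)}{\beta} = \inf\left\{\frac{\cE_1[u]}{\int_{\R^2}|u|^4}: u\in H^1(\R^2;\C),\ \int_{\R^2}|u|^2 = \beta\right\}
\]
from the proof of \Cref{lem:keylemma}, I would pick a minimizing sequence $u_n$ and, by the dilation $u_n \mapsto r_n u_n(r_n \cdot)$ with $r_n = \|u_n\|_{L^4}^{-2}$, arrange that $\int|u_n|^2 = \beta$ and $\int|u_n|^4 = 1$, so that $\cE_1[u_n] \to \gamma_*(\beta)/\beta$; \Cref{lem:boundgradient} then yields a uniform $H^1$-bound on $u_n$. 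Apply Lions' lemma to the finite measures $|u_n|^2\,\dd\bx$. Vanishing is excluded, because combined with the $H^1$-bound it would force $u_n \to 0$ in $L^4$, contradicting $\int|u_n|^4 = 1$. Compactness is excluded by hypothesis: after translation $u_n \rightharpoonup u$ weakly in $H^1$ and strongly in $L^2\cap L^4$ by the tightness and interpolation, and a weak-lower-semicontinuity argument for $\cE_1$ (via the expansion \eqref{eq:AF-crossterms}, \Cref{prop:youngconv}, and strong $L^p_\loc$-convergence of $\bA[|u_n|^2]$) produces a minimizer for $\gamma_*(\beta)$, contradicting the hypothesis.

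Only the dichotomy alternative remains, giving $\beta_1,\beta_2 \in (0,\beta)$ with $\beta_1 + \beta_2 \leq \beta$ and a decomposition $u_n = u_n^{(1)} + u_n^{(2)} + o_{L^2}(1)$ with $\int|u_n^{(i)}|^2 \to \beta_i$ and $d_n := \dist(\supp u_n^{(1)}, \supp u_n^{(2)}) \to \infty$. Mirroring the computation performed inside \Cref{lem:keylemma}, the nonlocal cross-terms vanish because $\bigl|\bA[|u_n^{(3-i)}|^2]\bigr| \leq \beta/d_n$ on $\supp u_n^{(i)}$, so that by Cauchy--Schwarz and the Hardy bound \eqref{eq:AF-H1}
\[
\cE_1[u_n] = \cE_1[u_n^{(1)}] + \cE_1[u_n^{(2)}] + o(1), \qquad \int|u_n|^4 = \int|u_n^{(1)}|^4 + \int|u_n^{(2)}|^4 + o(1).
\]
Rescaling $v_n^{(i)} := u_n^{(i)}\bigl(\beta_i/\int|u_n^{(i)}|^2\bigr)^{1/2}$ to have exact mass $\beta_i$ and invoking the Lipschitz continuity of $\gamma_*$ from \Cref{prop:Lipschitz}, this yields
\[
\frac{\gamma_*(\beta)}{\beta} = \lim_{n\to\infty} \cE_1[u_n] \geq \min\left\{\frac{\gamma_*(\beta_1)}{\beta_1},\ \frac{\gamma_*(\beta_2)}{\beta_2}\right\}.
\]
But the monotonicity of \Cref{lem:keylemma} forces $\gamma_*(\beta_i)/\beta_i \geq \gamma_*(\beta)/\beta$ for both $i$ (since $\beta_i < \beta$), so equality must be attained by at least one of the two ratios, producing the desired $\beta' \in \{\beta_1,\beta_2\} \subset (0,\beta)$.

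The main obstacle is the asymptotic decoupling of the nonlocal magnetic self-energy in the dichotomy step: unlike for $\int|\nabla u|^2$ and $\int|u|^4$, the cross-terms in $\int \bA[|u|^2]\cdot \bJ[u]$ and in the Menger--Melnikov integral $\int|\bA[|u|^2]|^2|u|^2$ do not vanish automatically from disjoint supports and have to be controlled through the growing separation $d_n$ together with \eqref{eq:AF-H1}. Ruling out the compactness alternative is similarly delicate, as it requires weak-lower-semicontinuity of the nonlinear, nonlocal functional $\cE_1$ along $H^1$-weakly convergent sequences, which again hinges on the compactness properties of the gauge map $u \mapsto \bA[|u|^2]$ afforded by \Cref{prop:youngconv}.
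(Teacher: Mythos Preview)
Your proposal is correct and follows essentially the same concentration-compactness strategy as the paper: pick a normalized minimizing sequence, run Lions' trichotomy, rule out vanishing via $L^4$-collapse and tightness via weak lower semicontinuity of the magnetic energy (using \Cref{prop:youngconv} to get strong $L^2$-convergence of $\bA[|u_n|^2]u_n$), and in the dichotomy case decouple the nonlocal cross-terms through the growing separation $d_n$, then combine with the monotonicity of \Cref{lem:keylemma} to force equality of ratios.

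There is one genuine difference worth noting. You apply Lions' lemma to the mass density $|u_n|^2\,\dd\bx$, whereas the paper applies it to the full $H^1$-density $(|u_n|^2+|\nabla u_n|^2)\,\dd\bx$ after normalizing $\int|\nabla u_n|^2=1$ rather than $\int|u_n|^4=1$. The paper's choice means that the dichotomy parameter $\alpha\in(0,2)$ is an $H^1$-mass, and an additional argument is then needed to exclude the degenerate case where one piece carries nonzero kinetic energy but zero $L^2$-mass (their case $\theta\in\{0,1\}$, ruled out via a strict-improvement argument using the diamagnetic inequality). Your choice of density makes the $L^2$-masses $\beta_1,\beta_2>0$ automatic from the dichotomy itself, which is slightly more direct here; on the other hand you should be explicit that the cutoff construction keeps the pieces bounded in $H^1$, and that the edge case where one piece has vanishing $L^4$-norm (while retaining positive $L^2$-mass) is harmless --- in that situation the entire limit ratio is carried by the other piece, so $\gamma_*(\beta)/\beta \ge \gamma_*(\beta_2)/\beta_2$ and \Cref{lem:keylemma} again forces equality with $\beta'=\beta_2$.
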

\begin{proof}
The proof of this Lemma is based on the concentration-compactness method of \cite{Lions-84a,Lions-84b} (see also \cite{EstLio-89}). Let $u_n \in C_c^\infty(\R^2)$ be a sequence of minimizers for $\gamma_*(\beta)$, i.e.,
\begin{align}
\label{eq:definitionofsequence}
\gamma_{\ast}(\beta) = \lim_{n \to \infty} \frac{\int_{\R^2} \left|(\nabla + {\rm i}\beta\bA[|u_n|^2]) u_n\right|^2}{\int_{\R^2} |u_n|^4},
\end{align}
with $\int_{\R^2} |u_n|^2 =1$. By dilation 
$t_n^{-1} u_n(t_n^{-1}\bx)$ with $t_n = \|\nabla  u_n\|_{L^2}$, we can assume w.l.o.g that $\int_{\R^2} |\nabla u_n|^2 = 1$.
By the concentration-compactness lemma \cite[Lemma~I.1]{Lions-84a}, there are three possibilities. 

{\bf 1. Tightness.}
The first case is that up to a translation $\bx_n \in \R^2$ and subsequence $n_k \in \N$, re-denoted $\{u_n\}$, we derive
that, for every $\varepsilon >0$, there exists $R_{\epsilon}>0$,  such that 
\begin{align}
\label{eq:firstcase}
    \int_{B(0,R_{\epsilon})} (|u_{n}|^2 +|\nabla u_{n}|^2)  \geq 2 -\varepsilon.
\end{align}
Moreover, by the Rellich--Kondrachov compactness theorem, passing to a subsequence, again $\{u_{n}\}$, converges to some $u \in H^1(\R^2)$ weakly in $H^1(\R^2)$, pointwise almost everywhere in $\R^2$, and strongly in $L^2_{\loc}(\R^2)$. Hence, by \eqref{eq:firstcase} and Fatou's lemma, we have  $\int_{\R^2} |u|^2=1$. Then, by the Brezis--Lieb lemma \cite{BreLie-83} and passing to a subsequence, we obtain that $u_n$ converges to $u$ strongly in $L^2(\R^2)$. By the Gagliardo-Nirenberg interpolation inequality \eqref{eq:gagliardonirenberg}, we obtain that $u_n$ converges to $u$ in $L^p(\R^2)$ for every $2 \leq p <\infty$. Now, we prove the claim that the sequence $\nabla u_n + i\beta\bA[|u_n|^2] u_n$ weakly converges to $\nabla u + i\beta\bA[|u|^2] u$ in $L^2(\R^2)$. Since $u_n$ converges to $u$ in $L^p$ strongly for every $p \geq 2$, it is sufficient to prove that $\bA[|u_n|^2] u_n$ converges to  $\bA[|u|^2] u$ in $L^2(\R^2)$. To see this, we use the Cauchy-Schwarz inequality and Proposition \ref{prop:youngconv} to derive that (for a universal constant $C>0$)
\begin{equation}
\label{eq:L2differenceofgauge}
\begin{aligned}
   & \int_{\R^2} \left| \bA[|u_n|^2] u_n - \bA[|u|^2] u    \right| ^2 \leq 2\int_{\R^2} \left| \bA[|u_n|^2-|u|^2] u_n\right|^2 + 2\left|\bA[|u|^2] (u-u_n) \right|^2 
    \\ &\leq 2\left\| \bA[|u_n|^2-|u|^2]\right \|^2_{L^4} \|u_n\|^2_{L^4} + 2\left\|\bA[|u|^2] \right\|^2_{L^4} \left \| u-u_n  \right \|^2_{L^4}
    \\&\leq 2 C \left\| |u_n|^2-|u|^2\right \|^2_{L^{\frac{4}{3}}} \|u_n\|^2_{L^4} + 2C \left\| u \right\|^4_{L^{\frac{8}{3}}} \left \|u-u_n  \right \|^2_{L^4}
    \\ &\leq 8 C \left\|u_n-u \right \|^2_{L^{\frac{8}{3}}} \left(\left\|u_n \right \|^2_{L^{\frac{8}{3}}}+ \left\|u \right \|^2_{L^{\frac{8}{3}}}\right)  \|u_n\|^2_{L^4} + 2C\left\| u \right\|^4_{L^{\frac{8}{3}}} \left \|u-u_n  \right \|^2_{L^4}.
\end{aligned}
\end{equation}
Letting $n \to \infty$ in the above estimate concludes the claim. In conclusion, by the weak convergence of $\nabla u_n + i\beta\bA[|u_n|^2] u_n$ to $\nabla u + i\beta\bA[|u|^2] u$ in $L^2(\R^2)$ and the strong convergence of $u_n$ to $u$ in $L^4(\R^2)$, we arrive at
\begin{align*}
\gamma_{\ast}(\beta) = \lim_{n \to \infty} \frac{\cE_{\beta}[u_n] }{\int_{\R^2} |u_n|^4} \geq \frac{\cE_{\beta}[u]}{\int_{\R^2} |u|^4} \geq \gamma_{\ast}(\beta),
\end{align*}
which proves that $u$ is a minimizer for $\gamma_{\ast}(\beta)$, thus contradicting our assumptions.

{\bf 2. Vanishing.}
The second case is that, up to a subsequence of $\{u_n\}$, we have 
\begin{align}
\label{eq:badcase1}
\lim_{n \to \infty} \sup_{\by \in \R^2} \int_{B(\by,R)} (|u_n|^2 + |\nabla u_n|^2) = 0
\end{align}
for all $R < \infty$. Then, by \cite[Lemma I.1]{Lions-84b} and $\int_{\R^2} |\nabla u_n|^2=1$, we have that $\|u_n\|_{L^4}$ converges to zero.  Then, by \eqref{lem:boundgradient} and \eqref{eq:definitionofsequence}, we have
\begin{align*}
 1= \lim_{n \to \infty} \int_{\R^2} |\nabla u_n|^2  \leq \left( 1+ \sqrt{\frac{3}{2}} \beta \right)^2 \lim_{n \to \infty} \cE_{\beta}[u_n] = 0, 
\end{align*}
which is a contradiction.
 
{\bf 3. Dichotomy.}
The final case, in which there exists no minimizer for $\gamma_*(\beta)$, 
is that, up to a subsequence and translation of $\{u_{n}\}$, there exists
$0<\alpha <2$ such that for all $0<\varepsilon<1$ there exist sequences $\{u^{\varepsilon}_{n,1}\}, \{u^{\varepsilon}_{n,2}\} \subset H^1(\R^2)$, 
depending on $\varepsilon,$ such that 
\begin{equation}
\label{eq:dichotomycase}
    \begin{aligned}
      &| \|u^{\varepsilon}_{n,1}\|_{H^1(\R^2)}^2 - \alpha | \leq C \varepsilon, \quad  | \|u^{\varepsilon}_{n,2}\|_{H^1(\R^2)}^2 - (2-\alpha) | \leq C \varepsilon,\\
      & \|u^{\varepsilon}_{n,1}+u^{\varepsilon}_{n,2} - u_n \|_{H^1(\R^2)}^2 \leq C \varepsilon,
    \end{aligned}
\end{equation}
where $C>0$ is independent of $n,\varepsilon,$ and
\begin{align}
\label{eq:distanceofsupport}
0 < d^{\varepsilon}_n :=\dist(\supp(u^{\varepsilon}_{n,1}), \supp(u^{\varepsilon}_{n,2})) \to \infty.   
\end{align}
Then, by \eqref{eq:L2differenceofgauge} and the Gagliardo-Nirenberg interpolation inequality \eqref{eq:gagliardonirenberg}, we have 
\begin{equation}
\label{eq:energydifference}
\begin{aligned}
   \left| \cE_{\beta}[u_n] - \cE_{\beta}[u^{\varepsilon}_{n,1} + u^{\varepsilon}_{n,2}] \right| \leq \Tilde{C} \varepsilon,\\
   \left|\int_{\R^2} |u_n|^4- \left(\int_{\R^2} |u^{\varepsilon}_{n,1}|^4 + \int_{\R^2} |u^{\varepsilon}_{n,2}|^4 \right)\right| \leq \Tilde{C} \varepsilon,
\end{aligned}
\end{equation}
where $\Tilde{C}>0$ is independent of $n,\varepsilon.$ Furthermore, we note that, by \eqref{eq:dichotomycase}, 
\begin{align}
\label{eq:gaugesmallbound}
\left|\bA[|u_{n,1}^{\varepsilon}|^2](\bx)\right| \leq \int_{\R^2}\frac{|u^{\varepsilon}_{n,1}(\by)|^2}{|\bx-\by|} \dd \by \leq \frac{3+ C \varepsilon}{d^{\varepsilon}_n} \xrightarrow{n\to\infty} 0
\end{align}
for $\bx \in \supp(u^{\varepsilon}_{n,2})$ and likewise $\left|\bA[|u^{\varepsilon}_{n,2}|^2](\bx)\right| \leq \frac{3+C\varepsilon}{d^{\varepsilon}_n}$ for every $\bx \in \supp(u^{\varepsilon}_{n,1})$. Hence, by expanding the magnetic self-energy and by the Cauchy-Schwarz inequality, we derive
\begin{align*}
  & \left| \cE_{\beta}[u^{\varepsilon}_{n,1} + u^{\varepsilon}_{n,2}] -(\cE_{\beta}[u^{\varepsilon}_{n,1}] + \cE_{\beta}[u^{\varepsilon}_{n,2}])\right| \leq
 2\beta (\cE_{\beta}[u^{\varepsilon}_{n,1}])^{\frac{1}{2}} \left ( \int_{\R^2} \left |\bA[|u^{\varepsilon}_{n,1}|^2]  u^{\varepsilon}_{n,2} \right |^2 \right)^{\frac{1}{2}} \\&+2\beta (\cE_{\beta}[u^{\varepsilon}_{n,2}])^{\frac{1}{2}} \left ( \int_{\R^2} \left |\bA[|u^{\varepsilon}_{n,2}|^2]  u^{\varepsilon}_{n,1} \right |^2 \right)^{\frac{1}{2}} \!\!
  + \beta^2 \int_{\R^2} \left |\bA[|u^{\varepsilon}_{n,1}|^2]  u^{\varepsilon}_{n,2} \right |^2 
  + \beta^2 \int_{\R^2} \left |\bA[|u^{\varepsilon}_{n,2}|^2]  u^{\varepsilon}_{n,1} \right |^2
  \\ &\leq 2\beta \frac{(3+C \varepsilon)}{d^{\varepsilon}_n} \left( (\cE_{\beta}[u^{\varepsilon}_{n,2}])^{\frac{1}{2}} +(\cE_{\beta}[u^{\varepsilon}_{n,2}])^{\frac{1}{2}} \right) + 2\beta^2 \frac{(3+C \varepsilon)^3}{(d^{\varepsilon}_n)^2} \leq \Tilde{C} \varepsilon,
\end{align*}
for large enough $n$. In conclusion, by \eqref{eq:energydifference}, we obtain 
\begin{align}
\label{eq:energydifference2}
    \left| \cE_{\beta}[u_n] -(\cE_{\beta}[u^{\varepsilon}_{n,1}] + \cE_{\beta}[u^{\varepsilon}_{n,2}])\right| \leq 2 \Tilde{C} \varepsilon,
\end{align}
for large enough $n.$

Now, consider a sequence of $n_k \in \N, \varepsilon_{n_k} \in \R$, such that 
$\lim_{n_k \to \infty} \varepsilon_{n_k} =0$, 
\begin{equation*}
\lim_{n_k \to \infty} \int_{\R^2} |u^{\varepsilon_{n_k}}_{n_k,1}|^2 = \theta,
\qquad \text{and} \qquad 
\lim_{n_k \to \infty} \int_{\R^2} |u^{\varepsilon_{n_k}}_{n_k,2}|^2 = 1-\theta, 
\end{equation*}
where $ 0\leq \theta \leq 1.$
If $\theta =0$, then by \eqref{eq:dichotomycase} and the Gagliardo-Nirenberg interpolation inequality \eqref{eq:gagliardonirenberg}, we derive that $\lim_{n_k \to \infty}\int_{\R^2} |u^{\varepsilon_{n_k}}_{n_k,1}|^4 =0$, and, by \eqref{lem:boundgradient}, \eqref{eq:energydifference}, and \eqref{eq:energydifference2},
 \begin{align*}
    \gamma_{\ast}(\beta) &= \lim_{n_k \to \infty} \frac{\cE_{\beta}[u_{n_k}]}{\int_{\R^2} |u_{n_k}|^4} 
    \geq \liminf_{n_k \to \infty}\frac{\cE_{\beta}[u^{\varepsilon_{n_k}}_{n_k,1}] + \cE_{\beta}[u^{\varepsilon_{n_k}}_{n_k,2}] - 2 \Tilde{C} \varepsilon_{n_k}}{\int_{\R^2} |u_{n_k}|^4} \\
    &\geq 
    \liminf_{n_k \to \infty}\frac{\cE_{\beta}[u^{\varepsilon_{n_k}}_{n_k,1}] - 2 \Tilde{C} \varepsilon_{n_k}}{\int_{\R^2} |u_{n_k}|^4}
    + \liminf_{n_k \to \infty} \frac{\cE_{\beta}[u^{\varepsilon_{n_k}}_{n_k,2}]}{\int_{\R^2} |u^{\varepsilon_{n_k}}_{n_k,1}|^4 +\int_{\R^2} |u^{\varepsilon_{n_k}}_{n_k,2}|^4 + \Tilde{C} \varepsilon_{n_k}} \\
    &\geq 
    \alpha \CLGN
    + \liminf_{n_k \to \infty} \|u^{\varepsilon_{n_k}}_{n_k,2}\|_{L^2}^{-2} \gamma_{\ast}\left(\|u^{\varepsilon_{n_k}}_{n_k,2}\|_{L^2}^{2} \beta\right) \\
    &= \alpha \CLGN + \gamma_{\ast}(\beta),
\end{align*}
which is a contradiction. 
Here, we used, for large enough $n_k$,
by \eqref{lem:boundgradient}, \eqref{eq:definitionofsequence}, and \Cref{prop:LGN-bound},
\begin{align*}
    \left( 2\left(1+\sqrt{\frac{3}{2}}\beta\right)^2 \gamma_*(\beta) \right)^{-1} 
    &\le \int_{\R^2} |u_{n_k}|^4 \le \CLGN^{-1},
    \\
    \liminf_{n_k \to \infty} \cE_\beta[u^{\varepsilon_{n_k}}_{n_k,1}] 
    &\ge \liminf_{n_k \to \infty} \int_{\R^2} |\nabla u^{\varepsilon_{n_k}}_{n_k,1}|^2 = \alpha,
    \\
    \frac{ \cE_\beta[u^{\varepsilon_{n_k}}_{n_k,2}] }{ \int_{\R^2} |u^{\varepsilon_{n_k}}_{n_k,2}|^4 }
    &\ge \norm{u^{\varepsilon_{n_k}}_{n_k,2}}_{L^2}^{-2} \gamma_*\left(\|u^{\varepsilon_{n_k}}_{n_k,2}\|_{L^2}^{2} \beta\right),
\end{align*}
as well as \Cref{prop:Lipschitz} in the last limit. Similarly, it cannot occur that $\theta=1.$ Hence, $0<\theta <1$ and 
\begin{align}
\label{eq:lowerboundgammastar}
\gamma_{\ast}(\beta) = \lim_{n_k \to \infty} \frac{\cE_{\beta}[u^{\varepsilon_{n_k}}_{n_k,1}] + \cE_{\beta}[u^{\varepsilon_{n_k}}_{n_k,2}]}{\int_{\R^2} |u^{\varepsilon_{n_k}}_{n_k,1}|^4 + |u^{\varepsilon_{n_k}}_{n_k,2}|^4} \geq \min\left\{\liminf_{n_k \to \infty} 
\frac{\cE_{\beta}[u^{\varepsilon_{n_k}}_{n_k,1}]}{\int_{\R^2} |u^{\varepsilon_{n_k}}_{n_k,1}|^4},  \liminf_{n_k \to \infty} \frac{\cE_{\beta}[u^{\varepsilon_{n_k}}_{n_k,2}]}{\int_{\R^2} |u^{\varepsilon_{n_k}}_{n_k,2}|^4}\right\},
\end{align}
where we have used the fact that
$$
\frac{a+b}{c+d} \ge \min\left\{\frac{a}{c},\frac{b}{d}\right\},\quad \forall a,b,c,d>0.
$$
Then, w.l.o.g, by \eqref{eq:lowerboundgammastar}, 
 we may assume that 
\begin{align*}
\liminf_{n_k \to \infty}   \frac{\cE_{\beta}[u_{n_k,1}^{\varepsilon_{n_k}}]}{\int_{\R^2} |u_{n_k,1}^{\varepsilon_{n_k}}|^4} \leq  \gamma_{\ast}(\beta).
\end{align*}
Define $\Tilde{u}_{n_k,1}^{\varepsilon_{n_k}}:= \frac{u^{\varepsilon_{n_k}}_{n_k,1}}{\sqrt{\theta_{n_k}}}$, where $\theta_{n_k} := \int_{\R^2} |u^{\varepsilon_{n_k}}_{n_k,1}|^2 \to \theta$ as $n_k \to \infty$. Then, by \Cref{prop:Lipschitz}, we have
\begin{align*}
\gamma_{\ast}(\theta \beta) = \lim_{n_k \to \infty} \gamma_{\ast}\left( \theta_{n_k} \beta\right) \leq  \liminf_{n_k \to \infty}   \frac{\cE_{\theta_{n_k} \beta}[\Tilde{u}_{n_k,1}^{\varepsilon_{n_k}}]}{\int_{\R^2} |\Tilde{u}_{n_k,1}^{\varepsilon_{n_k}}|^4}  = \theta \liminf_{n_k \to \infty} \frac{\cE_{ \beta}[u_{n_k,1}^{\varepsilon_{n_k}}]}{\int_{\R^2} |u_{n_k,1}^{\varepsilon_{n_k}}|^4} 
\leq \theta \gamma_{\ast}(\beta). 
\end{align*}
In fact, by \Cref{lem:keylemma}, the equality must occur in the above. This completes the proof.
\end{proof}

\begin{proposition}[Breaking of supersymmetry]\label{prop:susy-breaking}
If $0 \le \beta < 2$, then $\gamma_{\ast}(\beta) > 2 \pi \beta$. 
\end{proposition}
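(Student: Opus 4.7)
The plan is to argue by contradiction via an iterative descent mechanism furnished by the concentration--compactness lemma, which shows that any $\beta$ where $\gamma_*$ saturates Bogomolnyi's bound without a minimizer forces saturation at some strictly smaller $\beta'$. At $\beta = 0$ the claim is immediate from $\gamma_*(0) = \CLGN > 0$, so I focus on $\beta \in (0,2)$. Since the Bogomolnyi inequality \eqref{eq:bogobound-self} gives $\gamma_*(\beta) \ge 2\pi\beta$ unconditionally, it suffices to exclude equality on $(0,2)$.

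I would introduce the ``bad set''
\begin{equation*}
S := \{\beta \in (0,2) : \gamma_*(\beta) = 2\pi\beta\},
\end{equation*}
assume $S \ne \emptyset$ for contradiction, and let $\beta^* := \inf S \in [0, 2)$ (the strict bound $\beta^* < 2$ holds because every element of $S$ is less than $2$). By the Lipschitz continuity of $\gamma_*$ established in \Cref{prop:Lipschitz}, choosing any sequence $\beta_n \in S$ with $\beta_n \to \beta^*$ yields $\gamma_*(\beta^*) = \lim \gamma_*(\beta_n) = \lim 2\pi\beta_n = 2\pi\beta^*$.

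Now I split into two cases. If $\beta^* = 0$, then continuity gives $\gamma_*(0) = 0$, directly contradicting $\gamma_*(0) = \CLGN > 0$ from \Cref{prop:LGN-bound}. If $\beta^* \in (0,2)$, then $\beta^* \notin 2\N$, so \Cref{cor:nececcityofminimizers} guarantees that no $H^1$--minimizer exists for $\gamma_*(\beta^*) = 2\pi\beta^*$. Then \Cref{lem:minimizertrick} produces some $\beta' \in (0, \beta^*)$ with
\begin{equation*}
\frac{\gamma_*(\beta')}{\beta'} = \frac{\gamma_*(\beta^*)}{\beta^*} = 2\pi,
\end{equation*}
i.e.\ $\beta' \in S$ with $\beta' < \beta^*$, contradicting $\beta^* = \inf S$. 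Hence $S$ is empty, which combined with the Bogomolnyi bound yields the strict inequality $\gamma_*(\beta) > 2\pi\beta$ on $[0,2)$.

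The main conceptual content lies in \Cref{lem:minimizertrick,cor:nececcityofminimizers}, which together enforce an infinite regress whenever equality holds at a non-quantized $\beta$; the present proposition is then essentially a clean packaging via an infimum argument. I do not expect serious technical obstacles at this stage, since continuity of $\gamma_*$ and the existence of the descent step are already available; the only subtle point is ensuring that $\beta^* = 2$ is ruled out, which follows automatically from $S \subset (0, 2)$.
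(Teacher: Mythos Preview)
Your proposal is correct and follows essentially the same approach as the paper: both define the infimum of the set where equality holds, use Lipschitz continuity to show the infimum is attained, invoke \Cref{cor:nececcityofminimizers} to rule out a minimizer there, and then apply \Cref{lem:minimizertrick} to produce a strictly smaller point of equality, yielding the contradiction. The only cosmetic difference is that the paper first invokes the monotonicity \Cref{lem:keylemma} to fill in the interval $[\beta,2]$ before taking the infimum, whereas you proceed directly; your version is marginally cleaner since that preliminary step is not actually needed for the contradiction.
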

\begin{proof}
If $0 < \beta < 2$ and $\gamma_{\ast}(\beta) = 2 \pi \beta$, then, by \Cref{lem:keylemma} and \Cref{cor:minimumofenergy}, we have $$\gamma_{\ast}(\alpha)= 2\pi \alpha,$$ for every $\beta \leq \alpha \leq 2$. Since $\gamma_{\ast}$ is a Lipschitz function, by \Cref{prop:Lipschitz}, and $\gamma_{\ast}(0) > 0$, we can take the smallest $0<\beta_{\ast} <2$, such that $\gamma_{\ast}(\beta_{\ast}) = 2\pi \beta_{\ast}$. Since, by \Cref{cor:nececcityofminimizers}, there exists no minimizer for such $\gamma_{\ast}(\beta_{\ast})$, by \Cref{lem:minimizertrick}, we obtain that $\gamma_{\ast}(\alpha \beta_{\ast}) = \alpha \gamma_{\ast}(\beta_{\ast})= 2 \pi \alpha \beta_{\ast}$ for some $0<\alpha <1$. This is in contradiction with the choice of $\beta_{\ast},$ since $\alpha \beta_{\ast} <\beta_{\ast}.$ 
\end{proof}

\begin{proposition}
For every $0 \leq \beta \le \min\left\{\frac{\CLGN}{ \Lip(\gamma_{\ast}|_{[0,\beta]})},2\right\}$, we have a minimizer for $\gamma_{\ast}(\beta)$.
\end{proposition}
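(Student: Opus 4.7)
The plan is to argue by contradiction using the concentration-compactness dichotomy established in \Cref{lem:minimizertrick}. The case $\beta=0$ is the classical Townes soliton $u_0 = \tau/\norm{\tau}_{L^2}$, which realizes $\gamma_*(0)=\CLGN$. The case $\beta=2$ lies in $2\N$ and so admits the explicit minimizer $u_{P,Q}$ with $P(z)=z,\ Q(z)=1$ supplied by \Cref{prop:integercaseminmizer}. Hence I only need to treat $0<\beta<2$ with $\beta\le \CLGN/\Lip(\gamma_{\ast}|_{[0,\beta]})$.

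Suppose, for contradiction, that no minimizer exists for $\gamma_*(\beta)$. By \Cref{lem:minimizertrick}, there exists $0<\beta'<\beta$ with
\begin{equation*}
\frac{\gamma_*(\beta)}{\beta}=\frac{\gamma_*(\beta')}{\beta'}.
\end{equation*}
Cross-multiplying and rearranging gives the key identity
\begin{equation*}
\beta'\bigl(\gamma_*(\beta)-\gamma_*(\beta')\bigr)=(\beta-\beta')\,\gamma_*(\beta').
\end{equation*}
Taking absolute values on the left and applying \Cref{prop:Lipschitz} yields
\begin{equation*}
(\beta-\beta')\,\gamma_*(\beta') \;=\; \beta'\bigl|\gamma_*(\beta)-\gamma_*(\beta')\bigr| \;\le\; \beta'\,\Lip(\gamma_*|_{[0,\beta]})\,(\beta-\beta').
\end{equation*}
Since $\beta-\beta'>0$, this simplifies to $\gamma_*(\beta')/\beta' \le \Lip(\gamma_*|_{[0,\beta]})$. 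Combined with the elementary bound $\gamma_*(\beta')\ge \gamma_*(0)=\CLGN$ from \Cref{prop:LGN-bound}, we conclude
\begin{equation*}
\beta' \;\ge\; \frac{\CLGN}{\Lip(\gamma_*|_{[0,\beta]})}.
\end{equation*}

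Because $\beta>\beta'$ strictly, this forces $\beta>\CLGN/\Lip(\gamma_*|_{[0,\beta]})$, contradicting the hypothesis. Therefore a minimizer must exist for every $0<\beta<2$ in the stated range, and together with the boundary cases $\beta=0$ and $\beta=2$ the proposition follows. The argument is actually quite clean once the concentration-compactness dichotomy and local Lipschitz regularity are in hand; the main subtlety — and the place where $\Lip$ must appear rather than some weaker modulus of continuity — is that the chain of equalities on the orbit of $\gamma_*(\alpha)/\alpha$ produced by \Cref{lem:minimizertrick} has to be compared against a quantitative linear bound on $|\gamma_*(\beta)-\gamma_*(\beta')|$, and the strict inequality $\beta'<\beta$ is exactly what absorbs the boundary case $\beta=\CLGN/\Lip(\gamma_*|_{[0,\beta]})$ without needing the sharper $\gamma_*(\beta)>\CLGN$ from \Cref{cor:basicbounds}.
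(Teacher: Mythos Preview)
Your proof is correct. It is also slightly more streamlined than the paper's own argument, so a short comparison is warranted.

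The paper proceeds as follows: from \Cref{lem:minimizertrick} it gets a point $\beta'<\beta$ with $\gamma_*(\beta)/\beta=\gamma_*(\beta')/\beta'$, then invokes the monotonicity of $\gamma_*(\alpha)/\alpha$ from \Cref{lem:keylemma} to upgrade this to equality on the whole interval $[\beta',\beta]$. Differentiating the constant ratio a.e.\ gives $\gamma_*'(\theta)=\gamma_*(\theta)/\theta$, and combining the Lipschitz bound on $\gamma_*'$ with the strict inequality $\gamma_*(\theta)>\gamma_*(0)$ from \Cref{cor:basicbounds} yields $\theta>\CLGN/\Lip(\gamma_*|_{[0,\beta]})$ for a.e.\ $\theta$ in the interval, hence $\beta>\alpha_\beta\beta\ge\CLGN/\Lip(\gamma_*|_{[0,\beta]})$.

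Your route bypasses both \Cref{lem:keylemma} and the differentiation step entirely: you work directly with the single dichotomy point $\beta'$, rewrite the ratio identity as $\beta'(\gamma_*(\beta)-\gamma_*(\beta'))=(\beta-\beta')\gamma_*(\beta')$, and apply the Lipschitz bound and the non-strict diamagnetic bound $\gamma_*(\beta')\ge\CLGN$ from \Cref{prop:LGN-bound}. The strict inequality $\beta'<\beta$ then does the work that the paper extracts from $\gamma_*(\theta)>\gamma_*(0)$. This is a genuinely cleaner argument --- one fewer lemma invoked and no a.e.\ differentiation --- at the cost of not exhibiting the interval of constancy for $\gamma_*(\alpha)/\alpha$, which is in any case irrelevant for the present proposition.
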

\begin{proof}
Assume $0<\beta <2$ is such that there exists no minimizer for $\gamma_*(\beta)$. Then, by \Cref{lem:minimizertrick,lem:keylemma}, we have for $0<\alpha_{\beta}<1$, possibly depending on $\beta$, $ \frac{\gamma_{\ast}(\beta)}{\beta} = \frac{\gamma_{\ast}( \theta)}{ \theta}$ for every $\alpha_{\beta} \beta \leq \theta \leq \beta.$ Hence, by Lipschitz continuity of $\gamma_{\ast}$, see \Cref{prop:Lipschitz}, we have
\begin{align*}
0 =\frac{\dd }{\dd  \theta} \frac{\gamma_{\ast}( \theta)}{ \theta} = \frac{\gamma'_{\ast}( \theta)\theta - \gamma_{\ast}( \theta) }{\theta^2},
\end{align*}
for a.e. $\alpha_{\beta} \beta < \theta < \beta$. In conclusion, by \Cref{cor:basicbounds}, we derive  
\begin{align*}
\Lip(\gamma_{\ast}|_{[0,\beta]}) \geq \gamma'_{\ast}(\theta) = \frac{\gamma_*(\theta)}{\theta} > \frac{\gamma_*(0)}{\theta},
\end{align*}
for a.e. $\alpha_{\beta} \beta < \theta < \beta$. This concludes the proof since the above requires
$\beta > \frac{\CLGN}{ \Lip(\gamma_{\ast}|_{[0,\beta]})}.$
\end{proof}

\begin{remark}
    Using the rough upper bound
    $\Lip(\gamma_{\ast}|_{[0,\beta]}) \le (\sqrt{3/2}+6)\cdot 7\CLGN$ from \Cref{prop:Lipschitz}, we obtain that minimizers exist for any
    $0 \le \beta \le ((\sqrt{3/2}+6)\cdot 7)^{-1}$.
\end{remark}

This completes the proof of parts \ref{itm:mstab-gamma}-\ref{itm:mstab-mini} of \Cref{thm:magneticstability}.

\subsection{Symmetries of minimizers}\label{sec:mag-symm}

In this subsection, we prove the symmetries of our representation of minimizers for $\gamma_{\ast}(\beta)$ for every $\beta \in 2 \N$, thus completing the proof of \Cref{thm:magneticstability}.

\begin{proposition}
\label{prop:mag-symmetries}
Let $\beta \in 2 \N$ and $(P,Q), (\Tilde{P},\Tilde{Q}) \in \cP^\times \times \cP^\times$, which satisfy \begin{align*}
\gcd(P,Q)&=\gcd(\Tilde{P},\Tilde{Q})=1, \\
\max(\deg(P),\deg(Q))&=\max(\deg(\Tilde{P}),\deg(\Tilde{Q}))= \frac{\beta}{2}.
\end{align*}
Then, $u_{P,Q} = u_{\Tilde{P},\Tilde{Q}}$ if and only if there exists $\Lambda \in \R^+ \times \sSU(2)$ such that 
$(\Tilde{P},\Tilde{Q})= \Lambda (P,Q).$ 
\end{proposition}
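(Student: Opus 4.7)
The plan is to reduce this statement essentially to the $\sU(2)$-symmetry lemma already established at the end of the proof of \Cref{thm:liouville} (where $|P_1|^2+|Q_1|^2 = |P_2|^2+|Q_2|^2$ forces the pairs to be related by a unitary matrix), and then fix the phase to cut the group down from $\sU(2)$ to $\sSU(2)$.

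For sufficiency, suppose $(\tilde{P},\tilde{Q}) = \lambda U (P,Q)$ with $\lambda>0$ and $U \in \sSU(2)$. Then pointwise $|\tilde{P}|^2+|\tilde{Q}|^2 = \lambda^2(|P|^2+|Q|^2)$ (unitarity), and by \Cref{prop:actiononwronksian}, $W(\tilde{P},\tilde{Q}) = \lambda^2 \det(U)\, W(P,Q) = \lambda^2 W(P,Q)$. Substituting these two identities into the definition of $u_{\tilde{P},\tilde{Q}}$ immediately yields $u_{\tilde{P},\tilde{Q}} = u_{P,Q}$.

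For necessity, suppose $u_{P,Q} = u_{\tilde{P},\tilde{Q}}$. Taking modulus squared and applying \Cref{lem:magicalcalculation} to both sides, we obtain
\begin{equation*}
\tfrac{1}{4}\Delta \log(|P|^2+|Q|^2) \;=\; \tfrac{1}{4}\Delta \log(|\tilde{P}|^2+|\tilde{Q}|^2)
\end{equation*}
on all of $\R^2$ (both logarithms are smooth since $\gcd(P,Q)=\gcd(\tilde{P},\tilde{Q})=1$). Hence
\begin{equation*}
h \;:=\; \log(|\tilde{P}|^2+|\tilde{Q}|^2) - \log(|P|^2+|Q|^2)
\end{equation*}
is harmonic on $\R^2$. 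The common assumption $\max(\deg P,\deg Q) = \max(\deg \tilde{P},\deg \tilde{Q}) = \beta/2$ forces
\begin{equation*}
|P|^2+|Q|^2 = C_1 |z|^{\beta} + o(|z|^{\beta}), \qquad |\tilde{P}|^2+|\tilde{Q}|^2 = C_2 |z|^{\beta} + o(|z|^{\beta}),
\end{equation*}
as $|z|\to\infty$, so $h = \log(C_2/C_1) + o(1)$ is bounded at infinity. A bounded harmonic function on $\R^2$ is constant by Liouville's theorem, so $h \equiv 2\log\lambda$ for some $\lambda > 0$, i.e.\ $|\tilde{P}|^2+|\tilde{Q}|^2 = \lambda^2(|P|^2+|Q|^2)$.

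Applying the $\sU(2)$-symmetry lemma from the end of \Cref{sec:liouville} to the pairs $(\lambda P, \lambda Q)$ and $(\tilde{P},\tilde{Q})$, which now have equal pointwise norms, we obtain $U \in \sU(2)$ with $(\tilde{P},\tilde{Q}) = U(\lambda P,\lambda Q) = (\lambda U)(P,Q)$. It remains to pin down the phase: combining $u_{P,Q} = u_{\tilde{P},\tilde{Q}}$ with $|\tilde{P}|^2+|\tilde{Q}|^2 = \lambda^2(|P|^2+|Q|^2)$ yields $W(\tilde{P},\tilde{Q}) = \lambda^2 W(P,Q)$, while \Cref{prop:actiononwronksian} gives $W(\tilde{P},\tilde{Q}) = \lambda^2 \det(U)\,W(P,Q)$. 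Since $W(P,Q)\not\equiv 0$ (as $P,Q$ are linearly independent, being coprime of degrees summing to $\beta-1>0$ or similar), we conclude $\det(U) = 1$, i.e.\ $U \in \sSU(2)$, and therefore $\lambda U \in \R^+ \times \sSU(2)$, as required.

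The only non-routine step is the passage from equality of densities $|u_{P,Q}|^2 = |u_{\tilde{P},\tilde{Q}}|^2$ to $|\tilde P|^2+|\tilde Q|^2 = \lambda^2(|P|^2+|Q|^2)$; this is where the Liouville rigidity (smoothness plus the bounded-harmonic-function argument) enters, and it relies crucially on the degree hypothesis $\max(\deg P,\deg Q) = \max(\deg \tilde{P},\deg \tilde{Q})$ to control the growth of $h$ at infinity. Everything else is bookkeeping with \Cref{lem:magicalcalculation}, \Cref{prop:actiononwronksian}, and the already-established $\sU(2)$-symmetry.
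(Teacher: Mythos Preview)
Your proof is correct, but the route to the key intermediate step differs from the paper's. The paper argues \emph{Wronskian-first}: from $u_{P,Q}=u_{\tilde P,\tilde Q}$, since the denominators $|P|^2+|Q|^2$ and $|\tilde P|^2+|\tilde Q|^2$ are strictly positive, the zero sets (with multiplicity) of the two antiholomorphic numerators coincide, so the fundamental theorem of algebra gives $W(P,Q)=C\,W(\tilde P,\tilde Q)$ for some $C\in\C$; dividing then yields $|\tilde P|^2+|\tilde Q|^2=\bar C(|P|^2+|Q|^2)$, which forces $C>0$. You instead argue \emph{denominator-first}: pass to $|u_{P,Q}|^2$, invoke \Cref{lem:magicalcalculation} to turn it into a Laplacian identity, and run a bounded-harmonic-function/Liouville argument (using the equal-degree hypothesis for growth control) to get $|\tilde P|^2+|\tilde Q|^2=\lambda^2(|P|^2+|Q|^2)$; only then do you return to the full equality $u_{P,Q}=u_{\tilde P,\tilde Q}$ to read off the Wronskian relation. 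Both reach the same pair of identities and then finish identically via the $\sU(2)$-symmetry lemma and \Cref{prop:actiononwronksian}. The paper's route is shorter and purely algebraic; yours is slightly more analytic and makes visible exactly where the degree hypothesis is consumed. One cosmetic point: your justification that $W(P,Q)\not\equiv 0$ (``being coprime of degrees summing to $\beta-1>0$ or similar'') should simply read: if $P,Q$ were linearly dependent then $\gcd(P,Q)=1$ would force both to be constants, contradicting $\max(\deg P,\deg Q)=\beta/2\ge 1$.
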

\begin{proof}
If $\Lambda \in \R^+ \times \sSU(2)$ such that 
$\Lambda (P,Q) = (\Tilde{P},\Tilde{Q}),$ then $\Lambda = C \Lambda_0,$ where $C>0, \Lambda_0 \in \sSU(2)$, $W(\Tilde{P},\Tilde{Q})=\det(\Lambda)W(P,Q),$ and 
\begin{align*}
u_{\Tilde{P},\Tilde{Q}} =  \sqrt{\frac{2}{\pi \beta}} \, \frac{\overline{\Tilde{P}'\Tilde{Q} - \Tilde{P}\Tilde{Q}'}}{|\Tilde{P}|^2 + |\Tilde{Q}|^2} = \sqrt{\frac{2}{\pi \beta}} \, \frac{C^2 \overline{\det(\Lambda_0)} (\overline{P'Q - P Q'})}{C^2(|P|^2 + |Q|^2)}= u_{P,Q}.
\end{align*}

Now, if $u_{\Tilde{P},\Tilde{Q}}  = u_{P,Q}$ for pairs of coprime polynomials, then we have 
\begin{align}
\label{eq:equalityofminimizers}
\frac{\overline{P'Q - PQ'}}{|P|^2 + |Q|^2} 
= 
\frac{\overline{\Tilde{P}'\Tilde{Q} - \Tilde{P}\Tilde{Q}'}}{|\Tilde{P}|^2 + |\Tilde{Q}|^2},
\end{align}
where the denominators are uniformly bounded from below.
Hence, by the fundamental theorem of algebra, we have 
\begin{align}
\label{eq:Wronskianconnection}
P'Q - PQ' = C (\Tilde{P}'\Tilde{Q} - \Tilde{P}\Tilde{Q}'),
\end{align}
for a constant $C \in \C$. We deduce from \labelcref{eq:equalityofminimizers,eq:Wronskianconnection} that
\begin{align*}
|\Tilde{P}|^2 + |\Tilde{Q}|^2 =\overline{C} (|P|^2 + |Q|^2).
\end{align*}
Therefore, $C>0$ and 
\begin{align*}
|\Tilde{P}|^2 + |\Tilde{Q}|^2 = |\sqrt{C} P|^2 + |\sqrt{C} Q|^2.
\end{align*}
By \Cref{thm:liouville}, we derive that \begin{align*}
(\Tilde{P},\Tilde{Q}) = \Lambda_0 (\sqrt{C} P,\sqrt{C} Q),
\end{align*}
for some $\Lambda_0 \in \sU(2).$ 
Using the Wronskian property, \Cref{prop:actiononwronksian}, we obtain
\begin{align*}
W (\Tilde{P},\Tilde{Q}) = C \det(\Lambda_0) W(P,Q).
\end{align*}
In conclusion, by \eqref{eq:Wronskianconnection}, we derive $\det(\Lambda_0)=1$. Finally, letting $\Lambda = \sqrt{C}  \Lambda_0$ completes the proof.
\end{proof}

\begin{remark}
\label{rmk:degreesless-u}
    By \Cref{prop:mag-symmetries}, we also have $u_{P,Q} = u_{\tilde{P},\tilde{Q}}$ where $\deg \tilde{P} > \deg \tilde{Q}$ and $\tilde{P}$ is monic (cp.~\Cref{rmk:degreesless}).
    Namely, if $P = az^n + O(z^{n-1})$, $Q = bz^n + O(z^{n-1})$, and $\Lambda := \frac{1}{|a|^2+|b|^2} \begin{pmatrix} \overline{a} & \overline{b} \\ -b & a \end{pmatrix}$,
    then $(\tilde{P},\tilde{Q}) := \Lambda(P,Q) = (z^n + O(z^{n-1}), O(z^{n-1}))$.
\end{remark}

\subsection{Vortex rings and minimizers with radially symmetric density}\label{sec:symmetric}

In this subsection, 
we derive the vortex rings \eqref{eq:vortexring} and the minimizers for $\gamma_{\ast}(\beta)$ with radially symmetric density. 

\begin{corollary}[Single-root solutions / vortex rings]
Let $\beta = 2n \in 2\N$. Then, all the minimizers of $\gamma_{\ast}(\beta)$ which only have the root $z_0 \in \C$ are of the form  
\begin{align}\label{eq:vortexringform-symm}
u_n(z) = \sqrt{\frac{n}{\pi}} \frac{b\, \overline{a \, \,(z-z_0)^{n-1}}}{ |a(z-z_0)^{n} + c|^2 + b^2},
\quad n \in \N, a \in \C^{\times}, b \in \R^+, c \in \C,
\end{align}
for every $z \in \C.$
Moreover, if $|u_n|$ is radially symmetric, then
\begin{align}\label{eq:radialvortexring}
u_n(z) = \overline{C} \sqrt{\frac{n}{\pi}} \frac{ {\bar{z}}^{n-1}}{|z|^{2n} + |C|^2},
\end{align}
for $ C \in \C^\times.$
\end{corollary}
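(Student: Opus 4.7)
The plan is to combine Theorem~\ref{thm:magneticstability}(ii)-(iii) with the classification of coprime Wronskian pairs in Lemma~\ref{lem:vortexwronskian}. By Theorem~\ref{thm:magneticstability}, any minimizer for $\gamma_*(2n)$ has the form $u_{P,Q}$ with $P,Q$ coprime, linearly independent complex polynomials such that $\max(\deg P,\deg Q)=n$. Because $\gcd(P,Q)=1$ implies $|P|^2+|Q|^2>0$ on $\C$, the zero set of $u_{P,Q}$ coincides with that of $W(P,Q)=P'Q-PQ'$. Thus the assumption that $u_n$ has only the single root $z_0$ forces $W(P,Q)=a(z-z_0)^m$ for some $a\in\C^\times$ and $m\ge 0$.

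Next I would apply Lemma~\ref{lem:vortexwronskian} (with $n$ in that statement replaced by $m$) to conclude that $P=\alpha_1(z-z_0)^{m+1}+\beta_1$ and $Q=\alpha_2(z-z_0)^{m+1}+\beta_2$ with $\alpha_1\beta_2-\alpha_2\beta_1=a/(m+1)$. Since at least one of $\alpha_1,\alpha_2$ must be nonzero (otherwise the Wronskian identity fails), one has $\max(\deg P,\deg Q)=m+1$, and the minimizer constraint $\max(\deg P,\deg Q)=n$ forces $m=n-1$. Now I would use the $\R^+\times\sSU(2)$-symmetry from Theorem~\ref{thm:magneticstability}(iii) to bring $(P,Q)$ to a canonical form. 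Specifically, the rotation
\begin{equation*}
\Lambda_0:=\frac{1}{\sqrt{|\alpha_1|^2+|\alpha_2|^2}}\begin{pmatrix} e^{{\rm i}\phi/2}\bar\alpha_1 & e^{{\rm i}\phi/2}\bar\alpha_2 \\ -e^{-{\rm i}\phi/2}\alpha_2 & e^{-{\rm i}\phi/2}\alpha_1 \end{pmatrix}\in\sSU(2),
\end{equation*}
for a suitable phase $\phi$, sends $(P,Q)$ to $(\tilde P,\tilde Q)$ with $\tilde P=\tilde a(z-z_0)^n+c$ (for some $\tilde a\in\C^\times$, $c\in\C$) and $\tilde Q=b>0$, using that $-\alpha_2 P+\alpha_1 Q=a/n$ is constant. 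Substitution into the defining formula \eqref{eq:mag-solution} with $\beta=2n$, together with $W(\tilde P,\tilde Q)=n\tilde a b(z-z_0)^{n-1}$, yields exactly \eqref{eq:vortexringform-symm} (after relabelling $\tilde a$ back to $a$); conversely, direct computation shows every such $u$ is a minimizer of the required form.

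For the radially symmetric case, observe that the density
\begin{equation*}
|u_n(z)|^2=\frac{n}{\pi}\cdot\frac{b^2|a|^2|z-z_0|^{2(n-1)}}{\bigl(|a|^2|z-z_0|^{2n}+2\Re(\bar a c\,(\bar z-\bar z_0)^n)+|c|^2+b^2\bigr)^2}
\end{equation*}
has a zero only at $z_0$, so radial symmetry forces the symmetry centre to be $z_0$; expanding $|a(z-z_0)^n+c|^2$ then shows the cross-term $2\Re(\bar a c(\bar z-\bar z_0)^n)$ must vanish identically, which for $n\ge 1$ requires $c=0$. Taking $z_0=0$ and setting $C:=(b/|a|)\cdot(a/|a|)$, so that $|C|=b/|a|$ and $\bar C\,a/|a|$ combines with the prefactor to give an overall phase, reduces the expression to \eqref{eq:radialvortexring}. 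The only mildly delicate step is tracking the phase through the $\sSU(2)$ rotation so that the positivity $b>0$ in \eqref{eq:vortexringform-symm} and the single complex parameter $C$ in \eqref{eq:radialvortexring} are recovered cleanly; this is purely a matter of absorbing a global $\sU(1)$ phase into the freedom left after fixing determinant one.
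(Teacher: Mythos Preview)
Your proposal is correct and follows essentially the same route as the paper: invoke Theorem~\ref{thm:magneticstability} to write the minimizer as $u_{P,Q}$, observe that the single-root hypothesis forces $W(P,Q)$ to be a monomial in $z-z_0$, apply Lemma~\ref{lem:vortexwronskian} to determine $P,Q$, use the degree constraint to fix the exponent, and then normalize via an $\sSU(2)$ rotation. The only cosmetic differences are that you build the extra phase $\phi$ into your rotation to land directly on $b>0$ (the paper instead first obtains $a>0$, $b\in\C^\times$ and implicitly relabels), and you spell out the cross-term argument for the radial case more explicitly than the paper's one-line ``we derive that $c=0$''.
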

\begin{remark}\label{rmk:magneticsolsymm}
    The latter, radial case was already known from \cite{JacPi-90b}. 
    The case $c \neq 0$ appears to only have been discussed in the context of the corresponding Liouville equation; compare \Cref{rmk:Liouvillesolsymm}.
\end{remark}
\begin{proof}
Let $u_n$ be a minimizer for $\gamma_{\ast}(\beta)$ where $\beta = 2n \in  2 \N$. Then, by \Cref{thm:magneticstability}, we derive 
\begin{align}
\label{eq:initialformvortexring}
u_n =   \sqrt{\frac{1}{ \pi n}} \, \frac{\overline{P'Q - PQ'}}{|P|^2 + |Q|^2},
\end{align}
where $P,Q \in \cP$ and 
\begin{align}
\label{eq:degreecondPQ}
\max(\deg(P),\deg(Q)) = n.    
\end{align}
Now, if $z_0 \in \C$ is the only root of $u_n$, then $P'Q - PQ' =\lambda (z-z_0)^k$ for some $\lambda \in \C^\times, k \in \N$. By \Cref{lem:vortexwronskian}, we have 
$$
P= \alpha_1 (z-z_0)^{k+1} + \beta_1
\quad \text{and} \quad Q= \alpha_2 (z-z_0)^{k+1} + \beta_2,
$$ 
where 
\begin{align*}
\alpha_1 \beta_2 - \alpha_2 \beta_1 = \frac{\lambda}{k+1}.
\end{align*}
Hence, by \eqref{eq:degreecondPQ}, we have
$$
P= \alpha_1 (z-z_0)^{n} + \beta_1 \quad \text{and} \quad Q=  \alpha_2 (z-z_0)^{n} + \beta_2,
$$
where 
\begin{align*}
\alpha_1 \beta_2 - \alpha_2 \beta_1 = \frac{\lambda}{n}.
\end{align*}
In conclusion, by \eqref{eq:initialformvortexring}, we derive 
\begin{align}\label{eq:vortexringform}
u_n(z) = \sqrt{\frac{n}{\pi}} \frac{\overline{(\alpha_1 \beta_2 -\alpha_2 \beta_1) (z-z_0)^{n-1}}}{ |\alpha_1 (z-z_0)^{n} + \beta_1|^2 + |\alpha_2 (z-z_0)^{n} + \beta_2|^2},
\quad n \in \N.
\end{align}
Now (cp.~\Cref{rmk:degreesless-u}), we define the matrix
\begin{align*}
    \Lambda := \frac{1}{\sqrt{|\alpha_1| ^2 + |\alpha_2|^2}} \begin{pmatrix}
\overline{\alpha_1} & \overline{\alpha_2} \\
-\alpha_2 & \alpha_1
\end{pmatrix}.
\end{align*}
Then, $\Lambda \in \sSU(2)$ and, by the symmetry of solutions in Theorem \ref{thm:magneticstability}, we have 
\begin{align}
    u_n =  \sqrt{\frac{1}{ \pi n}} \, \frac{\overline{W(P_0,Q_0)}}{|P_0|^2 + |Q_0|^2},
\end{align}
where $(P_0,Q_0)= \Lambda (P,Q).$ Hence, by Proposition \ref{prop:actiononwronksian}, we have
\begin{align*}
 u_n(z) = \sqrt{\frac{n}{\pi}} \frac{\overline{a b \, (z-z_0)^{n-1}}}{ |a(z-z_0)^{n} + c|^2 + |b|^2},
\quad n \in \N,
\end{align*}
for every $z \in \C,$ where 
$a = |\alpha_1|^2 + |\alpha_2|^2 > 0,$ 
$b = \lambda/n \in \C^\times$, 
$c = \overline{\alpha_1}\beta_1 + \overline{\alpha_2}\beta_2 \in \C$.
Moreover, if $|u_n|$ is radially symmetric, we derive that $c=0$
(unless $n=1$).
Hence, by taking $
C := \frac{b}{\Bar{a}},$
we obtain \eqref{eq:radialvortexring}. 
\end{proof}

The following tells us something about how close the vortex rings are to being minimizers for arbitrary $\beta$ 
(they get closer like $O(1/\beta)$ uniformly as $\beta \to \infty$).

\begin{proposition}[Vortex ring energy]
\label{prop:vortexringenergy}
The radial vortex rings $u_n$ in \eqref{eq:radialvortexring} satisfy the identities 
$\int_{\R^2} |u_n|^2 = 1$ 
and
\begin{equation}\label{vortexringenergy}
\frac{\cE_{\beta,2\pi\beta}[u_n]}{\int_{\R^2} |u_n|^4} = \frac{\pi(2n-1)}{n(n+1)}(\beta-2n)^2,\quad \forall n \in \N, \ \beta \ge 0.
\end{equation}
In particular,
$$
\gamma_*(\beta) \leq 2\pi + \frac{\pi}{2}\beta^2,\quad \forall \beta \ge 0.
$$
\end{proposition}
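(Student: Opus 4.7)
The plan is to bypass any direct calculation with the magnetic vector potential by invoking the supersymmetric factorization already developed. With the minus sign in \Cref{lem:mag-super},
$$\cE_{\beta,2\pi\beta}[u_n]=\int_{\R^2}\bigl|(\partial_1-\ii\partial_2)(e^{\beta\Phi[|u_n|^2]}u_n)\bigr|^2 e^{-2\beta\Phi[|u_n|^2]},$$
so I only need an explicit formula for the superpotential. The normalization $\int|u_n|^2=1$ is immediate from polar coordinates and the substitution $s=r^{2n}$, reducing the problem to $\int_0^\infty ds/(s+|C|^2)^2$.

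Next I would compute $\Phi[|u_n|^2]$. A direct calculation gives $\Delta\log(|z|^{2n}+|C|^2)=4n^2|C|^2|z|^{2n-2}/(|z|^{2n}+|C|^2)^2=4\pi n\,|u_n|^2$, so by \Cref{lem:superpot-relations},
$$\Phi[|u_n|^2](z)=\tfrac{1}{2n}\log(|z|^{2n}+|C|^2)+\Psi(z),$$
with $\Psi$ harmonic on $\R^2$. The asymptotic $\Phi[|u_n|^2](\bx)/\log|\bx|\to 1$ from \Cref{lem:newtonlemma}, together with $\tfrac{1}{2n}\log(|z|^{2n}+|C|^2)\sim\log|z|$, forces $\Psi(z)=o(\log|z|)$; expanding $\Psi$ as the real part of an entire function on $\R^2$ then forces $\Psi$ to be a constant (this is the only step that requires any care, but it is standard Liouville-type reasoning). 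Substituting back,
$$e^{\beta\Phi[|u_n|^2]}u_n=K\,(|z|^{2n}+|C|^2)^{\beta/(2n)-1}\,\bar z^{n-1},$$
with $K=K(n,C,\Psi)$. Because $\bar z^{n-1}$ is antiholomorphic and $(\partial_1-\ii\partial_2)=2\partial_z$, the derivative hits only the power of the radial expression, yielding
$$2\partial_z\bigl(e^{\beta\Phi[|u_n|^2]}u_n\bigr)=2K(\beta/2-n)\,|z|^{2n-2}\bar z^n(|z|^{2n}+|C|^2)^{\beta/(2n)-2},$$
which correctly vanishes at $\beta=2n$ in accordance with \Cref{cor:generalformofminimizer}. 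Taking absolute squares, the $|K|^2 e^{2\beta\Psi}$ factor cancels against $e^{-2\beta\Psi}$ in the weight, leaving
$$\cE_{\beta,2\pi\beta}[u_n]=\frac{4n|C|^2}{\pi}(\beta/2-n)^2\int_{\R^2}\frac{|z|^{6n-4}}{(|z|^{2n}+|C|^2)^4}\,dA.$$

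Both this integral and $\int|u_n|^4=\frac{2n^2|C|^4}{\pi}\int_{\R^2}\frac{|z|^{4n-4}}{(|z|^{2n}+|C|^2)^4}\,dA$ reduce, via $s=r^{2n}$, to the Euler integral
$$\int_0^\infty\frac{s^{a-1}}{(s+|C|^2)^b}\,ds=|C|^{2(a-b)}\,\frac{\Gamma(a)\Gamma(b-a)}{\Gamma(b)},\qquad b=4,$$
with $a=3-1/n$ in the numerator and $a=2-1/n$ in the denominator. The $|C|$-factors and $\Gamma(4)$ cancel in the ratio, and the identities $\Gamma(3-1/n)=(2-1/n)\Gamma(2-1/n)$ and $\Gamma(2+1/n)=(1+1/n)\Gamma(1+1/n)$ collapse the rest to $(2n-1)/(n+1)$, producing the formula $\pi(2n-1)(\beta-2n)^2/(n(n+1))$. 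The upper bound on $\gamma_*$ is then just the case $n=1$: since $\|u_1\|_{L^2}=1$, taking $u_1$ as a trial function in \Cref{def:gamma} gives
$$\gamma_*(\beta)\le\frac{\cE_\beta[u_1]}{\int|u_1|^4}=\frac{\cE_{\beta,2\pi\beta}[u_1]}{\int|u_1|^4}+2\pi\beta=\tfrac{\pi}{2}(\beta-2)^2+2\pi\beta=2\pi+\tfrac{\pi}{2}\beta^2.$$
Everything past the harmonic-function step is a routine polar-coordinate computation; I do not expect any substantive obstacle.
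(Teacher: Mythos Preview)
Your proposal is correct and follows essentially the same route as the paper: both invoke the supersymmetric factorization of \Cref{lem:mag-super}, identify $\Phi[|u_n|^2]=\tfrac{1}{2n}\log(|z|^{2n}+|C|^2)+\text{const}$, observe that $e^{\beta\Phi}u_n$ is a radial function times $\bar z^{n-1}$ so that the Cauchy--Riemann operator produces the factor $(\beta-2n)$, and finish with the same Beta/Gamma evaluation of the two radial integrals. Your justification of the constancy of the harmonic remainder $\Psi$ via \Cref{lem:newtonlemma} is in fact slightly more explicit than the paper's, which simply asserts the formula for $\Phi_n$ ``due to the radial symmetry''; the argument you sketch is the same Liouville-type reasoning carried out in full in the proof of \Cref{prop:integercaseminmizer}.
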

\begin{proof}
We first verify straightforwardly
\begin{equation}\label{eq:vring-L2}
\int_{\R^2} |u_n|^2
= \frac{n}{\pi} \int_0^{2\pi}\int_0^\infty \frac{|C|^2 r^{2n-1}}{(r^{2n}+|C|^2)^2} \dd r\dd \theta = \int_0^\infty \frac{|C|^2}{(t+|C|^2)^2} \dd t = 1.
\end{equation}
Then we compute (see e.g, \cite[p. 325]{GraRyz-14})
\begin{equation}\label{eq:vring-L4}
\int_{\R^2} |u_n|^4 
= \frac{n^2}{\pi^2} \int_0^{2\pi}\int_0^\infty \frac{|C|^4 r^{4n-3}}{(r^{2n}+|C|^2)^4} \dd r\dd \theta = \frac{n}{6\pi|C|^{\frac{2}{n}}} \Gamma\left(2+\frac{1}{n}\right) \Gamma\left(2-\frac{1}{n}\right).
\end{equation}
And we also verify that $u_n$ are smooth and in $H^1(\R^2)$.

Now we claim that, due to the radial symmetry,
the convolution amounts to
$$
\Phi_n := \Phi[|u_n|^2] = - \frac{1}{2n} \log \sqrt{\frac{n}{\pi}}\frac{|C|}{|z|^{2n}+|C|^2},
$$
which is locally bounded and satisfies $\nabla^\perp \Phi_n = \bA[|u_n|^2]$. Next, we insert the concrete expressions $\phi=\beta\Phi_n$, $u=u_n$ in the 
r.h.s.\ of the supersymmetric identity 
\eqref{eq:mag-super},
$$
\cL_\phi[u] := 4 \int_{\R^2} |\partial_{z}(e^\phi u)|^2 e^{-2\phi}.
$$
We use the polar coordinates
$$
\partial_z = \frac{e^{{\rm i}\theta}}{2}\left( 
\partial_r + \frac{{\rm i}}{r}\partial_\theta
\right)
$$
and observe
$$
e^{\beta\Phi_n}u_n
= e^{(\beta-2n)\Phi_n} \bar{z}^{n-1},
$$
where $\Phi_n$ is radial.
Thus, for any $\beta \ge 0$,
by \Cref{lem:mag-super},
\begin{align*}
\cE_{\beta,2\pi\beta}[u_n] = \cL_{\beta\Phi_n}[u_n] & = 4 \int_{\R^2} |\partial_{z}(e^{\beta\Phi_n} u_n)|^2 e^{-2\beta\Phi_n} \\
& = \int_{\R^2} \left|\partial_r e^{(\beta-2n)\Phi_n}\right|^2 |z|^{2n-2} e^{-2\beta\Phi_n} \\
& = (\beta-2n)^2 \int_{\R^2} |\partial_r\Phi_n|^2 |z|^{2n-2} e^{-4n\Phi_n} \\
& = (\beta-2n)^2 \frac{n}{\pi} \int_0^{2\pi}\int_0^\infty \frac{|C|^2 r^{6n-3}}{(r^{2n} + |C|^2)^4} \dd r\dd \theta \\
& = \frac{(\beta-2n)^2}{6|C|^{\frac{2}{n}}} \Gamma\left(1+\frac{1}{n}\right) \Gamma\left(3-\frac{1}{n}\right).
\end{align*}
 Here we again used the computation in \cite[page 325]{GraRyz-14}. Therefore, by \eqref{eq:vring-L4},
$$
\frac{\cL_{\beta\Phi_n}[u_n]}{\int_{\R^2} |u_n|^4}
= \frac{\pi}{n}(\beta-2n)^2\frac{\Gamma\left(1+\frac{1}{n}\right) \Gamma\left(3-\frac{1}{n}\right)}{\Gamma\left(2+\frac{1}{n}\right) \Gamma\left(2-\frac{1}{n}\right)}.
$$
We use the identity $\Gamma(1+z) = z\Gamma(z)$ to simplify the last quantity in the above as follows
$$
\frac{\Gamma\left(1+\frac{1}{n}\right) \Gamma\left(3-\frac{1}{n}\right)}{\Gamma\left(2+\frac{1}{n}\right) \Gamma\left(2-\frac{1}{n}\right)} = \frac{\Gamma\left(1+\frac{1}{n}\right) \left(2-\frac{1}{n}\right) \Gamma\left(2-\frac{1}{n}\right)}{\Gamma\left(1+\frac{1}{n}\right) \left(1+\frac{1}{n}\right) \Gamma\left(2-\frac{1}{n}\right)} = \frac{2n-1}{n+1}.
$$
Putting it all together we obtain the desired identity \eqref{vortexringenergy}.

Finally, by the variational principle, we have
$$
\gamma_*(\beta) \leq \frac{\cE_{\beta,0}[u_1]}{\int_{\R^2} |u_1|^4} = \frac{\cE_{\beta,2\pi\beta}[u_1]}{\int_{\R^2} |u_1|^4} +2\pi\beta = \frac{\pi}{2}(\beta - 2)^2 + 2\pi\beta = 2\pi + \frac{\pi}{2}\beta^2
$$
for every $\beta \ge 0$.
\end{proof}

\bigskip


%
%
\def\MR#1{} 




\newcommand{\etalchar}[1]{$^{#1}$}


\end{document}